\newtheorem{thm}{Theorem}[section]
\newtheorem{prop}[thm]{Proposition}
\newtheorem{lem}[thm]{Lemma}
\newtheorem{cor}[thm]{Corollary}
\theoremstyle{definition}
\newtheorem{exm}[thm]{Example}
\newtheorem{defn}[thm]{Definition}
\theoremstyle{remark}
\newtheorem{remk}[thm]{Remark}
\newtheorem{remks}[thm]{Remarks}
\newtheorem{exms}[thm]{Examples}
\newtheorem{notat}[thm]{Notation}
\newtheorem{ack}{Acknowledgements}
\numberwithin{equation}{section}
\newcommand{\sC}{{\mathcal C}}
\newcommand{\sH}{{\mathcal H}}
\newcommand{\sK}{{\mathcal K}}
\newcommand{\sO}{{\mathcal O}}
\newcommand{\sU}{{\mathcal U}}
\newcommand{\sV}{{\mathcal V}}
\newcommand{\sZ}{{\mathcal Z}}
\newcommand{\wt}{\widetilde}
\newcommand{\wh}{\widehat}
\newcommand{\A}{{\mathbb A}}
\renewcommand{\H}{{\mathbb H}}
\newcommand{\N}{{\mathbb N}}
\renewcommand{\P}{{\mathbb P}}
\newcommand{\Q}{{\mathbb Q}}
\newcommand{\Z}{{\mathbb Z}}
\newcommand{\surj}{\twoheadrightarrow}
\newcommand{\inj}{\hookrightarrow}
\newcommand{\ds}{{/\kern-3pt/}}
\newcommand{\ov}{\overline}
\begin{document}
\title{Riemann-Roch for Equivariant $K$-theory}
\author{Amalendu Krishna}

\keywords{Equivariant $K$-theory, Higher Chow groups, Algebraic groups}

\subjclass{Primary 14C40, 14C35; Secondary 14C25}
\baselineskip=10pt 
                                              
\begin{abstract}
The goal of this paper is to prove the equivariant version of Bloch's
Riemann-Roch isomorphism between the higher algebraic $K$-theory and the
higher Chow groups of smooth varieties. We show that for a 
linear algebraic group $G$ acting on a smooth variety $X$, although there is
no Chern character map from the equivariant $K$-groups to equivariant higher 
Chow groups, there is indeed such a map ${K^G_i(X)} {\otimes}_{R(G)} 
\widehat{R(G)} \xrightarrow {ch} {CH^*_G(X,i)} {\otimes}_{S(G)} 
\widehat{S(G)}$ with rational coefficients, which is an isomorphism. 
This implies the Riemann-Roch isomorphism $\widehat{K^G_i(X)} 
\xrightarrow{{\widehat{\tau}}^G_X} \widehat{CH^*_G\left(X, i \right)}$.
The case $i = 0$ provides a stronger form of the Riemann-Roch theorem of 
Edidin and Graham ({\sl cf.} \cite{ED1}). 
\end{abstract}

\maketitle   

\section{Introduction}
A {\sl variety} in this paper will mean a reduced, connected and
separated scheme of finite type with an ample line bundle over a field $k$. 
This base field $k$ will be fixed throughout this paper.
Let $G$ be a linear algebraic group over $k$ acting on such a variety 
$X$. Recall that this action on $X$ is said to be {\sl linear} if $X$ admits a 
$G$-equivariant ample line bundle, a condition which is always satisfied
if $X$ is normal ({\sl cf.} \cite[Theorem~2.5]{Sumihiro} for $G$ connected
and \cite[5.7]{Thomason1} for $G$ general). All $G$-actions in this paper
will be assumed to be linear. 
For $i \ge 0$, let $K^G_i(X)$ (resp. $G^G_i(X)$) denote the $i$th 
homotopy group of the $K$-theory spectrum of $G$-equivariant vector 
bundles (resp. coherent sheaves) on $X$. The $G$-equivariant higher Chow
groups $CH^*_G(X, i)$ of $X$ were defined by Edidin and Graham 
({\sl cf.} \cite{ED2}, also see below for more detail) as the ordinary higher 
Chow groups of the quotient space $X \stackrel{G}{\times} U$, where
$U$ is an open subscheme of a representation of $G$ on which the action of
$G$ is free, and its complement is of sufficiently high codimension. 

It has been known for a long time that in the non-equivariant case, 
there is a Riemann-Roch isomorphism ${G_0(X)}_{\Q} \to 
{CH^*\left(X, 0\right)}_{\Q}$ ({\sl cf.} \cite{Fulton}).
It is an important question to know if there are functorial Chern character 
and Riemann-Roch maps from $K$-theory spaces to a given cohomology theory, and
if these maps are isomorphisms with rational coefficients. It was proved
by Bloch ({\sl cf.} \cite[Theorem~9.1]{Bloch}) that for a quasi-projective
variety $X$, there are Riemann-Roch maps $G_i(X) \to CH^*(X,i)$ which
are isomorphisms with rational coefficients. It then follows that if
$X$ is smooth, the Chern character maps ${K_i(X)}_{\Q} \to {CH^*(X,i)}_{\Q}$ 
defined by Gillet in \cite{Gillet} are also isomorphisms.
  
In this paper, we address the question of extending this result to the
equivariant setting. It is not difficult to
see however that unlike in the non-equivariant  case, one can not expect such
an isomorphism between the equivariant $K$-groups and higher Chow groups
without perturbing them in some way. For example, for a finite cyclic
group $G$ of order $m$, one knows that ${K^G_0(k)}_{\Q}$ is a 
$\Q$-vector space of rank $m$, while ${CH^*_G(X, 0)}_{\Q}$ is a
$\Q$-vector space of rank $1$. This makes the equivariant Riemann-Roch     
problem more subtle than the ordinary one. 

For finite group actions, the structure of $G^G(X)$ was analysed in
\cite{Vistoli1}. The case that $G$ acts on a smooth variety $X$ with finite
stabilizers was treated in \cite{Toen} and \cite{VV0}. In this case,
one shows that ${G^G(X)}_{\Q}$ splits as a product from which one can
deduce the equivariant Riemann-Roch for $CH^*_G\left(X, 0\right)$.

The situation is much more subtle when the stabilizers are not finite.
Edidin and Graham proved the following fact. Denote by $R(G)$ the ring
of representations of $G$, tensored with $\Q$; call $I_G \subset R(G)$
the ideal of virtual representations of rank zero.
\begin{thm}[Edidin-Graham, \cite{ED1}]\label{thm:ED}
Let $X$ be a separated algebraic space, and let $\widehat{G^G_0(X)}$
denote the $I_G$-adic completion of the $R(G)$-module 
${G^G_0(X)}_{\Q}$. Then there is a Riemann-Roch map 
\[
{\widehat{\tau}}^G_X : {\widehat{G^G_0(X)}} \to 
\stackrel{\infty}{\underset {j=0}{\prod}}{CH^j_G(X,0)}_{\Q},\]
which is an isomorphism.
\end{thm}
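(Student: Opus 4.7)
The plan is to reduce the equivariant statement to Bloch's non-equivariant
Riemann-Roch isomorphism via the Totaro mixed-quotient construction, and
then to identify an inverse limit of the resulting maps with the
$I_G$-adic completion of $G^G_0(X)_{\Q}$.

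First, for each integer $j \ge 1$, choose a $G$-representation $V_j$
together with a $G$-invariant open $U_j \subset V_j$ on which $G$ acts
freely, with $\codim_{V_j}(V_j \setminus U_j) > j$, arranged into an
inverse system so that the quotients are compatible under direct sums.
Because $G$ is linear and the action on $X$ is linear, the free quotient
$X_j := X \stackrel{G}{\times} U_j$ exists as a separated algebraic
space, and by the Edidin-Graham definition of equivariant higher Chow
groups one has $CH^p_G(X,0)_{\Q} \cong CH^p(X_j,0)_{\Q}$ for $p \le j$.
Applying Bloch's Riemann-Roch theorem to each $X_j$ yields a natural
isomorphism
\[
\tau_{X_j} : G_0(X_j)_{\Q} \xrightarrow{\sim}
\prod_{p \ge 0} CH^p(X_j,0)_{\Q}.
\]
Free-quotient descent identifies $G_0(X_j) \cong G^G_0(X \times U_j)$,
and homotopy invariance of $G$-theory along the $G$-equivariant vector
bundle $X \times V_j \to X$ identifies $G^G_0(X)_{\Q} \cong
G^G_0(X \times V_j)_{\Q}$, so restriction along
$X \times U_j \hookrightarrow X \times V_j$ furnishes natural
$R(G)$-linear composites $r_j : G^G_0(X)_{\Q} \to G_0(X_j)_{\Q}
\xrightarrow{\tau_{X_j}} \prod_{p \ge 0} CH^p(X_j,0)_{\Q}$, whose
inverse limit of targets truncates precisely to
$\prod_{p \ge 0} CH^p_G(X,0)_{\Q}$.

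The essential content then lies in identifying the inverse limit of the
restriction maps with the $I_G$-adic completion, i.e.\ in showing
\[
\widehat{G^G_0(X)} \xrightarrow{\sim}
\varprojlim_j G^G_0(X \times U_j)_{\Q}.
\]
Two ingredients are needed. The equivariant localization sequence
$G^G_0(X \times (V_j \setminus U_j)) \to G^G_0(X \times V_j) \to
G^G_0(X \times U_j) \to 0$ gives surjectivity of the restriction and
identifies its kernel with the image of the first term. To bound this
kernel, I would exploit that $[0] \in G^G_0(V_j)$ has a Koszul
resolution by the trivial representation that places it in
$I_G^{\dim V_j}$, so that classes on $X \times V_j$ supported on
$X \times (V_j \setminus U_j)$ are annihilated modulo $I_G^{c_j}$ with
$c_j \to \infty$ as $j \to \infty$. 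Combined with a matching lower
bound on the kernel (showing the restriction factors through a
quotient cofinal with $G^G_0(X)_{\Q}/I_G^{n}$), this yields the
displayed isomorphism. This Atiyah-Segal-type completion statement for
algebraic $G$-theory of a linear algebraic group is the main obstacle
and the entire equivariant input of the argument.

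Assembling, the inverse limit of the $\tau_{X_j}$ composed with the
completion identification produces the desired $\widehat{\tau}^G_X$.
Independence of the choice of Totaro system, and functoriality in
proper pushforward and flat pullback, descend from the corresponding
properties of the non-equivariant $\tau$ through the descent and
homotopy invariance identifications above. Thus, once the completion
identification is established, the remainder of the proof is naturality
combined with Bloch's theorem.
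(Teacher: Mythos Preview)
The paper does not prove this theorem; it is quoted from Edidin--Graham \cite{ED1} and used as input throughout. So there is no ``paper's own proof'' to compare against, but your proposal can still be assessed against the argument in \cite{ED1}.

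Your construction of the map $\widehat{\tau}^G_X$ via good pairs and the non-equivariant Riemann-Roch on the mixed quotients $X_j$ is exactly how Edidin and Graham define it, and your observation that $ch^G$ carries $I_G$ into positive-degree Chow classes is what shows the map factors through the completion. That part is fine.

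The gap is in your proof of the isomorphism. You propose to deduce it from an Atiyah--Segal-type identification $\widehat{G^G_0(X)} \cong \varprojlim_j G_0(X_j)_{\Q}$, but your sketch does not establish this. The Koszul argument you invoke shows only that the class of the \emph{origin} in $G^G_0(V_j)$ lies in $I_G^{\dim V_j}$; the complement $V_j\setminus U_j$ is a much larger closed set (only its codimension is controlled), and classes supported there need not lie in any prescribed power of $I_G$ without further argument. Your ``matching lower bound'' showing that $I_G^n G^G_0(X)$ eventually lands in the kernel of restriction is entirely unaddressed and is at least as hard. In effect you have restated the theorem as a completion theorem for equivariant $G$-theory without reducing it to anything easier.

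Edidin and Graham proceed differently: after constructing the map and showing it factors through the completion, they prove the isomorphism by a chain of reductions --- Morita equivalence to pass to $GL_n$, then to a maximal torus via Weyl-group invariants, and for tori a Noetherian induction using Thomason's generic \'etale slice theorem to reach the case of a trivial action, where the statement is a direct computation. The completion identification you are aiming for is then a \emph{consequence} of the Riemann--Roch isomorphism, not an input to it. If you want a self-contained argument, you should follow that reduction strategy rather than attempt the completion theorem directly.
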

Now, one can ask, first, if the completions of Edidin-Graham 
at the $G_0$-level is the minimal perturbation one requires to define and
prove Riemann-Roch isomorphisms in the equivariant setting, and second,
if such isomorphisms could be established also for the higher equivariant
$K$-groups. Our goal in this paper is to answer these two questions, in
the particular case that $X$ is smooth. We show that the difference between the
equivariant and the non-equivariant cases occurs already at the level of
$K_0$ of the base field: once this difference is taken 
into account, one can define and prove the Riemann-Roch and Chern character
isomorphisms for equivariant higher $K$-theory just like Bloch's
theorem in the non-equivariant case.  
 
We set up some notations before we state our main result. For a linear
algebraic group $G$ over $k$ as above, let $R(G)$ denote the ring of
virtual representations of $G$ and let $I_G$ be the ideal of the 
rank zero virtual representations, i.e., $I_G$ is the kernel of the
rank map $R(G) \to \Z$. It is easy to see that $R(G)$ is same as the
Grothendieck group $K^G_0(k)$. Let $\widehat{R(G)}$ denote the completion
of the ring $R(G)$ with respect to the ideal $I_G$. 
Let $S(G)$ denote the equivariant Chow ring 
$CH^*_G(k,0) = {\bigoplus}_{j \ge 0} CH^j_G(k,0)$
and let $J_G$ denote the irrelevant ideal ${\bigoplus}_{j \ge 1} CH^j_G(k,0)$.
Let $\widehat{S(G)}$ denote the $J_G$-adic completion of $S(G)$.
We shall call $I_G$ and $J_G$ as the augmentation ideals of the rings
$R(G)$ and $S(G)$ respectively, conforming to the notations already in
use in the literature. Let $\widehat{I_G}$ (resp. $\widehat{J_G}$) 
denote the extension of $I_G$ (resp. $J_G$) in the completion
$\widehat{R(G)}$ (resp. $\widehat{S(G)}$). 

For a variety $X$ with a $G$-action, put
\[CH^*_G(X,i) = {\underset {j \ge 0}{\bigoplus}} CH^j_G(X,i) \ \ {\rm for} \ \
i \ge 0.\]
It is known ({\sl cf.} \cite{ED2}) that the term on the right is an
infinite sum in general. 
For any such variety $X$, $G^G_i(X)$ and $K^G_i(X)$ are $K^G_0(X)$-modules
and hence the ring homomorphism $R(G) \to K^G_0(X)$ makes $G^G_i(X)$ and
$K^G_i(X)$ $R(G)$-modules. Moreover, for any $G$-equivariant map
$f: X \to Y$, the pull-back map $f^*: K^G_i(Y) \to K^G_i(X)$ is 
$R(G)$-linear, and the projection formula ({\sl cf.} 
\cite[Ex. II.8.3]{Hart}) implies that so is the
push-forward map if $f$ is proper.
At the level of equivariant higher Chow groups, the  
smoothness of the classifying stack $BG$ implies that there is an action of 
$CH^*_G(k,0)$ on 
$CH^*_G(X,i)$ ({\sl cf.} \cite[Proposition~5.5]{Bloch}) and this makes 
the latter an $S(G)$-module. As in the case of $K$-groups, any 
$G$-equivariant map $f:X \to Y$ induces an $S(G)$-linear pull-back map on the 
equivariant higher Chow groups if $Y$ is smooth, and an $S(G)$-linear
push-forward map if $f$ is proper ({\sl cf.} \cite[5.8]{Bloch}). 
For a $G$-variety $X$, let $\widehat{G^G_i(X)}$ denote the $I_G$-adic
completion of the $R(G)$-module $G^G_i(X)$, and let $\widehat{CH^*_G(X,i)}$
denote the $J_G$-adic completion of the $S(G)$-module $CH^*_G(X,i)$.
Finally, we recall that if $X$ is smooth, then there is a poincar{\'e}
duality isomorphism of spectra $K^G(X) \xrightarrow{\cong} G^G(X)$
({\sl cf.} \cite[Theorem~1.8]{Thomason1}). All the $K$-theory and Chow groups
in this paper (except in Section~2) will be tensored with $\Q$. 
We now state our main result.
\begin{thm}\label{thm:main*}
Let $X$ be a smooth variety with a $G$-action. Then for any $i \ge 0$,
there are Chern character maps
\begin{equation}\label{eqn:main1}
{\wt{ch}}^G_X : K^G_i(X){\otimes}_{R(G)} {\widehat{R(G)}}
\longrightarrow     
CH^*_G(X,i) {\otimes}_{S(G)} {\widehat{S(G)}}
\end{equation}
\begin{equation}\label{eqn:main2} 
{\widehat{ch}}^G_X : {\widehat{K^G_i(X)}} \longrightarrow 
{\widehat{CH^*_G(X,i)}}
\end{equation}
and a commutative diagram
\begin{equation}\label{eqn:main**}
\xymatrix{
{K^G_i(X) {\otimes}_{R(G)} {\widehat{R(G)}}}
\ar[r]^{{\wt{ch}}^G_X} \ar[d]_{u^G_X} &
{CH^*_G(X,i) {\otimes}_{S(G)} {\widehat{S(G)}}}
\ar[d]^{{\ov{u}}^G_X} \\
{\widehat{K^G_i(X)}} \ar[r]_{{\widehat{ch}}^G_X} &
{\widehat{CH^*_G(X,i)}}}
\end{equation}
such that the horizontal maps are isomorphisms and the vertical maps
are injective. Moreover, these Chern character maps commute with
the pull-back maps on $K$-groups and higher Chow groups of smooth
$G$-varieties, and with products.
\end{thm}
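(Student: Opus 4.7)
The plan is to bootstrap from Bloch's non-equivariant Riemann--Roch theorem by replacing $BG$ with an algebro-geometric Borel construction in the style of Totaro and Edidin--Graham. Choose a tower of pairs $(V_n, U_n)$ with $V_n$ a finite-dimensional $k$-rational representation of $G$, $U_n \subset V_n$ a $G$-stable open subscheme on which $G$ acts freely, and $\codim_{V_n}(V_n \setminus U_n) \to \infty$. Set $X_n := X \times^G U_n$, a smooth quasi-projective variety. By definition, for every fixed codimension $j$ the map $CH^j_G(X, i) \to CH^j(X_n, i)$ is an isomorphism for $n \gg 0$, and on the $K$-theory side, Thomason's free-quotient descent yields $K^G_i(X \times U_n) \cong K_i(X_n)$, so that pullback along the projection $X \times U_n \to X$ gives a canonical comparison $\phi_n : K^G_i(X) \to K_i(X_n)$.

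I would then proceed in two stages. First, Bloch's theorem applied to each smooth $X_n$ produces Gillet's non-equivariant Chern character isomorphism $ch_n : K_i(X_n)_{\Q} \xrightarrow{\cong} CH^*(X_n, i)_{\Q}$, and these are compatible with the tower transition maps, with smooth pullbacks, and with products. Assembling them and composing with $\phi_n$ yields a canonical arrow $K^G_i(X) \to \lim_n K_i(X_n) \to \lim_n CH^*(X_n, i)$. Second, one identifies these inverse limits intrinsically; the Chow-side identification
\[
CH^*_G(X,i) \otimes_{S(G)} \widehat{S(G)} \;\xrightarrow{\cong}\; \lim_n CH^*(X_n, i)
\]
is essentially formal, since Edidin--Graham stabilization expresses the right-hand side as $\prod_j CH^j_G(X, i)$, which coincides with the $J_G$-adic completion of the graded $S(G)$-module $CH^*_G(X, i)$. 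The $K$-theoretic analogue
\[
K^G_i(X) \otimes_{R(G)} \widehat{R(G)} \;\xrightarrow{\cong}\; \lim_n K_i(X_n)
\]
is the heart of the argument and must be established separately.

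Once both identifications are in hand, $\wt{ch}^G_X$ and $\wh{ch}^G_X$ are defined by assembling the $ch_n$, and the commutative square \eqref{eqn:main**} is built in. Its horizontal arrows are isomorphisms because each $ch_n$ is, and injectivity of the vertical maps $u^G_X$ and $\ov{u}^G_X$ follows from the flatness of $\widehat{R(G)}$ over $R(G)$ and of $\widehat{S(G)}$ over $S(G)$, together with a reduction to finitely generated submodules. Compatibility with pullbacks along smooth $G$-equivariant maps and with products is then inherited level-by-level from the corresponding well-known properties of the Bloch--Gillet non-equivariant Chern character.

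The main obstacle is the $K$-theoretic completion isomorphism above. This upgrades \thmref{thm:ED} from $i = 0$ and $G$-theory to all $i \ge 0$ and $K$-theory; unlike the $K_0$ case, which one can argue using equivariant vector bundles and the representation ring directly, the higher-$i$ case requires a spectrum-level argument. Specifically, one uses Thomason's equivariant localization and Poincar\'e duality to compare the spectrum $K^G(X)$ with the tower $\{K(X_n)\}$ of non-equivariant spectra, then controls the Milnor $\lim^1$-terms after tensoring with $\Q$ in order to match the $I_G$-adic completion of $K^G_i(X)$ with the homotopical inverse limit. Once this completion theorem is in place, the remainder of \thmref{thm:main*} reduces to the formal assembly of the non-equivariant Chern characters across the Borel tower.
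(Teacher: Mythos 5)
There is a genuine gap, in fact two. First, your claim that the identification $CH^*_G(X,i)\otimes_{S(G)}\widehat{S(G)}\xrightarrow{\cong}\lim_n CH^*(X_n,i)$ is ``essentially formal'' is false. The inverse limit over the Borel tower is the graded completion $\ov{CH^*_G(X,i)}=\prod_j CH^j_G(X,i)$, and for a graded $S(G)$-module that is not generated in bounded degrees the weak completion $\wt{CH^*_G(X,i)}$, the $J_G$-adic completion, and the graded completion can all differ: the natural map $\wt{M}\to\ov{M}$ is injective but need not be surjective (this is exactly Proposition~\ref{prop:completion1}(iv),(v), and $CH^*_G(X,i)$ is typically not finitely generated over $S(G)$). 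Showing that the Chern character, which by construction lands in $\ov{CH^*_G(X,i)}$, actually carries $\wt{K^G_i(X)}$ \emph{into} the subgroup $\wt{CH^*_G(X,i)}$ is one of the substantive points of the proof, not a formality; it is achieved via the trace/Weyl-invariant arguments and diagram chases of Sections~10--11.

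Second, and more seriously, the step you correctly identify as ``the heart of the argument'' --- the completion isomorphism $K^G_i(X)\otimes_{R(G)}\widehat{R(G)}\xrightarrow{\cong}\lim_n K_i(X_n)$ --- is an Atiyah--Segal completion theorem for higher equivariant algebraic $K$-theory. Your sketch (compare $K^G(X)$ with the tower $\{K(X_n)\}$ via localization and control $\lim^1$) does not address the actual difficulty, which is to show that the map from the $I_G$-adic (or weak) completion of $K^G_i(X)$ to the homotopy inverse limit of the tower is an isomorphism; $\lim^1$-vanishing with $\Q$-coefficients gives you a surjection onto $\lim_n K_i(X_n)$ from $\pi_i\holim$, not the required comparison with the algebraic completion. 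Such a theorem was not available and is at least as hard as Theorem~\ref{thm:main*} itself; the entire architecture of the paper (the self-intersection formula, cohomological rigidity and specialization maps, the Vezzosi--Vistoli-style decomposition by stabilizer dimension, Thomason's generic slice theorem for the finite-stabilizer case, and the descent from arbitrary $G$ to tori via Merkurjev's isomorphism and Weyl-group invariants) exists precisely to prove the isomorphism of $\wt{ch}^G_X$ \emph{without} any such completion theorem. As written, your proposal defers the theorem to an unproven statement rather than proving it.
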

\begin{cor}\label{cor:mainRR}
Let $X$ be a smooth variety with a $G$-action. Then for every $i \ge 0$,
there is a Riemann-Roch isomorphism 
\[
{\widehat{{\tau}}}^G_X : {\widehat{K^G_i(X)}} 
\xrightarrow{\cong} {\widehat{CH^*_G(X,i)}}.
\]
\end{cor}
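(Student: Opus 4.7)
The plan is to deduce the corollary almost immediately from Theorem~\ref{thm:main*}. Since $X$ is smooth, the Poincar\'e duality isomorphism $K^G(X) \xrightarrow{\cong} G^G(X)$ of \cite[Theorem~1.8]{Thomason1} identifies $\widehat{K^G_i(X)}$ with $\widehat{G^G_i(X)}$, so the task reduces to building a Riemann-Roch isomorphism out of the Chern character isomorphism already supplied by Theorem~\ref{thm:main*}.

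The standard recipe is to set
\[
\widehat{\tau}^G_X(\alpha) := \widehat{ch}^G_X(\alpha) \cdot {\rm Td}(T_{X/k}),
\]
where ${\rm Td}(T_{X/k}) \in \widehat{CH^*_G(X,0)}$ is the equivariant Todd class of the tangent bundle, acting on $\widehat{CH^*_G(X,i)}$ via the natural module structure over the completed Chow ring. Since $\widehat{ch}^G_X$ is already an isomorphism by Theorem~\ref{thm:main*}, the only remaining issue is to check that multiplication by ${\rm Td}(T_{X/k})$ is an automorphism of $\widehat{CH^*_G(X,i)}$.

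This in turn reduces to the statement that ${\rm Td}(T_{X/k})$ is a unit in the topological ring $\widehat{CH^*_G(X,0)}$. I would write ${\rm Td}(T_{X/k}) = 1 + \eta$ with $\eta$ a sum of classes of strictly positive codimension. The key observation is that in the $J_G$-adic completion, $\eta$ is topologically nilpotent: after translating codimension into powers of $\widehat{J_G}$ (using the smoothness of $BG$ and the comparison of the codimension filtration on $CH^*_G(X,0)$ with the $J_G$-adic one), the positive-codimension components of $\eta$ land in arbitrarily high powers of $\widehat{J_G}$. The geometric series $\sum_{n \ge 0} (-\eta)^n$ then converges to an inverse of ${\rm Td}(T_{X/k})$, and $\widehat{\tau}^G_X$ is the desired isomorphism.

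The main technical obstacle is precisely this invertibility of the Todd class, and it is exactly the point at which the choice of the $J_G$-adic completion (rather than some naive truncation of $CH^*_G(X,i)$) becomes essential: it is this completion that turns formal power series in the equivariant Chern roots into genuinely convergent elements, allowing the classical $({\rm ch}, {\rm Td})$ machinery to go through verbatim in the equivariant setting.
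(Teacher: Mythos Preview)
Your proposal is correct and follows essentially the same route as the paper: both arguments deduce the Riemann-Roch isomorphism from the Chern character isomorphism of Theorem~\ref{thm:main*} by observing that $\widehat{\tau}^G_X$ and $\widehat{ch}^G_X$ differ by multiplication by an invertible Todd-type class in $\widehat{CH^*_G(X,0)}$. The invertibility step you flag---that the positive-codimension part of the Todd class is topologically nilpotent in the $J_G$-adic topology---is exactly the comparison of the graded and $J_G$-adic filtrations on $CH^*_G(X,0)$ due to Graham and Brion (\cite[Proposition~2.1]{Graham}, \cite[Corollary~2.3]{Brion}), which the paper invokes immediately after stating the corollary; note also that the paper's $\widehat{\tau}^G_X$ is the map already constructed in Theorem~\ref{thm:RRoch} (with Todd factor $Td(X_G)/Td(E_V)$ as in~\eqref{eqn:RRC}) rather than a fresh definition, but this does not affect your argument.
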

This result is the correct generalization of the Riemann-Roch theorem
of Edidin and Graham \cite{ED1} to the higher $K$-theory.
In fact for $i = 0$, one knows  ({\sl cf.} \cite[Proposition~2.1]{Graham},
\cite[Corollary~2.3]{Brion}) that the $J_G$-adic 
and the graded filtrations induce the same topology on 
$CH^*_G\left(X, 0 \right)$.
Hence the natural map ${\widehat{CH^*_G(X,i)}} \to
\stackrel{\infty}{\underset {j=0}{\prod}}{CH^j_G(X,0)}$ is an isomorphism.
In particular, the main result of Edidin and Graham \cite{ED1} is a
special case of Corollary~\ref{cor:mainRR}. 

For actions with finite stabilizers, the above results can be
further refined to give the following stronger form which is strikingly 
similar to the Bloch's non-equivariant Riemann-Roch theorem.
For $i=0$, the Riemann-Roch in this form  was conjectured by Vistoli 
({\sl cf.} \cite{Vistoli}) and proved (without the surjectivity assertion) 
by Edidin and Graham ({\sl cf.} \cite[Corollary~5.2]{ED1}). 
\begin{thm}\label{thm:finite} 
Let $G$ act on a (possibly singular) variety $X$ with finite stabilizers. 
Assume that $X$ is either smooth or, the stack $[X/G]$ has a
coarse moduli scheme. Then for $i \ge 0$,
the Riemann-Roch map of Theorem~\ref{thm:singular} induces a map
\[
{G^G_i(X)} \xrightarrow{{\tau}^G_X} {CH^*_G(X,i)},
\]
which is surjective, and $\alpha \in {\rm Ker}\left({{\tau}^G_X}\right)$
if and only if there exists a virtual representation $\epsilon \in
R(G)$ of non-zero rank such that $\epsilon \alpha = 0$.
\end{thm}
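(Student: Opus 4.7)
My plan is to derive Theorem~\ref{thm:finite} from the completed Riemann--Roch isomorphism of Theorem~\ref{thm:singular} by exploiting two special features of the finite stabilizer setting: the $J_G$-adic completion on the Chow side becomes trivial, and the $R(G)$-module $G^G_i(X)_{\Q}$ decomposes into an $I_G$-local summand (matching the Chow side via $\widehat{\tau}^G_X$) and a complementary summand supported off $I_G$, which will be the kernel of $\tau^G_X$.

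On the Chow side, the finite stabilizer hypothesis (together with smoothness of $X$ or existence of a coarse moduli scheme) makes $[X/G]$ a Deligne--Mumford stack of dimension $d = \dim X - \dim G$, and one has the vanishing $CH^j_G(X, i) = 0$ for $j > d$ with rational coefficients. Thus $J_G$ acts nilpotently on the bounded direct sum $CH^*_G(X, i) = \bigoplus_{j=0}^{d} CH^j_G(X, i)$, and the canonical map $CH^*_G(X, i) \to \widehat{CH^*_G(X, i)}$ is an isomorphism.

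On the $K$-theory side I would invoke the decomposition theorem for rational equivariant $K$-theory of finite stabilizer actions, due in various forms to Vistoli, To\"en, and Vezzosi--Vistoli (the case $i=0$ is essentially in \cite{ED1}). This produces an $R(G)$-linear direct sum decomposition
\[
G^G_i(X)_{\Q} = M_1 \oplus N,
\]
where $M_1$ is the localization at $I_G$ and every element of $N$ is annihilated by some $\epsilon \in R(G)\setminus I_G$. The latter forces $N = I_G N$, so $N$ has trivial $I_G$-adic completion, giving $\widehat{G^G_i(X)} = \widehat{M_1}$. Combined with the Chow-side computation and Theorem~\ref{thm:singular}, this yields $\widehat{M_1} \cong CH^*_G(X, i)$; the fact that $I_G$ then acts nilpotently on $M_1$ (inherited from $CH^*_G(X, i)$ along $R(G) \to S(G)$) shows $M_1 = \widehat{M_1}$. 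Consequently $\tau^G_X$ factors as
\[
G^G_i(X) = M_1 \oplus N \twoheadrightarrow M_1 \xrightarrow{\sim} \widehat{G^G_i(X)} \xrightarrow{\widehat{\tau}^G_X} \widehat{CH^*_G(X,i)} = CH^*_G(X,i),
\]
which is surjective with kernel $N$. For $\alpha \in N$, the annihilator of $\alpha$ is not contained in $I_G$ (otherwise $I_G$ would lie in the support of $R(G)\alpha \subseteq N$, contradicting the support condition on $N$), so some $\epsilon \in R(G)\setminus I_G$ kills $\alpha$; conversely, if $\epsilon \alpha = 0$ with $\epsilon \notin I_G$, then $\epsilon$ acts invertibly on $M_1$, forcing the $M_1$-component of $\alpha$ to vanish.

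The principal obstacle is establishing the decomposition $G^G_i(X)_{\Q} = M_1 \oplus N$ with the required properties in the generality demanded by the theorem. For smooth $X$ one can combine Theorem~\ref{thm:main*} with an Atiyah--Segal style decomposition as in \cite{VV0}; when $X$ is only assumed to admit a coarse moduli scheme, one must bootstrap from the smooth case, likely via equivariant resolution or via pushforward along the coarse moduli map together with rational invariance of $G$-theory, and care is needed to ensure the complementary summand $N$ genuinely consists of elements annihilated by a non-zero rank representation.
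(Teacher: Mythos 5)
Your identification of the kernel and of the target is essentially the paper's own argument: $\tau^G_X$ factors through the localization $G^G_i(X)_{I_G}$, which by Lemma~\ref{lem:FST1}, Theorem~\ref{thm:singular} and Proposition~\ref{prop:FST} is identified with $CH^*_G(X,i)$, and the kernel of $G^G_i(X)\to G^G_i(X)_{I_G}$ is exactly the set of classes killed by some $\epsilon\notin I_G$, i.e.\ by a virtual representation of non-zero rank. That part is fine. Where you diverge is the surjectivity, and there your argument has a genuine gap. You make surjectivity a formal consequence of an $R(G)$-linear splitting $G^G_i(X)_{\Q}=M_1\oplus N$ with $M_1$ the $I_G$-localization and $N$ supported away from $I_G$. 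For smooth $X$ this splitting is indeed the main theorem of Vezzosi--Vistoli \cite{VV0}, but that theorem requires regularity; for the second case of the statement (singular $X$ whose quotient stack has a coarse moduli scheme) no such decomposition of higher $G$-theory is available, and your proposed ``bootstrap via equivariant resolution or pushforward along the coarse moduli map'' is not an argument --- equivariant resolution is a characteristic-zero tool, and there is no exact sequence relating $G^G_i$ of $X$ to that of a resolution or of the coarse space from which the support condition on $N$ would follow. A secondary, fixable, circularity: you deduce that $I_G$ acts nilpotently on $M_1$ from the isomorphism $\widehat{M_1}\cong CH^*_G(X,i)$, but nilpotence on the completion does not give nilpotence on $M_1$ unless $M_1\to\widehat{M_1}$ is already known to be injective (Krull intersection fails for non-finitely-generated modules); the paper proves $I_G^nG^G_i(X)_{I_G}=0$ directly via Thomason's generic slice theorem (Lemma~\ref{lem:FST1}), and you need that input, not a consequence of it.

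For comparison, the paper proves surjectivity without any decomposition theorem (it explicitly avoids relying on To\"en and Vezzosi--Vistoli): it produces a finite $G$-equivariant cover $f:X'\to X$ on which $G$ acts freely --- by Seshadri's theorem when $X$ is smooth, by Kresch--Vistoli when $[X/G]$ has a coarse moduli scheme --- observes that $\bar f_*:CH^*_G(X',i)\to CH^*_G(X,i)$ is surjective because $\tilde f_*\circ\tilde f^*$ is multiplication by $\deg f$ on the mixed quotients (flat or l.c.i.\ pull-back being available in each case), and then invokes Bloch's non-equivariant Riemann--Roch isomorphism for $X'/G$ together with covariance of $\tau^G$. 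If you want to salvage your approach you would either have to restrict to smooth $X$ and import \cite{VV0} wholesale, or replace the decomposition step in the singular case by precisely this kind of covering argument.
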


We make a few remarks on the above results. First of all, our results  
seem to be best possible form of equivariant
Riemann-Roch Theorem one could possibly hope for. This is because it
is unavoidable to tensor the left and the right sides of ~\ref{eqn:main1} 
with $\widehat{R(G)}$ and $\widehat{S(G)}$ respectively, as can be seen
even at the level of finite group actions.

The ring ${R(G)}_{\Q}$ has in general
infinitely many maximal ideals, besides the augmentation ideal $I_G$.
For any maximal ideal $\mathfrak m$, let ${\widehat{R(G)}}_{\mathfrak m}$
denote the $\mathfrak m$-adic completion of ${R(G)}_{\Q}$, and let
${\wt{G^G_i(X)}}_{\mathfrak m}$ denote the tensor product
$G^G_i(X){\otimes}_{R(G)} {\widehat{R(G)}}_{\mathfrak m}$.
Then the results of this paper give a description of the group
${\wt{G^G_i(X)}}_{I_G}$ in terms of equivariant higher Chow groups. 
One would like to give a similar description of 
${\wt{G^G_i(X)}}_{\mathfrak m}$, for any given maximal ideal 
$\mathfrak m$. This will be the subject of a forthcoming sequel
to this paper. 

We end this section with a brief description of the contents of the
various sections of this paper. We review the definitions and various 
properties of the equivariant higher Chow groups in the next section.
The main result here is a self intersection formula for the
higher Chow groups. This formula is crucial for the decomposition
theorem of equivariant higher Chow groups of varieties with an action
of a diagonalizable group. In Section~3, we prove various reduction
techniques for describing the equivariant higher Chow groups for
action of arbitrary groups in terms of the higher Chow groups for action of
tori. We also prove a decomposition theorem for the equivariant 
higher Chow groups of a $G$-variety $X$ when certain subgroup of $G$ acts
trivially on $X$. Section~4 is devoted to the description
and comparison of various completions of the $S(G)$-modules
$CH^*_G\left(X, \cdot \right)$ for a $G$-variety $X$. In Section~5,
we study the notion of cohomological rigidity, which is then used to
construct various specialization maps between the equivariant higher Chow
groups, analogous to the specialization maps in equivariant $K$-theory
in \cite{VV}. In the following section, we prove our main decomposition
theorem (Theorem~\ref{thm:decomposition*}) for the equivariant higher Chow 
groups for action of diagonalizable groups. This is one of the crucial
steps in proving Theorem~\ref{thm:main*} for diagonalizable groups.
In Section~7, we construct our main objects of study,
the equivariant Chern character and Riemann-Roch maps. We also prove various
properties of these maps which are useful in proving our main results.
Our approach to the proof of our main result is to eventually reduce it to the
case when the underlying group acts either with finite stabilizers
or with a constant dimension of stabilizers.   
Sections~8 and 9 are devoted to proving our results in these cases
for the action of diagonalizable groups. In Section~10, we prove
our main result for actions of diagonalizable groups. Section~11
is devoted to proving some results for relating the equivariant 
$K$-groups and higher Chow groups for actions of any linear algebraic groups 
with these groups for actions of diagonalizable groups. We finally prove
the above stated results in the last section of the paper. We also give
an application of the above Riemann-Roch theorems to the equivariant
$K$-theory.
      
\section{Equivariant Higher Chow Groups}
Unlike the case of the rest of this paper, all the groups in this section
will be considered with integral coefficients. 
We begin with a brief review of the equivariant higher
Chow groups from \cite{ED2} and their main properties, especially those
which will be used repeatedly in this paper. Our main result in this
section is the self intersection formula for the higher Chow
groups of smooth varieties. The analogous formula for the higher 
$K$-theory was proved by Thomason ({\sl cf.} \cite[Theorem~3.1]{Thomason2}). 
Surprisingly, this formula for the higher Chow groups has still been unknown. 
The equivariant version of this formula will be crucial in proving our
decomposition Theorem~\ref{thm:decomposition*} for the equivariant higher 
Chow groups of smooth varieties with an action of a diagonalizable group. 

Let $G$ be a linear algebraic group and let $X$ be an equidimensional
variety over $k$ with a $G$-action. All representations of $G$ in this paper
will be finite dimensional. The definition of equivariant higher Chow groups
of $X$ needs one to consider certain kind of mixed spaces which in general
may not be a scheme even if the original space is a scheme. The following
well known ({\sl cf.} \cite[Proposition~23]{ED2}) lemma shows that this
problem does not occur in our context and all the mixed spaces in this
paper are schemes with ample line bundles.
\begin{lem}\label{lem:sch}
Let $H$ be a linear algebraic group acting freely and linearly on a 
$k$-variety $U$ such that the quotient $U/H$ exists as a quasi-projective
variety. Let $X$ be a $k$-variety with a linear action of $H$.
Then the mixed quotient $X \stackrel{H} {\times} U$ exists for the 
diagonal action of $H$ on $X \times U$ and is quasi-projective.
Moreover, this quotient is smooth if both $U$ and $X$ are so.
In particular, if $H$ is a closed subgroup of a linear algebraic group $G$ 
and $X$ is a $k$-variety with a linear action of $H$, then the quotient 
$G \stackrel{H} {\times} X$ is a quasi-projective scheme.
\end{lem}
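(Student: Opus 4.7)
The plan is to reduce the existence and quasi-projectivity of $X \stackrel{H}{\times} U$ to the following standard descent principle: if a linear algebraic group $H$ acts freely on a quasi-projective $k$-variety $Y$ carrying an $H$-equivariant ample line bundle, then the geometric quotient $Y/H$ exists as a quasi-projective variety. This is essentially Mumford's GIT for free actions and is available in the form one needs in the literature (for instance in \cite{Thomason1}). So the task is to supply both ingredients for $Y = X \times U$ equipped with the diagonal $H$-action. Freeness is immediate: if $h \cdot (x, u) = (x, u)$ then in particular $h \cdot u = u$, forcing $h = e$ since $H$ already acts freely on $U$.

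For the equivariant ample line bundle, let $L_X$ be an $H$-equivariant ample line bundle on $X$, which exists by the hypothesis that the $H$-action on $X$ is linear. Fix an ample line bundle $M$ on the quasi-projective scheme $U/H$, and let $M_U$ denote its pullback to $U$; this is automatically $H$-equivariant and ample. Then $p_1^* L_X \otimes p_2^* M_U^{\otimes n}$ is $H$-equivariant on $X \times U$ for every $n \geq 1$, and it is ample because the tensor product of pullbacks of ample line bundles from the two factors of a product of quasi-projective varieties is ample. Invoking the descent principle now produces $X \stackrel{H}{\times} U$ as a quasi-projective variety.

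For the smoothness statement, observe that when $X$ and $U$ are both smooth, the product $X \times U$ is smooth. The free diagonal action makes the quotient map $X \times U \to X \stackrel{H}{\times} U$ a principal $H$-bundle; since $H$ is smooth over $k$, this map is smooth and surjective, so smoothness of the total space descends to smoothness of the base.

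For the final assertion on $G \stackrel{H}{\times} X$, I would take $U = G$ with $H \subset G$ acting by right translation: this action is free, and the quotient $G/H$ is quasi-projective by Chevalley's classical theorem. The first part of the lemma then applies directly to yield the conclusion. The main obstacle in the proof is the descent principle invoked at the very beginning, which must be available for an arbitrary, possibly non-reductive, linear algebraic group $H$; once it is in hand, everything else reduces to verifying its two hypotheses and to a routine smoothness descent for principal bundles.
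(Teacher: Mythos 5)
Your verifications of freeness, of the smoothness statement via descent along the principal $H$-bundle $X\times U \to X\stackrel{H}{\times}U$, and of the last assertion (taking $U=G$, with $G/H$ quasi-projective by Chevalley/Borel) all match what the paper does. The problem is the ``descent principle'' on which you hang the existence statement: \emph{it is false as stated}. A free action of a linear algebraic group on a quasi-projective variety equipped with an $H$-linearized ample line bundle need not admit a quasi-projective (or even separated) geometric quotient. For instance, let $H=\mathbb{G}_m$ act on $Y=\mathbb{A}^2\setminus\{0\}$ by $t\cdot(x,y)=(tx,t^{-1}y)$: the action is free, $\mathcal{O}_Y$ is ample (as $Y$ is quasi-affine) and carries an $H$-linearization, yet the orbit space is $\mathbb{A}^1$ with a doubled origin (the two punctured axes are distinct closed orbits mapping to the same point of $\mathrm{Spec}\,k[xy]$), so no quasi-projective geometric quotient exists. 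Freeness plus an equivariant ample bundle does not supply the properness/separatedness of the action groupoid that existence of a good quotient requires.

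What actually makes the lemma work --- and what the paper's citation of \cite[Proposition~23]{ED2} and \cite[Proposition~7.1]{GIT} encodes --- is that one does not take a quotient of $X\times U$ ``from scratch'': one constructs $X\stackrel{H}{\times}U$ as the fiber bundle over $U/H$ with fiber $X$ associated to the principal bundle $U\to U/H$, by descending $X\times U\to U$ along $U\to U/H$. The hypothesis that $U/H$ exists as a quasi-projective variety is used as the \emph{base} of this descent (this is where separatedness comes from), and the $H$-linearized ample line bundle on $X$ is used to make the descent datum effective and to produce a line bundle on $X\stackrel{H}{\times}U$ that is relatively ample over $U/H$, whence quasi-projectivity of the total space. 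In your write-up the quotient $U/H$ is used only to manufacture an ample bundle on $U$, which is not where its force lies. To repair the argument you would either have to invoke the associated-bundle/descent statement directly (as the paper does), or add and verify a properness hypothesis on the diagonal action of $H$ on $X\times U$ --- which itself has to be deduced from the principal-bundle structure of $U\to U/H$ --- before any GIT-type quotient theorem applies.
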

\begin{proof} It is already shown in \cite[Proposition~23]{ED2} using
\cite[Proposition~7.1]{GIT} that the quotient $X \stackrel{H} {\times} U$ 
is a scheme. Moreover, as $U/H$ is quasi-projective, 
\cite[Proposition~7.1]{GIT} in fact shows that $X \stackrel{H} {\times} U$ 
is also quasi-projective. The similar conclusion about 
$G \stackrel{H} {\times} X$ follows from the first case by taking $U = G$
and by observing that $G/H$ is a smooth quasi-projective scheme
({\sl cf.} \cite[Theorem~6.8]{Borel}). The assertion about the smoothness
is clear since $X \times U \to X \stackrel{H} {\times} U$ is a principal
$H$-bundle.
\end{proof}   

For any integer $j \ge 0$, let $V$ be a representation of $G$ and let $U$ be 
a $G$-invariant open subset of $V$ such that the codimension of the 
complement $V-U$ in $V$ is larger than $j$, and $G$ acts freely on $U$ such 
that the quotient $U/G$ is a quasi-projective scheme. Such a pair 
$\left(V,U\right)$ will be called a {\sl good} pair for the $G$-action
corresponding to $j$. 
It is easy to see that a good pair always exists ({\sl cf.}
\cite[Lemma~9]{ED2}). Let $X_G$ denote the quotient 
$X \stackrel{G} {\times} U$ of the product $X \times U$ by the 
diagonal action of $G$ which is free. We define the equivariant higher Chow 
group $CH^j_G(X, i)$ as the homology group $H_i\left(\sZ(X_G, \cdot)\right)$,
where $\sZ(X_G, \cdot)$ is the Bloch's cycle complex of the variety $X_G$.
It is known ({\sl loc. cit.}) that this definition of $CH^j_G(X, i)$
is independent of the choice of a good pair $(V,U)$ for the $G$-action. 
One should also
observe that $CH^j_G(X, i)$ may be non-zero   for infinitely many values of
$j$, a crucial change from the non-equivariant  higher Chow groups.
The following result summarizes most of the essential properties of 
the equivariant higher Chow groups that will be used in this paper. 
Let ${\sV}_G$ denote the category of $G$-varieties with $G$-equivariant maps
and let ${\sV}^S_G$ denote the full subcategory of smooth $G$-varieties.
\begin{prop}\label{prop:EHCG}
The equivariant higher Chow groups as defined above satisfy the following
properties. \\
$(i) \ \ Functoriality:$ Covariance for proper maps and contravariance for
flat maps. Moreover, if $f:X \to Y$ is a morphism in ${\sV}_G$ with
$Y$ in ${\sV}^S_G$, then there is a pull-back map $f^*: CH^*_G(Y,i)
\to CH^*_G(X,i)$. \\
$(ii) \ \ Homotopy:$ If $f:X \to Y$ is an equivariant vector bundle, then
$f^*: CH^*_G(Y,i) \xrightarrow{\cong} CH^*_G(X,i)$. \\
$(iii) \ \ Localization:$ If $Y \subset X$ is of pure codimension $d$ with
complement $U$, then there is a long exact localization sequence 
\[\cdots \to  CH^{*-d}_G(Y,i) \to CH^{*}_G(X,i) \to  CH^{*}_G(U,i) \to
CH^{*-d}_G(Y,i-1) \to \cdots.
\]
$(iv) \ \ Exterior \ product:$ There is a natural product map 
\[
CH^{j}_G(X,i) {\otimes}  CH^{j'}_G(Y,i') \to  CH^{j+j'}_G(X \times Y,i+i').
\]
Moreover, if $f: X \to Y$ is such that $Y \in {\sV}^S_G$, then there is
a pull-back via the graph map ${\Gamma}_f : X \to {X \times Y}$, which makes
$CH^*_G(Y, \cdot)$ a bigraded ring and $CH^*_G(X, \cdot)$ a module
over this ring. \\
$(v) \ \ Chern  \ classes:$ For any $G$-equivariant vector bundle of rank $r$,
there are equivariant Chern classes $c^G_l(E):  CH^{j}_G(X,i) \to
 CH^{j+l}_G(X,i)$ for $1 \le l \le r$, having the same functoriality 
properties as in the non-equivariant  case. \\
$(vi) \ \ Projection \ formula:$ For a proper map $f: X \to Y$ in
${\sV}_G$ with $Y \in {\sV}^S_G$, one has for $x \in CH^{*}_G(X,\cdot),
y \in CH^{*}_G(Y,\cdot)$, $f_*\left(x \cdot f^*(y)\right) =
f_*(x) \cdot y$. \\
$(vii) \ \ Free \ action:$ If $G$ acts freely on $X$ with quotient $Y$, then 
there is  a natural isomorphism $CH^*_G(X,i) \xrightarrow{\cong}
CH^*(Y,i)$.
\end{prop}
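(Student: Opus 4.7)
The plan is to deduce each of the seven properties from the corresponding statement for Bloch's non-equivariant higher Chow groups, applied to the mixed quotients $X_G = X \stackrel{G}{\times} U$ from \lemref{lem:sch}, and then verify independence from the choice of good pair $(V,U)$ by the standard double-fibration argument. Concretely, given two good pairs $(V_1,U_1)$ and $(V_2,U_2)$ for the integer $j$, I would compare them through the good pair $(V_1 \oplus V_2,\, U_1 \times V_2 \cap V_1 \times U_2)$, noting that the induced maps $X \stackrel{G}{\times} (U_1 \times V_2) \to X_{G,1}$ and $X \stackrel{G}{\times} (V_1 \times U_2) \to X_{G,2}$ are vector-bundle projections up to removing closed subschemes of codimension exceeding $j$. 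Bloch's homotopy invariance then identifies the groups in codimension up to $j$, and Bloch's localization shows the discarded piece contributes nothing in that range; this yields canonical isomorphisms independent of the chosen pair.

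For (i), (iii), and (vii) the reduction is immediate. An equivariant morphism $f:X\to Y$ induces $f_G: X_G \to Y_G$ which inherits properness, flatness, or the property of being a closed immersion of pure codimension $d$ from $f$ itself; the covariant, contravariant, and localization maps are then just those of Bloch, and if $Y$ is smooth then $Y_G$ is smooth by \lemref{lem:sch} so Bloch's pull-back (via the graph) supplies $f^*$. Item (ii) follows because an equivariant vector bundle $E \to X$ yields an honest vector bundle $E_G \to X_G$, and item (v) is obtained by applying the Gillet--Bloch Chern classes to $E_G$. For (vii), free action makes $X_G \to X/G$ a Zariski-locally trivial bundle with fiber $U$, and since $V\setminus U$ has codimension larger than $j$ in $V$, Bloch's homotopy and localization give $CH^*(X_G,i) \cong CH^*(X/G, i)$ in the relevant codimension range.

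The genuinely nontrivial items are (iv) and (vi). For the external product in (iv), I choose a good pair $(V,U)$ for $G$, regard $(V\oplus V,\, U\times U)$ as a good pair for $G \times G$ acting on $X \times Y$, and form Bloch's external product
\[
CH^*(X_G,\cdot) \otimes CH^*(Y_G,\cdot) \to CH^*(X_G \times Y_G,\cdot) = CH^*_{G\times G}(X\times Y, \cdot).
\]
Restriction along the diagonal $\Delta: G \hookrightarrow G\times G$, realised geometrically by the map $X \times Y \stackrel{G}{\times} U \to X\times Y \stackrel{G\times G}{\times}(U\times U)$ induced by the diagonal of $U$, then lands in $CH^*_G(X\times Y, \cdot)$ for the diagonal $G$-action. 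When $Y$ is smooth, the graph $\Gamma_f: X \to X \times Y$ is a regular embedding between smooth $G$-varieties, so the pull-back of (i) turns this external product into the ring and module structure asserted. Item (vi) reduces directly to the non-equivariant projection formula applied to $f_G: X_G \to Y_G$, which is proper with smooth target.

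The main obstacle is the bookkeeping: verifying that every construction above is compatible both with changes of good pair and with the other structures (so that, e.g., the product is associative and commutative, that $f^*$ is a ring map, and that Chern classes are stable under pull-back). I would handle this uniformly by always stabilising to a single sufficiently large good pair that simultaneously works for all varieties and codimensions in sight, so that every diagram becomes a diagram of non-equivariant higher Chow groups of smooth (quasi-)projective schemes, where Bloch's theory supplies all the required compatibilities and the double-fibration isomorphisms identify the resulting maps.
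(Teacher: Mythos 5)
Your proposal is correct and follows essentially the same route as the paper: the paper likewise reduces properties (i)--(vi) to Bloch's non-equivariant results applied to the mixed quotients $X_G$ (and in fact omits the details you supply for the double-fibration independence, the diagonal construction of the external product, and the projection formula), and its only written argument is for (vii), where it passes through the locally trivial $V$-fibration $X_V = (X\times V)/G \to X/G$ and then uses localization for the open inclusion $X_G \subset X_V$ with complement of codimension exceeding $j$ --- exactly the two-step homotopy-plus-localization argument you compress into your last sentence of the second paragraph. No gaps.
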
  
\begin{proof}
Since the equivariant higher Chow groups of $X$ are defined in terms of
the the higher Chow groups of $X_G$, the proposition (except possibly the 
last property) can be easily deduced from the similar results for the 
non-equivariant  higher Chow groups as in \cite{Bloch} and the techniques 
of {\sl loc. cit.}. We therefore skip the proof. For the last property,
fix $j \ge 0$ and choose a good pair $\left(V,U\right)$ for the $G$-action
corresponding to $j$. Since $G$ acts freely on $X$, it acts likewise also
on $X \times V$ with quotient, say $X_V$. Then $X_G$ is an open subset
of $X_V$ and $X_V \to Y$ is a locally trivial $V$-fibration, which implies
that the map $CH^j\left(Y, i\right) \to CH^j\left(X_V, i\right)$ is an
isomorphism by the homotopy invariance. On the other hand, the pull-back
map $CH^j\left(X_V, i\right) \to CH^j\left(X_G, i\right) = 
CH^j_G\left(X, i\right)$ is an isomorphism by the property $(iii)$ as 
$j$ is sufficiently large.    
\end{proof}    
If $X$ is not equidimensional, then one defines the equivariant higher Chow
groups $CH^G_j(X,i)$ as $CH_{j+l-g}(X_G,i)$, where $X_G$ is formed from an
$l$-dimensional representation $V$ such that $V-U$ has sufficiently high
codimension and $g$ is the dimension of $G$. The groups $CH^G_j(X,i)$ enjoy 
many of the properties stated above and in particular, one has the
localization sequence as above even if the closed subscheme $Y \subset$
is not equidimensional ({\sl cf.} [{\sl loc. cit.}, Proposition~5]). 
It is easy to see that $CH^G_j(X,i) \cong CH^{d-j}_G(X,i)$ if $X$ is 
equidimensional of dimension $d$. 

We next recall that the Chern classes $c^G_l(E)$ of an equivariant 
vector bundle $E$, as described in Proposition~\ref{prop:EHCG} above,
live in the operational Chow groups $A^l(X_G)$. If $X$ is in ${\sV}^S_G$
however, this operational Chow group is isomorphic to
the equivariant Chow group $CH^l_G(X,0)$ and the action of $c^G_l(E)$ on 
$CH^*_G(X,\cdot)$ then coincides with the intersection product in the ring 
$CH^*_G(X, \cdot)$. 
 
Finally, we recall from {\sl loc. cit.} that if $H \subset G$ is a closed 
subgroup and if $(V,U)$ is a good pair, then for $X \in {\sV}_G$, 
the natural map of 
quotients $X \stackrel{H}{\times} U \to X \stackrel{G}{\times} U$ is a
$G/H$-principal bundle and hence there is a natural restriction map
\begin{equation}\label{eqn:res}
r^G_H : CH^*_G\left(X,\cdot\right) \to CH^*_H\left(X,\cdot\right).
\end{equation} 
Taking $H = \{1\}$, and using the homotopy invariance and the localization
sequence, one gets a natural map
\begin{equation}\label{eqn:forgetful}
r^G_X : CH^*_G\left(X,\cdot\right) \to CH^*\left(X, \cdot\right).
\end{equation}
Moreover, as $r^G_X$ is the pull-back under a flat map, it commutes
({\sl cf.} Proposition~\ref{prop:EHCG}) with the pull-back for any flat 
map, and with the push-forward for any proper map in ${\sV}_G$. 
We remark here that although the definition of $r^G_H$ uses a good pair
$(V,U)$ for any given $j \ge 0$, it is easy to check from the homotopy
invariance that $r^G_H$ is independent of the choice of the good pair
$(V,U)$.

As mentioned before, our main goal in this section is to prove a 
self-intersection formula for the ordinary and equivariant higher Chow 
groups. Our main technical tool to prove this is the {\sl deformation to 
the normal cone} method. Since this technique will be used several times 
in this paper, we briefly recall the construction from 
\cite[Chapter~5]{Fulton} for our as well as reader's convenience.
Let $X$ be a smooth variety over $k$ and let $Y \stackrel{f}{\inj} X$ be a
smooth closed subscheme of codimension $d \ge 1$. 
Let $\wt{M}$ be the blow-up of $X \times {\P}^1$ along $Y \times {\infty}$. 
Then $Bl_Y(X)$ is a closed subscheme of $\wt{M}$ and one denotes its
complement by $M$. There is a natural map ${\pi} : M \to {\P}^1$ such that
${\pi}^{-1}({\A}^1) \cong X \times {\A}^1$ with $\pi$ the projection map
and ${\pi}^{-1}(\infty) \cong X'$, where $X'$ is the total space of the
normal bundle $N_{Y/X}$ of $Y$ in $X$. One also gets the following 
diagram, where all the squares and the triangles commute.
\begin{equation}\label{eqn:DNC}
\xymatrix{
Y \ar[rr]_{i_0} \ar[rd]^{u'} \ar[dd]^{f} & & {Y \times {\P}^1} 
\ar@/_/[ll]_{p_Y}
\ar[dd]^{F} & Y \ar[l]^{i_{\infty}} \ar[dd]^{f'} \\
& {Y \times {\A}^1} \ar[dd]_{F'} \ar[ru]^{j'} & & \\
X \ar[rr]^{h} \ar[dr]^{u} & & M & X' \ar[l]_{i} \\
& {X \times {\A}^1} \ar[ru]^{j} & & }
\end{equation}   
In this diagram, all the vertical arrows are the closed embeddings, $i_0$
and $i_{\infty}$ are the obvious inclusions of $Y$ in $Y \times {\P}^1$ along
the specified points, $i$ and $j$ are inclusions of the inverse
images of $\infty$ and ${\A}^1$ respectively under the map $\pi$, 
$u$ and $f'$ are are zero section embeddings and $p_Y$ is the projection
map. In particular, one has ${p_Y} \circ {i_0} = {p_Y} \circ {i_{\infty}}
= {id}_Y$.

We also make the observation here that in case $X$ is a $G$-variety
and $Y$ is $G$-invariant, then by letting $G$ act trivially on ${\P}^1$
and diagonally on $X \times {\P}^1$, one gets a natural action of $G$ on
$M$, and all the spaces in the above diagram become $G$-spaces and all the
morphisms become $G$-equivariant. This observation will be used later on 
in this paper. 
 
We shall need the following result about the higher Chow groups which
is an easy consequence of Bloch's moving lemma.
\begin{lem}\label{lem:commute} 
Let 
\[
\xymatrix{
W \ar[r]^{i'} \ar[d]_{j'} & Y \ar[d]^{j} \\
Z \ar[r]_{i} & X}
\]
be a fiber diagram of closed immersions such that $X$ and $Y$ are smooth.
Then one has $i^*\circ j_* = {j'}_* \circ {i'}^* : CH^*(Y, \cdot) \to
CH^*(Z, \cdot)$.   
\end{lem}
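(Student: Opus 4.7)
The plan is to reduce everything to an identity at the level of cycles via Bloch's moving lemma, exploiting the fact that pushforward along a closed immersion is just inclusion of cycles.

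First I would invoke Bloch's moving lemma on the smooth quasi-projective variety $Y$: there is a quasi-isomorphic subcomplex $z^*_W(Y,\cdot) \subseteq z^*(Y,\cdot)$ consisting of cycles on $Y \times \Delta^n$ which meet $W \times F$ properly for every face $F$ of $\Delta^n$. Since this subcomplex computes $CH^*(Y,\cdot)$, it suffices to check the identity on classes represented by such a cycle $\alpha$. On $\alpha$, the pullback $(i')^*\alpha$ is defined directly as the intersection cycle $\alpha \cdot (W \times \Delta^n)$ inside the smooth ambient $Y \times \Delta^n$, with no further moving required.

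Next I would verify that the pushforward $j_*\alpha$, viewed as a cycle on $X \times \Delta^n$, meets $Z \times \Delta^n$ properly. Since the diagram is Cartesian and $j$ is a regular closed immersion (because $Y$ is smooth inside the smooth $X$), one has $\mathrm{Supp}(\alpha) \cap (Z \times \Delta^n) = \mathrm{Supp}(\alpha) \cap (W \times \Delta^n)$ scheme-theoretically, and the codimension jump accounts for the codimension of $Y$ in $X$. Hence this set-theoretic intersection has the correct codimension in $X \times \Delta^n$, so $i^*(j_*\alpha)$ is likewise defined at the cycle level as $(j_*\alpha) \cdot (Z \times \Delta^n)$.

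Finally I would apply the projection formula for the regular closed immersion $j \times \mathrm{id} \colon Y \times \Delta^n \hookrightarrow X \times \Delta^n$ (which holds at the cycle level for properly intersecting cycles):
\[
(j_*\alpha) \cdot (Z \times \Delta^n) \;=\; j_*\bigl(\alpha \cdot (j \times \mathrm{id})^*(Z \times \Delta^n)\bigr) \;=\; j_*\bigl(\alpha \cdot (W \times \Delta^n)\bigr),
\]
using the fiber-diagram identity $(j \times \mathrm{id})^*[Z \times \Delta^n] = [W \times \Delta^n]$ which is valid because $W$ has the expected codimension at each point of the proper intersection. Noting that $j \circ i' = i \circ j'$ and that all pushforwards under closed immersions are inclusions of cycles, the right-hand side is exactly $j'_*((i')^*\alpha)$, giving the required equality.

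The only genuine (and routine) point of care will be the simultaneous moving: one must apply Bloch's moving lemma so that $\alpha$ meets $W$ properly not just on the top simplex but on every face $W \times F$, which is precisely the form in which the moving lemma is standardly invoked in the construction of $i^*$ on Bloch's complex.
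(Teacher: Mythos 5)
Your overall strategy is the same as the paper's: use Bloch's moving lemma to replace $\sZ^*(Y,\cdot)$ by a quasi-isomorphic subcomplex on which both sides of the identity are computable at the cycle level, then observe that pushforward along a closed immersion is just relabelling of cycles and conclude via the cycle-level projection formula. Your write-up actually supplies the details that the paper compresses into ``the conclusion of the lemma is checked easily.''

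There is, however, one step where your argument diverges from the paper's and where it has a real gap. You move $\alpha$ only so that it meets the faces of $W\times\Delta^\cdot$ properly in $Y\times\Delta^\cdot$, and then assert that $j_*\alpha$ automatically meets $Z\times\Delta^\cdot$ properly in $X\times\Delta^\cdot$ because ``the codimension jump accounts for the codimension of $Y$ in $X$.'' Unwinding this, the implication requires $\codim_Y W=\codim_X Z$, i.e.\ that the fiber square has no excess intersection; the same assumption reappears when you invoke $(j\times\id)^*[Z\times\Delta^n]=[W\times\Delta^n]$. Neither follows from the stated hypotheses (a fiber square of closed immersions with $X,Y$ smooth): taking $Z=Y$, so $W=Y$, the square is Cartesian but $\codim_Y W=0\ne\codim_X Z$, and the asserted identity would read $i^*\circ i_*=\id$, contradicting the self-intersection formula $i^*i_*=c_d(N_{Y/X})\cdot(-)$ proved immediately after this lemma. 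So some transversality/Tor-independence hypothesis is genuinely needed, and your proof is the place where it silently enters. The paper avoids \emph{deriving} the second properness condition from the first by instead defining the subcomplex $\sZ^p_{ZW}(Y,\cdot)$ to consist of cycles satisfying \emph{both} conditions (proper intersection with the faces of $W\times\Delta^\cdot$ in $Y$, and of $Z\times\Delta^\cdot$ after pushforward to $X$) and applying the moving lemma to that; note that in all of the paper's applications the relevant squares are Tor-independent, as is made explicit in Lemma~\ref{lem:torind} and its uses. To repair your proof you should either add the hypothesis $\codim_Y W=\codim_X Z$ (with Tor-independence, which also guarantees the multiplicities in $(j\times\id)^*[Z\times\Delta^n]=[W\times\Delta^n]$), or impose both properness conditions in the subcomplex as the paper does and then still justify the equality of intersection multiplicities on the two sides.
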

\begin{proof} Since $X$ and $Y$ are smooth, we can assume them to be
equidimensional. Let $\sZ^p_{ZW}(Y, \cdot) \stackrel{i_Y} \inj 
\sZ^p(Y, \cdot)$ be the
subcomplex which is generated by cycles on $Y \times {\Delta}^{\cdot}$
which intersect all faces of $Z \times {\Delta}^{\cdot}$ and 
$W \times {\Delta}^{\cdot}$ properly. 
Similarly, let $\sZ^p_{Z}(X, \cdot) \stackrel{i_X} \inj 
\sZ^p(X, \cdot)$ be the subcomplex generated by cycles on 
$X \times {\Delta}^{\cdot}$ which intersect all faces of 
$Z \times {\Delta}^{\cdot}$ properly. Then $i_X$ and $i_Y$ are 
quasi-isomorphisms
by the moving lemma ({\sl cf.} \cite[Theorem~1.10]{KrishnaL}). However,
if $V \in \sZ^p_{ZW}(Y, \cdot)$ is an irreducible cycle in 
$Y \times {\Delta}^n$, then the conclusion of the lemma is checked easily.
\end{proof}
\begin{lem}\label{lem:DNC1}
Consider the diagram ~\ref{eqn:DNC} and let $y \in CH^*(Y,m)$. Then
there exists $z \in  CH^*(M,m)$ such that $f_*(y) = h^*(z)$ and
${f'}_*(y) = i^*(z)$.
\end{lem}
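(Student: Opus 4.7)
The natural candidate for $z$ is $z := F_*(p_Y^*(y))$. I would take this as my definition and then verify both identities via Lemma~\ref{lem:commute}.

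First I would observe that $M$ and $Y \times \P^1$ are both smooth: $M$ is open in the blow-up of the smooth variety $X \times \P^1$ along the smooth subscheme $Y \times \{\infty\}$, and $Y \times \P^1$ is smooth since $Y$ is. Thus Lemma~\ref{lem:commute} can be applied to any fiber square of closed immersions whose two ambient spaces are chosen from $\{M,\, Y\times\P^1,\, X,\, Y,\, X',\, Y\}$.

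The key geometric step is to check that the two squares
\[
\xymatrix{
Y \ar[r]^{i_0} \ar[d]_{f} & Y\times\P^1 \ar[d]^{F} \\
X \ar[r]_{h} & M
} \qquad\text{and}\qquad
\xymatrix{
Y \ar[r]^{i_\infty} \ar[d]_{f'} & Y\times\P^1 \ar[d]^{F} \\
X' \ar[r]_{i} & M
}
\]
extracted from diagram~\eqref{eqn:DNC} are in fact Cartesian. This follows from the description of the deformation space: under $\pi:M\to\P^1$, the fiber over $0\in\A^1$ is $X$ (with $Y\times\{0\}$ sitting inside as $f(Y)$), while the fiber over $\infty$ is the normal bundle $X'=N_{Y/X}$ (with $Y\times\{\infty\}$ sitting inside as the zero section $f'(Y)$). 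In both cases the image $F(Y\times\P^1)$ meets the respective fiber exactly along $Y$, set-theoretically and scheme-theoretically, which is the Cartesian property.

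Granting these fiber squares, Lemma~\ref{lem:commute} gives
\[
h^*\circ F_* = f_*\circ i_0^*, \qquad i^*\circ F_* = f'_*\circ i_\infty^*
\]
as maps $CH^*(Y\times\P^1,m)\to CH^*(X,m)$ and $CH^*(Y\times\P^1,m)\to CH^*(X',m)$ respectively. Since $p_Y\circ i_0 = p_Y\circ i_\infty = \mathrm{id}_Y$, we have $i_0^*p_Y^* = i_\infty^*p_Y^* = \mathrm{id}$ on $CH^*(Y,m)$, so applying the two identities to $p_Y^*(y)$ yields
\[
h^*(z) = f_*(i_0^*(p_Y^*(y))) = f_*(y), \qquad i^*(z) = f'_*(i_\infty^*(p_Y^*(y))) = f'_*(y),
\]
which completes the proof. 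The only real content beyond Lemma~\ref{lem:commute} is the verification that the two squares are Cartesian; once that is established, the rest is formal. I expect this Cartesian verification to be the one genuinely geometric step, though it is essentially standard for deformation to the normal cone.
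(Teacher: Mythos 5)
Your proposal is correct and matches the paper's argument: the paper takes exactly the same $z = F_*(p_Y^*(y))$ and deduces both identities from Lemma~\ref{lem:commute} together with $p_Y\circ i_0 = p_Y\circ i_\infty = \id_Y$. The only (immaterial) divergence is at the fiber over $0$: you apply Lemma~\ref{lem:commute} directly to the square $\{Y,\,Y\times\P^1,\,X,\,M\}$, which forces you to check that this square is Cartesian inside $M$, whereas the paper first restricts along the open immersion $j\colon X\times\A^1\hookrightarrow M$ (where $F$ becomes $f\times\id_{\A^1}$ and the Cartesian property is automatic), applies the lemma there, and then uses that proper push-forward commutes with open pull-back.
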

\begin{proof} Put $\tilde{y} = {p_Y^*}(y)$ and $z = F_*(\tilde{y})$.
Then
\[
\begin{array}{lllll}
f_*(y) & = & f_*\left({\left(p_Y \circ {i_0}\right)}^*(x)\right) & & \\
& = & f_* \circ {i_0^*} \circ {p_Y^*} \left(y\right) & = &
f_* \circ {i_0^*}\left(\tilde{y}\right) \\
& = & f_* \circ {u'}^* \circ {j'}^*\left(\tilde{y}\right) & &  \\
& = & u^* \circ {F'}_*\left({j'}^*\left(\tilde{y}\right)\right) & &
\hspace*{2cm} \left({\rm{by \ Lemma~\ref{lem:commute}}}\right) \\
& = & u^* \circ j^* \circ F_*\left(\tilde{y}\right) & &  
\hspace*{2cm}\left({\rm{since}} \ j \ 
{\rm{is \ an \ open \ immersion}}\right) \\ 
& = & h^* \circ F_*\left(\tilde{y}\right) & = & h^*(z). 
\end{array}
\]
Similarly, 
\[
\begin{array}{lllll}
{f'}_*(y) & = & {f'}_*\left({\left(p_Y \circ 
{i_{\infty}}\right)}^*(x)\right) & & \\
& = & {f'}_* \circ i_{\infty}^* \circ {p_Y^*} \left(y\right) & = &
{f'}_* \circ i_{\infty}^*\left(\tilde{y}\right) \\
& = & i^* \circ {F}_*\left(\tilde{y}\right) & &
\hspace*{2cm} \left({\rm{by \ Lemma~\ref{lem:commute}}}\right) \\
& = & i^*(z).
\end{array}
\]
\end{proof}
\begin{thm}[\bf{Self-intersection Formula}]\label{thm:SIF}
Let $Y \stackrel{f}\inj X$ be a closed immersion of smooth varieties of
codimension $d \ge 1$, and let $N_{Y/X}$ be the normal bundle of $Y$ in $X$.
Then one has for every $y \in CH^*(Y, \cdot)$, $f^*\circ f_*(y) =
c_d\left(N_{Y/X}\right)\cdot y$.
\end{thm}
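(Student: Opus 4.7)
The plan is to use the deformation-to-the-normal-cone diagram~\ref{eqn:DNC} to reduce the statement to the case where $f$ is the zero-section embedding of a vector bundle, and then handle that case by the projection formula combined with the classical identification of the top Chern class as the self-intersection of the zero section.

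Applying Lemma~\ref{lem:DNC1}, choose $z\in CH^*(M,m)$ with $f_*(y)=h^*(z)$ and $f'_*(y)=i^*(z)$. From the commutativity of~\ref{eqn:DNC} one has $h\circ f = F\circ j'\circ u' = F\circ i_0$ and $i\circ f' = F\circ i_\infty$, hence
\[
f^*\circ f_*(y)=i_0^*\bigl(F^*(z)\bigr),\qquad
(f')^*\circ f'_*(y)=i_\infty^*\bigl(F^*(z)\bigr).
\]
Now $i_0^*$ and $i_\infty^*$ agree on $CH^*(Y\times\P^1,m)$: by the projective bundle formula every element takes the form $p_Y^*(a)+p_Y^*(b)\cdot h$ with $h=c_1(p_Y^*\mathcal O_{\P^1}(1))$, and both restrictions return $a$ since $i_0^*(h)=i_\infty^*(h)=0$. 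Therefore $f^*f_*(y)=(f')^*f'_*(y)$, and it is enough to prove the formula for the zero-section embedding $f':Y\to X'=N_{Y/X}$.

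Now let $s:Y\to E$ be the zero section of a rank-$d$ bundle $\pi:E\to Y$. From $\pi\circ s=\id_Y$ and the projection formula of Proposition~\ref{prop:EHCG}(vi) one obtains $s_*(y)=s_*\bigl(s^*\pi^*(y)\bigr)=s_*(1)\cdot\pi^*(y)$, and hence
\[
s^*\circ s_*(y)=s^*(s_*(1))\cdot s^*\pi^*(y)=s^*(s_*(1))\cdot y.
\]
It remains to identify $s^*s_*(1)\in CH^d(Y,0)$ with $c_d(E)$. This is an assertion in the ordinary Chow ring, which I would settle by the splitting principle: pulling back to a suitable flag bundle of $E$ (where $\pi^*$ is injective by iterated projective bundle formulas), $E$ acquires a complete filtration by sub-bundles, reducing the problem to the rank-one case. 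For a line bundle $L$ with zero section $s$, $s(Y)\subset L$ is the Cartier divisor cut out by the tautological section, the conormal identification $s^*\mathcal O_L(s(Y))\cong L$ gives $s^*s_*(1)=c_1(L)$, and the general case then follows by the standard product-of-Chern-roots computation.

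The chief technical point is the reduction step: the existence of the lifting $z\in CH^*(M,\cdot)$ given by Lemma~\ref{lem:DNC1} is what permits the comparison between $f^*f_*$ on $X$ and $(f')^*f'_*$ on $N_{Y/X}$, and its construction depends on Bloch's moving lemma (via Lemma~\ref{lem:commute}). Once this lifting is secured, the remaining argument takes place entirely in the ordinary Chow ring of $Y$, and no further higher-chain-level subtleties arise. The same proof, applied to the equivariant version of~\ref{eqn:DNC} observed immediately after that diagram, will deliver the equivariant self-intersection formula needed elsewhere in the paper.
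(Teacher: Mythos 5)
Your argument is correct and follows essentially the same route as the paper: deformation to the normal cone via Lemma~\ref{lem:DNC1} to pass from $f^*f_*$ to $(f')^*f'_*$ on the normal bundle, then the projection formula to reduce the zero-section case to the identity $s^*s_*(1)=c_d(E)$. The only differences are minor: the paper closes the base case by citing the classical self-intersection formula \cite[Corollary~6.3]{Fulton} directly (the class $s^*s_*(1)$ lies in $CH^d(Y,0)$, i.e.\ the ordinary Chow group, so your splitting-principle detour, while valid, just reproves a quoted result), and the paper leaves the equality $i_0^*=i_\infty^*$ on $CH^*(Y\times\P^1,\cdot)$ implicit where you spell it out via the projective bundle formula (note only that the class $h$ there is pulled back along the projection to $\P^1$, not along $p_Y$).
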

\begin{proof} We first consider the case when $X \xrightarrow{p} Y$ is a 
vector bundle of rank $d$ and $f$ is the zero section embedding so that
$p \circ f = {id}_Y$. In that case, we have 
\[
\begin{array}{lllll}
f^* \circ f_*(y) & = & f^* \circ f_*\left(f^* \circ p^*(y)\right) & & \\
& = & f^*\left(f_*(1) \cdot p^*(y)\right) & 
\hspace*{2cm} \left({\rm{by \ Proposition~\ref{prop:EHCG} \ (vi)}}\right) \\
& = & f^*\left(f_*(1)\right) \cdot \left(f^* \circ p^*(y) \right) & & \\
& = & f^*\left(f_*(1)\right) \cdot y & & \\
& = & c_d\left(N_{Y/X}\right) \cdot y, & &
\end{array}
\]
where the last equality follows from the self-intersection formula for 
Fulton's Chow groups ({\sl cf.} \cite[Corollary~6.3]{Fulton}).
This proves the theorem in the case of zero section embedding.

Now let $Y \inj X$ be as in the theorem. We consider the deformation to the
normal cone diagram ~\ref{eqn:DNC}, and choose $z \in CH^*(M,\cdot)$ as
in Lemma~\ref{lem:DNC1}. Then we have 
\[
\begin{array}{lllll}
f^* \circ f_*(y) & = & f^* \circ h^*\left(z\right) & = & 
{i_0}^* \circ F^*(z) \\
& = & i_{\infty}^* \circ F^*(z) & = &
{f'}^* \circ i^*(z) \\
& = & c_d\left(N_{Y/{X'}}\right) \cdot y & & \left(
{\rm{by \ the \ case \ of \ vector \ bundle \ above}}\right) \\
& = & c_d\left(N_{Y/{X}}\right) \cdot y.
\end{array}
\]
This completes the proof of the theorem.
\end{proof}
\begin{cor}\label{cor:ESIF}
Let $G$ be a linear algebraic group over $k$ and let 
$Y \stackrel{f}\inj X$ be a closed immersion of codimension 
$d \ge 1$ in ${\sV}^S_G$. Then one has for every 
$y \in CH^*_G(Y, \cdot)$, $f^*\circ f_*(y) =
c^G_d\left(N_{Y/X}\right)\cdot y$.
\end{cor}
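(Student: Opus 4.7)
The plan is to reduce the equivariant statement to the non-equivariant Theorem~\ref{thm:SIF} by passing to the Borel-type mixed spaces used to define the equivariant higher Chow groups.

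Fix $j \ge 0$ and choose a good pair $(V,U)$ for the $G$-action with $\codim_V(V-U) \gg j + d$. By Lemma~\ref{lem:sch}, since $X$ and $Y$ lie in $\sV^S_G$, the mixed quotients $X_G = X \stackrel{G}{\times} U$ and $Y_G = Y \stackrel{G}{\times} U$ are smooth quasi-projective schemes, and the induced map $f_G : Y_G \inj X_G$ is a closed immersion of codimension $d$. First I would verify, directly from the definitions in Proposition~\ref{prop:EHCG}, that under the identifications $CH^{*}_G(Y,i) \cong CH^{*}(Y_G,i)$ and $CH^{*}_G(X,i) \cong CH^{*}(X_G,i)$ (in the appropriate range of $j$), the equivariant pullback $f^*$ and pushforward $f_*$ coincide with the ordinary pullback $f_G^*$ and pushforward $(f_G)_*$ on the mixed spaces.

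Next I would identify the normal bundle. The total space of $N_{Y/X}$ is itself a smooth $G$-variety, and the free diagonal $G$-action on $N_{Y/X} \times U$ produces the mixed space $(N_{Y/X})_G = N_{Y/X} \stackrel{G}{\times} U$, which is naturally identified with the normal bundle $N_{Y_G/X_G}$ of the closed immersion $f_G$ (since formation of the normal bundle commutes with the smooth base change $X \times U \to X_G$ together with descent along the principal $G$-bundle $U \to U/G$). Under the identification of $c^G_d(N_{Y/X}) \in CH^d_G(Y,0)$ with $c_d(N_{Y_G/X_G}) \in CH^d(Y_G, 0)$ via the description of equivariant Chern classes recalled just after Proposition~\ref{prop:EHCG}, intersection with $c^G_d(N_{Y/X})$ on $CH^*_G(Y,\cdot)$ coincides with intersection with $c_d(N_{Y_G/X_G})$ on $CH^*(Y_G,\cdot)$.

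With these identifications in hand, the corollary is immediate: for $y \in CH^*_G(Y,\cdot)$ viewed as an element of $CH^*(Y_G,\cdot)$, Theorem~\ref{thm:SIF} applied to the closed immersion $f_G : Y_G \inj X_G$ of smooth varieties gives
\[
f_G^* \circ (f_G)_*(y) \;=\; c_d\bigl(N_{Y_G/X_G}\bigr)\cdot y,
\]
which translates back to $f^* \circ f_*(y) = c^G_d(N_{Y/X}) \cdot y$. Since the good pair $(V,U)$ may be chosen with $\codim_V(V-U)$ as large as desired, this identity holds in $CH^j_G(Y, \cdot)$ for every $j$, proving the corollary.

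The main obstacle, and really the only thing that needs care, is the compatibility in the second paragraph: that $N_{Y_G/X_G} \cong (N_{Y/X})_G$ as equivariant vector bundles on $Y_G$ and that equivariant Chern classes, pullbacks, and pushforwards genuinely correspond to their non-equivariant counterparts on the mixed spaces. Once these naturality statements are pinned down (which uses only that $X\times U \to X_G$ is a principal $G$-bundle, hence smooth and faithfully flat), the deduction from Theorem~\ref{thm:SIF} is formal.
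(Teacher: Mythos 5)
Your proposal is correct and follows essentially the same route as the paper: fix a good pair $(V,U)$ with the complement of sufficiently high codimension, pass to the mixed quotients $Y_G \inj X_G$, identify $\left(N_{Y/X}\right)_G$ with $N_{Y_G/X_G}$ (by descent along the principal $G$-bundle) and $c^G_d$ with $c_d$, and then invoke Theorem~\ref{thm:SIF}. The paper's proof is just a terser version of the same reduction, likewise leaving the normal-bundle descent fact as an elementary verification.
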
 
\begin{proof} Fix $i, j \ge 0$ and choose and good pair $(V,U)$ for 
$n \gg j+d$. We can then identify $CH^p_G(X,i)$
with $CH^p(X_G,i)$ (and same for $Y$) for $p \le n$.
We can also identify $c^G_d(E)$ with $c_d(E_G)$ for any equivariant 
vector bundle $E$ on $Y$ ({\sl cf.} \cite[Section~2.4]{ED2}). 
Now the proof of the corollary would follow
straightaway from Theorem~\ref{thm:SIF}, once we show that
${\left(N_{Y/X}\right)}_G$ is the normal bundle of $Y_G$ in $X_G$.
But this follows immediately from the elementary fact that if
$G$ acts freely on a smooth variety $Z$ and $W$ is a smooth closed
and $G$-invariant subvariety of $Z$ with normal bundle $N$, then
$G$ acts freely on $N$, and moreover, $N/G$ is the normal bundle of
$W/G$ in $Z/G$. We leave the proof of this fact to the reader.
\end{proof}   

\section{Reduction techniques For Equivariant Higher Chow Groups}
One of the important tools in the equivariant geometry is the technique of
reducing the study of varieties with action of an arbitrary linear algebraic 
groups to 
the reductive and then to the diagonalizable groups. In order to successfully
apply this technique in practice, one often needs to know how certain
invariants of varieties with an action of a group $G$ are related to these
invariants for the actions of various subgroups or quotients of $G$.
In this section, we establish some basic results in this direction
about the equivariant higher Chow groups. We recall here our convention
that an abelian group $A$ in the rest of this paper will actually mean the 
group $A {\otimes}_{\Z} {\Q}$.

Although many of the results that follow hold also with the integral
coefficients, we shall not need them in that form.
\begin{prop}[Morita isomorphism]\label{prop:Morita*}
Let $H$ be a normal subgroup of a linear algebraic group $G$ and let
$F = G/H$. Let $f: X \to Y$ be a $G$-equivariant morphism of $G$-varieties
which is an $H$-torsor for the restricted action. Then for every $i, j \ge 0$,
the map $f$ induces an isomorphism of the equivariant higher Chow groups 
\[
CH^j_F\left(Y, i\right) \xrightarrow{f^*} CH^j_G\left(X, i\right).
\]
\end{prop}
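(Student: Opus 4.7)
The plan is to reduce the claim to comparing specific mixed-space quotients attached to compatible good pairs for $G$ and $F$, and then to identify the pullback $f^*$ with a composite of a vector-bundle pullback and a localization-sequence isomorphism, both of which are supplied by Proposition~\ref{prop:EHCG}.

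First, I would fix $i, j \ge 0$ and choose a good pair $(V_1, U_1)$ for the $G$-action and a good pair $(V_2, U_2)$ for the $F$-action, both corresponding to $j$. Viewing $V_2$ as a $G$-representation through $G \twoheadrightarrow F$, I would check that $(V_1 \oplus V_2,\, U_1 \times U_2)$ is again a good pair for the $G$-action corresponding to $j$, because $G$ acts freely on $U_1$ and the complement still has codimension $>j$. This yields the presentations
\[
CH^j_G(X, i) = CH^j\bigl(X \times^G (U_1 \times U_2),\, i\bigr), \qquad CH^j_F(Y, i) = CH^j\bigl(Y \times^F U_2,\, i\bigr).
\]
Since $H \triangleleft G$ acts freely on $U_1$ and trivially on $U_2$, and $f$ is the $H$-quotient on the $X$-factor, splitting the $G$-quotient as first an $H$-quotient and then an $F$-quotient produces the identification
\[
X \times^G (U_1 \times U_2) \;\cong\; (X \times^H U_1) \times^F U_2.
\]

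Next, the $F$-equivariant map $\pi : X \times^H U_1 \to Y$ induced by $f$ extends to the $F$-equivariant vector bundle $\bar\pi : X \times^H V_1 \to Y$ associated to the $H$-torsor $f$ through the $H$-representation $V_1$. Pulling back along the projection $Y \times U_2 \to Y$ and then passing to the $F$-quotient (legitimate because $F$ acts freely on $U_2$) yields a vector bundle
\[
\bar\Pi \;:\; (X \times^H V_1) \times^F U_2 \longrightarrow Y \times^F U_2,
\]
whose open subscheme $(X \times^H U_1) \times^F U_2 = X \times^G (U_1 \times U_2)$ has complement $(X \times^H (V_1 - U_1)) \times^F U_2$ of codimension $>j$. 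Homotopy invariance (Proposition~\ref{prop:EHCG}(ii)) gives an isomorphism $\bar\Pi^* : CH^j(Y \times^F U_2, i) \xrightarrow{\cong} CH^j((X \times^H V_1) \times^F U_2,\, i)$, and the localization sequence (Proposition~\ref{prop:EHCG}(iii)) together with the codimension bound produces an isomorphism $CH^j((X \times^H V_1) \times^F U_2,\, i) \xrightarrow{\cong} CH^j((X \times^H U_1) \times^F U_2,\, i)$. I would then chase through the identifications to verify that the composite agrees with $f^*$, proving the proposition.

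The main technical obstacle will be the claim that $X \times^H V_1 \to Y$ is a \emph{Zariski}-locally trivial vector bundle, for this is what homotopy invariance requires. A priori the $H$-torsor $f$ is only \'etale-locally trivial, but the associated bundle via the $H$-representation $V_1$ has structure group $\GL(V_1)$, which is special; by Hilbert's Theorem~90 an \'etale-locally trivial $\GL(V_1)$-torsor is Zariski-locally trivial, so the same conclusion holds after pulling back to $Y \times U_2$ and descending under the free $F$-action. A secondary but standard check is that the quotient of an $F$-equivariant vector bundle by a free $F$-action on its base is again a vector bundle, which is what legitimizes the formation of $\bar\Pi$ above.
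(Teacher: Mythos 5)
Your argument is correct, but it is organized differently from the paper's. The paper's proof uses a single good pair $(V,U)$, chosen for the $F$-action on $Y$, and observes that the diagonal $G$-action on $X\times U$ is already free (freeness in the $H$-direction comes from the torsor $X\to Y$, freeness in the $F$-direction from $U$), so that $(X\times U)/G\to (Y\times U)/F$ is an isomorphism of schemes; the identification $CH^j_G(X,i)\cong CH^j\left((X\times U)/G,i\right)$ is then obtained from the equivariant homotopy invariance and localization properties together with the free-action property (vii) of Proposition~\ref{prop:EHCG}. You instead take a product of good pairs for $G$ and for $F$, so the induced map of mixed quotients $X\stackrel{G}{\times}(U_1\times U_2)\to Y\stackrel{F}{\times}U_2$ is a $U_1$-fibration rather than an isomorphism, and you kill the residual fiber by embedding it in the associated vector bundle built from $X\stackrel{H}{\times}V_1\to Y$ and applying non-equivariant homotopy invariance and localization. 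The two routes draw on the same toolkit, and in fact your explicit appeal to Hilbert's Theorem~90 for $\GL(V_1)$ supplies precisely the Zariski-local triviality that the paper leaves implicit in its proof of property (vii); the paper's version is shorter because the scheme-level isomorphism of mixed quotients removes the need for the vector-bundle compactification on the $Y$-side. Both arguments establish that your composite is the flat pullback along the induced map of mixed quotients, i.e.\ the map $f^*$ of the statement, so your proof is complete.
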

\begin{proof} We first observe from \cite[Corollary~12.2.2]{Springer} that
$F$ is also a linear algebraic group over the given ground field $k$.
Now, since $f$ is an $H$-torsor, it is clear that $G$ acts on $Y$
via $F$. Fix $j \ge 0$ and choose a good pair $\left(V,U\right)$
for the $F$-action on $Y$ corresponding to $j$. Then $V$ is also a 
representation of $G$ in which $U$ is $G$-invariant. In particular,
$G$ acts on $X \times U$ via the diagonal action, which is easily
seen to be free since $H$ acts freely on $X$ and $F$ acts freely on $U$.
By the same reason, we see that $X\times U \to Y \times U$ is $G$-equivariant
which is a principal $H$-bundle. This in turn implis that the map
${\left(X \times U\right)}/G \to Y_F$ is an isomorphism and hence we get
\begin{equation}\label{eqn:M**}
CH^j_F\left(Y, i\right) \cong CH^j\left(Y_F, i\right) 
\xrightarrow{f^*} CH^j\left({\left(X \times U\right)}/G, i\right).
\end{equation}
On the other hand, we have 
\[
CH^j_G\left(X, i\right) \cong CH^j_G\left(X \times V, i\right)
\cong CH^j_G\left(X \times U, i\right) \cong 
CH^j\left({\left(X \times U\right)}/G, i\right),
\]
where the first isomorphism is due to the homotopy invariance, the second 
follows from the localization property 
({\sl cf.} Proposition~\ref{prop:EHCG}, $(iii)$) as $j$ is sufficiently large,
and the third isomorphism follows from Proposition~\ref{prop:EHCG},
$(vii)$. The proof of the proposition now follows by combining this with
~\ref{eqn:M**}.
\end{proof}   
\begin{cor}[{\sl cf.} \cite{ED1}]\label{cor:Morita}
Let $H \subset G$ be a closed subgroup and let $X \in {\sV}_H$.
Then for any $i, j \ge 0$, there is a natural isomorphism
\begin{equation}\label{eqn:MoritaI}
CH^j_G\left(G \stackrel{H}{\times} X, i\right) \xrightarrow{\cong}
CH^j_H\left(X, i\right).
\end{equation}
\end{cor}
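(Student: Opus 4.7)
The plan is to reduce the corollary to Proposition~\ref{prop:Morita*} by passing through the auxiliary direct product $\Gamma := G \times H$, in which both $G \times \{1\}$ and $\{1\} \times H$ sit as normal subgroups. This is precisely the setting needed to apply Proposition~\ref{prop:Morita*} twice and splice the resulting isomorphisms.

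First I would equip $G \times X$ with the linear $\Gamma$-action $(g', h) \cdot (g, x) = (g' g h^{-1},\, h x)$. A direct verification shows that the restricted actions of $\{1\} \times H$ and of $G \times \{1\}$ are both free, and that the two natural quotient maps
\[
p_1 \colon G \times X \longrightarrow G \stackrel{H}{\times} X, \qquad p_2 \colon G \times X \longrightarrow X
\]
are respectively an $H$-torsor (quotient by $\{1\} \times H$) and a $G$-torsor (quotient by $G \times \{1\}$). One next verifies that the residual $\Gamma/(\{1\} \times H) \cong G$-action on $G \stackrel{H}{\times} X$ is the standard left-translation action, while the residual $\Gamma/(G \times \{1\}) \cong H$-action on $X$ is the originally given $H$-action. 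Quasi-projectivity of the mixed quotient $G \stackrel{H}{\times} X$ is supplied by Lemma~\ref{lem:sch}, so we are working inside ${\sV}_\Gamma$ throughout.

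Applying Proposition~\ref{prop:Morita*} to $p_1$ (with $\{1\} \times H$ as the normal subgroup) and to $p_2$ (with $G \times \{1\}$ as the normal subgroup) then yields a diagram
\[
CH^j_G\!\left(G \stackrel{H}{\times} X,\, i\right) \xrightarrow{p_1^*} CH^j_\Gamma\!\left(G \times X,\, i\right) \xleftarrow{p_2^*} CH^j_H(X, i)
\]
in which both arrows are isomorphisms. Composing $p_1^*$ with the inverse of $p_2^*$ produces the natural isomorphism asserted in the corollary, and naturality in $X$ follows from the functoriality of equivariant pull-back. The only point requiring real care is the combinatorial bookkeeping of the two residual actions, namely checking that the formula $(g', h) \cdot (g, x) = (g' g h^{-1}, h x)$ does produce exactly the standard $G$-action on $G \stackrel{H}{\times} X$ on the one side and the given $H$-action on $X$ on the other; once this is in place, the corollary falls out of Proposition~\ref{prop:Morita*} with no further input.
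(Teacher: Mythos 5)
Your proposal is correct and is essentially identical to the paper's own proof: the paper also introduces the product group $H \times G$ acting on $G \times X$ by $(h,g)\cdot(g',x) = (gg'h^{-1}, hx)$, applies Proposition~\ref{prop:Morita*} once to the $G$-torsor $G \times X \to X$ and once to the $H$-torsor $G \times X \to G \stackrel{H}{\times} X$, and composes the two resulting isomorphisms through $CH^j_{H\times G}(G\times X, i)$. The only difference is notational (your $\Gamma = G \times H$ versus the paper's $H \times G$).
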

\begin{proof} Define an action of $H \times G$ on $G \times X$ by
\[
(h, g) \cdot (g', x) = \left(gg'h^{-1}, hx\right),
\]
and an action of $H \times G$ on $X$ by $(h, g) \cdot x = hx$. 
Then the projection map $G \times X \xrightarrow{p} X$ is 
$\left(H \times G\right)$-equivariant which is a 
$G$-torsor. Hence by 
Proposition~\ref{prop:Morita*}, the natural map 
$CH^j_H\left(X, i\right)
\xrightarrow{p^*} CH^j_{H \times G}\left(G \times X, i\right)$. 
On the other hand, the projection map $G \times X 
\to G \stackrel{H}{\times} X$ is $\left(H \times G\right)$-equivariant which 
is an $H$-torsor. Hence we get an isomorphism 
$CH^j_G\left(G \stackrel{H}{\times} X , i \right) \xrightarrow{\cong}
CH^j_{H \times G}\left(G \times X, i\right)$. The corollary follows
by combining these two isomorphisms. 

\end{proof}
\begin{thm}\label{thm:Borel}
Let $G$ be a connected and reductive group over $k$. Let $B$ be a Borel 
subgroup of $G$ containing a maximal torus $T$ over $k$. Then the restriction 
maps
\begin{equation}\label{eqn:Borel2}
CH^*_B\left(X, \cdot\right) \xrightarrow{r^B_T} CH^*_T\left(X, \cdot\right),
\end{equation}
\begin{equation}\label{eqn:Borel3*}
CH^*_G\left(X, \cdot\right) \xrightarrow{r^G_T} CH^*_T\left(X, \cdot\right) 
\end{equation}
are respectively isomorphism and split monomorphism. Moreover, this
splitting is natural for morphisms in ${\sV}_G$. In particular, if $H$ is
any closed subgroup of $G$, then then there is a split injective map 
\begin{equation}\label{eqn:Borel3}
CH^*_H\left(X, \cdot\right) \xrightarrow{r^G_T} 
CH^*_T\left(G \stackrel{H}{\times} X, \cdot\right) 
\end{equation}
\end{thm}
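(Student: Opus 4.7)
The plan is to factor $r^G_T = r^B_T \circ r^G_B$, handle the two factors separately, and then derive the third claim from the Morita isomorphism (Corollary~\ref{cor:Morita}).

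First I would prove that $r^B_T$ is an isomorphism. Fix a good pair $(V, U)$ for the $G$-action, which restricts to good pairs for $B$ and $T$. The natural projection
\[
p_{T,B} : X \stackrel{T}{\times} U \longrightarrow X \stackrel{B}{\times} U
\]
is a quotient by the free left action of $B/T$. Since $B = T \ltimes R_u(B)$ is a Borel, the quotient $B/T$ is $T$-equivariantly isomorphic to the unipotent radical $R_u(B)$, which admits a composition series by $T$-stable normal subgroups with successive quotients isomorphic to $\mathbb{G}_a$. Consequently $p_{T,B}$ factors as a tower of Zariski-locally trivial $\mathbb{A}^1$-bundles, and iterated application of homotopy invariance (Proposition~\ref{prop:EHCG}(ii), in its mildly extended form for affine bundles) shows that $p_{T,B}^*$ is an isomorphism.

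Next I would show that $r^G_B$ is a split monomorphism, which combined with the previous step yields the split monomorphism $r^G_T$. With the same good pair, the projection $p : X \stackrel{B}{\times} U \to X \stackrel{G}{\times} U$ is a Zariski-locally trivial fiber bundle with fiber $G/B$, because $G \to G/B$ is itself Zariski-locally trivial (\cite[Theorem~6.8]{Borel}). Using the Bruhat stratification $G/B = \coprod_{w \in W} C_w$ with $C_w \cong \mathbb{A}^{l(w)}$, a standard induction on the closed stratification by Schubert varieties, alternating the localization sequence (Proposition~\ref{prop:EHCG}(iii)) with homotopy invariance on each affine cell, yields a direct sum decomposition
\[
CH^*\bigl(X \stackrel{B}{\times} U, i\bigr) \;\cong\; \bigoplus_{w \in W} CH^{*-l(w)}\bigl(X \stackrel{G}{\times} U, i\bigr),
\]
under which $p^*$ is the inclusion of the $w = w_0$ summand (the open cell). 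For naturality of the splitting on $\sV_G$, I would pass to the canonical $\Q$-linear projector $\frac{1}{|W|} \sum_{w \in W} w^*$ coming from the Weyl group $W = N_G(T)/T$ action on $CH^*_T(X, \cdot)$, which is visibly natural in $X \in \sV_G$ and identifies the image of $r^G_T$ with $CH^*_T(X, \cdot)^W$.

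For the last claim, apply Corollary~\ref{cor:Morita} to the $G$-variety $G \stackrel{H}{\times} X$ to identify $CH^*_H(X, \cdot) \cong CH^*_G\bigl(G \stackrel{H}{\times} X, \cdot\bigr)$, and then feed this $G$-variety into the split injection $r^G_T$ just constructed. The main obstacle I expect is establishing the Leray-Hirsch-type decomposition for $G/B$-bundles in higher Chow groups: the Bruhat cells and the localization sequence are each standard, but one must iterate the long exact sequences carefully, keeping track of the higher index $i$, and verify that the resulting splitting really is given by Schubert classes so that both $\sV_G$-naturality and the identification with Weyl invariants go through cleanly.
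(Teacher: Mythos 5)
Your treatment of \eqref{eqn:Borel2} (the tower of torsors under vector groups coming from the characteristic filtration of $R_u(B)$, plus homotopy invariance) is essentially the paper's argument, and \eqref{eqn:Borel3} via Corollary~\ref{cor:Morita} is also the same. The problem is in your proof that $r^G_B$ is a split monomorphism. First, the map $p : X \stackrel{B}{\times} U \to X \stackrel{G}{\times} U$ is the $G/B$-bundle associated to the principal $G$-bundle $X \times U \to X_G$, and for non-special $G$ this is only {\'e}tale-locally trivial, not Zariski-locally trivial: the Zariski-local triviality of $G \to G/B$ concerns the $B$-torsor structure on $G$ and says nothing about associated $G/B$-fibrations (for $G = PGL_2$ one gets conic bundles without local sections). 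Second, and more seriously, the Bruhat cells $C_w = BwB/B$ are only $B$-stable, not $G$-stable, so they do not descend to locally closed subschemes of $X \stackrel{B}{\times} U$; there is no global stratification of the total space by ``Schubert cell bundles,'' and the claimed decomposition $CH^*(X \stackrel{B}{\times} U, i) \cong \bigoplus_{w \in W} CH^{*-l(w)}(X \stackrel{G}{\times} U, i)$ is not established by iterating localization sequences. Your fallback for naturality, the averaging projector $\frac{1}{|W|}\sum_w w^*$ onto $CH^*_T(X,\cdot)^W$, presupposes that $r^G_T$ identifies $CH^*_G(X,\cdot)$ with the $W$-invariants; in this paper that is Corollary~\ref{cor:reduction}, a strictly harder statement proved only later from Theorem~\ref{thm:reductiveI} and Lemma~\ref{lem:fibration}, so it cannot be invoked here (and it also needs $T$ split, which the theorem does not assume).

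The paper's route avoids all of this with a transfer argument: by Morita, $CH^*_B(X,\cdot) \cong CH^*_G(G \stackrel{B}{\times} X, \cdot)$ and $G \stackrel{B}{\times} X \cong G/B \times X$ as $G$-varieties, so one only has to split $f^*$ for the flat proper projection $f : G/B \times X \to X$. The projection formula gives $f_*(f^*(y)) = f_*(1)\cdot y$, and $f_*(1) = 1$ by \cite[Theorem~2.1]{Kock}; thus $f_*$ itself is the splitting, and it is automatically natural in $X \in \sV_G$ because proper push-forward commutes with the relevant pull-backs. If you want to keep a Leray--Hirsch flavour, you would have to work with the $G$-stable stratification of $G/B \times G/B$ by relative position rather than with the Bruhat cells of a single flag variety, which is considerably more work than the projection-formula argument.
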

\begin{proof} We first prove ~\ref{eqn:Borel2}.
By Corollary~\ref{cor:Morita}, we only need to show that
\begin{equation}\label{eqn:Borel1}
CH^*_B\left(B \stackrel{T}{\times} X, \cdot\right) \cong
CH^*_T\left(X, \cdot\right).
\end{equation}
By \cite[XXII, 5.9.5]{SGA3}, there exists a characteristic
filtration $B^u = U_0 \supseteq U_1 \supseteq \cdots \supseteq U_n =
\{1\}$ of the unipotent radical $B^u$ of $B$ such that ${U_{i-1}}/{U_i}$
is a vector group, each $U_i$ is normal in $B$ and $TU_i = T \ltimes U_i$. 
Moreover, this filtration also implies that for each $i$, the natural map
$B/{BU_i} \to B/{TU_{i-1}}$ is a torsor under the vector bundle
${U_{i-1}}/{U_i} \times B/{TU_{i-1}}$ on $B/{TU_{i-1}}$. Hence, the 
homotopy invariance gives an isomorphism
\[
CH^*_B\left(B/{TU_{i-1}} \times X, \cdot\right) \xrightarrow{\cong}
CH^*_B\left(B/{TU_i} \times X, \cdot\right).
\]
Composing these isomorphisms successively for $i = 1, \cdots ,n$, we get
\[
CH^*_B\left(X, \cdot\right) \xrightarrow{\cong}
CH^*_B\left(B/T \times X, \cdot\right).
\]
The isomorphism of $B$-varieties $B \stackrel{T}{\times} X \cong
B/T \times X$ ({\sl cf.} Corollary~\ref{cor:Morita}) 
now proves ~\ref{eqn:Borel1} and hence ~\ref{eqn:Borel2}.

To prove ~\ref{eqn:Borel3*}, we can apply ~\ref{eqn:Borel2} to reduce to
showing that the map
$CH^*_G\left(X, \cdot\right) \to CH^*_B\left(X, \cdot\right)$
is a naturally split monomorphism. By ~\ref{eqn:MoritaI},
there is an isomorphism $CH^*_B\left(X, \cdot\right) \cong
CH^*_G\left(G \stackrel{B}{\times} X, \cdot\right)$. Moreover, there is an
isomorphism of $G$-varieties $G \stackrel{B}{\times} X \cong G/B 
\times X$. Thus it suffices to show for the flat and proper
map $f : G/B \times X \to X$ that $f^*$ is split by the map $f_*$.
Using the projection formula of Proposition~\ref{prop:EHCG}, it suffices
to show that $f_*(1) = 1$. But this follows directly from
\cite[Theorem~2.1]{Kock}. Finally, ~\ref{eqn:Borel3} follows from
~\ref{eqn:Borel3*} and Corollary~\ref{cor:Morita}.
\end{proof}
\begin{prop}\label{prop:NRL}
Let $H$ be a possibly non-reductive group over $k$.
Assume that $H$ has a Levi decomposition $H = L \ltimes H^u$ such
that $H_u$ is split over $k$ (e.g., when $k$ is of characteristic zero).
Then the restriction map 
\begin{equation}\label{eqn:NRL0}
CH^*_H\left(X, \cdot\right) \xrightarrow{r^H_L} CH^*_L\left(X, \cdot\right),
\end{equation}
is an isomorphism.
\end{prop}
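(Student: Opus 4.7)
The plan is to mimic the filtration argument from the proof of Theorem~\ref{thm:Borel}, with the unipotent radical $H^u$ playing the role of $B^u$. Since $H^u$ is split over $k$, the same structural input cited there ({\sl cf.}\ \cite[XXII, 5.9.5]{SGA3}) produces a descending filtration
$$H^u = U_0 \supseteq U_1 \supseteq \cdots \supseteq U_n = \{1\}$$
by closed $H$-normal subgroups with each successive quotient $U_{i-1}/U_i$ a vector group $\G_a^{m_i}$. Setting $H_i := L \ltimes U_i$ produces a chain $H = H_0 \supseteq H_1 \supseteq \cdots \supseteq H_n = L$, and correspondingly $r^H_L$ factors as the composition of the successive restrictions $r^{H_{i-1}}_{H_i}$, so it suffices to prove each of these is an isomorphism.

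Fix $i$ and use Corollary~\ref{cor:Morita} to identify $CH^*_{H_j}(X,\cdot) \cong CH^*_H(H \stackrel{H_j}{\times} X, \cdot)$ for $j = i-1, i$. Under these identifications, $r^{H_{i-1}}_{H_i}$ becomes the pullback along the natural $H$-equivariant quotient map
$$\pi_i \colon H \stackrel{H_i}{\times} X \longrightarrow H \stackrel{H_{i-1}}{\times} X.$$
Since $L \cap H^u = \{1\}$, one has $U_{i-1} \cap H_i = U_i$, and consequently the inclusion $U_{i-1} \hookrightarrow H_{i-1}$ induces an isomorphism of varieties $U_{i-1}/U_i \xrightarrow{\cong} H_{i-1}/H_i$. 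Hence the fibers of $\pi_i$ are isomorphic to the vector group $U_{i-1}/U_i \cong \mathbb{A}^{m_i}$, which acts freely by left translation, exhibiting $\pi_i$ as a torsor under $\G_a^{m_i}$.

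The main obstacle is then to conclude that $\pi_i^*$ is an isomorphism on higher Chow groups. Working at the level of the mixed-space models $X_{H_j} = X \stackrel{H_j}{\times} U$ for a good pair $(V, U)$ of $H$ corresponding to any prescribed degree, the induced map $X_{H_i} \to X_{H_{i-1}}$ is an ordinary $\G_a^{m_i}$-torsor. Any such torsor is Zariski-locally trivial, so a Mayer-Vietoris / Zariski-descent argument reduces the claim to the trivial torsor (i.e.\ trivial vector bundle) case, where it is exactly the homotopy invariance of Proposition~\ref{prop:EHCG}(ii). This is the same ingredient already invoked in the proof of Theorem~\ref{thm:Borel}, so no new technical input is required; iterating over $i$ then completes the argument.
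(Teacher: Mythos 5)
Your argument is essentially the paper's: the proof given there just says to repeat the proof of ~\ref{eqn:Borel2} with $B,T$ replaced by $H,L$, i.e.\ the same $H$-normal filtration of the split unipotent radical with vector-group quotients, Morita untwisting, and homotopy invariance applied to the resulting tower of affine fibrations $H/LU_i \times X \to H/LU_{i-1}\times X$, so your stepwise Morita reductions are only a repackaging of the same argument. One small caveat: $\pi_i$ need not be a torsor under the constant group $\G_a^{m_i}$ acting by left translation, since the conjugation action of $L$ on $U_{i-1}/U_i$ can be nontrivial; it is a torsor under the associated vector bundle on the base, which is still Zariski-locally trivial, so your reduction to the homotopy invariance of Proposition~\ref{prop:EHCG}(ii) goes through unchanged.
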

\begin{proof}
Since the unipotent radical of $H$ is split over $k$, the proof is exactly
same as in the proof of ~\ref{eqn:Borel2}, where we just have to replace
$B$ and $T$ by $H$ and $L$ respectively.
\end{proof}
\begin{remk} We point out here that though we have assumed all abelian
groups to be tensored with $\Q$ in this and the latter sections, the
readers can check from the proofs that the results of this section so
far, remain true with the integral coefficients.
\end{remk}

A consequence of Theorem~\ref{thm:Borel} is that the equivariant higher
Chow groups for the action of a connected reductive group $G$ are subgroups of
the equivariant higher Chow groups for the action of a maximal torus 
of $G$. In our next result, we prove a refinement of this by giving an 
explicit description of these subgroups. This is a generalization of
the analogous result Proposition~6 of \cite{ED2} to equivariant higher
Chow groups. We begin with following result about the non-equivariant  
higher Chow groups.
\begin{lem}\label{lem:linear}
If $L$ is a linear variety over $k$ and $X = Y \times L$, then the
exterior product map
\[
CH^*(Y, \cdot) {\otimes}_{CH^*(k, \cdot)} CH^*(L, \cdot)     
\to CH^*(X, \cdot)
\]
is an isomorphism of $CH^*(k, \cdot)$-modules.
\end{lem}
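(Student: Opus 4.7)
The plan is to induct on the construction of $L$ as a linear variety. Recall that the class of linear varieties over $k$ is generated by the affine spaces $\A^n$ under the operations of taking open complements of linear subvarieties and stratified gluings. For the base case $L = \A^n$, the homotopy invariance of Proposition~\ref{prop:EHCG}(ii) (applied to the trivial bundle over a point and over $Y$) gives $CH^*(k, \cdot) \xrightarrow{\cong} CH^*(\A^n, \cdot)$ and $CH^*(Y, \cdot) \xrightarrow{\cong} CH^*(Y \times \A^n, \cdot)$, so the exterior product map reduces to the canonical isomorphism $CH^*(Y, \cdot) {\otimes}_{CH^*(k, \cdot)} CH^*(k, \cdot) \cong CH^*(Y, \cdot)$.

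For the inductive step, given $L$ linear, I would choose a closed subvariety $Z \subset L$ of pure codimension $d$ with open complement $U$ such that both $Z$ and $U$ are linear of strictly simpler type. Applying the localization sequence of Proposition~\ref{prop:EHCG}(iii) to the pair $(Z, L)$ and separately to $(Y \times Z, Y \times L)$, one stacks them into a ladder whose vertical arrows are the exterior product maps for $Z$, $L$, and $U$. By the inductive hypothesis the vertical arrows at $Z$ and $U$ are isomorphisms, and the bottom row is exact by Proposition~\ref{prop:EHCG}(iii). A five-lemma argument at the middle column then yields the conclusion for $L$, provided the top row, obtained by tensoring the localization sequence for $L$ with $CH^*(Y, \cdot)$ over $CH^*(k, \cdot)$, remains exact.

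The main obstacle is precisely this exactness, since the tensor product is only right exact \emph{a priori}. My intended resolution is to strengthen the induction so as to prove in tandem that, for every linear $L$, the bigraded module $CH^*(L, \cdot)$ is a flat (in fact free) $CH^*(k, \cdot)$-module, and that the connecting homomorphism in each relevant localization sequence vanishes, so that the sequence splits into short exact sequences that survive the functor $-\,{\otimes}_{CH^*(k, \cdot)}\,CH^*(Y, \cdot)$. Both enhanced statements propagate through the inductive step by the same five-lemma argument, so the joint induction closes. This is exactly where the linearity of $L$ is essential: for a general variety neither flatness of $CH^*(L, \cdot)$ nor the vanishing of the connecting map can be expected. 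An alternative, which avoids flatness altogether, is to follow Totaro's approach in the Chow-group setting by constructing explicit cellular generators of $CH^*(L, \cdot)$ over $CH^*(k, \cdot)$ from a compatible stratification of $L$ and lifting them through the exterior product to build a two-sided inverse directly.
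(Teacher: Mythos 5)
Your proposal follows essentially the same route as the paper: induction over the linear structure of $L$, a ladder of localization sequences, and a five-lemma argument, with the crux --- which you correctly isolate --- being that the localization sequence for $L$ itself must stay exact after applying $-\otimes_{CH^*(k,\cdot)}CH^*(Y,\cdot)$. The paper disposes of this by citing Joshua for the split exactness of $0 \to CH^*(L',\cdot) \to CH^*(L,\cdot) \to CH^*(U,\cdot) \to 0$, having set up the induction so that the open stratum $U$ is isomorphic to $\A^n$ and $L'=L-U$ is linear of smaller dimension; you propose instead to prove the splitting in tandem with the main statement, which is a legitimate and standard alternative.

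One step of your inductive bookkeeping is misstated, though. The vanishing of the connecting homomorphism does not propagate \emph{by the same five-lemma argument}; it holds because the open stratum is an affine space, or more generally because $CH^*(U,\cdot)$ is free over $CH^*(k,\cdot)$ on generators lying in simplicial degree $0$. Such generators lift along $CH^*(L,0)\to CH^*(U,0)$, which is surjective since the localization sequence of classical Chow groups is right exact, and $CH^*(k,\cdot)$-linearity of $j^*$ then forces surjectivity in every simplicial degree, i.e.\ $\partial=0$. With the inductive step phrased as you have it --- $Z$ and $U$ merely linear \emph{of strictly simpler type} --- flatness of $CH^*(U,\cdot)$ alone does not yield $\partial=0$. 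So either adopt the paper's form of the recursion ($U\cong\A^n$ open dense with linear complement of smaller dimension) or strengthen the induction hypothesis to ``free on cycle classes in $CH^*(-,0)$,'' which is in effect what your Totaro-style alternative does.
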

\begin{proof} We prove by the induction on the dimension of $L$. If $L$ is 
$0$-dimensional, there is nothing to prove. So we assume that 
${\rm dim}(L) \ge 1$ and that the lemma holds for all linear varieties
of dimension less than the dimension of $L$. 
Put $CH^*(k) = {\bigoplus}_{i \ge 0} CH^*(k,i)$.

Since $L$ is linear, there exists an open dense subset $U \subset L$ such
that $U \cong {\A}^n$ for some $n$ and $L' = L - U$ is a linear variety of
dimension less than that of $L$. Moreover, the localization sequence
\[
0 \to CH^*(L', \cdot) \to CH^*(L, \cdot) \to CH^*(U, \cdot) \to 0
\]
is split exact ({\sl cf.} \cite{Joshua1}).
In particular, this sequence remains exact after tensoring with 
$CH^*(Y, \cdot)$, and we get a diagram of localization exact sequences
\[
\xymatrix@C.5pc{
0 \ar[r] & CH^*(Y, \cdot) {\otimes} CH^*(L', \cdot)  
\ar[r] \ar[d] & CH^*(Y, \cdot) {\otimes} CH^*(L, \cdot) 
\ar[r] \ar[d] & CH^*(Y, \cdot) {\otimes} CH^*(U, \cdot) 
\ar[r] \ar[d] & 0 \\
\ar[r] &  CH^*(Y \times L', \cdot) \ar[r]^{i_*} &
 CH^*(Y \times L, \cdot) \ar[r]^{j^*} &  CH^*(Y \times U, \cdot) \ar[r] &,}
\] 
where the tensor product in the top row is over the ring $CH^*(k)$.
The left vertical arrow is an isomorphism by induction, and the right
vertical arrow is an isomorphism by the homotopy invariance.
In particular, $j^*$ is surjective in all indices. We conclude that
$i_*$ is injective in all indices and the middle vertical arrow is
an isomorphism.
\end{proof}
Recall that a connected and reductive group $G$ over $k$ is said to be
{\sl split}, if it contains a split maximal torus $T$ over $k$ such that
$G$ is given by a root datum relative to $T$. One knows that every
connected and reductive group containing a split maximal torus is
split ({\sl cf.} \cite[Chapter~XXII, Proposition~2.1]{SGA3}).  
In such a case, the normalizer $N$ of $T$ in $G$ and all its connected
components are defined over $k$ and the quotient $N/T$ is the Weyl group
$W$ of the corresponding root datum.
\begin{lem}\label{lem:fibration}
Let $G$ be a connected reductive group and let $T$ be a split maximal torus
of $G$ contained in a Borel subgroup $B$. Put $H = G/N$, where $N$ is the 
normalizer of $T$ in $G$.
Then any {\'e}tale locally trivial $H$-fibration $f: X \to Y$
induces an isomorphism of higher Chow groups
\[
f^*: CH^*(Y, \cdot) \xrightarrow{\cong} CH^*(X, \cdot).
\]
\end{lem}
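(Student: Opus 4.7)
My plan is to exhibit $H = G/N$ as a linear variety admitting a closed filtration with affine-space strata, to transfer this filtration to $X$ via the étale local trivializations of $f$, and to conclude by iterated application of Proposition~\ref{prop:EHCG}$(ii)$ and $(iii)$, with Lemma~\ref{lem:linear} handling the base case of a trivial product.

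For the first step, since $G$ is split with maximal torus $T \subset B$, the Bruhat decomposition provides a closed filtration of the flag variety $G/B$ whose successive strata are Schubert cells $C_w \cong \A^{\ell(w)}$ indexed by $w \in W = N/T$. Pulling this back along the Zariski-locally trivial $B/T$-bundle $G/T \to G/B$, whose fiber $B/T \cong B^u$ is an affine space, yields a closed filtration of $G/T$ with affine-space strata. Since $G/T \to G/N$ is a finite étale Galois cover with group $W$ permuting the Bruhat strata, the filtration descends to a closed filtration $H = H_0 \supset H_1 \supset \cdots \supset H_n = \emptyset$ of $H$ with each $H_i \setminus H_{i+1}$ a disjoint union of affine spaces; in particular, $H$ is a linear variety in the sense of Lemma~\ref{lem:linear}.

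For the second step, I would transfer this filtration to a closed filtration $X = X_0 \supset X_1 \supset \cdots \supset X_n = \emptyset$ such that each $X_i \setminus X_{i+1} \to Y$ is a Zariski-locally trivial affine-space bundle over $Y$. Once that is in place, iterated use of the localization sequence together with homotopy invariance forces $f^*: CH^*(Y, \cdot) \to CH^*(X, \cdot)$ to be an isomorphism by dévissage. The main obstacle is justifying that the filtration of $H$ descends globally through the étale local trivializations: the $H_i$ are intrinsically defined as closures of $W$-orbits of Bruhat cells, so one needs to show that the structure group of the fibration preserves them. In the equivariant context where this lemma is applied, the fibration will arise from a $G$-equivariant construction that supplies this compatibility directly; alternatively, for a completely arbitrary étale-locally trivial $H$-fibration one could pass to the $W$-torsor $\tilde{X} \to X$ obtained by pulling back along $G/T \to G/N$ and then recover $CH^*(X, \cdot)$ from $CH^*(\tilde{X}, \cdot)^W$, using that we work with $\Q$-coefficients throughout.
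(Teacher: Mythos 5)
Your proposal has two genuine gaps, and neither of your suggested fallbacks repairs them.

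The first gap is in the claim that the Bruhat stratification descends from $G/T$ to $H=G/N$. The covering $G/T\to G/N$ is the quotient by the \emph{right} action of $W=N/T$, and right multiplication by a representative $n'\in N$ of $w'\in W$ does not permute the strata $BwB/T$: already for $G=SL_2$ with $s$ the nontrivial Weyl element, the set $BsBs$ meets both $B$ and $BsB$, so $(BsB/T)\cdot s$ is not a union of Bruhat strata. Consequently you have not produced a filtration of $H$ with affine-space strata, and you cannot apply Lemma~\ref{lem:linear} to $H$. The paper never asserts that $H$ is linear; it instead proves $CH^*(H,\cdot)\cong\left(CH^*(G/T,\cdot)\right)^W\cong\left(CH^*(G/B,\cdot)\right)^W\cong CH^*(k,\cdot)$, using the transfer isomorphism for the free action of the finite group $W$ (with $\Q$-coefficients), homotopy invariance along the tower $G/TU_i\to G/TU_{i-1}$, and the linearity/K\"unneth input of Lemma~\ref{lem:linear} and \cite{Joshua1} applied only to the flag variety $G/B$, where the Bruhat decomposition actually lives.

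The second gap is the one you acknowledge: for a general \'etale locally trivial $H$-fibration the transition functions need not preserve any chosen stratification of the fiber, so a fiberwise d\'evissage does not globalize over $Y$. Your first fix (restricting to $G$-equivariant fibrations) would prove less than the lemma asserts, and in any case a $G$-stable stratification of the homogeneous fiber $G/N$ is necessarily trivial; your second fix (passing to the $W$-torsor $\tilde X\to X$) merely trades the problem for the identical one about the \'etale locally trivial $G/T$-fibration $\tilde X\to Y$. The missing idea is to d\'evissage over the base rather than over the fiber: after the product case $X=Y\times H$ is settled (via $W$-invariants of $CH^*(G/B\times Y,\cdot)$ and the K\"unneth isomorphism of Lemma~\ref{lem:linear}), choose an \'etale trivializing cover $Y'\to Y$, shrink to a dense open $U\subseteq Y$ over which it is finite \'etale, and use the projection formula to exhibit $f_U^*$ as a compatible retract of the isomorphism ${f'}_U^*$ for the trivialized fibration over $U'$; the localization sequence and Noetherian induction on the closed complement $Y-U$ then finish the proof. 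Without an argument of this kind, the reduction from ``\'etale locally trivial'' to ``trivial'' is not achieved.
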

\begin{proof} We prove the lemma in several steps. In the first step, we
show that the natural map 
\begin{equation}\label{eqn:fibration1}
CH^*(k, \cdot) \to CH^*(H, \cdot)
\end{equation}
is an isomorphism. 

Since $N$ is a closed subgroup of $G$ defined over $k$, it follows from
\cite[Theorem~12.2.1]{Springer} that the quotient $H$ is defined over
$k$. If $W$ denotes the Weyl group of $G$, then the fibration sequence
\[
0 \to W \to G/T \to H \to 0
\]
together with Corollary~\ref{cor:finite2} give an isomorphism 
\begin{equation}\label{eqn:fibration1*}
CH^*(H, \cdot) \cong 
{\left(CH^*\left(G/T, \cdot\right)\right)}^W.
\end{equation}
Moreover, the characteristic filtration $B^u = U_0 \supseteq U_1 \supseteq 
\cdots \supseteq U_n = \{1\}$ of the unipotent radical $B^u$ of $B$
({\sl cf.} proof of Theorem~\ref{thm:Borel}) has the property that
the map $G/TU_i \to G/TU_{i-1}$ is a torsor for the vector group
$U_{i-1}/U_i$ for $0 \le i \le n$. The homotopy invariance then implies that
the map $CH^*\left(G/TU_{i-1}, \cdot\right) \to 
CH^*\left(G/TU_{i}, \cdot\right)$ is an isomorphism. Using this isomorphism
for $i = 0, \cdots, n$, we get an isomorphism
\[
CH^*(H, \cdot) \cong 
{\left(CH^*\left(G/T, \cdot\right)\right)}^W \cong
{\left(CH^*\left(G/B, \cdot\right)\right)}^W.
\]
However, as $G/B$ is a linear variety, 
the natural map 
\[CH^*\left(G/B, 0\right) {\otimes}_{\Q} 
CH^*\left(k, \cdot\right) \to CH^*\left(G/B, \cdot\right)
\]
is an isomorphism ({\sl cf.} \cite{Joshua1}).
In particular, we get 
\[
CH^*(H, \cdot) \cong {\left(CH^*\left(G/B, \cdot\right)\right)}^W
\cong {\left(CH^*\left(G/B, 0\right)\right)}^W {\otimes}_{\Q} 
CH^*\left(k, \cdot\right).\]
The proof of ~\ref{eqn:fibration1} now follows using the isomorphism
${\left(CH^*\left(G/B, 0\right)\right)}^W \cong \Q$ ({\sl loc. cit.}).

Now we consider the case when $X = Y \times H$ and $f$ is the projection map.
Since $G/T \times Y$ is a principal $W$-bundle over $H \times Y = X$,
the same argument as above shows that there is an isomorphism
\[
CH^*\left(X, \cdot \right) \cong 
{CH^*\left(G/B \times Y,  i \right)}^W
\cong {CH^*\left(G/B, \cdot \right)}^W {\otimes}_{CH^*\left(k, \cdot\right)}
CH^*\left(Y, \cdot \right) 
\cong CH^*\left(Y, \cdot \right),
\]
where the second isomorphism follows from Lemma~\ref{lem:linear} and the 
last isomorphism follows from ~\ref{eqn:fibration1*} followed by 
~\ref{eqn:fibration1}.
We finally prove the lemma by the Noetherian induction on $Y$. We can assume 
$Y$ to be reduced. If $Y$ is $0$-dimensional, then
the lemma follows from ~\ref{eqn:fibration1}. So we assume that 
${\rm dim}(Y) \ge 1$ and the lemma holds for when $f$ is restricted to
all proper closed subvarieties of $Y$.     

Since $f$ is {\'e}tale locally trivial, there is an {\'e}tale cover
${\pi}: Y' \to Y$ such that $X' = X {\times}_{Y} Y'$ is isomorphic
to $H \times Y'$ and the pull-back of $f$ to $X'$ is the projection
map $f': X {\times}_{Y} Y' \to Y'$. Moreover, as $\pi$ is dominant
and generically finite, there exists an open set $U \subset Y$ such that
${\pi}_U : U' = {\pi}^{-1}(U) \to U$ is finite and {\'e}tale. Letting
$X_U = f^{-1}(U)$, we thus get a fiber square
\[
\xymatrix{
X'_U \ar[d]_{{\pi}'_U} \ar[r]^{f'_U} &
U' \ar[d]^{{\pi}_U} \\
X_U \ar[r]_{f_U} & U}
\]
where $X'_U = H \times U'$ and $f'_U$ is the projection map.   
Since ${\pi}_U$ is finite {\'e}tale and $f_U$ is smooth, we get a 
commutative diagram ({\sl cf.} Proposition~\ref{prop:EHCG})
\[
\xymatrix{
CH^*\left(U, \cdot \right) \ar[r]^{{\pi}_U^*} \ar[d]^{f_U^*} &
CH^*\left(U', \cdot \right) \ar[r]^{{\pi}_{U*}} \ar[d]^{{f'}_U^*} &
CH^*\left(U, \cdot \right)  \ar[d]^{f_U^*} \\
CH^*\left(X_U, \cdot \right) \ar[r]_{{{\pi}'}_U^*} &
CH^*\left(X'_U, \cdot \right) \ar[r]_{{\pi}'_{U*}} & 
CH^*\left(X_U, \cdot \right)}
\]
and the projection formula implies that ${\pi}_{U*} \circ {\pi}_U^*$ and 
${{\pi}'}_{U*} \circ {{\pi}'}_U^*$ are both multiplication by the degree of 
${\pi}_U$.
On the other hand, we have just shown above that the middle vertical arrow is 
an isomorphism. Hence $f_U^*$ is also an isomorphism.

Now we let $Z = Y - U$ and put $X_Z = f^{-1}(Z)$. This gives the
following commutative diagram of fibration sequences of cycle complexes.
\[
\xymatrix@C.5pc{
{\sZ}^j\left(Z, \cdot\right) \ar[r] \ar[d]^{f^*_Z} &
{\sZ}^j\left(Y, \cdot\right) \ar[r] \ar[d]^{f^*} &
{\sZ}^j\left(U, \cdot\right) \ar[d]^{f^*_U} \\
{\sZ}^j\left(X_Z, \cdot\right) \ar[r] &
{\sZ}^j\left(X, \cdot\right) \ar[r] &
{\sZ}^j\left(X_U, \cdot\right)}
\]
We have just now shown that $f^*_U$ is a quasi-isomorphism.
The left vertical arrow is a quasi-isomorphism by the Noetherian
induction. We conclude that the middle vertical arrow is also
a quasi-isomorphism.
\end{proof}

The following theorem is an analogue of a result of Merkurjev
({\sl cf.} \cite[Proposition~8]{Merkurjev}) about the equivariant
$K$-theory. However, this result for the equivariant higher
Chow groups has an advantage over Merkurjev's theorem in that it holds for 
the action of any split reductive group (though with rational coefficients)
whereas \cite[Proposition~8]{Merkurjev}
is known only for the groups whose derived subgroups are 
simply connected, e.g., $GL_n(k)$.
\begin{thm}\label{thm:reductiveI} 
Let $G$ be a connected and split reductive group and let $T$ be a split maximal
torus of $G$. Then for any $X \in {\sV}_G$, the natural map of $S(T)$-modules
\[
CH^*_G\left(X, \cdot \right) {\otimes}_{S(G)} S(T) \to
CH^*_T\left(X, \cdot \right)
\]
is an isomorphism.
\end{thm}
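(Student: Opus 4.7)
The plan is to reduce the theorem to a Leray--Hirsch / flag-bundle formula for $G/B$-bundles, which can then be established by induction on the semisimple rank of $G$ using iterated $\P^1$-bundles.

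First, Corollary~\ref{cor:Morita} combined with the $G$-equivariant isomorphism $G \times^T X \cong G/T \times X$, $[g,x] \mapsto (gT, gx)$, yields $CH^*_T(X, \cdot) \cong CH^*_G(G/T \times X, \cdot)$. Since $G/T \to G/B$ is a torsor under the split unipotent group $B/T \cong B^u$ (an affine bundle), Proposition~\ref{prop:EHCG}(ii) gives $CH^*_G(G/T \times X, \cdot) \cong CH^*_G(G/B \times X, \cdot)$. Specializing to $X = \Spec k$ and applying Theorem~\ref{thm:Borel} yields $CH^*_G(G/B, \cdot) \cong S(T)$. Under these identifications, the map of the theorem becomes the cup product
\[
CH^*_G(X, \cdot) \otimes_{S(G)} CH^*_G(G/B, \cdot) \longrightarrow CH^*_G(G/B \times X, \cdot), \quad \alpha \otimes \beta \mapsto q^*(\alpha) \cdot p^*(\beta),
\]
where $p, q$ are the two projections. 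Fixing $i, j$ and a good pair $(V, U)$ for $G$, the Borel construction reformulates this as the assertion that, for the étale locally trivial $G/B$-bundle $\pi \colon X_B := (X \times U)/B \to X_G := (X \times U)/G$, the natural map $CH^*(X_G, \cdot) \otimes_{CH^*(U/G, 0)} CH^*(U/B, 0) \to CH^*(X_B, \cdot)$ is an isomorphism in the relevant range.

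Next I would establish this flag-bundle formula by induction on the semisimple rank of $G$, with the trivial base case $G = T$. For the inductive step, pick a simple root $\alpha$ and the corresponding minimal parabolic $P_\alpha \supseteq B$. Then the projection $X_B \to X_{P_\alpha}$ is a Zariski locally trivial $\P^1$-bundle (arising as the projectivization of a rank-$2$ bundle from the $SL_2$-factor of $P_\alpha$), and the projective bundle formula --- itself a direct consequence of Proposition~\ref{prop:EHCG}(ii)--(iii) applied to the cell decomposition $\P^1 = \A^1 \sqcup \{\infty\}$ --- identifies $CH^*(X_B, \cdot)$ as a free rank-$2$ module over $CH^*(X_{P_\alpha}, \cdot)$ with basis $\{1, \xi\}$. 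Using Proposition~\ref{prop:NRL} to replace $P_\alpha$ by its Levi $L_\alpha$ (which has strictly smaller semisimple rank than $G$, with Borel $B \cap L_\alpha$ and maximal torus $T$), the inductive hypothesis gives the flag-bundle formula for the residual $L_\alpha/(B \cap L_\alpha)$-bundle $X_{P_\alpha} \to X_G$; combining with the $\P^1$-step finishes the induction.

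The main obstacle is the flag-bundle formula itself in Step~3. While the ingredients (projective bundle formula, homotopy invariance, Morita) are all in place, the argument is delicate because the Bruhat decomposition of $G/B$ is only $B$-invariant, not $G$-invariant, which rules out a single-shot global equivariant cellular decomposition of $X_B$ over $X_G$. The Bott--Samelson-style reduction via minimal parabolics sidesteps this by only ever using $\P^1$-bundles, whose projective bundle formula is unproblematic; the bookkeeping in the inductive step, however, requires carefully tracking the tensor products and the compatibility of the classes $\xi \in CH^*(X_B, 0)$ with the Schubert classes pulled back from $CH^*(U/B, 0)$, so that the final assembled basis matches an $S(G)$-module basis of $S(T)$.
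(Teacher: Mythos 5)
Your reduction of the theorem to a Leray--Hirsch statement for the $G/B$-bundle $X_B\to X_G$ is a genuinely different starting point from the paper, which never works with $G/B$-bundles over $X_G$: it passes instead through the normalizer $N$ of $T$, showing via Lemma~\ref{lem:fibration} that the \'etale locally trivial $G/N$-fibration $X_N\to X_G$ induces an isomorphism on rational higher Chow groups, and then handles the finite principal $W$-bundle $X_T\to X_N$ by a transfer/retraction argument. The problem is that your inductive step does not close up, and this is a structural gap, not bookkeeping. In the tower $X_B\to X_{P_\alpha}\to X_G$ the fibre of the first map is $P_\alpha/B\cong L_\alpha/(B\cap L_\alpha)\cong\P^1$, while the fibre of the second is $G/P_\alpha$, of dimension $\dim G/B-1$. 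So the ``residual bundle'' $X_{P_\alpha}\to X_G$ is \emph{not} the $L_\alpha/(B\cap L_\alpha)$-flag bundle of the Levi --- that flag bundle is exactly the $\P^1$-bundle you have already consumed --- and it is not the flag bundle of any group of smaller semisimple rank. After one $\P^1$-step you are left with a $G/P_\alpha$-bundle for which you have no formula, and no choice of parabolic fixes this: the factorization of a full flag bundle as an iterated tower of projectivized vector bundles is special to $GL_n$ (where the relevant $G/P$ are projective spaces). This failure is precisely why an argument for general split reductive $G$ must either lift Schubert classes globally (e.g.\ by pulling the bundle back along itself, where the $G$-torsor acquires a reduction to $B$ and hence a global Bruhat decomposition, and then descending by a transfer) or, as the paper does, replace $G/B$ by $G/N$ and exploit that $\left(CH^*\left(G/B,0\right)\right)^W\cong\Q$.

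A secondary but real issue: $X_B\to X_{P_\alpha}$ is in general neither Zariski locally trivial nor the projectivization of a rank-$2$ bundle. The $P_\alpha$-action on $P_\alpha/B$ factors through $L_\alpha\to PGL_2$, and when the root subgroup is adjoint (already for $G=PGL_2$, $P_\alpha=G$) this does not lift to $GL_2$; the associated bundle is only an \'etale form of a $\P^1$-bundle. With rational coefficients this is repairable --- one can take $\tfrac12 c_1$ of the relative anticanonical bundle as the hyperplane class, or trivialize after a degree-$2$ \'etale cover and transfer, in the style of the paper's Lemma~\ref{lem:fibration} --- but it is not a direct consequence of Proposition~\ref{prop:EHCG}(ii)--(iii) as you assert, and would need to be proved. (Also, $CH^*_G\left(G/B,\cdot\right)$ is $CH^*_T\left(k,\cdot\right)$, not $S(T)=CH^*_T\left(k,0\right)$; the identification you want uses the linearity of $G/B$.)
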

\begin{proof} It is easy to see using the rules of the intersection product
that the above map is $S(T)$-linear, where the $S(T)$-module structure
on the left is given by the extension of scalars. So we only need to
prove that this map is an isomorphism of abelian groups.
Let $N$ denote the normalizer of $T$ in $G$ and let
$W = N/T$ be the Weyl group. If $(V,U)$ is good pair for the action of $G$, 
then $X_N  \to X_G$ is an {\'e}tale locally trivial
$G/N$-fibration. Hence it follows from Lemma~\ref{lem:fibration} that the map
\begin{equation}\label{eqn:reduction1}
CH^*_G\left(X, \cdot\right) \to CH^*_N\left(X, \cdot\right)
\end{equation}
is an isomorphism. In particular, we have $S(G) \cong S(N)$.
Thus we only need to show that the natural map
\begin{equation}\label{eqn:reductiveI0} 
CH^*_N\left(X, \cdot \right) {\otimes}_{S(N)} S(T) \to
CH^*_T\left(X, \cdot \right)
\end{equation}
is an isomorphism.

We give the Weyl group $W$ a reduced induced scheme structure and let 
$r$ be the cardinality of $W$. Then $CH^*\left(W,0\right) \cong
{CH^*\left(k,0\right)}^{{\oplus}r} \cong {\Q}^{r}$ with a basis 
$\{u_1, \cdots , u_r\}$.
It suffices then to prove that if $(V,U)$ is a good pair for
the action of $N$ such that it acts freely on $Y = X \times U$ and if
$Y/T \xrightarrow{f} Y/N$ is the flat map, then
the natural map
\begin{equation}\label{eqn:reductiveI}
CH^*\left(Y/N, \cdot \right) {\otimes}_{\Q} CH^*\left(W,0\right)
\xrightarrow{{\wt{f}}^*} CH^*\left(Y/T, \cdot \right)
\end{equation}
\[
\left(a_1, \cdots, a_r\right) \mapsto \stackrel{r}{\underset{i= 1}{\Sigma}}
f^*(a_i)
\]
is an isomorphism. For, this would imply that $S(N) {\otimes}_{\Q}
CH^*\left(W,0\right) \xrightarrow{\cong} S(T)$ and 
$CH^*_N\left(X, \cdot \right) {\otimes}_{\Q}  CH^*\left(W,0\right)
\cong CH^*_T\left(X, \cdot \right)$. But the left term of this
isomorphism is same as 
\[
\left(CH^*_N\left(X, \cdot \right){\otimes}_{S(N)} S(N) \right)
{\otimes}_{\Q} CH^*\left(W,0\right) \cong 
CH^*_N\left(X, \cdot \right){\otimes}_{S(N)} S(T).
\] 
In order to prove ~\ref{eqn:reductiveI}, let $Z = Y/N$ and $Z' = Y/T$.
Since $Z' \xrightarrow{f} Z$ is a principal $W$-bundle,
one has a fiber diagram
\[
\xymatrix{
{Z' \times W} \ar[r]^{\pi} \ar[d]_{\pi} & Z' \ar[d]^{f} \\
Z' \ar[r]_{f} & Z}
\]
This gives a commutative diagram
\[
\xymatrix@C.5pc{
CH^*\left(Z, \cdot \right) {\otimes} CH^*\left(W,0\right)  
\ar[r]^{f^* {\otimes} id} \ar[d]^{{\tilde{f}}^*} &
CH^*\left(Z', \cdot \right) {\otimes} CH^*\left(W,0\right) 
\ar[d]^{{\pi}^*} \ar[r]^{f_* {\otimes} id} &
CH^*\left(Z, \cdot \right) {\otimes} CH^*\left(W,0\right) 
 \ar[d]^{{\tilde{f}}^*} \\
CH^*\left(Z', \cdot \right) \ar[r]^{{\pi}^*} &
CH^*\left({Z' \times W}, \cdot \right) \ar[r]^{{\pi}_*} &
CH^*\left(Z', \cdot \right)}
\]
where the tensor product in the top row is over $\Q$.
The right (and the left) vertical map is given by
${\tilde{f}}^*\left(a_1, \cdots , a_r\right) = 
\stackrel{r}{\underset {i=1}{\Sigma}}{f^*(a_i)}$
and so is the middle vertical map. Moreover, the composite horizontal
arrows are identity. Hence ${\tilde{f}}^*$ is an isomorphism as
it is a retract of the middle vertical arrow, which is clearly 
an isomorphism. This proves ~\ref{eqn:reductiveI}
and hence the theorem.
\end{proof}
\begin{cor}\label{cor:reduction}
Let $G$ be a connected and split reductive group and let $T$ be a split 
maximal torus of $G$ with the Weyl group $W$. 
Then for any $X \in {\sV}_G$, the restriction map $r^G_T$
induces an isomorphism
\[
CH^*_G\left(X, \cdot\right) \xrightarrow{\cong} 
{\left(CH^*_T\left(X, \cdot\right)\right)}^W.
\]
\end{cor}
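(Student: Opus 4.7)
The plan is to combine \thmref{thm:reductiveI} with two ingredients already established inside its proof: the isomorphism $CH^*_G(X,\cdot) \cong CH^*_N(X,\cdot)$ from equation \ref{eqn:reduction1} (where $N$ is the normalizer of $T$ in $G$), and the decomposition $CH^*_T(X,\cdot) \cong CH^*_N(X,\cdot) \otimes_{\Q} CH^*(W,0)$ from equation \ref{eqn:reductiveI}. Granting these, it suffices to identify the $W$-invariants of the right-hand side with $CH^*_N(X,\cdot)$, and then to match the composition $r^G_T = r^N_T \circ r^G_N$ with the embedding into the $W$-invariants.

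First, I would fix a good pair $(V,U)$ for the $G$-action and note that $N$ acts freely on $Y = X \times U$. Since $T$ is normal in $N$, the quotient $X_T = Y/T$ carries a residual free action of $W = N/T$ with quotient $X_N$, and the restriction map $r^N_T \colon CH^*_N(X,\cdot) \to CH^*_T(X,\cdot)$ is the flat pullback along the principal $W$-bundle $p \colon X_T \to X_N$. In particular, its image lies in $(CH^*_T(X,\cdot))^W$, and by the same finite-quotient argument used at the start of the proof of \lemref{lem:fibration} (rational coefficients, finite group quotient), the induced map $CH^*_N(X,\cdot) \to (CH^*_T(X,\cdot))^W$ is already an isomorphism.

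Second, I would cross-check this against the explicit decomposition \ref{eqn:reductiveI}. The factor $CH^*(W,0) \cong \Q[W]$ is the regular representation of $W$, whose $W$-invariant subspace is one-dimensional and spanned by $\sum_{w \in W} [w]$. The map $\wt{f}^*$ of \ref{eqn:reductiveI} is $W$-equivariant when the source carries the tensor action (trivial on $CH^*_N(X,\cdot)$, regular on $CH^*(W,0)$), because it is built from $p^*$ via the fiber square $X_T \times W \rightrightarrows X_T \to X_N$, and $W$ acts trivially on $X_N$. Taking $W$-invariants then gives
\[
(CH^*_T(X,\cdot))^W \;\cong\; CH^*_N(X,\cdot) \otimes_{\Q} \Q \;\cong\; CH^*_N(X,\cdot),
\]
and composing with \ref{eqn:reduction1} yields the claimed isomorphism.

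The main obstacle is the $W$-equivariance assertion in the second step: one must verify that the decomposition \ref{eqn:reductiveI} intertwines the geometric $W$-action on $CH^*_T(X,\cdot)$ (coming from $N$ normalizing $T$) with the regular representation on the $CH^*(W,0)$ factor. This amounts to a diagram chase using the projection formula for the principal $W$-bundle $p$ together with the fact that, on the fiber square appearing in the proof of \thmref{thm:reductiveI}, the action of $W$ permutes the sheets $X_T \times W \to X_T$ by translation while leaving $X_N$ fixed. Once this compatibility is confirmed, the two descriptions of $(CH^*_T(X,\cdot))^W$ agree and the corollary follows.
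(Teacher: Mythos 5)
Your proposal is correct, and your first argument is actually a cleaner route than the one the paper takes. The paper's proof is a one-liner: apply $(-)^W$ to both sides of the base-change isomorphism $CH^*_G(X,\cdot)\otimes_{S(G)}S(T)\xrightarrow{\cong}CH^*_T(X,\cdot)$ of Theorem~\ref{thm:reductiveI} and invoke $S(G)={S(T)}^W$ (citing [ED2, Prop.~6]); with $\Q$-coefficients the averaging idempotent makes $(-)^W$ commute with $-\otimes_{S(G)}S(T)$, so the left side collapses to $CH^*_G(X,\cdot)$. Implicit in that argument is precisely the compatibility you flag as the ``main obstacle'': that the isomorphism of Theorem~\ref{thm:reductiveI} intertwines the tensor $W$-action with the geometric one. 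Your first step sidesteps this entirely by factoring $r^G_T=r^N_T\circ r^G_N$, using ~\ref{eqn:reduction1} (i.e.\ Lemma~\ref{lem:fibration}) to identify $CH^*_G$ with $CH^*_N$, and then observing that $X\stackrel{T}{\times}U\to X\stackrel{N}{\times}U$ is the quotient by a free action of the finite group $W$, so that Lemma~\ref{lem:finite1} (the rational finite-quotient invariance, the same input as ~\ref{eqn:fibration1*}) gives $CH^*_N(X,\cdot)\xrightarrow{\cong}(CH^*_T(X,\cdot))^W$ directly, the $W$-action on $CH^*_T$ being exactly the deck-transformation action on this $W$-bundle. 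That already proves the corollary; what it buys is that the only equivariance you need is the manifest one of a covering-space action, at the cost of reaching into the interior of the proof of Theorem~\ref{thm:reductiveI} rather than quoting its statement. Your second step is therefore redundant as a proof (it is an honest consistency check, and the equivariance of ~\ref{eqn:reductiveI} it requires is genuine but unnecessary once step one is in place); I would drop it or present it explicitly as a remark.
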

\begin{proof} This follows directly by taking the Weyl group invariants
on the both sides of the isomorphism in Theorem~\ref{thm:reductiveI}
and then using the isomorphism $S(G) = {S(T)}^W$ 
({\sl cf.} \cite[Proposition~6]{ED2}).
\end{proof}
We end this section with the following structure theorem for
the equivariant higher Chow groups of a variety with the action
of a diagonalizable group on which certain subgroup acts trivially. 
An analogous result for the equivariant $K$-theory for the torus action was 
proved by Thomason ({\sl cf.} \cite[Lemma~5.6]{Thomason3}). 
\begin{thm}\label{thm:trivial}
Let $T$ be a split diagonalizable group and let $X \in {\sV}_T$.
Let $H$ be a connected closed subgroup of $T$ which acts 
trivially on $X$. Then there is a natural isomorphism
\[
CH^*_{T/H}\left(X, \cdot \right) {\otimes}_{\Q} 
S(H) \xrightarrow{i^T_H} CH^*_T\left(X, \cdot \right).
\]                   
\end{thm}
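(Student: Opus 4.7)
The plan is to reduce the statement to a geometric Künneth computation by splitting $T$ as an internal product involving $H$, identifying $X_T$ concretely, and then applying an iterated projective bundle formula.

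First, I would establish that $T \cong T/H \times H$ as algebraic groups. Since $T$ is split diagonalizable, its character lattice $\wh{T}$ is a finitely generated abelian group; since $H$ is a connected closed subgroup of a diagonalizable group, $H$ is a split subtorus, so $\wh{H}$ is free. Consequently the short exact sequence $0 \to \widehat{T/H} \to \wh{T} \to \wh{H} \to 0$ splits, and dualizing yields an isomorphism $T \cong T' \times H$ with $T' := T/H$. In particular, $H \cong \mathbb{G}_m^r$ for some $r$, and any representation of $H$ may be taken to be a direct sum of one-dimensional characters.

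Next, for a fixed codimension $j$, choose good pairs $(V_1, U_1)$ for $H$ and $(V_2, U_2)$ for $T'$ so that $U_1/H \cong (\P^{N-1})^r$ with $N > j$. Then $(V_1 \oplus V_2,\, U_1 \times U_2)$ is a good pair for $T = H \times T'$. Since $H$ acts trivially on $X$ and trivially on $U_2$, and $T'$ acts trivially on $U_1$, quotienting first by $H$ and then by $T'$ gives an isomorphism of schemes
\[
X_T = (X \times U_1 \times U_2)/(H \times T') \; \cong \; X_{T'} \times (U_1/H).
\]
Let $\pi \colon X_T \to X_{T'}$ and $q \colon X_T \to U_1/H$ denote the two projections. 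The map $i^T_H$ is exactly $\alpha \otimes \beta \mapsto \pi^*(\alpha) \cdot q^*(\beta)$, using that $S(H) = CH^*_H(k,0) = CH^*(U_1/H, 0)$ in codimensions $< N$.

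Now apply the projective bundle formula $r$ times, once for each factor of $U_1/H \cong (\P^{N-1})^r$, to obtain
\[
CH^*(X_T, \cdot) \; \cong \; CH^*(X_{T'}, \cdot) \otimes_{\Q} CH^*((\P^{N-1})^r, 0).
\]
Taking the graded piece in codimension $j$ and using $CH^{j'}((\P^{N-1})^r, 0) \cong S^{j'}(H)$ for $j' < N$, this becomes
\[
CH^j_T(X, i) \; \cong \; \bigoplus_{j_1 + j_2 = j} CH^{j_1}_{T'}(X, i) \otimes_{\Q} S^{j_2}(H),
\]
which is precisely the $j$-th graded piece of $CH^*_{T'}(X, \cdot) \otimes_{\Q} S(H)$. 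Matching up the maps on both sides shows that this composite isomorphism is $i^T_H$.

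The main obstacle is purely bookkeeping: one must verify that the chosen good pairs and codimension thresholds $N \gg j$ can be taken compatibly across all $j$, so that the projective bundle formula (applied in a fixed finite-dimensional approximation) really produces an isomorphism in the direct limit defining the equivariant groups. This relies on Proposition~\ref{prop:EHCG}$(iii)$ and the independence of $CH^*_G$ from the good pair chosen, both of which have already been recorded. Apart from this, every ingredient, the character-theoretic splitting, the geometric product decomposition of $X_T$, and the iterated projective bundle formula, is essentially formal.
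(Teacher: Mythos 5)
Your proof is correct and follows essentially the same route as the paper: split $T\cong T/H\times H$ using that $H$ is a subtorus, choose a product good pair so that $X_T\cong X_{T'}\times(\P^n)^r$, and conclude by the iterated projective bundle formula together with the independence of the equivariant groups from the choice of good pair. The only cosmetic difference is that you spell out the character-lattice argument for the splitting and the explicit formula $\alpha\otimes\beta\mapsto\pi^*(\alpha)\cdot q^*(\beta)$ for $i^T_H$, which the paper leaves implicit.
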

\begin{proof} 
Put $T' = T/H$. Since $H$ is a torus, we 
can choose a decomposition (not necessarily canonical) $T = H \times T'$.   
Fix an integer $j \ge 0$ and let $(V, U)$ and $(V',U')$
be good pairs for the actions of $H$ and $T'$ respectively
corresponding to $j$ as in \cite[Example~3.1]{ED2}.
Thus $U$ is a product of punctured affine spaces and
${U}/{H} = {\left({\P}^n\right)}^{r}$ for some $n \gg 0$,
where $r = {\rm rank}(H)$.
Then $(V_T, U_T)$ with $V_T = V \times V'$ and $U_T = U \times U'$,
is a good pair for $j$ for the action of $T$. Note that 
$CH^j_T\left(X, \cdot \right)$ does not depend on the choice
of the decomposition of $T$ since it does not depend on the choice of 
the good pair $(V_T, U_T)$. Now we have 
\[
X_T = \left(X \times U \times U' \right)/({H \times T'})
= \left(X \times U' \right) \stackrel{T'}{\times}{U/{H}}
= X_{T'} \times {\left({\P}^n\right)}^{r},
\]
where the second equality holds since $H$ acts trivially on
$X \times U'$ and the third equality holds because $T'$ acts
trivially on $U$.   
We now apply the projective bundle formula for the non-equivariant  
higher Chow groups to conclude that the map
\[
{\underset {p+q = j}{\bigoplus}} CH^p \left(X_{T'}, \cdot \right) 
{\otimes}_{\Q} CH^q\left({\left({\P}^n\right)}^{r}, 0\right)
\to CH^j\left(X_T, \cdot \right)
\]
is an isomorphism. 
However, $CH^p \left(X_{T'}, \cdot \right) \cong 
CH^p_{T'}\left(X, \cdot \right)$ for all $p \le j$ and
$CH^j\left(X_T, \cdot \right) \cong CH^j_T\left(X, \cdot \right)$.
This finishes the proof.
\end{proof}
\section{Completions Of Equivariant Higher Chow Groups}
Let $G$ be a linear algebraic group. Recall that the {\sl Chow ring}
$S(G)$ of $G$ is the graded ring $CH^*_G\left(k,0\right) = 
\stackrel{\infty}{\underset {j=0}{\bigoplus}} 
CH^j_G\left(k,0\right)$, and $\widehat{S(G)}$ is the $J_G$-adic
completion of $S(G)$, where $J_G$ is the augmentation ideal of cycles 
of positive codimension. For $X \in {\sV}_G$, we can define various
completions of the graded $S(G)$-module $CH^*_G\left(X, \cdot\right)$.
Our objective in this section is to analyze the relation between
these completions. Our main ingredient for this analysis is the
following general algebraic result. 

Let $R_0$ be a commutative Noetherian ring and let $R =  
\stackrel{\infty}{\underset {j=0}{\bigoplus}} R_j$ be 
a finitely generated graded $R_0$-algebra. Let $I = 
\stackrel{\infty}{\underset {j=1}{\bigoplus}} R_j$ be the
irrelevant ideal of $R$. Let $M =  
\stackrel{\infty}{\underset {j=0}{\bigoplus}} M_j$ be a graded
$R$-module which need not be finitely generated.   
Put $M^i = \stackrel{\infty}{\underset {j= i}{\bigoplus}} M_j$
for $i \ge 0$. Then $M^{\cdot}$ defines a filtration of $M$ by 
$R$-submodules. We shall call this the {\sl filtration of $M$ by grading}.
Let $\widehat{R}$ be the $I$-adic completion of $R$.
Associated to $M$, we define the following three modules. \\
$\wt{M} = M {\otimes}_R \widehat{R} \hspace*{1cm} 
(the \ weak \ completion \ of \ M)$ \\
$\widehat{M} = {\widehat{M}}_I  \hspace*{1cm}     
(the \ I-adic \ completion \ of \ M)$ \\
$\ov{M} =$ the completion defined by the filtration $M^{\cdot}$ \\
$\hspace*{3cm} (the \ graded \ completion \ of M)$. \\
Note that $\ov{M}$ is an $\widehat{R}$-submodule of the 
product $\stackrel{\infty}{\underset {i=0}{\prod}} M/{M^i}$ and
the natural map 
\begin{equation}\label{eqn:comp0}
\stackrel{\infty}{\underset {i=0}{\prod}} M_i
\longrightarrow \stackrel{\infty}{\underset {i=0}{\prod}} M/{M^i}
\end{equation}
\[
\left(m_i\right) \mapsto \left((m_0, \cdots , m_{i-1})\right)
\]
identifies $\stackrel{\infty}{\underset {i=0}{\prod}} M_i$
with $\ov{M}$. Moreover,
the map $M \to \ov{M}$ is the natural embedding of the direct sum
into the direct product.  
All the above completions of $M$ are $\widehat{R}$-modules and there
are natural maps
\begin{equation}\label{eqn:completion}
\xymatrix{
M \ar[r] & \wt{M} \ar[r]^{{\phi}_M} \ar@/_1pc/[rr]_{{\psi}_M} &
\widehat{M} \ar[r]^{{\theta}_M} & \ov{M},}
\end{equation}
where ${\phi}_M$ and ${\theta}_M$ (hence their composite ${\psi}_M$) are 
$\widehat{R}$-linear.
\begin{prop}\label{prop:completion1}
For the graded ring $R$, the following hold. \\
$(i)$ The graded completion is an exact functor on the category of
graded $R$-modules. \\
$(ii) \ \ {\phi}_M$ is an isomorphism if $M$ is finitely generated graded 
$R$-module. \\
$(iii) \ \ {\theta}_M$ is an isomorphism if $M$ is generated as 
$R$-module by a (possibly infinite) set $S$ of homogeneous elements of
bounded degree. \\  
$(iv) \ \ {\psi}_M$ (hence ${\phi}_M$) is injective for any
graded $R$-module $M$. \\
$(v) \ \ {\psi}_M$ need not be surjective for any graded $R$-module $M$.
\end{prop}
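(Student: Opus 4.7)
My plan is to treat the five parts in order, since each supports the next.

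Part (i) is immediate from the identification (\ref{eqn:comp0}) of $\ov{M}$ with $\prod_{i\ge 0} M_i$: a graded map of $R$-modules is the product of its graded components, an exact sequence of graded modules is exact in each degree, and direct products in $\Ab$ are exact. For (ii) I would invoke the standard fact that, since $R$ is Noetherian (being a finitely generated algebra over Noetherian $R_0$), the $I$-adic completion $\widehat{R}$ is $R$-flat and $\widehat{M} \cong M \otimes_R \widehat{R}$ for any finitely generated $R$-module $M$; this is exactly the claim that $\phi_M$ is an isomorphism. For (iii) the plan is to show that the filtrations $\{I^i M\}$ and $\{M^i\}$ of $M$ are cofinal, so they define the same completion and $\theta_M$ is an isomorphism. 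The inclusion $I^i M \subseteq M^i$ is clear by degree. For the reverse, writing $R = R_0[x_1, \ldots, x_s]$ with $\deg x_\ell = d_\ell \le D$, any monomial of degree $\ge iD$ must involve at least $i$ of the $x_\ell$'s (since $\sum a_\ell \ge (\sum a_\ell d_\ell)/D \ge i$), so $R_{\ge iD} \subseteq I^i$. If $M$ is generated by homogeneous elements of degree $\le N$, then any homogeneous element of $M^{iD + N}$ is an $R$-combination of these generators with coefficients of degree $\ge iD$, hence in $I^i$, yielding $M^{iD + N} \subseteq I^i M$.

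For (iv), which I expect to be the main technical step, the strategy is to reduce to a finitely generated submodule. Given $x \in \wt{M}$ with $\psi_M(x) = 0$, I would write $x = \sum_{k=1}^{K} m_k \otimes r_k$ as a finite sum and let $N \subseteq M$ be the graded $R$-submodule generated by the (finitely many) homogeneous components of $m_1, \ldots, m_K$; then $N$ is finitely generated and graded, and $x$ is the image under $\wt{N} \to \wt{M}$ of some $x' \in \wt{N}$. Flatness of $\widehat{R}$ over $R$ makes $\wt{N} \to \wt{M}$ injective, while part (i) applied to $N \hookrightarrow M$ makes $\ov{N} \to \ov{M}$ injective. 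From the commutative square relating $\psi_N$ and $\psi_M$ one then concludes $\psi_N(x') = 0$. But $N$ is finitely generated and generated in bounded degree, so by (ii) and (iii) both $\phi_N$ and $\theta_N$ are isomorphisms; hence $\psi_N = \theta_N \circ \phi_N$ is an isomorphism and $x' = 0$, so $x = 0$.

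For (v) I would take $R_0 = k$ a field, $R = k[t]$ with $\deg t = 1$ (so $I = (t)$), and let $M = \bigoplus_{j \ge 0} k \cdot e_j$ with $\deg e_j = j$ and $t$ acting as zero. Then $IM = 0$, so $\wt{M} = M \otimes_R (\widehat{R}/I\widehat{R}) = M$, while $\ov{M} = \prod_{j \ge 0} k$, and $\psi_M$ is the natural non-surjective inclusion of a direct sum into a direct product. The chief obstacle is (iv): the crux is to exploit the finiteness implicit in any element of the tensor product together with the flatness of $\widehat{R}$ over $R$, in order to reduce to a finitely generated graded submodule on which (ii) and (iii) can be combined.
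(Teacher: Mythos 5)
Your proposal is correct and follows essentially the same route as the paper: exactness degree-by-degree for (i), Noetherianity for (ii), cofinality of the $I$-adic and grading filtrations for (iii), and for (iv) a reduction to finitely generated graded submodules where (ii) and (iii) combine (the paper packages this as a direct-limit argument over all finitely generated graded submodules rather than element-by-element, but the content is identical, and your flatness remark is not even needed since injectivity of $\ov{N} \to \ov{M}$ alone forces $\psi_N(x')=0$). The only divergence is the counterexample in (v), where the paper takes $M = R^{\N}$ while you take a module on which $I$ acts trivially; both are valid one-line witnesses that the direct sum does not fill out the direct product.
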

\begin{proof} To prove $(i)$, we note that a sequence 
\[
0 \to M' \to M \to M'' \to 0 
\]
of graded $R$-linear maps is exact if and only if 
\[
0 \to M'_j \to M_j \to M''_j \to 0 
\]
is exact sequence of $R_0$-modules for every $j \ge 0$.
Equivalently, the sequence  
\[
0 \to \stackrel{\infty}{\underset {j=0}{\prod}} M'_j \to
\stackrel{\infty}{\underset {j=0}{\prod}} M_j \to 
\stackrel{\infty}{\underset {j=0}{\prod}} M''_j \to 0
\]
is exact, which proves $(i)$.
The part $(ii)$ is obvious since $R$ is Noetherian.

For $(iii)$, it suffices to show that the $I$-adic filtration and the 
filtration by grading give the same topology on $M$. We already
have $I^iM \subseteq M^i$ for every $i \ge 0$. So we need to prove that
for given $n \ge 1$, one has $M^i \subseteq I^nM$ for all $i \gg 0$.
Since $R$ is a finitely generated $R_0$-algebra, we can assume that there 
exists a finite set $T$ of homogeneous elements of positive degree in $R$ 
which generate $R$ as an $R_0$-algebra. Let $N$ denote the maximum of the 
bounded degrees of the sets $S$ and $T$, and the cardinality of $T$. It 
suffices then to show that 
\begin{equation}\label{eqn:com1}
M^i \subseteq I^nM \ {\rm for} \ i \ge N' = (N^2n+1)N.
\end{equation}
So let $m \in M_j$ with $j \ge i \ge N'$ and write
$m = \stackrel{r}{\underset {u=0}{\Sigma}}a_u m_u$ with ${\rm deg}(m_u)
\le N$. Then for every
$u$, we must have $j = {\rm deg}(a_u) + {\rm deg}(m_u) \ge N'$,
which implies that $ {\rm deg}(a_u) \ge N' - {\rm deg}(m_u) \ge
N'- N = N^2n$. This shows that
\[
M^i \subseteq R_{\ge N^2n}M.
\]
Thus it suffices to show that $R_{\ge N^2n} \subseteq I^n$ in order to
prove ~\ref{eqn:com1}. So let $a \in R_j$ with $j \ge N^2n$ and
write $a = a_1^{t_1} \cdots a_r^{t_r}$ with $a_u \in T$. 
We need to show that $\stackrel{r}{\underset {u=1}{\Sigma}}t_u \ge n$
(which would imply that $a \in I^n$).
However, otherwise we would get
\[
N^2n \le j = \stackrel{r}{\underset {u=1}{\Sigma}} {\rm deg}(a_u) t_u 
<  \stackrel{r}{\underset {u=1}{\Sigma}} Nn \le N^2n,
\]
which is absurd. This proves $(iii)$.  
   
To prove $(iv)$, let $M$ be a graded $R$-module. Then there exists a
direct system $\{M_{\lambda}\}$ of finitely generated graded $R$-submodules 
of $M$ such that 
${\underset {\lambda}{\varinjlim}} M_{\lambda} \xrightarrow{\cong} M$,
which in turn gives ${\underset {\lambda}{\varinjlim}} \left( 
M_{\lambda} {\otimes}_R {\widehat{R}}\right) \xrightarrow{\cong}
M  {\otimes}_R {\widehat{R}}$. This gives us a commutative diagram
\[
\xymatrix{
{\underset {\lambda}{\varinjlim}} \left( 
M_{\lambda} {\otimes}_R {\widehat{R}}\right) \ar[r] \ar[d] &
{\underset {\lambda}{\varinjlim}} \ov{M_{\lambda}} \ar[d] \\
{M {\otimes}_R \widehat{R}} \ar[r]^{{\psi}_M} & {\ov{M}}.}
\]
The top horizontal arrow is an isomorphism by the parts $(ii)$ and
$(iii)$ of the proposition. We have just seen that the left vertical arrow
is an isomorphism. The right vertical arrow is injective by using
the exactness of the graded completion and the direct limit functors,
plus the fact that each $M_{\lambda} \inj M$. Hence the bottom
horizontal arrow must be injective.

To see $(v)$, take $M = R^{\N}$. Then it is easy to check that
$\wt{M} = {\left(\stackrel{\infty}{\underset {j=0}{\prod}} R_j\right)}^{\N}$
whereas $\ov{M} = \stackrel{\infty}{\underset {j=0}{\prod}} {R_j}^{\N}$. Hence
${\psi}_M$ is not surjective.
\end{proof}
We need a few intermediate results before we give our first application of
Proposition~\ref{prop:completion1} to the equivariant higher Chow groups.
\begin{lem}\label{lem:nonreductive}
Let $G$ be a linear algebraic group over $k$ such that the unipotent
radical $R_u G$ is defined over $k$. Let $L = G/{R_uG}$ be
the reductive quotient of $G$. Then there are natural isomorphisms
$R(L) \xrightarrow{\cong} R(G)$ and $S(L) \xrightarrow{\cong} S(G)$.
\end{lem}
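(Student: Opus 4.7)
Write $U := R_u G$ and $\pi \colon G \twoheadrightarrow L := G/U$ for the quotient homomorphism. The natural candidates for the claimed isomorphisms are the inflation map $\pi^* \colon R(L) \to R(G)$, which sends an $L$-representation to the same vector space regarded as a $G$-representation via $\pi$, and the pullback $\pi^* \colon S(L) \to S(G)$ on equivariant Chow rings of a point (induced, at the level of good pairs, by inflating a representation of $L$ to one of $G$). My plan is to treat the two statements separately.

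For the Chow-ring assertion I would apply Proposition~\ref{prop:NRL} with $H = G$ and $X = \Spec k$: since $G$ has Levi decomposition $G = L \ltimes U$, that proposition gives an isomorphism $CH^*_G(\Spec k, \cdot) \xrightarrow{\cong} CH^*_L(\Spec k, \cdot)$, and restricting to homological degree $i = 0$ yields exactly $S(G) \cong S(L)$. Compatibility with the ring structure is automatic, since the restriction map of Proposition~\ref{prop:NRL} is realized by flat pullback along a smooth morphism of mixed quotients and pullbacks are ring maps.

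For the representation-ring assertion, injectivity of $\pi^*$ is formal: any $G$-linear isomorphism $\phi \colon \pi^* V \xrightarrow{\cong} \pi^* W$ satisfies $\phi(\pi(g)\cdot v) = \pi(g)\cdot \phi(v)$ for all $g \in G$, and since $\pi$ is surjective on $k$-points this forces $\phi$ to be $L$-equivariant, so $V \cong W$ as $L$-representations. For surjectivity, let $V$ be a finite-dimensional $G$-representation. Because $U$ is a connected unipotent $k$-subgroup of $G$, the Lie-Kolchin theorem (applied over $\bar k$ and descended to $k$ via the Galois-stability of fixed subspaces, which is legitimate since $U$ is defined over $k$) yields $V^U \neq 0$ whenever $V \neq 0$; normality of $U$ in $G$ forces $V^U$ to be $G$-stable. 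Iterating on $V/V^U$ produces a finite $G$-stable filtration
\begin{equation*}
0 = V_0 \subset V_1 \subset \cdots \subset V_n = V
\end{equation*}
whose successive quotients have $U$ acting trivially and therefore descend to $L$-representations along $\pi$. Hence $[V] = \sum_i [V_i/V_{i-1}]$ lies in the image of $\pi^*$ in $R(G)$, which gives surjectivity.

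The genuine step is the Lie-Kolchin argument used for surjectivity in the $R$-case; everything else is either formal (injectivity for $R$) or a direct appeal to Proposition~\ref{prop:NRL} (the whole $S$-case). The main technical point to verify is that the filtration is defined over $k$ and not merely over $\bar k$, and this is ensured by the $k$-rationality of $U$ together with Galois-equivariance of the fixed-subspace functor.
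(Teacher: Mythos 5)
Your argument for $R(L)\xrightarrow{\cong} R(G)$ is essentially the paper's: both rest on the fixed-point theorem for unipotent groups (the paper applies it directly to an irreducible $G$-representation $V$ to conclude that $V^{R_uG}$ is a nonzero $G$-stable subspace, hence all of $V$, so the irreducibles of $G$ and of $L$ coincide and the two free abelian groups have the same basis; your filtration argument encodes the same fact). One caveat: your ``injectivity is formal'' step only shows that inflation is injective on isomorphism classes of representations, whereas injectivity of the homomorphism of Grothendieck groups $R(L)\to R(G)$ requires comparing Jordan--H\"older constituents. This is easily repaired using the fact that inflation gives a bijection on irreducibles, but as written it does not prove injectivity of the map of groups.

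The genuine gap is in the $S$-case. You invoke Proposition~\ref{prop:NRL} by asserting that $G$ has a Levi decomposition $G = L \ltimes R_uG$ with split unipotent radical. But the lemma only assumes that $R_uG$ is defined over $k$; in positive characteristic the projection $G \to L$ need not split, so a Levi subgroup need not exist, and Proposition~\ref{prop:NRL} (which also requires the unipotent radical to be split over $k$) does not apply. This is precisely why the paper gives a different argument and records in Remark~\ref{remk:nonr*} that the route through Proposition~\ref{prop:NRL} only covers characteristic zero. The paper instead constructs the pull-back $S(L)\to S(G)$ directly from a product of good pairs for $G$ and $L$, and then deduces that it is an isomorphism from the already-established isomorphism $R(L)\cong R(G)$ together with the Edidin--Graham isomorphism $\widehat{R(G)}\cong \prod_{j}CH^j_G(k,0)$ of Theorem~\ref{thm:ED}: the induced map on completed products is then an isomorphism, which forces surjectivity of $CH^j_L(k,0)\to CH^j_G(k,0)$ in each degree, while injectivity of $S(L)\to S(G)$ follows because $S(L)$ injects into the product. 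To complete your proof you would either have to restrict to characteristic zero or replace the appeal to Proposition~\ref{prop:NRL} by such an indirect argument.
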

\begin{proof} Since $R_u G$ is defined over $k$, it 
follows from \cite[Corollary~12.2.2]{Springer} that the reductive 
quotient $L$ is also defined over $k$.
Let $V$ be an irreducible representation of $G$. Then the
fixed point theorem ({\sl cf.} \cite[Theorem~17.5]{Hump}) implies
that if the unipotent radical $R_uG$ acts non-trivially on $V$, then
the invariance subspace $W \subset V$ is non-zero. Moreover, $W$ is then
$G$-invariant which contradicts the irreducibility of $V$. We conclude that
$R_uG$ acts trivially on $V$. Since $R(G)$ is a free abelian group on the
set of irreducible representations of $G$, we see that the map
$R(L) \to R(G)$ must be an isomorphism.

For proving the corresponding isomorphism of the Chow rings, we fix any
$j \ge 0$ and choose good pairs $(V_1, U_1)$ and $(V_2, U_2)$ for
$G$ and $L$ respectively, and let $G$ act on $V = V_1 \oplus V_2$
diagonally where it acts on $V_2$ via $L$. Then $(V_1 \times V_2,
U_1 \times U_2)$ is a good pair for $G$. Moreover, the $G$-equivariant 
projection map $U_1 \times U_2 \to U_2$ induces the map on quotients
${\left(U_1 \times U_2 \right)}/G \to {U_2}/G = {U_2}/L$. This gives the 
pull-back map on Chow groups $CH^j_L\left(k, i\right) =
CH^j\left({U_2}/L, i \right) \to 
CH^j\left({\left(U_1 \times U_2 \right)}/G, i \right) = 
CH^j_G\left(k, i\right)$.
This gives the natural map $S(L) \to S(G)$. To show that this is an
isomorphism, we use the isomorphism $R(L) \xrightarrow{\cong} R(G)$ shown
above and the following commutative diagram of completions ({\sl cf.} 
\cite[Lemma~3.2]{ED1}).
\[
\xymatrix{
{\wh{R(L)}} \ar[r] \ar[d] & {\underset {j \ge 0}{\prod}} CH^j_L (k)
\ar[d] & S(L) \ar[d] \ar[l] \\
{\wh{R(G)}} \ar[r] & {\underset {j \ge 0}{\prod}} CH^j_G (k) & 
S(L) \ar[l]}
\]
The horizontal arrows in the left square are ring isomorphisms by 
Theorem~\ref{thm:ED} and we have shown above that the left vertical
arrow is an isomorphism of rings. Thus the middle vertical arrow is
also an isomorphism. The horizontal arrows in the right square are
clearly injective. We conclude from this that the right vertical arrow
is injective. On the other hand, the isomorphism of the middle vertical
arrow and the natural surjection ${\underset {j \ge 0}{\prod}} CH^j_G (k)
\surj CH^j_G (k)$ implies that the map $CH^j_L (k) \to CH^j_G (k)$ is
surjective for each $j \ge 0$ and hence, the map $S(L) \to S(G)$ is
also surjective and consequently an isomorphism.
\end{proof}
\begin{remk}\label{remk:nonr*}
We remark here that isomorphism of $S(L) \to S(G)$ in characteristic
zero follows directly from Proposition~\ref{prop:NRL}. The above indirect
argument is given to take care of the positive characteristic case when
$G$ might not have Levi subgroups.
\end{remk}  
\begin{lem}\label{lem:folklore1}
Let $G$ be a linear algebraic group acting on a quasi-projective variety
$X$ over $k$. Let $l/k$ be a finite extension of $k$ and let 
$CH^*_G\left(X_l, i \right)$ denote the equivariant higher Chow groups
for the action of the group $G_l$ on $X_l$. Then there are natural maps
$CH^*_G\left(X, i \right) \to CH^*_G\left(X_l, i \right) \to
CH^*_G\left(X, i \right)$ such that the composite map is multiplication by
the degree $[l:k]$ of the field extension.
\end{lem}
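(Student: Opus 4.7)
The approach is to construct the two maps by base changing everything from $k$ to $l$ at the level of mixed quotients, and then to recognize the composite as the familiar ``pushforward of pullback equals multiplication by the degree'' for finite flat base change of ordinary higher Chow groups.

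Fix $i,j \ge 0$ and choose a good pair $(V,U)$ for the $G$-action on $X$ corresponding to $j$, so that $CH^j_G(X,i) = CH^j(X_G, i)$ where $X_G = X \stackrel{G}{\times} U$. Base changing to $l$ produces the representation $V_l$ of $G_l$ with open $G_l$-invariant subset $U_l$ on which $G_l$ acts freely; its complement still has codimension $>j$, so $(V_l, U_l)$ is a good pair for the $G_l$-action on $X_l$. Moreover, formation of geometric quotients commutes with flat base change, so $(X_l)_{G_l} = X_l \stackrel{G_l}{\times} U_l \cong (X_G) \times_{\Spec k} \Spec l$. Call the resulting base change morphism $\pi : (X_G)_l \to X_G$.

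Since $\Spec l \to \Spec k$ is finite and flat of degree $n=[l:k]$, so is $\pi$ (it is obtained from it by flat base change along $X_G \to \Spec k$). Hence Proposition~\ref{prop:EHCG} provides a flat pullback $\pi^* : CH^j(X_G, i) \to CH^j((X_G)_l, i)$ and a proper pushforward $\pi_* : CH^j((X_G)_l, i) \to CH^j(X_G, i)$. Translated back to equivariant notation, these are the two maps
\[
CH^j_G(X,i) \xrightarrow{\pi^*} CH^j_G(X_l,i) \xrightarrow{\pi_*} CH^j_G(X,i)
\]
asserted in the lemma. To see that these do not depend on the choice of good pair, observe that if $(V',U')$ is a second good pair for the same $j$, one compares via the good pair $(V \oplus V', U \times U')$ and uses the homotopy invariance of Proposition~\ref{prop:EHCG}(ii), which is compatible with flat base change on both ends.

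Finally, I need to verify that $\pi_* \circ \pi^*$ is multiplication by $n$. This is the standard projection-formula computation: by the projection formula (Proposition~\ref{prop:EHCG}(vi)), for any $\alpha \in CH^j(X_G,i)$,
\[
\pi_*(\pi^*(\alpha)) \;=\; \pi_*(1)\cdot \alpha,
\]
and $\pi_*(1)$ is the class of the fundamental cycle of $(X_G)_l$ pushed forward to $X_G$, which equals $n \cdot [X_G]$ because $\pi$ is finite flat of constant degree $n$ (this reduces via Bloch's definition to the usual zero-cycle computation on $\Spec l \to \Spec k$). Hence $\pi_*\pi^* = n\cdot \id$, as required. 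The main subtle point is simply the verification that the base-change map of mixed quotients is again finite flat of degree $n$, which, as noted above, is immediate because it is obtained from $\Spec l \to \Spec k$ by flat base change.
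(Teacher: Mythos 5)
Your proof is correct and follows essentially the same route as the paper: identify $X_l \stackrel{G_l}{\times} U_l$ with $(X_G)_l$ and then use flat pullback and finite pushforward along the degree-$[l:k]$ map $\pi : (X_G)_l \to X_G$. The one caveat is that your appeal to the projection formula of Proposition~\ref{prop:EHCG}(vi) is not literally available here, since $X$ (hence $X_G$) need not be smooth; the paper instead cites Bloch's Corollary~1.4, which gives $\pi_*\circ\pi^* = [l:k]\cdot\mathrm{id}$ for finite flat maps directly at the level of cycles --- exactly the ``zero-cycle computation'' you gesture at in your last sentence.
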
   
\begin{proof} Fix $j \ge 0$ and let $(V, U)$ be a good pair for the
$G$-action corresponding to $j$. Since the map $U \to U/G$ is a principal
$G$-bundle of quasi-projective $k$-schemes, we see that 
$U_l \to {(U/G)}_l$ is a principal $G_l$-bundle. In particular,
$(V_l, U_l)$ is a good pair for the $G_l$-action. 
Similarly, the principal $G$-bundle $X \times U \to X_G$ implies that
$X_l {\times}_l U_l \to {(X_G)}_l$ is a principal $G_l$-bundle, which 
shows that the mixed quotient $X_l \stackrel{G_l} {\times} U_l$
is isomorphic to ${(X_G)}_l$. Using this, we get 
\begin{equation}\label{eqn:folk}
CH^j_G\left(X_l, i \right) \cong 
CH^j\left(X_l \stackrel{G_l} {\times} U_l, i \right)
\cong CH^j\left({(X_G)}_l, i \right).
\end{equation}
Let ${\pi} : {(X_G)}_l \to X_G$ denote the natural finite map.
Then the flatness of ${\pi}$ implies that there are pull-back and
push-forward maps $CH^j\left(X_G, i \right) \xrightarrow{{\pi}^*}
CH^j\left({(X_G)}_l, i \right) \xrightarrow{{\pi}_*} 
CH^j\left(X_G, i \right)$ such that ${\pi}_* \circ {\pi}^*$ is
multiplication by $[l:k]$ by \cite[Corollary~1.4]{Bloch}. 
The proof is completed by combining this with the isomorphism in
~\ref{eqn:folk}.
\end{proof}
\begin{remk} It is easy to see from the above proof that if $l/k$ is a
Galois extension in Lemma~\ref{lem:folklore1}, then 
$CH^*_G\left(X, i \right)$ is in fact the Galois invariant of 
$CH^*_G\left(X_l, i \right)$ under ${\pi}^*$ and ${\pi}_*$
is just the trace map. This follows for example, from
\cite[Lemma~8.4]{Gubeladze} and the non-equivariant Riemann-Roch
isomorphism of \cite{Bloch}.
\end{remk}
We next present the following two elementary results in commutative algebra
which are useful in describing when a subring of a Noetherian ring is 
also Noetherian. Recall that if $B$ is a commutative ring containing a
subring $A$, then one says that $A$ is a {\sl pure subring} of $B$, if
for any $A$-module $M$, the natural map $M \to M {\otimes}_A B$ is
injective. One example of pure subrings which often occurs is the case when 
$A$ is a retract of $B$ as an $A$-module. It is known that pure subrings
share many good properties of the ambient ring. We mention here one such
property that will be useful in this paper.
\begin{lem}\label{lem:folklore} 
Let $R$ be a Noetherian ring, $B$ a finitely generated $R$-algebra, and
$A$ a pure $R$-subalgebra of $B$. Then $A$ is finitely generated over
$R$.
\end{lem}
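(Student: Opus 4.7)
The plan is to prove this in two steps, first establishing that $A$ is Noetherian and then deducing finite generation over $R$. For Noetherianity, purity gives that for every ideal $\mathfrak{a}$ of $A$, the natural map $A/\mathfrak{a} \to (A/\mathfrak{a}) \otimes_A B = B/\mathfrak{a}B$ is injective, which is equivalent to the contraction identity $\mathfrak{a}B \cap A = \mathfrak{a}$. Since $R$ is Noetherian and $B$ is a finitely generated $R$-algebra, the Hilbert basis theorem implies $B$ is Noetherian. Any ascending chain $\mathfrak{a}_1 \subseteq \mathfrak{a}_2 \subseteq \cdots$ of ideals of $A$ therefore extends to an ascending chain $\mathfrak{a}_1 B \subseteq \mathfrak{a}_2 B \subseteq \cdots$ in $B$ which stabilizes, and the contraction identity forces the original chain in $A$ to stabilize. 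Hence $A$ is Noetherian.

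For finite generation, the plan is to leverage purity (which in the settings of interest provides an $A$-linear retraction $\pi : B \to A$, an $A$-module splitting of the inclusion) together with the Noetherianity just established. Writing $B = R[b_1, \ldots, b_n]$, every $a \in A$ is a finite $R$-linear combination of monomials $b^I$; applying the $A$-linear map $\pi$ and using $\pi(a) = a$, one obtains $a = \sum_I r_I \pi(b^I)$, so the countable family $\{\pi(b^I)\}_I$ generates $A$ as an $R$-module. To extract a finite set of $R$-algebra generators, I would consider the ascending chain of ideals $\mathfrak{a}_N := (\pi(b^I) : |I| \leq N)$ of $A$, which stabilizes at some $N_0$ by the first step. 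A Hilbert-style induction, mirroring the classical finite-generation theorem for rings of reductive group invariants, then shows that $\{\pi(b^I) : |I| \leq N_0\}$ in fact generates $A$ as an $R$-algebra.

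The main obstacle is the second step: passing from finite generation of ideals in $A$ (i.e., Noetherianity) to finite generation of $A$ itself as an $R$-algebra. The Hilbert-type argument uses the $A$-linearity of $\pi$ crucially, and the termination of the induction is cleanest in the graded settings in which this lemma is applied in the paper (where the Chow rings and representation rings carry natural gradings compatible with $\pi$). Without such additional structure, more care is required to ensure the induction closes, typically by filtering $B$ by a Noetherian $A$-submodule filtration adapted to the monomial degrees in the $b_i$.
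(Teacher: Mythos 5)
The paper does not actually prove this lemma: its ``proof'' is a one-line citation to Hashimoto's theorem \cite[Theorem~1]{Hasi} (``A pure subalgebra of a finitely generated algebra is finitely generated''), which is a genuinely nontrivial result. So you are attempting to reprove a quoted theorem from scratch, and your attempt has a real gap. Your Step 1 is correct and standard: cyclic purity gives $\mathfrak{a}B \cap A = \mathfrak{a}$ for every ideal $\mathfrak{a} \subseteq A$, and since $B$ is Noetherian (Hilbert basis), ascending chains in $A$ stabilize; hence $A$ is Noetherian.

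Step 2 is where the proof breaks down, in two ways. First, purity of $A \to B$ does \emph{not} in general provide an $A$-linear retraction $\pi : B \to A$; split monomorphisms are pure but the converse fails, so your argument at best treats the special case where $A$ is a direct summand of $B$ (which does cover the paper's applications, but not the lemma as stated). Second, and more seriously, even granting such a $\pi$, the proposed induction does not close. Stabilization of the chain of ideals $\mathfrak{a}_N = \bigl(\pi(b^I) : |I| \le N\bigr)$ at $N_0$ only tells you that for $|I| > N_0$ one can write $\pi(b^I) = \sum_J a_J\, \pi(b^J)$ with $|J| \le N_0$ and coefficients $a_J \in A$ that are completely uncontrolled; it does not place $\pi(b^I)$ in the $R$-subalgebra generated by $\{\pi(b^J) : |J| \le N_0\}$. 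In Hilbert's classical argument this step is rescued by the grading: a homogeneous element of degree $d$ has coefficients of strictly smaller degree, and the Reynolds operator preserves degree, so one can induct. Here there is no grading on $A$ or $B$ compatible with $\pi$, and you acknowledge as much (``more care is required to ensure the induction closes''). That missing care is precisely the content of Hashimoto's theorem, whose actual proof proceeds quite differently (via Noetherian induction and generic flatness arguments), so the key step of your proof is absent rather than merely sketched.
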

\begin{proof} {\sl Cf.} \cite[Theorem~1]{Hasi}.
\end{proof}
\begin{lem}\label{lem:folklore1*}
Let $R \subset S$ be an inclusion of commutative rings such that
$R$ is Noetherian and $S$ is finitely generated as $R$-algebra.
Let $G$ be a finite group of $R$-algebra automorphisms of $S$. Then
the ring of invariants $S^G$ is also a finitely generated $R$-algebra,
and hence Noetherian.
\end{lem}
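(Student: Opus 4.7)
The plan is to prove this via the classical argument, due to Emmy Noether, that $S$ is integral over $S^G$, which then allows us to wedge $S^G$ between two Noetherian layers: an intermediate finitely generated $R$-subalgebra $T \subseteq S^G$ and the module-finite extension $S$ of $T$. From this sandwich, finite generation of $S^G$ drops out by a Noetherian module argument.

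First I would show that every $s \in S$ is integral over $S^G$, using the polynomial $P_s(X) = \prod_{g \in G}(X - g(s)) \in S[X]$: this polynomial is monic, vanishes on $s$, and its coefficients are elementary symmetric functions in the $G$-orbit $\{g(s)\}_{g \in G}$, hence lie in $S^G$. Next, pick $R$-algebra generators $s_1,\dots,s_n$ of $S$, and let $T \subseteq S^G$ be the $R$-subalgebra generated by the (finitely many) coefficients of the monic polynomials $P_{s_1},\dots,P_{s_n}$. Then $T$ is a finitely generated $R$-algebra, so by Hilbert's basis theorem $T$ is Noetherian. Since $R \subseteq T$, the ring $S = R[s_1,\dots,s_n]$ is equal to $T[s_1,\dots,s_n]$, and each $s_i$ is by construction integral over $T$; therefore $S$ is a finitely generated $T$-algebra in which all generators are integral, hence $S$ is a finitely generated $T$-module.

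To finish, observe that since $T$ is Noetherian and $S$ is a finitely generated $T$-module, $S$ is a Noetherian $T$-module, so its $T$-submodule $S^G$ is also finitely generated over $T$. In particular $S^G$ is a finitely generated $T$-algebra, and since $T$ is itself a finitely generated $R$-algebra, $S^G$ is a finitely generated $R$-algebra. Noetherianness then follows from a second application of Hilbert's basis theorem to the finitely generated algebra $S^G$ over the Noetherian ring $R$.

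I do not anticipate a substantive obstacle. The only mild subtlety worth flagging is that one should resist the temptation to invoke the pure-subring machinery of Lemma~\ref{lem:folklore} via a Reynolds operator $\tfrac{1}{|G|}\sum_{g} g$, since the statement as given carries no invertibility hypothesis on $|G|$ in $R$; the integrality argument above works in complete generality and is in fact the cleanest route.
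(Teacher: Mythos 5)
Your proof is correct and follows essentially the same route as the paper: both hinge on showing $S$ is integral over $S^G$ via the orbit polynomial $\prod_{g\in G}\bigl(X-g(s)\bigr)$, after which finite generation of $S^G$ over $R$ is exactly the Artin--Tate lemma (\cite[Proposition~7.8]{AM}), which the paper cites and you re-prove inline with the intermediate subalgebra $T$. Your closing remark is also apt: neither argument needs $|G|$ invertible, unlike a Reynolds-operator/purity approach.
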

\begin{proof} By \cite[Proposition~7.8]{AM}, one only needs to show that
$S$ is integral over $S^G$. So let $s \in S$ and put
\[
f(s) = {\underset {\sigma \in G}{\prod}} \left(s - {\sigma}(s)\right).
\]
Then $f(s)$ is clearly zero (take $\sigma = 1$ on the right).
On the other hand, it is easy to see that $f(s)$ is a monic polynomial
in $s$ of degree equal to the cardinality of $G$ and with coefficients in
$S^G$. 
\end{proof}
Let $G$ be a linear algebraic group over $k$ and let $X \in {\sV}_G$.
In the rest of this paper, we shall follow the notations for the
various completions defined in the beginning of this section for
the ring $S(G)$ and the graded $S(G)$-module $CH^*_G\left(X, \cdot\right)$.
\begin{cor}\label{cor:completion2}
Let $G$ and $X \in {\sV}_G$ be as above. Then the natural maps of 
$\widehat{S(G)}$-modules
\[
\wt{CH^*_G\left(X, \cdot\right)} \to \widehat{CH^*_G\left(X, \cdot\right)}
\ \ {\and}
\]
\[ 
\wt{CH^*_G\left(X, \cdot\right)} \to \ov{CH^*_G\left(X, \cdot\right)}
\]
are injective.
\end{cor}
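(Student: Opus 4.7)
The plan is to deduce the corollary directly from Proposition~\ref{prop:completion1}(iv), which applies to any graded $R$-module $M$ over a finitely generated graded $R_0$-algebra $R$ with $R_0$ Noetherian. Taking $R_0 = \Q$, $R = S(G)$ graded by Chow codimension, and $M = CH^*_G(X,i)$ for each fixed $i \ge 0$ as a graded $S(G)$-module, the first map of the corollary is exactly $\phi_M$ while the second map is $\psi_M = \theta_M \circ \phi_M$; Proposition~\ref{prop:completion1}(iv) asserts injectivity of both. So the only remaining thing to check is that $S(G)$ is a finitely generated graded $\Q$-algebra, which I would establish in three reduction steps.

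First, Lemma~\ref{lem:nonreductive} gives a graded ring isomorphism $S(G/R_uG) \xrightarrow{\cong} S(G)$, so I may assume $G$ is reductive. Second, I choose a finite Galois extension $l/k$ over which $G_l$ becomes split reductive (one may take $l$ to be a splitting field of a maximal torus). Applied to $X = \Spec k$, Lemma~\ref{lem:folklore1} produces graded maps $S(G) \xrightarrow{\pi^*} S(G_l) \xrightarrow{\pi_*} S(G)$ whose composition is multiplication by $[l:k]$; since we have inverted all integers, this composition is an isomorphism, so $S(G)$ is a retract, and in particular a pure $\Q$-subalgebra, of $S(G_l)$. By Lemma~\ref{lem:folklore}, finite generation of $S(G)$ over $\Q$ will follow from the same statement for $S(G_l)$, so I may assume $G$ is split reductive.

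Third, for $G$ split reductive with split maximal torus $T$ of rank $r$ and Weyl group $W$, Corollary~\ref{cor:reduction} yields $S(G) \cong S(T)^W$. Choosing the standard good pair whose quotient is $(\P^n)^r$ for $n$ large, the projective bundle formula identifies $S(T)$ in each bounded range of codimensions with $\Q[c_1,\ldots,c_r]$, exhibiting $S(T)$ as the polynomial ring on the first Chern classes of the standard characters; this is visibly finitely generated and Noetherian. Since $W$ is finite and acts by $\Q$-algebra automorphisms, Lemma~\ref{lem:folklore1*} then yields that $S(T)^W$ is a finitely generated $\Q$-algebra, completing the verification. The main obstacle, such as it is, is the purity argument in the second step: it is essential that we have passed to rational coefficients, for otherwise $[l:k]$ need not be a unit and $S(G) \to S(G_l)$ might fail to split, blocking the descent of finite generation.
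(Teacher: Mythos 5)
Your reduction of the corollary to the finite generation of $S(G)$ as a graded $\Q$-algebra via Proposition~\ref{prop:completion1}(iv), and the toolkit you use to prove that finite generation (base change to a splitting field, the retract/purity argument with Lemma~\ref{lem:folklore1} and Lemma~\ref{lem:folklore}, Weyl-group invariants with Lemma~\ref{lem:folklore1*}, and the polynomial structure of $S(T)$), are exactly those of the paper's proof. There are, however, two gaps in the order and scope of your reductions.

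First, you never handle disconnected $G$. After dividing out the unipotent radical the group need not be connected, and Corollary~\ref{cor:reduction}, which you invoke to get $S(G)\cong S(T)^W$, is stated and proved (via Theorem~\ref{thm:reductiveI} and Theorem~\ref{thm:Borel}) only for \emph{connected} split reductive groups. The missing step is the paper's use of Corollary~\ref{cor:finite2}, which gives $S(G)\cong S(G^0)^{G/G^0}$, combined with Lemma~\ref{lem:folklore1*} applied to the finite component group; only after that may one assume $G$ connected. Second, your opening appeal to Lemma~\ref{lem:nonreductive} requires its hypothesis that $R_uG$ be defined over $k$, which can fail in positive characteristic (cf.\ Remark~\ref{remk:nonr*}); this is precisely why the paper performs the base change $l/k$ \emph{before} passing to the reductive quotient, choosing $l$ so that the components of $G_l$, the splitting of $G^0_l$, and $R_u(G^0_l)$ are all defined over $l$. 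Your purity argument for the base change is itself correct, so both gaps are repaired by reordering (extend the field first) and inserting the component-group step; but as written the proof only covers connected groups in characteristic zero.
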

\begin{proof} By Proposition~\ref{prop:completion1}, we only need to show
that $S(G)$ is a finitely generated $\Q \ (= S(G)_0)$-algebra. 
Now, there is a finite extension $l/k$ such that all the connected components
of algebraic group $G_l$ are defined over $l$, the identity component
$G^0_l$ is split and its unipotent radical $R_u\left(G^0_l\right)$ is
also defined and split over $l$. By applying Lemma~\ref{lem:folklore1} to
$X = {\rm Spec}(k)$, we see that there are pull-back and push-forward
maps $S(G) \to S(G_l) \to S(G)$ such that the composite map is 
multiplication by the degree $[l:k]$. Moreover, the commutativity
of the intersection product with the pull-back, and the projection formula
({\sl cf.} Proposition~\ref{prop:EHCG}) show that these maps are
$S(G)$-linear. We conclude that $S(G)$ is a retract of $S(G_l)$
as an $S(G)$-module, and hence a pure ${\Q}$-subalgebra.
By Lemma~\ref{lem:folklore}, it suffices to show that $S(G_l)$ is a
finitely generated $\Q$-algebra. Hence we can assume that $G$ has all the
properties described in the beginning of the proof.

Using Corollary~\ref{cor:finite2} and Lemma~\ref{lem:folklore1*}, we
can assume that $G$ is connected. By Lemma~\ref{lem:nonreductive}, we
can further assume that $G$ is reductive. Since $G$ is split, we can now
use Corollary~\ref{cor:reduction} and Lemma~\ref{lem:folklore1*} once again
to reduce to the case when $G$ is a split torus. But then $S(G)$ is known to 
be a finitely generated polynomial algebra over $\Q$.
\end{proof}
\section{Cohomological Rigidity And Specializations}
Let $G$ be a split diagonalizable group over $k$ acting on a smooth variety
$X$. Recall ({\sl cf.} \cite[13.2.5]{Springer}) that all the diagonalizable
subgroups of $G$ are defined and split over $k$.
The equivariant $K$-theory of $X$ for the $G$-action was studied
by Vezzosi and Vistoli in \cite{VV}. Their main result (Theorem~1) is  
to show how to reconstruct the $K$-theory ring of $X$ in terms of the
equivariant $K$-theory of the loci where the stabilizers have constant
dimension. In the next two sections, we use the ideas of Vezzosi-Vistoli 
to prove an analogous decomposition theorem for the equivariant higher 
Chow groups of $X$ for the $G$-action. This theorem and its compatibility 
with the corresponding result for $K$-theory will be crucial in the proof
of the main results of this paper. Like in the case of 
$K$-theory ({\sl loc. cit.}), this decomposition can also be used to compute 
the equivariant higher Chow groups of many varieties with an action of the 
group $G$ such as the toric varieties.
This section is concerned with the study of cohomological rigidity and
the construction of the specialization maps in equivariant higher Chow groups.
For the rest of this section and the next, the group $G$ will always denote a 
split diagonalizable group and the varieties will be assumed to be smooth with 
$G$-action.    
We have seen ({\sl cf.} Proposition~\ref{prop:EHCG}) that for such a variety 
$X$, $CH^*_G\left(X, \cdot\right)$ is a bigraged ring which is an
algebra over the ring $CH^*_G\left(k, \cdot\right)$. We denote the full
equivariant Chow ring ${\underset {j, i \ge 0}{\bigoplus}} 
CH^j_G\left(X, i \right)$ of $X$ in short by $CH^*_G\left(X\right)$.
\subsection{Cohomological rigidity}  
\begin{defn}\label{defn:rigidity}
Let $Y \subset X$ be a smooth and $G$-invariant closed subvariety of 
codimension $d \ge 1$ and
let $N_{Y/X}$ denote the normal bundle of $Y$ in $X$. We say that $Y$ is
{\sl cohomologically rigid} inside $X$ if $c^G_d\left(N_{Y/X}\right)$ is
a not a zero-divisor in the ring $CH^*_G\left(Y\right)$.
\end{defn}
As one observes, this definition has any reasonable meaning only in
the equivariant setting, since every element of positive degree in the
non-equivariant  Chow ring is nilpotent. The importance of cohomological
rigidity for the equivariant higher Chow groups comes from the following
analogue of the $K$-theory splitting theorem (Proposition~4.3) of
{\sl loc. cit.}.   
\begin{prop}\label{prop:split}
Let $Y$ be a smooth and $G$-invariant closed subvariety of $X$ of
codimension $d \ge 1$. Assume that $Y$ is cohomologically rigid inside $X$,
and put $U = X - Y$. Let $Y \stackrel{i}{\inj} X$ and
$U \stackrel{j}{\inj} X$ be the inclusion maps. Then \\
$(i)$ The localization sequence 
\[
0 \to CH^*_G(Y) \xrightarrow{i_*} CH^*_G(X) \xrightarrow{j^*}
CH^*_G(U) \to 0
\]
is exact. \\
$(ii)$ The restriction ring homomorphisms
\[
CH^*_G(X) \stackrel{(i^*, j^*)}{\longrightarrow} 
CH^*_G(Y) \times CH^*_G(U)
\]
give an isomorphism of rings
\[
CH^*_G(X) \xrightarrow{\cong} CH^*_G(Y) {\underset {\wt{CH^*_G(Y)}}
{\times}} CH^*_G(U),
\]
where $\wt{CH^*_G(Y)} = {CH^*_G(Y)}/{\left(c^G_d\left(N_{Y/X}\right)\right)}$,
and the maps 
\[
CH^*_G(Y) \to \wt{CH^*_G(Y)}, \ CH^*_G(U) \to \wt{CH^*_G(Y)}
\]
are respectively, the natural surjection and the map 
\[
CH^*_G(U) = {\frac{CH^*_G(X)}{i_*\left(CH^*_G(Y)\right)}}
\xrightarrow{i^*} {\frac{CH^*_G(Y)}{c^G_d\left(N_{Y/X}\right)}}  
= \wt{CH^*_G(Y)},
\]
which is well-defined by Corollary~\ref{cor:ESIF}.
\end{prop}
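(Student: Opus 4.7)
The plan is to derive both parts from the localization long exact sequence of \propref{prop:EHCG} $(iii)$ together with the self-intersection formula \corref{cor:ESIF}. The crucial input is that $c^G_d(N_{Y/X})$ is not a zero-divisor on $CH^*_G(Y)$: this will collapse the long exact sequence into short exact pieces and then allow a direct diagram chase for the fiber product description.

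For part $(i)$, I would first show that $i_*$ is injective in every bidegree. \corref{cor:ESIF} says that $i^*\circ i_* : CH^*_G(Y) \to CH^*_G(Y)$ is multiplication by $c^G_d(N_{Y/X})$, which is injective by cohomological rigidity; hence so is $i_*$. To show $j^*$ is surjective, it suffices to show that the boundary $\partial : CH^*_G(U,i) \to CH^{*-d}_G(Y,i-1)$ in the long exact sequence of \propref{prop:EHCG} $(iii)$ vanishes for every $i \ge 1$. Exactness of the sequence gives $i_* \circ \partial = 0$; composing with $i^*$ on the left and applying \corref{cor:ESIF} again shows $c^G_d(N_{Y/X})$ annihilates the image of $\partial$, and the non-zero-divisor hypothesis then forces $\partial = 0$. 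The long exact sequence thus breaks into the short exact sequence claimed.

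For part $(ii)$, observe first that the map $CH^*_G(U) \to \wt{CH^*_G(Y)}$ in the statement is well-defined: using the exact sequence from $(i)$ to identify $CH^*_G(U)$ with $CH^*_G(X)/i_*(CH^*_G(Y))$, the map $i^*$ descends modulo $c^G_d(N_{Y/X})\cdot CH^*_G(Y)$ precisely because $i^*\circ i_*$ equals multiplication by $c^G_d(N_{Y/X})$. Injectivity of $(i^*,j^*)$ into the fiber product is immediate: if $j^*(\alpha)=0$ then $\alpha = i_*(\beta)$ by $(i)$, and $i^*(\alpha)=c^G_d(N_{Y/X})\beta=0$ forces $\beta=0$ by rigidity. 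For surjectivity onto the fiber product, given a compatible pair $(\beta,\gamma)$, lift $\gamma$ to some $\alpha_0 \in CH^*_G(X)$ via surjectivity of $j^*$; compatibility means $i^*(\alpha_0)-\beta = c^G_d(N_{Y/X})\cdot\delta$ for some $\delta \in CH^*_G(Y)$, and then $\alpha_0 - i_*(\delta)$ maps to $(\beta,\gamma)$, since $j^*\circ i_* = 0$ while $i^*\circ i_*(\delta)=c^G_d(N_{Y/X})\cdot\delta$.

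The expected obstacle is essentially cosmetic bookkeeping: the statement is phrased in the total bigraded ring $CH^*_G(X) = \bigoplus_{j,i}CH^j_G(X,i)$, whereas \propref{prop:EHCG} $(iii)$ and \corref{cor:ESIF} are stated one bidegree at a time, and $\partial$ shifts the $i$-index. The argument above is purely formal and runs through in every bidegree separately, so reconciling the grading with the ring-theoretic formulation is the only point that requires care.
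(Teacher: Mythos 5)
Your proof is correct and follows essentially the same route as the paper: part $(i)$ is exactly the paper's argument ($i^*\circ i_*$ is multiplication by $c^G_d(N_{Y/X})$, a non-zero-divisor by rigidity, so $i_*$ is injective and the boundary maps vanish), and for part $(ii)$ you have simply written out the diagram chase that the paper delegates to the general ring-theoretic Lemma~4.4 of \cite{VV}.
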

\begin{proof} The part $(i)$ follows directly from 
Corollary~\ref{cor:ESIF}
and the definition of cohomological rigidity. Since $i^*$ and 
$j^*$ are ring homomorphisms, the proof of the second part follows
directly from the first part and [{\sl loc. cit.}, Lemma~4.4].
\end{proof} 

To apply the above result in our context, we need to have some 
sufficient conditions for checking the cohomological rigidity in 
specific examples. We first have the following elementary result.
\begin{lem}\label{lem:elem}
Let $A$ be a ring which is a $\Q$-algebra. Then an element of the form
$t^d$, where $t = \stackrel{r}{\underset {j=1}{\Sigma}} 
a_j t_{i_j} \in A[t_1, \cdots , t_n]$, is not a zero-divisor for every
$d \ge 1$, whenever $a_j \in {\Q}$ for all $j$ and $a_j \neq 0$
for some $j$.
\end{lem}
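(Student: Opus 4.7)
The plan is to reduce, via a $\Q$-linear change of variables, to the trivial fact that a single variable in a polynomial ring is a non-zero-divisor. The hypothesis that $A$ is a $\Q$-algebra enters exactly once: the coefficients $a_j \in \Q$ which are nonzero are units in $A$, so they can be inverted in order to perform the substitution.

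More precisely, after relabeling, assume $i_1 = 1$ and $a_1 \neq 0$. Since $a_1 \in \Q^{\times} \subset A^{\times}$, the $A$-algebra endomorphism $\phi$ of $A[t_1,\ldots,t_n]$ sending $t_1 \mapsto t = a_1 t_1 + a_2 t_{i_2} + \cdots + a_r t_{i_r}$ and $t_j \mapsto t_j$ for $j \geq 2$ has an inverse, namely the endomorphism solving for $t_1$ (which requires only that $a_1$ be a unit). Thus $\phi$ is an $A$-algebra automorphism sending $t_1^d$ to $t^d$, and so $t^d$ is a non-zero-divisor in $A[t_1,\ldots,t_n]$ if and only if $t_1^d$ is.

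For the latter, write $A[t_1,\ldots,t_n] = B[t_1]$ with $B = A[t_2,\ldots,t_n]$. The polynomial ring $B[t_1]$ is a free $B$-module on $\{t_1^i\}_{i \geq 0}$, so if $p = \sum_{i \geq 0} b_i t_1^i \in B[t_1]$ satisfies $t_1^d \cdot p = \sum_{i \geq 0} b_i t_1^{i+d} = 0$, then each $b_i = 0$ by comparing coefficients. Hence multiplication by $t_1^d$ is injective on $B[t_1]$, which completes the proof.

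There is no real obstacle here; the lemma is essentially a bookkeeping observation. The only subtle point is to record carefully where the $\Q$-algebra hypothesis is used (to invert $a_1$ and obtain a genuine automorphism), and to note that the other $a_j$ need not be invertible at all, since they only appear as the image of $t_1$ under $\phi$.
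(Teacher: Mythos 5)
Your proof is correct, and it takes a genuinely different and cleaner route than the paper's. The paper argues by induction on $n$: it first reduces to the case where the annihilating polynomial $f$ is homogeneous, then writes $f$ as an $A$-linear combination of independent monomials, pulls out the largest power of $t_1$ dividing all of them, and passes to the quotient $A[t_2,\ldots,t_n]$ (setting $t_1=0$) to invoke the inductive hypothesis. Your argument replaces all of this with a single observation: since some $a_j$ is a nonzero rational and hence a unit, the assignment $t_1 \mapsto t$, $t_j \mapsto t_j$ ($j\ge 2$) is an $A$-algebra automorphism, so the claim reduces to the triviality that $t_1^d$ is a non-zero-divisor in $B[t_1]$ with $B = A[t_2,\ldots,t_n]$. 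This buys two things: it eliminates the induction and the homogeneity bookkeeping entirely, and it isolates the true hypothesis needed (one coefficient a central unit of $A$, rather than all coefficients rational). It is worth noting that the paper itself deploys exactly your change-of-variables device in the very next lemma (Lemma~\ref{lem:elem1}), where the set $\{c(f_1), c(f_2)\}$ is extended to a basis of $\Q^n$ and the corresponding linear automorphism is applied to reduce to $f_j = t_j$; so your proof brings Lemma~\ref{lem:elem} into line with that argument. The only point to keep explicit, which you do, is that the indices $i_2,\ldots,i_r$ are distinct from $i_1$, so the inverse substitution $t_1 \mapsto a_1^{-1}\bigl(t_1 - a_2 t_{i_2} - \cdots - a_r t_{i_r}\bigr)$ is well defined and genuinely inverts $\phi$.
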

\begin{proof} We can assume that $a_j \neq 0 \ \forall j$.
We prove by induction on $n$. For $n = 1$, it is obvious.
So we assume that the lemma holds for all $n' \le n-1$ and $n \ge 2$.

If $r=1$, then also the lemma is again obvious.
So we assume $r \ge 2$.
Let $\underline{t} = \left(t_1, \cdots , t_n\right)$, and let
$f(\underline{t}) = f_0(\underline{t}) + \cdots + f_p(\underline{t})$ be a
polynomial such that each $f_i$ is homogeneous of degree $d_i$ such that
$0 \le d_0 < \cdots < d_p$. If $f(\underline{t}) \neq 0$ and
$t^df(\underline{t}) = 0$, then using the fact that $t^d$ is homogeneous,
it is easy to see that $t^df_q(\underline{t}) = 0$, where $d_q$ is the 
largest integer such that $f_q \neq 0$. Thus we can assume that 
$f(\underline{t}) \neq 0$ is homogeneous of degree, say $p$.

Let $f(\underline{t}) = b_1 f_1 + \cdots + b_mf_m$ be the unique
representation of $f(\underline{t})$ as an $A$-linear combination of
linearly independent monomials $f_i$'s of homogeneous degree $p$. Let
$s \ge 0$ be the largest integer such that $t_1^s$ divides each $f_i$.
Then
$t^d f(\underline{t}) = t^d t_1^s \left(b_1f'_1 + \cdots + b_mf'_m\right)$,
where some $f'_j$, say $f'_1$ is not divisible by $t_1$. 
Now by $r =1$ case, $t^d  f(\underline{t}) = 0$ implies that 
$t^d f'(\underline{t}) = 0$,
where $f'(\underline{t}) = \stackrel{m}{\underset {j=1}{\Sigma}} 
b_j f'_j$. For any element $g \in A[t_1, \cdots , t_n]$, let
$\bar{g}$ denote its image in the quotient $A[t_2, \cdots , t_n]$.
Then we get ${\bar{t}}^d {\bar{f'}} = 0$ in $A[t_2, \cdots , t_n]$.
However, $r \ge 2$ implies that $\bar{t} \neq 0$ and ${\bar{f'}}_1 
\neq 0$ by our choice. By induction on $n$, this leads to a contradiction.
\end{proof}  
\begin{prop}\label{prop:rigiditysuff}
Let $G$ be a split diagonalizable group acting on a smooth variety $X$ and let
$E$ be a $G$-equivariant vector bundle of rank $d$ on $X$. Assume that
there is a subtorus $T \subset G$ of positive rank which acts trivially
on $X$, such that in the eigenspace decomposition of $E$ with respect to 
$T$, the submodule corresponding to the trivial character is zero. Then
$c^G_d(E)$ is not a zero-divisor in $CH^*_G\left(X\right)$.
\end{prop}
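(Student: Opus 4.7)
The plan is to combine Theorem~\ref{thm:trivial} with the $T$-eigenspace decomposition of $E$, and then reduce the non-zero-divisor statement to Lemma~\ref{lem:elem}. Since $G$ is split diagonalizable, every closed subtorus is split, so the exact sequence of character groups
\[
0 \to X^*(G/T) \to X^*(G) \to X^*(T) \to 0
\]
splits because $X^*(T)$ is free abelian; this yields a decomposition $G \cong (G/T) \times T$ and, via the resulting projection $G \to T$, an extension of any character of $T$ to a character of $G$. Since $T$ acts trivially on $X$, Theorem~\ref{thm:trivial} gives an isomorphism
\[
CH^*_G(X) \;\cong\; R \otimes_{\Q} A, \qquad R := CH^*_{G/T}(X), \quad A := S(T) \cong \Q[t_1,\ldots,t_r],
\]
with $r := \rank T \ge 1$. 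Because $G$ is abelian, the $T$-eigenspace decomposition $E = \bigoplus_{\chi} E_{\chi}$ is $G$-equivariant; by hypothesis only nontrivial characters $\chi$ occur.

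For each such $\chi$, let $L_{\chi}$ be the trivial line bundle on $X$ with $G$-action through the composite $G \to T \xrightarrow{\chi} \mathbb{G}_m$, and set $\bar{E}_{\chi} := E_{\chi} \otimes L_{-\chi}$; this is a $G$-equivariant bundle on which $T$ acts trivially, hence a $G/T$-equivariant bundle. Writing $d_{\chi} := \rank E_{\chi}$ and $\xi_{\chi} := c^G_1(L_{\chi})$, the Whitney sum formula together with the standard tensor-product-with-a-line-bundle identity gives
\[
c^G_d(E) \;=\; \prod_{\chi} c^G_{d_{\chi}}(E_{\chi}) \;=\; \prod_{\chi}\Bigl(\xi_{\chi}^{d_{\chi}} + \sum_{j=1}^{d_{\chi}} c^{G/T}_j(\bar{E}_{\chi})\, \xi_{\chi}^{d_{\chi}-j}\Bigr).
\]
Under the isomorphism $CH^*_G(X) \cong R \otimes A$, each $c^{G/T}_j(\bar{E}_{\chi})$ lies in the summand $R \otimes 1$, while $\xi_{\chi}$ corresponds to the image of $\chi$ in $X^*(T) \otimes \Q \hookrightarrow S(T) = A$, hence lies in $1 \otimes A_1$. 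Verifying this compatibility by unwinding the geometric construction $X_G \simeq X_{G/T} \times (\mathbb{P}^n)^r$ in the proof of Theorem~\ref{thm:trivial} is the main technical step, and once it is in place the computation is direct.

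For the last step, one uses the filtration of $R \otimes A$ by $R$-codimension. The component of $c^G_d(E)$ in $R_0 \otimes A = A$ is precisely $\prod_{\chi} \xi_{\chi}^{d_{\chi}}$. Since each $\chi$ is a nontrivial character, each $\xi_{\chi}$ is a nonzero $\Q$-linear combination of $t_1,\ldots,t_r$, so Lemma~\ref{lem:elem} shows each $\xi_{\chi}^{d_{\chi}}$ is a non-zero-divisor in $A$; as $A$ is an integral domain, $\prod_{\chi} \xi_{\chi}^{d_{\chi}}$ is a nonzero non-zero-divisor. Now suppose $f \in R \otimes A$ satisfies $f \cdot c^G_d(E) = 0$ with $f \ne 0$, and decompose $f = \sum_{p \ge p_0} f_p$ by $R$-codimension with $f_{p_0} \ne 0$. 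The $R$-codimension $p_0$ piece of $f \cdot c^G_d(E)$ equals $f_{p_0} \cdot \prod_{\chi} \xi_{\chi}^{d_{\chi}} \in R_{p_0} \otimes A$; since $R_{p_0} \otimes_{\Q} A$ is a free $A$-module and $\prod_{\chi} \xi_{\chi}^{d_{\chi}}$ is a non-zero-divisor in $A$, multiplication by it is injective, forcing $f_{p_0} = 0$, a contradiction.
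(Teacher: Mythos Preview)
Your argument is correct and follows essentially the same architecture as the paper's proof: both choose a splitting $G \cong D \times T$ with $D = G/T$, invoke Theorem~\ref{thm:trivial} to identify $CH^*_G(X)$ with $CH^*_D(X)\otimes_{\Q} S(T)$, use the $T$-eigenspace decomposition $E = \bigoplus_\chi E_\chi$ together with the Whitney sum and line-bundle-twist formulas to exhibit $c^G_d(E)$ as a product of factors whose ``leading term'' is a power of a nonzero $\Q$-linear form in $S(T)$, and then run a leading-term argument to conclude.

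The one genuine organizational difference is in the final step. The paper grades by total degree in the $S(T)$-variables: writing a single factor as $g(t)=t^d+\alpha_{d-1}t^{d-1}+\cdots$ with $\alpha_j\in CH^*_D(X)$, it passes to the top $t$-degree part and must then know that $t^d$ is a non-zero-divisor in $CH^*_D(X)[t_1,\dots,t_n]$, which is exactly Lemma~\ref{lem:elem}. You instead grade by $R$-codimension: the $R_0$-component of $c^G_d(E)$ is $\prod_\chi \xi_\chi^{d_\chi}\in A=\Q[t_1,\dots,t_r]$, and since $A$ is an integral domain this element is automatically a non-zero-divisor on the free $A$-module $R_{p_0}\otimes_{\Q} A$. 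Your route therefore does not actually need Lemma~\ref{lem:elem} at all (the sentence invoking it is harmless but superfluous), which is a mild simplification. Conversely, the paper's $t$-degree filtration makes the reduction to a single character $\chi$ natural and keeps the computation one factor at a time. Both filtrations isolate the same monomial $\prod_\chi \xi_\chi^{d_\chi}$, so the two arguments are dual views of the same phenomenon.
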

\begin{proof} By \cite[Lemma~5.6]{Thomason3}, $E$ has a unique direct
sum decomposition 
\[
E = \stackrel{r}{\underset {i=1}{\bigoplus}} E_{{\chi}_i} {\otimes}
{\chi}_i,
\]
where we choose a splitting $G = D \times T$, and $E_{{\chi}_i}$'s
are $D$-bundles and ${\chi}_i$'s are $1$-dimensional representations of
$T$. This decomposition is via the functor
\[
{\rm Bun}^D_X \times {\rm Rep}(T) \to  {\rm Bun}^G_X
\]
\[
\left(F, \rho \right) \mapsto p_1^*(F) {\otimes} p_2^*(\rho),
\]
where $p_1: D \times T \to D$ and $p_2: D \times T \to T$ are the projections.

Since ${\rm rank}(E) = d$, we have by the Whitney sum formula,    
$c^G_d(E) = \stackrel{r}{\underset {i=1}{\prod}}c^G_{d_i}
\left(E_{{\chi}_i} {\otimes}{\chi}_i\right)$, where $d_i =
{\rm rank}(E_i)$. Thus we can assume that $E = 
E_{\chi} {\otimes}{\chi}$, where $\chi$ is not a trivial character by
our assumption. In particular, if $L_{\chi}$ is the corresponding line 
bundle in ${\rm Pic}_T(k)$, then 
\begin{equation}\label{eqn:NZD}
c^T_1\left(L_{\chi}\right) = t = 
\stackrel{p}{\underset {i=1}{\Sigma}} n_i t_i \in {\Q}[t_1, \cdots , t_n]
\end{equation}
with $n_i \neq 0$ for some $i$. By neglecting those $i$ for which the
coefficients $n_i$'s are zero, we can assume that $n_i \neq 0 \ \forall
i$. Now we have
\[ 
\begin{array}{lll}
c^G_d(E) & = & c^G_d\left(p_1^*\left(E_{\chi}\right) {\otimes}
p_2^*\left(L_{\chi}\right)\right) \\
& = &  \stackrel{d}{\underset {i=0}{\Sigma}}  
c^G_{d-i}\left(p_1^*\left(E_{\chi}\right)\right) \cdot 
{\left(c^G_1\left(p_2^*\left(L_{\chi}\right)\right)\right)}^i \\
& = &  \stackrel{d}{\underset {i=0}{\Sigma}}  
p_1^*\left(c^D_{d-i}\left(E_{\chi}\right)\right) \cdot 
p_2^*\left({\left(c^T_1\left(L_{\chi}\right)\right)}^i\right) \\
& = & \stackrel{d}{\underset {i=0}{\Sigma}} {\alpha}_i t^i,
\end{array}
\]
where ${\alpha}_i \in CH^*_D\left(X\right)$ and $c^G_d(E) \in
CH^*_G\left(X\right) \cong CH^*_D\left(X\right) {\otimes} S(T)$
({\sl cf.} Theorem~\ref{thm:trivial}) 
and the second equality holds by \cite[Remark~3.2.3]{Fulton}. 
Furthermore, one has ${\alpha}_d = 
p_1^*\left(c^D_{0}\left(E_{\chi}\right)\right) = 1$.
Thus we get $c^G_d(E) = t^d + {\alpha}_{d-1} t^{d-1} + \cdots
+ {\alpha}_1 t + {\alpha}_0 = g(t)$.

We need to show that $g(t)$ is not a zero divisor in
$CH^*_D\left(X\right)[t_1, \cdots , t_n]$. So suppose $f(\underline{t})$
is a non-zero   polynomial such that $g(t) f(\underline{t}) = 0$,
and let $f'(\underline{t})$ be the homogeneous part of $f(\underline{t})$  
of largest degree which is not zero. By comparing the
homogeneous parts, it is easy to see that $g(t) f(\underline{t}) = 0$
only if $t^d f'(\underline{t}) = 0$. But this is a contradiction since
$t$ satisfies the condition of Lemma~\ref{lem:elem} by ~\ref{eqn:NZD},
and hence is not a zero-divisor.
\end{proof}  

Let $G$ be a split diagonalizable group as above and let $X \in {\sV}^S_G$.
Following the notations of \cite{VV}, for any $s \ge 0$, we let $X_{\le s}
\subset X$ be the open subset of points whose stabilizers have dimension
at most $s$. We shall often write $X_{\le s-1}$ also as $X_{< s}$.
Let $X_s = X_{\le s} - X_{< s}$ denote the locally closed subset of $X$,
where the stabilizers have dimension exactly $s$. We think of $X_s$
as a subspace of $X$ with the reduced induced structure. It is clear 
that $X_{\le s}$ and $X_s$ are $G$-invariant subspaces of $X$. Let $N_s$ 
denote the normal bundle of $X_s$ in $X_{\le s}$, and let $N_s^0$ denote the 
complement of the 0-section in $N_s$. Then $G$ clearly acts on $N_s$.
The following result of {\sl loc. cit.} describes some
properties of these subspaces which will be useful to us in what follows.
\begin{prop}[\cite{VV}]\label{prop:strata}
Let $s$ be non-zero integer. \\
$(i)$ There exists a finite number of $s$-dimensional subtori 
$T_1, \cdots , T_r$ in $G$ such that $X_s$ is the disjoint union of
the fixed point spaces $X^{T_j}_{\le s}$. \\
$(ii)$ $X_s$ is smooth locally closed subvariety of $X$. \\
$(iii) \ N^0_s = {\left(N_s\right)}_{< s}$. 
\end{prop}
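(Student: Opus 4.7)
The plan is to establish the three parts in sequence; parts (ii) and (iii) will follow easily once the stratification in (i) is in place, so the main obstacle is the finiteness assertion in (i).

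For (i), observe that since $G$ is diagonalizable, the identity component of any closed subgroup of $G$ is a subtorus, so for $x \in X_s$ the group $G_x^0$ is an $s$-dimensional subtorus of $G$. Conversely, for any $s$-dimensional subtorus $T \subseteq G$, a point $y \in X^T$ satisfies $T \subseteq G_y$, so if in addition $y \in X_{\le s}$ then $\dim G_y = s$ and $T = G_y^0$; this gives $X^T \cap X_{\le s} = X^T \cap X_s$. For finiteness, I would use Noetherianity of $X$: the locally closed subvariety $X_s$ has finitely many irreducible components $Z_1, \ldots, Z_r$, and for each $Z_i$ with generic point $\eta_i$ I set $T_i = G_{\eta_i}^0$, an $s$-dimensional subtorus. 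Since $X^{T_i}$ is closed in $X$ and contains $\eta_i$, it contains the closure $\overline{\{\eta_i\}}$, and hence $Z_i \subseteq X^{T_i} \cap X_s$. Thus $X_s = \bigcup_j X^{T_j}_{\le s}$, and the union is disjoint because each $y \in X^{T_j} \cap X_s$ uniquely determines $T_j = G_y^0$.

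For (ii), each $X^{T_j}$ is smooth by the classical fact that the fixed-point scheme of a linearly reductive group acting on a smooth variety is smooth. Then $X^{T_j}_{\le s} = X^{T_j} \cap X_{\le s}$ is an open subvariety of $X^{T_j}$, hence smooth, and the disjoint union description from (i) exhibits $X_s$ as a smooth locally closed subvariety of $X$.

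For (iii), fix a component $X^{T_j}_{\le s}$ of $X_s$ and a point $x$ in it. Since $X_s$ coincides with $X^{T_j}$ on a neighborhood of $x$, the fiber $(N_s)_x = T_x X / T_x X^{T_j}$ is identified with the sum of the non-trivial $T_j$-weight spaces in $T_x X$, so there is a decomposition $(N_s)_x = \bigoplus_\chi V_\chi$ over non-trivial characters $\chi$ of $T_j$. For a vector $v = \sum v_\chi$ in this fiber, the $T_j$-stabilizer of $v$ equals $\bigcap_{v_\chi \neq 0} \ker(\chi)$, which is a proper subgroup of $T_j$ whenever $v \neq 0$. Since the $G$-stabilizer of $(x,v) \in N_s$ lies inside $G_x$, its identity component lies in $G_x^0 = T_j$ and fixes $v$, so has dimension strictly less than $s$ when $v \neq 0$; on the zero section the stabilizer is $G_x$ itself, of dimension exactly $s$. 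It follows that $N_s = (N_s)_{\le s}$, the zero section equals $(N_s)_s$, and therefore $N^0_s = (N_s)_{<s}$.
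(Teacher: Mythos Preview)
Your argument is correct. The paper does not give its own proof here; it simply refers to \cite[Proposition~2.2]{VV}. The one point of comparison worth noting is that, as Remark~\ref{remk:stratasingular} indicates, the proof in \cite{VV} obtains the finiteness in part~$(i)$ via Thomason's generic \'etale slice theorem \cite[Proposition~4.10]{Thomason1}, whereas you argue directly by taking the identity components of the stabilizers at the generic points of the irreducible components of $X_s$. Your route is more elementary and, like the slice-theorem argument, does not require $X$ to be smooth for part~$(i)$: the only ingredients are upper semicontinuity of stabilizer dimension (so that $X_s$ is locally closed and its generic points have $s$-dimensional stabilizer) and closedness of fixed loci. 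Your treatments of $(ii)$ and $(iii)$ are the standard ones, using smoothness of torus fixed loci and the weight decomposition of the normal bundle, and agree with what \cite{VV} does.
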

\begin{proof} See [{\sl loc. cit.}, Proposition~2.2].
\end{proof}
\begin{remk}\label{remk:stratasingular}
We mention here that although the above proposition has been stated for
the smooth varieties, the part $(i)$ of the proposition holds
also when $X$ is not necessarily smooth, since the proof only uses
Thomason's generic {\'e}tale slice theorem 
\cite[Proposition~4.10]{Thomason1} which holds very generally.
\end{remk} 
We have the following important application of 
Proposition~\ref{prop:rigiditysuff}.
\begin{cor}\label{cor:rigidstrata}
For $s \ge 1$, $X_s$ is cohomologically rigid inside $X_{\le s}$.
\end{cor}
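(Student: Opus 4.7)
The plan is to deduce the corollary directly from Proposition~\ref{prop:rigiditysuff} after decomposing $X_s$ into its $G$-invariant connected pieces $X^{T_j}_{\le s}$ supplied by Proposition~\ref{prop:strata}~$(i)$.

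First I would reduce to a single stratum. By Proposition~\ref{prop:strata}, $X_s = \coprod_{j=1}^{r} X^{T_j}_{\le s}$ with each $X^{T_j}_{\le s}$ a smooth $G$-invariant locally closed subvariety of $X_{\le s}$, and the decomposition is as a topological disjoint union inside $X_{\le s}$. Consequently
\[
CH^*_G(X_s) \;\cong\; \prod_{j=1}^{r} CH^*_G(X^{T_j}_{\le s}),
\]
and the class $c^G_d(N_s)$ corresponds under this isomorphism to the tuple of classes $c^G_d\bigl(N_s|_{X^{T_j}_{\le s}}\bigr)$. A tuple is a non-zero-divisor if and only if each component is, so it suffices to verify cohomological rigidity for $X^{T_j}_{\le s}$ sitting in a small $G$-invariant open neighbourhood of it in $X_{\le s}$ (which we may take to be $X^{T_j}_{\le s}$ together with its normal bundle after restricting, or simply the component of $X_{\le s}$ containing it).

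Next I would apply Proposition~\ref{prop:rigiditysuff} with $T = T_j$. By construction, $T_j$ is an $s$-dimensional (hence positive-rank) subtorus of $G$ which acts trivially on $X^{T_j}_{\le s}$. The key input is the description of the normal bundle: at any point $x \in X^{T_j}_{\le s}$, the scheme $X^{T_j}$ is smooth at $x$ with tangent space equal to the $T_j$-fixed subspace of $T_x X_{\le s}$ (this is the standard smoothness-of-fixed-points for diagonalizable group actions on smooth varieties, valid in the present context by Proposition~\ref{prop:strata}~$(ii)$ and the fact that $X^{T_j}_{\le s}$ is open in $X^{T_j}$). Therefore the fibre of $N_s$ at $x$ is the complementary $T_j$-summand $T_x X_{\le s}/(T_x X_{\le s})^{T_j}$, on which $T_j$ acts without any trivial character. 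Hence in the eigenspace decomposition of $N_s|_{X^{T_j}_{\le s}}$ under $T_j$, the trivial-character summand is zero, and Proposition~\ref{prop:rigiditysuff} yields that $c^G_d(N_s|_{X^{T_j}_{\le s}})$ is not a zero-divisor in $CH^*_G(X^{T_j}_{\le s})$.

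Combining the two steps gives that $c^G_d(N_s)$ is not a zero-divisor in $CH^*_G(X_s)$, which is exactly cohomological rigidity. The only delicate point is the identification of the fibre of $N_s$ as a representation of $T_j$ with no trivial character, but this is forced by the definition of $X^{T_j}_{\le s}$ as the fixed locus and the compatibility of the normal bundle with restriction to the open subset $X_{\le s}$; no further computation is needed.
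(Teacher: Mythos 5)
Your proof is correct and follows essentially the same route as the paper: split $X_s$ into the pieces $X^{T_j}_{\le s}$ given by Proposition~\ref{prop:strata}~$(i)$, note that a tuple in a product of rings is a non-zero-divisor iff each component is, and apply Proposition~\ref{prop:rigiditysuff} with $T = T_j$. The one place you diverge is in verifying the hypothesis that the trivial-character summand of $N_s$ vanishes: the paper gets this in one line from Proposition~\ref{prop:strata}~$(iii)$, i.e.\ $N_s^0 = (N_s)_{<s}$ (a nonzero $T_j$-fixed normal vector away from the zero section would have stabilizer of dimension $\ge s$, contradicting that it lies in $(N_s)_{<s}$), citing \cite[Proposition~4.6]{VV}; you instead re-derive this from the smoothness of fixed loci of diagonalizable group actions and the identification of $T_x X^{T_j}$ with $(T_x X_{\le s})^{T_j}$, using linear reductivity of $T_j$ to see that the quotient carries no trivial summand. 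Both arguments are valid here; yours is self-contained but imports the fixed-point smoothness theorem, while the paper's leans on the stratification property it has already recorded.
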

\begin{proof} Let $d_s$ be the codimension of $X_s$ in $X_{\le s}$.
We need to show that $c^G_{d_s}(N_s)$ is not a zero-divisor in
$CH^*_G(X_s)$. By Proposition~\ref{prop:rigiditysuff}, it suffices to
show that there exists a subtorus $T$ in $G$ of positive rank which
acts trivially on $X_s$, such that in the eigenspace decomposition of
$N_s$ with respect to $T$, the submodule corresponding to the trivial
character is zero. But this follows directly from the parts $(i)$ and 
$(iii)$ of Proposition~\ref{prop:strata} and the fact that $s \ge 1$
(see [{\sl loc. cit.}, Proposition~4.6]).
\end{proof}

\subsection{Specialization maps}
Let $G$ and $X$ be as above, and let $n$ be the dimension of $G$.
As seen above, there is a filtration of $X$ by $G$-invariant open
subsets $\emptyset = X_{\le -1} \subset X_{\le 0} \subset \cdots 
\subset X_{\le n} = X$. In particular, $G$ acts on $X_{\le 0}$ 
with finite stabilizers, and the toral component of $G$ acts trivially
on $X_n$. We fix $1 \le s \le n$ and let $X_s \stackrel{f_s}{\inj}
X_{\le s}$ and $X_{< s} \stackrel{g_s}{\inj} X_{\le s}$ denote the closed 
and the open embeddings respectively. Let ${\pi}: M_s \to {\P}^1$
be the deformation to the normal cone for the embedding $f_s$ as 
in Section~2. We have already observed there that for the trivial action of 
$G$ on ${\P}^1$, $M_s$ has a natural $G$-action. Moreover,
the deformation diagram ~\ref{eqn:DNC} is a diagram of smooth $G$-spaces.
For $0 \le t \le s$, we shall often denote the open subspace
${\left(M_s\right)}_{\le t}$ of $M_s$ by $M_{s, \le t}$. The terms like
$M_{s,t}$ and $M_{s, < t}$ (and also for $N_s$) will have similar
meaning in what follows. Since $G$ acts trivially on ${\P}^1$, it acts
on $M_s$ fiberwise, and one has $N_s = {\pi}^{-1}({\infty})$ and
\begin{eqnarray}\label{eqn:nolabel}
M_{s, \le t} \cap N_s  = N_{s, \le t}; \ \ 
M_{s,t} \cap N_s  = N_{s,t}. \\
M_{s, \le t} \cap {\pi}^{-1}({\A}^1) = X_{\le t} \times {\A}^1 ; \
M_{s,t} \cap {\pi}^{-1}({\A}^1) = X_{t} \times {\A}^1 \\
M_{s, \le t} \cap {\pi}^{-1}(\infty) =  N_{s, \le t} ; \
M_{s,t} \cap {\pi}^{-1}(\infty) =  N_{s,t}. 
\end{eqnarray}
Let $N_{s, \le t} \stackrel{i_{s, \le t}}{\inj} M_{s, \le t}$ and 
$X_{\le t} \times {\A}^1 \stackrel{j_{s,\le t}}{\inj} M_{s, \le t}$
denote the obvious closed and open embeddings. We define $i_{s,t}$ and 
$j_{s,t}$ similarly. Let $N_{s,t} \stackrel{{\eta}_{s,t}}{\inj}
N_{s, \le t}$ and $M_{s,t} \stackrel{{\delta}_{s,t}}{\inj} M_{s,\le t}$
denote the other closed embeddings. 
One has a commutative diagram
\begin{equation}\label{eqn:DNC*}
\xymatrix{
X_{\le t} \ar[r]^{g_{\le t}} \ar[d]_{f_{s, \le t}} & 
{X_{\le t} \times {\A}^1}
\ar[r]^{j_{s, \le t}} \ar[d] & M_{s, \le t} \ar[d] & & \\
X_{\le s} \ar[r]^{g_{0, \le s}} & X_{\le s} \times {\A}^1 \ar[r]^{j_{\le s}}
& M_s \ar[r] & X_{\le s} \times {\P}^1 \ar[r] &  X_{\le s},}
\end{equation}  
where $g_{\le t}$ is the 0-section embedding, and the composite of
all the maps in the bottom row is identity.
This gives us the following  diagram of equivariant higher Chow groups,
where all squares commute ({\sl cf.} Proposition~\ref{prop:EHCG}).
\begin{equation}\label{eqn:DNC*1}
\xymatrix{
CH^*_G\left(N_{s,t}\right) \ar[r]^{{{i_{s,t}}_*}} 
\ar[d]^{{{\eta}_{s,t}}_*} & 
CH^*_G\left(M_{s,t}\right) \ar[r]^{j_{s,t}^*} \ar[d]^{{{\delta}_{s,t}}_*}  & 
CH^*_G\left(X_{t} \times {\A}^1 \right) \ar[r]^{{g_{t}^*}} 
\ar[d]^{{f_t}_*} & CH^*_G\left(X_{t}\right) \ar[d]^{{f_t}_*} \\
CH^*_G\left(N_{s, \le t}\right) \ar[r]^{{{i_{s, \le t}}_*}} &
CH^*_G\left(M_{s, \le t}\right) \ar[r]^{j_{s, \le t}^*} &
CH^*_G\left(X_{s, \le t} \times {\A}^1 \right) 
\ar[r]^{\hspace*{.5cm} {g_{\le t}^*}} & 
CH^*_G\left(X_{\le t}\right)}
\end{equation}
Since the last horizontal maps in both rows are natural isomorphisms by the 
homotopy invariance, we shall often identify the last two terms in both
rows and use ${j_{s, \le t}^*}$ and 
${\left(j_{s, \le t} \circ g_{\le t}\right)}^*$ interchangeably.
\begin{thm}\label{thm:specialization}
The maps ${j_{s, \le t}^*}$ and ${j_{s,t}^*}$ are surjective and there
are ring homomorphisms 
\[
{\ov{Sp}}_{X, s}^{\le t} : CH^*_G\left(X_{\le t}\right)
\to CH^*_G\left(N_{s,\le t}\right);
\]
\[ 
{\ov{Sp}}_{X, s}^{t} : CH^*_G\left(X_{t}\right)
\to CH^*_G\left(N_{s, t}\right)
\]
such that $i_{s,\le t}^* = {\ov{Sp}}_{X, s}^{\le t} \circ
{j_{s, \le t}^*}$ and $i_{s,t}^* = {\ov{Sp}}_{X, s}^{t} \circ
{j_{s,t}^*}$. Moreover, both the squares in the diagram
\begin{equation}\label{eqn:main}
\xymatrix{
CH^*_G\left(X_{\le t}\right) \ar[r]^{f_t^*}   
\ar[d]_{{\ov{Sp}}_{X, s}^{\le t}} & CH^*_G\left(X_{t}\right)
\ar[d]^{{\ov{Sp}}_{X, s}^{t}} \ar[r]^{{f_t}_*} &
CH^*_G\left(X_{\le t}\right) \ar[d]^{{\ov{Sp}}_{X, s}^{\le t}} \\
CH^*_G\left(N_{s,\le t}\right) \ar[r]_{{\eta}_{s,t}^*} &
CH^*_G\left(N_{s,t}\right) \ar[r]_{{{\eta}_{s,t}}_*} &
CH^*_G\left(N_{s,\le t}\right)}
\end{equation} 
commute.
\end{thm}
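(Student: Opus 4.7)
The plan is to prove the theorem in three stages: (i) establish that the normal bundle of $N_{s,\le t}$ in $M_{s,\le t}$ is trivial, which renders the candidate specialization map well defined once lifts exist; (ii) prove surjectivity of $j_{s,\le t}^*$ and $j_{s,t}^*$; and (iii) define $\overline{Sp}^{\le t}$, $\overline{Sp}^t$ and verify their properties together with the commutativity of the two squares in~\eqref{eqn:main}.

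For stage (i), the key geometric input is that $N_{s,\le t}$ is the scheme-theoretic fibre of the flat morphism $\pi$ over $\infty$, so the normal bundle of $N_{s,\le t}$ in $M_{s,\le t}$ is obtained by pulling back, along $\pi|_{N_{s,\le t}}$, the normal bundle of $\infty$ in $\mathbb{P}^1$; the latter is trivial as a line bundle over a point. Hence its first equivariant Chern class vanishes, and Corollary~\ref{cor:ESIF} gives $i_{s,\le t}^*\circ (i_{s,\le t})_*=0$, with the analogous vanishing for $i_{s,t}^*\circ (i_{s,t})_*$. For stage (ii), the blow-up construction of the deformation to the normal cone equips $M_s$ with a natural $G$-equivariant projection $p:M_s\to X_{\le s}$ (the restriction of the first projection $X_{\le s}\times\mathbb{P}^1\to X_{\le s}$), whose restriction to $M_{s,\le t}$ satisfies $p\circ j_{s,\le t} = \iota\circ\pi_1$, where $\iota:X_{\le t}\hookrightarrow X_{\le s}$ is the open inclusion. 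Consequently $j_{s,\le t}^*\circ p^* = \pi_1^*\circ\iota^*$; here $\pi_1^*$ is an isomorphism by homotopy invariance, and $\iota^*$ is surjective by iterated application of Proposition~\ref{prop:split} along the filtration $X_{\le t}\subset X_{\le t+1}\subset\cdots\subset X_{\le s}$, each step being legitimate since $X_u$ is cohomologically rigid in $X_{\le u}$ by Corollary~\ref{cor:rigidstrata}. Hence $j_{s,\le t}^*$ is surjective, and $j_{s,t}^*$ is handled by the parallel argument using the restriction of $p$ to $M_{s,t}$ together with the cohomological rigidity of the strata relevant to $M_{s,t}$.

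For stage (iii), define $\overline{Sp}_{X,s}^{\le t}(\beta) := i_{s,\le t}^*(\alpha)$ for any lift $\alpha\in CH^*_G(M_{s,\le t},\cdot)$ of $\beta$ along $(j_{s,\le t}\circ g_{\le t})^*$. This is independent of the chosen lift: two lifts differ by an element of $\ker j_{s,\le t}^* = \mathrm{im}\,(i_{s,\le t})_*$ (by the localization sequence of Proposition~\ref{prop:EHCG}(iii)), and $i_{s,\le t}^*$ vanishes on this image by stage (i). Multiplicativity and unitality are automatic since $(j_{s,\le t}\circ g_{\le t})^*$ is a ring homomorphism: a product of lifts is a lift of the product, and $1$ lifts to $1$. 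The relation $i_{s,\le t}^* = \overline{Sp}^{\le t}\circ(j_{s,\le t}\circ g_{\le t})^*$ is built into the definition. The map $\overline{Sp}^t$ is constructed identically.

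For the commutativity of the two squares in~\eqref{eqn:main}, the strategy is to chase lifts through diagram~\eqref{eqn:DNC*1}. If $\alpha$ lifts $\beta$, then $\delta_{s,t}^*(\alpha)$ lifts $f_t^*(\beta)$, since $\delta_{s,t}\circ j_{s,t} = j_{s,\le t}\circ(f_t\times\mathrm{id})$ and $(f_t\times\mathrm{id})\circ g_t = g_{\le t}\circ f_t$; the left square then reduces to the contravariant identity $\eta_{s,t}^*\circ i_{s,\le t}^* = i_{s,t}^*\circ\delta_{s,t}^*$ coming from the first-column fibre square of~\eqref{eqn:DNC*1}. Dually, if $\tilde\gamma$ lifts $\gamma$, then $(\delta_{s,t})_*\tilde\gamma$ lifts $(f_t)_*\gamma$ by the middle and rightmost squares of~\eqref{eqn:DNC*1}, and the right square reduces to $i_{s,\le t}^*\circ(\delta_{s,t})_* = (\eta_{s,t})_*\circ i_{s,t}^*$, which is precisely Lemma~\ref{lem:commute} applied to the same fibre square. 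The principal obstacle is the surjectivity assertion in stage (ii), particularly for the locally closed stratum $M_{s,t}$: extending the cohomological-rigidity argument there requires some care, and the full strength of Corollary~\ref{cor:rigidstrata} (rigidity of every stratum in its closed neighbour) is what makes the iterated splitting work.
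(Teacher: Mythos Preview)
Your overall architecture matches the paper's: trivialize the normal bundle of the fibre over $\infty$ to get $i^*\circ i_*=0$, use cohomological rigidity to force surjectivity of the $j^*$ maps, then define the specialization as $i^*$ on a lift and verify the two squares by push--pull identities. Stage (i) and the construction in stage (iii) are correct, and your direct argument for the \emph{left} square (via the lift $\delta_{s,t}^*(\alpha)$) is in fact cleaner than the paper's route, which first proves the right square and then deduces the left one from the projection formula and the injectivity of $(\eta_{s,t})_*$. For the right square you should cite the equivariant Lemma~\ref{lem:torind} rather than Lemma~\ref{lem:commute}, but the content is the same.

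There is, however, a genuine gap in your surjectivity argument for $j_{s,t}^*$. The ``parallel argument using the restriction of $p$ to $M_{s,t}$'' does not work: the composite $j_{s,t}^*\circ(p|_{M_{s,t}})^*$ identifies with the pull-back along the locally closed immersion $X_t\hookrightarrow X_{\le s}$, and this factors through $f_t^*:CH^*_G(X_{\le t})\to CH^*_G(X_t)$. But $f_t^*$ is \emph{not} surjective in general. Already for $G=\mathbb{G}_m$ acting on $X=\mathbb{P}^1$ with two fixed points one computes, via Proposition~\ref{prop:split}, that $CH^*_G(\mathbb{P}^1)\cong\{(a,b)\in\mathbb{Q}[t]^2:a(0)=b(0)\}$, while $CH^*_G(X_1)=\mathbb{Q}[t]^2$; the restriction to the fixed locus misses, e.g., $(1,0)$. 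So no amount of rigidity of the $X$-strata alone will rescue the parallel argument.

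The correct route, which the paper takes and which all of your other ingredients already support, is a $3\times 3$ diagram: place the short exact localization sequences for $N_{s,\le t-1}\subset N_{s,\le t}\supset N_{s,t}$, for $M_{s,\le t-1}\subset M_{s,\le t}\supset M_{s,t}$, and for $X_{\le t-1}\subset X_{\le t}\supset X_t$ as the three columns (exact by Corollary~\ref{cor:rigidstrata} applied respectively to $N_s$, $M_s$, and $X$), and the localization sequences $0\to CH^*_G(N)\to CH^*_G(M)\to CH^*_G(X\times\mathbb{A}^1)\to 0$ at the levels $\le t$ and $\le t-1$ as the second and third rows (exact by your stage~(i) together with the surjectivity of $j_{s,\le t}^*$ and $j_{s,\le t-1}^*$, which you have already established). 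The nine-lemma then forces the top row to be short exact, giving the surjectivity of $j_{s,t}^*$. Note that this is exactly where one needs rigidity of $M_{s,t}$ inside $M_{s,\le t}$---i.e., Corollary~\ref{cor:rigidstrata} applied to the $G$-variety $M_s$---which is presumably what you had in mind by ``the strata relevant to $M_{s,t}$'', but it is not a parallel of your $p$-argument.
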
   
\begin{proof} Using the results of this section and the previous ones,
one can prove this theorem along the lines of the proof of the analogous
result [{\sl loc. cit.}, Theorem~3.2] for $K$-theory as given in
\cite{VV1}. However, it is not at all clear from the construction of
the specialization maps in \cite{VV1} that these maps have good functorial 
properties, and if they are ring homomorphisms. In particular, it is not 
clear if these maps will 
have the compatibility properties with the Chern character and Riemann-Roch 
maps from the equivariant $K$-groups to higher Chow groups.
We give here a more direct and functorial construction of 
the specialization maps, which works both for the 
$K$-theory as well as the higher Chow groups, and the proof of various
compatibilities of these maps then becomes essentially obvious. We
give here the construction of these maps for the higher Chow groups.
The same construction works also for the $K$-theory without any change.

First of all, using Corollary~\ref{cor:rigidstrata} and 
Proposition~\ref{prop:split}, we see that for $1 \le s \le n$ and
$0 \le t \le s$, the map $CH^*_G\left(X_{\le s}\right) \to
CH^*_G\left(X_{\le t}\right)$ is surjective. We now consider the
commutative diagram
\[
\xymatrix{
CH^*_G\left(M_{s}\right) \ar[r]^{j_{\le s}^*} \ar[d] &
CH^*_G\left(X_{\le s}\right) \ar[d] \\
CH^*_G\left(M_{s, \le t}\right) \ar[r]^{j_{s, \le t}^*} &
CH^*_G\left(X_{\le t}\right).}
\]
Since the composite map in the bottom row of ~\ref{eqn:DNC*} is identity, 
we see by the homotopy invariance that ${j_{\le s}^*}$ is surjective.
Thus ${j_{s, \le t}^*}$ is also surjective.
Applying this surjectivity for ${j_{s, \le t}^*}$ and ${j_{s, \le t-1}^*}$,
we obtain the following commutative diagram
\begin{equation}\label{eqn:special}
\xymatrix{
& 0 \ar[d] & 0 \ar[d] & 0 \ar[d] & \\
0 \ar[r] & CH^*_G\left(N_{s,t}\right) 
\ar[r]^{{{i_{s,t}}_*}} \ar[d]^{{{\eta}_{s,t}}_*} &
CH^*_G\left(M_{s,t}\right) 
\ar[r]^{j_{s,t}^*} \ar[d]^{{{\delta}_{s,t}}_*} & 
CH^*_G\left(X_{t}\right) \ar[r] \ar[d]^{{f_t}_*} & 0 \\
0 \ar[r] & CH^*_G\left(N_{s,\le t}\right) 
\ar[r]^{{{i_{s, \le t}}_*}} \ar[d] &
CH^*_G\left(M_{s,\le t}\right) 
\ar[r]^{j_{s, \le t}^*} \ar[d] & 
CH^*_G\left(X_{\le t}\right) \ar[r] \ar[d] & 0 \\
0 \ar[r] & CH^*_G\left(N_{s,\le t-1}\right) 
\ar[r]^{{{i_{s, \le t-1}}_*}} \ar[d] &
CH^*_G\left(M_{s,\le t-1}\right) 
\ar[r]^{j_{s, \le t-1}^*} \ar[d] & 
CH^*_G\left(X_{\le t-1}\right) \ar[r] \ar[d] & 0 \\ 
& 0 & 0 & 0 & }
\end{equation}
such that the second and the third rows are exact. All the columns
are exact by Corollary~\ref{cor:rigidstrata} and 
Proposition~\ref{prop:split}. We conclude that the localization sequence
of the top row is also exact. This proves the surjectivity part of the 
theorem.

Next we note from ~\ref{eqn:nolabel} that $N_{s,\le t}$ and 
$N_{s,t}$ are the principal Cartier divisors on $M_{s,\le t}$ and
$M_{s,t}$ respectively. We conclude from Theorem~\ref{thm:SIF} that
the composites $i_{s,\le t}^* \circ {i_{s,\le t}}_*$ and 
$i_{s,t}^* \circ {i_{s,t}}_*$ are zero. The above diagram now
automatically defines the specializations ${\ov{Sp}}_{X, s}^{\le t}$
and ${\ov{Sp}}_{X, s}^{t}$ and gives the desired factorization
of $i_{s,\le t}^*$ and $i_{s,t}^*$. Since $i_{s,t}^*$ and
$j_{s,t}^*$ are ring homomorphisms, and since the latter is
surjective as shown in ~\ref{eqn:special}, we deduce that 
${\ov{Sp}}_{X, s}^{t}$ is also a ring homomorphism. The map 
${\ov{Sp}}_{X, s}^{\le t}$ is a ring homomorphism for the same reason.

We are now left with the proof of the commutativity of ~\ref{eqn:main}.  
To prove that the right square commutes, we consider the following
diagram.
\begin{equation}\label{eqn:nospecial}
\xymatrix@C.5pc{
CH^*_G\left(M_{s,t}\right) \ar@{->>}[drr]^{j_{s,t}^*} 
\ar[rrr]^{{{\delta}_{s,t}}_*} 
\ar[dd]^{i_{s,t}^*} & & & 
CH^*_G\left(M_{s,\le t}\right) \ar@{->>}[drr]^{j_{s,\le t}^*} 
\ar[dd]_{i_{s,\le t}^*} & & \\
& & CH^*_G\left(X_{t}\right) \ar[lld]^{{\ov{Sp}}_{X, s}^{t}} 
\ar[rrr]^{{f_t}_*} & & & 
CH^*_G\left(X_{\le t}\right) \ar[lld]^{{\ov{Sp}}_{X, s}^{\le t}} \\
CH^*_G\left(N_{s, t}\right) \ar[rrr]^{{{\eta}_{s,t}}_*} & & &
CH^*_G\left(N_{s, \le t}\right) & & }
\end{equation}
It is easy to check that $N_{s, \le t}$ and $M_{s,t}$ are 
Tor-independent over $M_{s,\le t}$ and hence the back face of the above
diagram commutes by Lemma~\ref{lem:torind}. The upper face commutes
by diagram ~\ref{eqn:special}. Since $j_{s,t}^*$ is surjective, a
diagram chase shows that the lower face also commutes, which is what
we needed to prove.

Finally, since we have shown that ${{{\eta}_{s,t}}_*}$ is injective, and 
the right square commutes, it now suffices to show that
the composite square in ~\ref{eqn:main} commutes in order to show that
the left square commutes. 

By the projection formula, the composite maps ${f_t}_* \circ f_t^*$
and ${{{\eta}_{s,t}}_*} \circ {\eta}_{s,t}^*$ are multiplication by
${f_t}_* (1)$ and ${{{\eta}_{s,t}}_*}(1)$ respectively. Since 
${\ov{Sp}}_{X, s}^{\le t}$ and ${\ov{Sp}}_{X, s}^{t}$ are ring
homomorphisms, it suffices to show that 
\[
{\ov{Sp}}_{X, s}^{\le t}\left({f_t}_* \circ j_{s,t}^* (1)\right) =
{\ov{Sp}}_{X, s}^{\le t}\left({f_t}_* (1)\right) = {{{\eta}_{s,t}}_*}(1).
\]
But this follows directly from the commutativity of the right square.
\end{proof}  
\begin{lem}\label{lem:torind}
Let $G$ be a linear algebraic group and let 
\[
\xymatrix{
W \ar[r]^{i'} \ar[d]_{j'} & Y \ar[d]^{j} \\
Z \ar[r]_{i} & X}
\]
be a fiber diagram of closed immersions of $G$-varieties such that $X$ and 
$Y$ are smooth.
Then one has $i^*\circ j_* = {j'}_* \circ {i'}^* : 
CH^*_G\left(Y, \cdot \right) \to CH^*_G\left(Z, \cdot \right)$.   
\end{lem}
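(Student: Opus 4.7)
The plan is to reduce this equivariant statement to its non-equivariant counterpart, \lemref{lem:commute}, by passing to an appropriate mixed quotient. This is the standard technique for deducing properties of equivariant higher Chow groups from properties of ordinary higher Chow groups, and all the needed ingredients (functoriality of mixed quotients, the good-pair comparison isomorphism, smoothness of mixed quotients) have been set up earlier in the paper.

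Concretely, fix $i, j \ge 0$ and choose a good pair $(V, U)$ for the $G$-action corresponding to a codimension bound $n \gg j + \codim_X Z + \codim_Y W$. By \lemref{lem:sch}, forming the mixed quotient $(-)_G := (-) \stackrel{G}{\times} U$ is a well-defined operation on quasi-projective $G$-varieties, and sends smooth $G$-varieties to smooth varieties (since $U$ is smooth and $G$ acts freely on the product). Applied to the given square, we obtain a commutative diagram
\[
\xymatrix{
W_G \ar[r]^{i'_G} \ar[d]_{j'_G} & Y_G \ar[d]^{j_G} \\
Z_G \ar[r]_{i_G} & X_G}
\]
in which all maps are closed immersions, $X_G$ and $Y_G$ are smooth, and the square remains Cartesian (since passing to a quotient by a free action commutes with fiber products).

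Next, I would apply \lemref{lem:commute} to this non-equivariant fiber square to conclude
\[
i_G^* \circ (j_G)_* \;=\; (j'_G)_* \circ {i'_G}^* \;:\; CH^*(Y_G, i) \to CH^*(Z_G, i).
\]
It then remains to identify this identity with the desired one at the equivariant level. By the definition of equivariant higher Chow groups and the independence of the good pair (together with homotopy invariance used to shift between codimension ranges, as in the proof of \corref{cor:ESIF}), the groups $CH^p_G(-, i)$ agree with $CH^p((-)_G, i)$ in the range $p \le n$, and the equivariant pullback and proper pushforward maps are by construction induced from the corresponding maps on the mixed quotients. Since $n$ was chosen larger than the relevant codimensions, the equivariant identity $i^* \circ j_* = j'_* \circ {i'}^*$ in degree $(j, i)$ follows from its non-equivariant counterpart above.

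The only slightly delicate point is checking that pushforward along the closed immersion $j$ of possibly non-smooth $Z$ into $X$ behaves well at the mixed-quotient level, but this is immediate because $Z_G \to X_G$ is again a closed immersion (the mixed-quotient construction preserves closed immersions) and the pushforward on equivariant higher Chow groups is defined precisely via $(j_G)_*$. Hence no additional argument is needed beyond invoking \lemref{lem:commute}; the entire proof is essentially a formal reduction.
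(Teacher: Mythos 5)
Your proposal is correct and follows exactly the route of the paper's own (much terser) proof: choose a good pair, pass to the mixed quotients, and invoke \lemref{lem:commute} on the resulting Cartesian square of quasi-projective varieties with $X_G$ and $Y_G$ smooth. The additional details you supply (preservation of the Cartesian property and of closed immersions under the mixed-quotient construction, and the codimension bookkeeping for identifying the equivariant groups with the non-equivariant ones) are all standard and consistent with what the paper leaves implicit.
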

\begin{proof} By choosing a good pair $(V,U)$ for the $G$-action and then
considering the appropriate mixed quotients, we can reduce to proving
the lemma for the non-equivariant  higher Chow groups. But this is shown
in Lemma~\ref{lem:commute}.
\end{proof} 

\section{Decomposition Theorem For Equivariant Higher Chow Groups}
We use the specialization maps to prove a decomposition theorem
for the equivariant higher Chow groups of $X \in {\sV}^S_G$,
where $G$ is a split diagonalizable group. We continue with the notations
of the previous section. 
\begin{prop}\label{prop:decomposition1}
The restriction maps 
\[
CH^*_G\left(X_{\le s}\right) \stackrel{(f_s^*, g_s^*)}{\longrightarrow}
CH^*_G\left(X_{s}\right) \times CH^*_G\left(X_{< s}\right)
\]
define an isomorphism of rings
\[
CH^*_G\left(X_{\le s}\right) \xrightarrow{\cong}
CH^*_G\left(X_{s}\right) {\underset {CH^*_G\left(N_s^0 \right)}
{\times}} CH^*_G\left(X_{< s}\right),
\]
where $CH^*_G\left(X_{s}\right) \xrightarrow{{\eta}_{s,\le s-1}^*}
CH^*_G\left(N_s^0 \right)$
is the pull-back
\[
CH^*_G\left(X_{s}\right) \xrightarrow{\cong} 
CH^*_G\left(N_{s}\right) \to CH^*_G\left(N_s^0 \right)
\]
and 
\[
CH^*_G\left(X_{< s}\right) \stackrel{{{\ov{Sp}}_{X, s}^{\le s-1}}}
{\longrightarrow} CH^*_G\left(N_{s, \le s-1}\right) =
CH^*_G\left(N_s^0 \right)
\]
is the specialization map of Theorem~\ref{thm:specialization}.
\end{prop}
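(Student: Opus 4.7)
The plan is to deduce this from the splitting theorem Proposition~\ref{prop:split} applied to the closed embedding $X_s \hookrightarrow X_{\le s}$ with complement $X_{<s}$, whose cohomological rigidity hypothesis is supplied by Corollary~\ref{cor:rigidstrata}. That proposition immediately gives a ring isomorphism
\[
CH^*_G(X_{\le s}) \xrightarrow{\cong} CH^*_G(X_s) \underset{\wt{CH^*_G(X_s)}}{\times} CH^*_G(X_{<s}),
\]
where $\wt{CH^*_G(X_s)} = CH^*_G(X_s)/(c^G_{d_s}(N_s))$ and $d_s$ is the codimension of $X_s$ in $X_{\le s}$. What remains is to identify $\wt{CH^*_G(X_s)}$ with $CH^*_G(N_s^0)$ and to match the two structure maps with those in the statement.

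For the first identification, I would combine the localization sequence for the zero section $X_s \hookrightarrow N_s$ with complement $N_s^0$ with the homotopy isomorphism $CH^*_G(X_s) \xrightarrow{\cong} CH^*_G(N_s)$. Under this homotopy isomorphism the pushforward from the zero section becomes multiplication by $c^G_{d_s}(N_s)$ by the equivariant self-intersection formula (Corollary~\ref{cor:ESIF}), and this multiplication is injective in every bigraded piece by cohomological rigidity. The localization long exact sequence thus collapses to a short exact sequence, yielding $CH^*_G(N_s^0) \cong \wt{CH^*_G(X_s)}$; under this isomorphism the quotient map $CH^*_G(X_s) \twoheadrightarrow \wt{CH^*_G(X_s)}$ from Proposition~\ref{prop:split} is precisely the composite $CH^*_G(X_s) \cong CH^*_G(N_s) \to CH^*_G(N_s^0)$ asserted in the statement.

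The remaining and main step is to check that the map from $CH^*_G(X_{<s})$ produced by Proposition~\ref{prop:split}, namely $u \mapsto f_s^*(\wt{u}) \bmod c^G_{d_s}(N_s)$ for any lift $\wt{u} \in CH^*_G(X_{\le s})$ of $u$ along $g_s^*$, coincides under the identification above with $\ov{Sp}_{X,s}^{\le s-1}(u)$. The strategy is to exhibit a common witness. Let $p \colon M_s \to X_{\le s} \times \P^1 \to X_{\le s}$ be the standard blow-down followed by the first projection, which is $G$-equivariant because $G$ acts trivially on $\P^1$, and set $\beta := p^*(\wt{u})|_{M_{s,\le s-1}}$. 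Since the composite $X_{\le s} \hookrightarrow X_{\le s} \times \A^1 \hookrightarrow M_s \xrightarrow{p} X_{\le s}$ is the identity, $\beta$ is a lift of $u$ along $j_{s,\le s-1}^*$, so Theorem~\ref{thm:specialization} gives $\ov{Sp}_{X,s}^{\le s-1}(u) = i_{s,\le s-1}^*(\beta)$. The restriction of $p$ to the fiber $N_s$ at $\infty$ factors as the bundle projection $N_s \to X_s$ followed by inclusion in $X_{\le s}$, so $i_{s,\le s-1}^*(\beta)$ equals the pullback of $f_s^*(\wt{u})$ along $N_s^0 \to X_s$, which under the previous paragraph's identification is $f_s^*(\wt{u}) \bmod c^G_{d_s}(N_s)$. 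The main obstacle is verifying this last bundle identification together with the compatible behaviour of stabilizers under $p$; both follow by direct inspection of the construction of $M_s$ in Section~2, but are the only non-automatic point.
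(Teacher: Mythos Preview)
Your proposal is correct and follows essentially the same strategy as the paper: invoke Proposition~\ref{prop:split} via Corollary~\ref{cor:rigidstrata}, identify $\wt{CH^*_G(X_s)}$ with $CH^*_G(N_s^0)$ through the self-intersection formula and localization for the zero section, and then verify that the two structure maps match. The only organizational difference is in the last step. The paper factors through the specialization $\ov{Sp}_{X,s}^{\le s}$ on all of $X_{\le s}$, uses the commutativity of the squares established in Theorem~\ref{thm:specialization}, and reduces to the identity $f_{s,\infty}^* \circ \ov{Sp}_{X,s}^{\le s} = f_s^*$, which it checks by lifting an arbitrary class through $j_{\le s}^*$. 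You instead produce an \emph{explicit} lift via the blow-down $p\colon M_s \to X_{\le s}\times\P^1 \to X_{\le s}$ and read off the specialization directly as $i_{s,\le s-1}^*(p^*(\wt{u})|_{M_{s,\le s-1}})$; this is a shade more concrete but amounts to the same computation.

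One small comment: the ``compatible behaviour of stabilizers under $p$'' that you flag as an obstacle is not actually needed. The class $\beta$ is simply $p^*(\wt u)\in CH^*_G(M_s)$ restricted to the open set $M_{s,\le s-1}$, so no equivariance of $p$ with respect to the stabilizer stratification is required; only the purely geometric fact (immediate from the deformation construction in Section~2) that $p|_{N_s}$ is the bundle projection $N_s\to X_s$ followed by $f_s$.
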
 
\begin{proof} We only need to identify the pull-back and the specialization
maps with the appropriate maps of Proposition~\ref{prop:split}.
In the diagram 
\[
\xymatrix{
0 \ar[r] & CH^*_G\left(X_{s}\right) \ar[r]^{{f_{s, \infty}}_*} 
\ar[dr]_{c^G_{d_s}} & 
CH^*_G\left(N_{s}\right) \ar[r]^{{\eta}_{s, \le s-1}^*} 
\ar[d]^{f_{s, \infty}^*} &
CH^*_G\left(N_s^0 \right) \ar[r] & 0 \\
& & CH^*_G\left(X_{s}\right) & & }
\]
where $f_{s, \infty}: X_s \to N_s$ is the 0-section embedding, the top
sequence is exact, and the lower
triangle commutes by Corollary~\ref{cor:ESIF}. Since ${f_{s, \infty}^*}$
is an isomorphism, this immediately identifies the pull-back map
of the proposition with the quotient map
$CH^*_G\left(X_{s}\right) \to \frac{CH^*_G\left(X_{s}\right)}
{\left({c^G_{d_s}(N_s)}\right)}$.

Next we consider the following diagram.
\[
\xymatrix{
CH^*_G\left(X_{\le s}\right) \ar[d]_{{{{\ov{Sp}}_{X, s}^{\le s}}}}
\ar@{->>}[r]^{f_{s,\le s-1}^*} & CH^*_G\left(X_{< s}\right)
\ar[d]^{{{{\ov{Sp}}_{X, s}^{\le s-1}}}} \\
CH^*_G\left(N_{s}\right) \ar[r] \ar[d]_{f_{s, \infty}^*} &
CH^*_G\left(N_s^0 \right) \\
CH^*_G\left(X_{s}\right) \ar[ur]_{{\eta}_{s, \le s-1}^*} & }
\]
Since the top horizontal arrow in the above diagram is surjective,
we only need to show that
${{{{\ov{Sp}}_{X, s}^{\le s-1}}}} \circ {f_{s,\le s-1}^*}
= {{\eta}_{s, \le s-1}^*} \circ f_s^*$ in order to identify 
${{\ov{Sp}}_{X, s}^{\le s-1}}$ with the map $j^*$ of
Proposition~\ref{prop:split}.
It is clear from the diagram ~\ref{eqn:special} and the definition of the 
specialization maps that the top square above commutes. We have just
shown above that the lower triangle also commutes. This reduces us to
showing that 
\begin{equation}\label{eqn:special2}
{f_{s, \infty}^*} \circ {{{{\ov{Sp}}_{X, s}^{\le s}}}}
= f_s^*.
\end{equation}
If $X_s \times {\P}^1 \xrightarrow{F_s} M_s$ denotes the embedding
({\sl cf.} ~\ref{eqn:DNC}), then for $x \in CH^*_G\left(X_{\le s}\right)$,
we can write $x = j_{\le s}^*(y)$ by Theorem~\ref{thm:specialization}.
Then 
\[
\begin{array}{lllll}
{f_{s, \infty}^*} \circ {{{{\ov{Sp}}_{X, s}^{\le s}}}} \circ
 j_{\le s}^*(y) & = & {f_{s, \infty}^*} \circ i_{s, \le s}^*(y) &
= & g_{\infty, \le s}^* \circ F_s^*(y) = g_{0, \le s}^* \circ F_s^*(y) \\   
& = & f_s^* \circ j_{\le s}^*(y) & = & f_s^*(x),
\end{array}
\]
where the second inequality follows from Lemma~\ref{lem:torind}.
This proves ~\ref{eqn:special2} and the proposition.
\end{proof}
We need the following algebraic result before we prove the main
result of this section. Let $A$ be a $\Q$-algebra 
(not necessarily commutative). For any linear form 
$f(\underline{t}) = \stackrel{n}{\underset{i=1}{\Sigma}} a_i t_i$
in $A[t_1, \cdots , t_n]$ such that $a_i \in {\Q}$ for each $i$, let
$c(f)$ denote the vector $(a_1, \cdots , a_n) \in {\Q}^n$ consisting of
the coefficients of the form $f$. 
\begin{lem}\label{lem:elem1}
Let $A$ be as above and let
$S = \{f_1, \cdots , f_s\}$ be a set of linear forms in
$A[t_1, \cdots , t_n]$ such that the vectors $\{c(f_1), \cdots , c(f_s)\}$
are linearly independent in ${\Q}^n$. Let
\[
{\gamma}_j = \stackrel{d_j}{\underset{i=0}{\Sigma}} m^j_i f^i_j
\] such that $m_{d_j}^j \in {\Q}^*$ for $1 \le j \le s$, and
$m^j_{j'} \in Z(A)$ for all $j, j'$. Then one has
\[
\left({\gamma}_1 \cdots {\gamma}_s\right) = 
\stackrel{s}{\underset{j=1}{\bigcap}} \left({\gamma}_j\right)
\]
as ideals in $A[t_1, \cdots , t_n]$.
\end{lem}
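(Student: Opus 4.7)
The plan is to first normalize by a $\Q$-linear change of variables, then use a centrality/monic-polynomial argument combined with induction on $s$.

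First I would exploit the hypothesis that $A$ is a $\Q$-algebra: since the vectors $c(f_1),\ldots,c(f_s)$ are $\Q$-linearly independent in $\Q^n$, I can extend them to a basis and consider the corresponding element of $GL_n(\Q)$. Because $\Q\subseteq A$, substitution along this linear change of variables yields a $\Q$-algebra automorphism of $A[t_1,\ldots,t_n]$ sending $f_j\mapsto t_j$ for $1\le j\le s$. This reduces the statement to the case $f_j = t_j$. In this normalized situation each $\gamma_j$ is a polynomial in the single variable $t_j$ with coefficients in $Z(A)$ and with leading coefficient $m_{d_j}^j\in\Q^{*}\subset A^{*}$. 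In particular every $\gamma_j$ lies in the center $Z(A)[t_1,\ldots,t_n]$ of $A[t_1,\ldots,t_n]$, the $\gamma_j$ pairwise commute, and the inclusion $(\gamma_1\cdots\gamma_s)\subseteq\bigcap_j(\gamma_j)$ is immediate.

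For the reverse inclusion I would induct on $s$, the case $s=1$ being trivial. For the inductive step, let $x\in\bigcap_{j=1}^s(\gamma_j)$. Applying the induction hypothesis to $\gamma_1,\ldots,\gamma_{s-1}$ gives $x = y\,\gamma_1\cdots\gamma_{s-1}$ for some $y\in A[t_1,\ldots,t_n]$, and I must show that $y\in(\gamma_s)$. Set $R = A[t_1,\ldots,\widehat{t_s},\ldots,t_n]$, so that $A[t_1,\ldots,t_n]\cong R[t_s]$. Since $\gamma_s\in R[t_s]$ is a polynomial in $t_s$ with invertible leading coefficient, the standard Euclidean division argument shows that $\gamma_s$ is a (two-sided) non-zero-divisor and that $R[t_s]/(\gamma_s)$ is a free $R$-module on the basis $1,\bar t_s,\ldots,\bar t_s^{\,d_s-1}$. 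In the quotient we have $\bar x = \bar y\cdot\overline{\gamma_1\cdots\gamma_{s-1}} = 0$, so it suffices to prove that multiplication by $\overline{\gamma_1\cdots\gamma_{s-1}}$ is injective on this free $R$-module. But $\gamma_1\cdots\gamma_{s-1}$ lies in $Z(A)[t_1,\ldots,t_{s-1}]\subseteq Z(R)$, and each $\gamma_j$ for $j<s$ is a polynomial in $t_j$ with invertible leading coefficient inside $R = A[t_1,\ldots,\widehat{t_s},\ldots,t_n]$, hence a non-zero-divisor of $R$; products of non-zero-divisors remain non-zero-divisors, so $\overline{\gamma_1\cdots\gamma_{s-1}}$ is a central non-zero-divisor of $R$. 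Multiplication by such an element on a free $R$-module is visibly injective (coefficient by coefficient in the chosen basis), giving $\bar y = 0$, i.e.\ $y = u\gamma_s$ and thus $x = u\gamma_1\cdots\gamma_s$.

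The main technical point is the step where centrality and the non-zero-divisor property must both be leveraged simultaneously: one needs $\gamma_j$ central to be able to rearrange products freely (so that $(\gamma_1\cdots\gamma_s)$ really is a two-sided ideal equal to $(\gamma_1)\cdots(\gamma_s)$), and one needs $\gamma_s$ to have a unit as leading coefficient in $t_s$ in order to get the free-module structure on $R[t_s]/(\gamma_s)$ and hence the injectivity of multiplication by the central non-zero-divisor $\overline{\gamma_1\cdots\gamma_{s-1}}$. Once the change of variables has isolated each $\gamma_j$ into its own variable $t_j$, both properties are available, and the induction closes.
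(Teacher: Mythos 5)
Your proof is correct and follows essentially the same route as the paper: a $\Q$-linear change of variables reducing to $f_j = t_j$, followed by an induction that reduces everything to the single divisibility statement $\gamma_s \mid y\,\gamma_1\cdots\gamma_{s-1} \Rightarrow \gamma_s \mid y$. The only difference is that the paper skips the proof of that last step by citing Lemma~4.9 of Vezzosi--Vistoli, whereas you supply a clean self-contained argument via Euclidean division (using that the leading coefficient of $\gamma_s$ in $t_s$ is a central unit, so $R[t_s]/(\gamma_s)$ is free over $R = A[t_1,\ldots,\widehat{t_s},\ldots,t_n]$) together with the observation that $\gamma_1\cdots\gamma_{s-1}$ is a central non-zero-divisor of $R$.
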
  
\begin{proof} Using a simple induction, it suffices to show that for 
$j \neq j'$, the relation ${\gamma}_j | q{\gamma}_{j'}$  
implies that ${\gamma}_j | q$. So we can assume $S = \{f_1, f_2\}$.
Extend $\{c(f_1), c(f_2)\}$ to a basis $B$ of ${\Q}^n$. Applying the
linear automorphism of $A[t_1, \cdots , t_n]$ given by the invertible
matrix $B$, we can assume that $f_j = t_j$ for $j = 1, 2$. 
Now the proof follows along the same lines as the proof of 
Lemma~4.9 of \cite{VV}. We skip the details.
\end{proof}
\begin{thm}\label{thm:decomposition*}
Let $G$ be a split diagonalizable group of dimension $n$ and let 
$X \in {\sV}^S_G$. The ring homomorphism
\[
CH^*_G\left(X \right) \longrightarrow 
\stackrel{n}{\underset{s=0}{\prod}} CH^*_G\left(X_s \right)
\]
is injective. Moreover, its image consists of the $n$-tuples
$\left({\alpha}_s\right)$ in the product with the property that
for each $s = 1, \cdots , n$, the pull-back of ${\alpha}_s \in
CH^*_G\left(X_s \right)$ in $CH^*_G\left(N_{s, s-1} \right)$ is same
as ${{\ov{Sp}}_{X, s}^{s-1}}\left({\alpha}_{s-1}\right) \in
CH^*_G\left(N_{s, s-1} \right)$.
In other words, there is a ring isomorphism
\[
CH^*_G\left(X \right) \stackrel{\cong}{\longrightarrow}
CH^*_G\left(X_n \right) {\underset{CH^*_G\left(N_{n,n-1} \right)}
{\times}} CH^*_G\left(X_{n-1} \right) 
{\underset{CH^*_G\left(N_{n-1,n-2} \right)}
{\times}} \cdots {\underset{CH^*_G\left(N_{1,0} \right)}
{\times}} CH^*_G\left(X_0 \right).
\]
\end{thm}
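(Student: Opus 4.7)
I will argue by induction on the number of non-empty strata in the stratification of $X$ by stabilizer dimension, proving the statement for every smooth $G$-variety simultaneously. The base case (only one non-empty stratum) is immediate: then $X = X_0$ and the product collapses to a single factor. For the inductive step with $X = X_{\le n}$, Proposition~\ref{prop:decomposition1} gives
\[
CH^*_G(X) \cong CH^*_G(X_n) \underset{CH^*_G(N_n^0)}{\times} CH^*_G(X_{\le n-1}).
\]
The open subvariety $X_{\le n-1}$ has strictly fewer strata, and $N_n^0 = N_{n,\le n-1}$ is itself a smooth $G$-variety whose top stratum $N_{n,n}$ is empty (the fibers of $N_n$ carry no trivial character of the identity component of the stabilizer on $X_n$, by the same argument underlying Corollary~\ref{cor:rigidstrata}). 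Hence the inductive hypothesis applies to both $X_{\le n-1}$ and $N_n^0$, providing iterated fiber-product descriptions in terms of $X_0,\ldots,X_{n-1}$ and $N_{n,0},\ldots,N_{n,n-1}$ respectively. Injectivity of $CH^*_G(X) \to \prod_{s=0}^n CH^*_G(X_s)$ is then automatic, since a fiber product embeds into the product of its two factors.

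The image characterization splits into two directions. The forward direction --- that the pull-backs $\alpha_s$ of any $\alpha\in CH^*_G(X)$ satisfy the matching $\alpha_s|_{N_{s,s-1}} = \ov{Sp}_{X,s}^{s-1}(\alpha_{s-1})$ --- is immediate from the left square in Theorem~\ref{thm:specialization}: applying $\eta_{s,s-1}^*$ to $\ov{Sp}_{X,s}^{\le s-1}(\alpha|_{X_{\le s-1}})$ yields $\ov{Sp}_{X,s}^{s-1}\circ f_{s-1}^*(\alpha|_{X_{\le s-1}}) = \ov{Sp}_{X,s}^{s-1}(\alpha_{s-1})$. For the reverse direction, given $(\alpha_0,\ldots,\alpha_n)$ satisfying every adjacent matching, the inductive hypothesis produces $\beta \in CH^*_G(X_{\le n-1})$ realizing $(\alpha_0,\ldots,\alpha_{n-1})$, and it remains to verify the single equation $\eta_{n,\le n-1}^*(\alpha_n) = \ov{Sp}_{X,n}^{\le n-1}(\beta)$ in $CH^*_G(N_n^0)$; by the inductive decomposition of $CH^*_G(N_n^0)$ this reduces to checking equality after further pull-back to each $N_{n,t}$ for $0 \le t \le n-1$.

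The principal obstacle is verifying this equation on $N_{n,t}$ for $t<n-1$, since the matching hypothesis supplies only the top stratum case $t=n-1$. I plan to combine (i) the compatibility of $\ov{Sp}_{X,n}^{\le\cdot}$ with restriction to $G$-invariant open subvarieties, which follows from the construction of specialization via the deformation $M_n$ (the open embedding $M_{n,\le t}\subset M_{n,\le n-1}$ supplies the desired compatibility), and (ii) iterated use of the left square in Theorem~\ref{thm:specialization}, to rewrite $\ov{Sp}_{X,n}^{\le n-1}(\beta)|_{N_{n,t}}$ as $\ov{Sp}_{X,n}^t(\alpha_t)$; the required equalities $\alpha_n|_{N_{n,t}} = \ov{Sp}_{X,n}^t(\alpha_t)$ will then follow by cascading the adjacent matchings through the internal matchings furnished by the inductive decomposition of $CH^*_G(N_n^0)$. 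Lemma~\ref{lem:elem1} is available here to guarantee that the various Chern-class factors $c^G_{d_s}(N_s)$ appearing across different strata are pairwise coprime non-zero-divisors, so that the successive fiber products assemble without unexpected identifications, mirroring the role of Lemma~4.9 in \cite{VV} for the $K$-theoretic analogue.
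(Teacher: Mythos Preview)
Your inductive framework and the use of Proposition~\ref{prop:decomposition1} match the paper exactly, and your forward direction (that classes coming from $CH^*_G(X)$ satisfy the adjacent matching) is correct. The gap is in the reverse direction, specifically in your ``cascading'' step.

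You want to show that $\gamma := \ov{Sp}_{X,n}^{\le n-1}(\beta)$ and $\delta := \eta_{n,\le n-1}^*(\alpha_n)$ agree in $CH^*_G(N_n^0)$, knowing only that they agree on the top stratum $N_{n,n-1}$. Your plan is to invoke the inductive fiber-product decomposition of $CH^*_G(N_n^0)$ and cascade downward. But the fiber-product decomposition only says that an element of $CH^*_G(N_n^0)$ is determined by \emph{all} of its components $(\,\cdot\,|_{N_{n,t}})_{0\le t\le n-1}$ together; it does not say that the top component alone determines the rest. Two elements satisfying the same internal matching conditions and agreeing on $N_{n,n-1}$ can in principle differ on $N_{n,n-2}$: the matching condition relates $N_{n,n-1}$ to $N_{n,n-2}$ through an auxiliary space, and the map from $CH^*_G(N_{n,n-2})$ into that auxiliary space need not be injective. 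So the cascade stalls immediately after the first step.

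What the paper does instead is bypass the cascade entirely by proving that the single restriction
\[
CH^*_G(N_s^0)\longrightarrow CH^*_G(N_{s,s-1})
\]
is already injective. Concretely, splitting $N_s=\bigoplus_i E_i$ into eigenbundles for linearly independent characters $\chi_i$ of the relevant subtorus, one has $N_{s,s-1}=\coprod_i E_i^0$, the kernel of $CH^*_G(X_s)\to CH^*_G(N_s^0)$ is the principal ideal $(\prod_i \gamma_i)$ with $\gamma_i=c^G_{d_i}(E_i)$, and the kernel of $CH^*_G(X_s)\to CH^*_G(N_{s,s-1})$ is $\bigcap_i(\gamma_i)$. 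Lemma~\ref{lem:elem1} is then applied \emph{precisely here}, to the polynomial ring $CH^*_D(X_s)[t_1,\dots,t_s]$, to show $(\prod_i\gamma_i)=\bigcap_i(\gamma_i)$, hence the two kernels coincide and the restriction is injective. Your closing remark invokes Lemma~\ref{lem:elem1} only vaguely (``assemble without unexpected identifications''); the actual role of the lemma is this single sharp injectivity statement, which replaces your cascade.
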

\begin{proof} We prove by the induction on the largest integer $s$ such that
$X_s \neq \emptyset$. 

If $s = 0$, there is nothing to prove. If $s > 0$, we have by induction
\begin{equation}\label{eqn:decomposition*1}
CH^*_G\left(X_{<s} \right) \stackrel{\cong}{\longrightarrow}
CH^*_G\left(X_{s-1} \right) {\underset{CH^*_G\left(N_{s-1,s-2} \right)}
{\times}} \cdots {\underset{CH^*_G\left(N_{1,0} \right)}
{\times}} CH^*_G\left(X_0 \right).
\end{equation}
Using this and Proposition~\ref{prop:decomposition1}, it suffices to
show that if ${\alpha}_s \in CH^*_G\left(X_s \right)$ and
if ${\alpha}_{<s} \in CH^*_G\left(X_{<s} \right)$ with the restriction
${\alpha}_{s-1} \in CH^*_G\left(X_{s-1} \right)$ are such that
${\alpha}_s \mapsto {\alpha}_s^0 \in CH^*_G\left(N_s^0 \right)$
and ${\alpha}_s \mapsto {\alpha}_{s, s-1} \in     
CH^*_G\left(N_{s-1,s-2} \right)$, then
\[
{{\ov{Sp}}_{X, s}^{\le s-1}}\left({\alpha}_{<s}\right) = 
{\alpha}_s^0 \ \ {\rm iff} \ \
{{\ov{Sp}}_{X, s}^{s-1}}\left({\alpha}_{s-1}\right) = 
{\alpha}_{s, s-1}.
\]
Using the commutativity of the left square in 
Theorem~\ref{thm:specialization},
this is reduced to showing that the restriction map
$CH^*_G\left(N_s^0 \right) \to CH^*_G\left(N_{s-1,s-2} \right)$
is injective. 

To prove this, we first use Proposition~\ref{prop:strata} to
assume that the toral component $T$ of the isotropy groups of the points 
of $X_s$ is fixed, and choose a splitting $G = D \times T$.
 
Now, following the proof of the analogous result for $K$-theory
({\sl cf.} \cite[Theorem~4.5]{VV}), we can write
\[
N_s = E = \stackrel{q}{\underset{i=1}{\bigoplus}} E_i \ \ {\rm and} \ \ 
N_{s,s-1} = {\underset{i}{\coprod}} E_i^0,
\]
where each $E_i$ is of the form ${\bigoplus}{E_{m_j}}_{{\chi}_i}
{\otimes} {m_j {\chi}_i}$, such that for $i \neq j$,
${\chi}_i$ and ${\chi}_j$ are linearly independent characters of $T$, 
and $E_i^0$ is embedded in $E$ by setting
all the other components equal to zero. Let $d_i = {\rm rank}(E_i)$.

Now we see from Proposition~\ref{prop:decomposition1} that
\[
{\rm Ker}\left(CH^*_G\left(X_{s} \right) \to 
CH^*_G\left(N_{s,s-1} \right)\right) =
{\underset{i}{\bigcap}} \left(c^G_{d_i}(E_i)\right) \ \ {\rm and} 
\]   
\[
{\rm Ker}\left(CH^*_G\left(X_{s} \right) \to 
CH^*_G\left(N_{s}^0 \right)\right) = \left(c^G_{d_s}(N_s)\right)
\ \ {\rm with} \ \ d_s = {\Sigma}d_i.
\]
Putting ${\gamma}_i = c^G_{d_i}(E_i)$ and ${\gamma} = 
c^G_{d_s}(N_s)$, we are then reduced to showing that  
\begin{equation}\label{eqn:elem2}
\left({\gamma}\right) = 
\left({\underset{i}{\prod}} {\gamma}_i \right)
= {\underset{i}{\bigcap}} \left({\gamma}_i\right)
\end{equation}
in $CH^*_D\left(X_{s} \right)[t_1, \cdots, t_s]$.

However, we have seen in the proof of Proposition~\ref{prop:rigiditysuff}
that each ${\gamma}_i$ is of the form
\[
{\gamma}_i = u_i^{d_i} + {\alpha}^i_{{d_i}-1} {u_i}^{{d_i}-1} + \cdots
+ {\alpha}^i_1 u_{i} + {\alpha}^i_0,
\]
where ${\alpha}^i_j \in CH^*_D\left(X_s, 0\right) \subset
Z\left(CH^*_D\left(X_s \right)\right)$ and 
$u_i = c^T_1\left(L_{{\chi}_i}\right)
= \stackrel{s}{\underset{j=1}{\Sigma}} b^i_jt_j \neq 0$ in
${\Q}[t_1, \cdots , t_s]$.
Moreover, the linear independence of ${\chi}_i$'s implies that the vectors
$\{c(u_1), \cdots , c(u_q)\}$ are linearly independent. We now apply
Lemma~\ref{lem:elem1} to conclude the proof of~\ref{eqn:elem2}
and hence the theorem.
\end{proof}

\section{Equivariant Chern Character And Riemann-Roch Maps}
The equivariant Riemann-Roch map for the Grothendieck group
of equivariant coherent sheaves on a variety has been constructed by
Edidin and Graham in \cite{ED1}. In this section, we construct these
maps for the higher equivariant $K$-theory and study their properties.
Following the techniques of Gillet ({\sl cf.} \cite{Gillet})
for the construction of the Riemann-Roch maps, 
we first define the equivariant Chern character map for the smooth varieties, 
and then use this Chern character to define the Riemann-Roch for all 
varieties. Recall from Section~1 that for any $G$-variety $X$,
$\widehat{G^G_i(X)}$ denotes the $I_G$-adic completion of the 
$R(G)$-module $G^G_i(X)$, where $I_G$ is the ideal of virtual 
representations of $G$ of rank zero in the representation ring $R(G)$.
We shall follow the notations of Section~4 for the various completions 
of $R(G)$ and $S(G)$-modules in the rest of this paper. In particular,
$\wt{K^G_i(X)}$ will denote the {\sl weak completion}
$K^G_i(X) {\otimes}_{R(G)} \widehat{R(G)}$. For a map $f: Y \to X$
in ${\sV}_G$, we make the convention in this paper that the induced
pull-back (or push-forward) map on $K$-theory will be denoted by
$f^*$ ($f_*$), and the map on the higher Chow groups will be denoted
by ${\bar{f}}^*$ (${\bar{f}}_*$).  
\begin{prop}\label{prop:Chern}
Let $G$ be a linear algebraic group and let $X \in {\sV}^S_G$. Then
for every $i \ge 0$, there is a Chern character map
\[
ch^G_X : K^G_i(X) \longrightarrow {\underset{j}{\prod}} 
CH^j_G\left(X,i\right) = \ov{CH^*_G\left(X,i\right)}
\]
with the following properties. \\
$(i) \ \ ch^G$ is a contravariant functor from ${\sV}^S_G$ to
${\rm Vec}_{\Q}$. \\
$(ii)$ For $\alpha \in K^G_0(X)$ and $x \in K^G_i(X)$, one has
$ch^G_X({\alpha} x) = ch^G_X({\alpha}) \cdot ch^G_X(x)$, where
the $\ov{CH^*_G\left(X,0\right)}$-module structure
on $\ov{CH^*_G\left(X,i\right)}$ is induced by the
intersection product. \\ 
$(iii) \ \ ch^G_X$ factors through the $I_G$-adic completion
\[
\widehat{K^G_i(X)} \stackrel{{\widehat{ch}}^G_X}{\longrightarrow} 
\ov{CH^*_G\left(X,i\right)}.
\]
$(iv)$ If $G$ acts freely on $X$, then $ch^G_X$ coincides with the
non-equivariant  Chern character map $ch_{X/G}$ under the identifications
$K^G_i(X) = K_i(X/G)$ and $CH^*_G\left(X,i\right) = 
CH^*\left(X/G,i\right)$ ({\sl cf.} Proposition~\ref{prop:EHCG}).
\end{prop}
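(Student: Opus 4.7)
The plan is to define the $j$-th component of $ch^G_X$ for each fixed $j\ge 0$ via the Borel construction combined with Gillet's non-equivariant Chern character, and then to deduce the listed properties from those of the non-equivariant theory. I choose a good pair $(V_j,U_j)$ with $\codim_{V_j}(V_j-U_j)>j$; by Lemma~\ref{lem:sch} and the smoothness of $X$, the mixed quotient $X_{G,j}:=X\stackrel{G}{\times}U_j$ is smooth and quasi-projective, and by Proposition~\ref{prop:EHCG}(vii) one has a canonical identification $CH^j_G(X,i)\cong CH^j(X_{G,j},i)$. I take the $j$-th component of $ch^G_X$ to be
\[
K^G_i(X)\xrightarrow{p^*}K^G_i(X\times U_j)\cong K_i(X_{G,j})\xrightarrow{ch_{X_{G,j}}}\prod_{m}CH^m(X_{G,j},i)\twoheadrightarrow CH^j(X_{G,j},i),
\]
where $p\colon X\times U_j\to X$ is the $G$-equivariant projection and the second isomorphism uses the free $G$-action on $X\times U_j$. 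Independence from the choice $(V_j,U_j)$ is checked by passing to $(V_1\oplus V_2,U_1\times U_2)$ for any two competing good pairs and invoking functoriality of Gillet's Chern character together with homotopy invariance and localization from Proposition~\ref{prop:EHCG}.

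Properties (i) and (iv) are then inherited directly: functoriality follows by applying Gillet's result to the morphism $Y_{G,j}\to X_{G,j}$ induced by any $f\colon Y\to X$ in $\sV^S_G$, and in the free-action case $X_{G,j}\to Y=X/G$ is a $U_j$-bundle, so homotopy invariance matches both the $K$-theory and the higher Chow groups with those of $Y$, and the construction reduces to Gillet's $ch_Y$. Property (ii) holds because $p^*$ and the Morita identification are module maps intertwining the $K_0$-action on $K_i$ with the intersection product on higher Chow groups, and Gillet's Chern character is multiplicative in the same sense; the component-wise products then assemble to give $ch^G_X(\alpha x)=ch^G_X(\alpha)\cdot ch^G_X(x)$ in the completed product ring $\ov{CH^*_G(X,i)}$.

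The crucial step is (iii). For $\alpha\in I_G\subset R(G)=K^G_0(k)$, one has $ch_0(\alpha)=\operatorname{rank}(\alpha)=0$, so $ch^G_k(\alpha)\in\widehat{J_G}=\prod_{m\ge 1}CH^m_G(k,0)$. Multiplicativity of $ch^G_k$, which is the $X=\Spec k$ case of (ii), yields $ch^G_k(I_G^N)\subseteq\widehat{J_G}^N$, and the latter projects to zero in every degree below $N$. Since the $R(G)$-module structure on $K^G_i(X)$ is pulled back along $\pi\colon X\to \Spec k$ and $\bar\pi^*$ preserves codimension, another application of (ii) together with the additivity of codimension under the intersection product shows $ch^G_X(I_G^N\cdot K^G_i(X))\subseteq\prod_{m\ge N}CH^m_G(X,i)$. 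Thus the $j$-th component of $ch^G_X$ annihilates $I_G^{j+1}\cdot K^G_i(X)$, and assembling over $j$ delivers the desired $\widehat{ch}^G_X\colon\widehat{K^G_i(X)}\to\ov{CH^*_G(X,i)}$. I expect the most delicate bookkeeping to lie in the independence verification of step one; the continuity argument for (iii), though the conceptual heart of the equivariant Riemann-Roch story, is essentially formal once the multiplicativity of $ch^G_k$ on $R(G)$ is recorded.
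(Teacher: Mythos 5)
Your proposal is correct and follows essentially the same route as the paper: define the $j$-th component via a good pair and the Borel construction, apply Gillet's non-equivariant Chern character on the mixed quotient, and deduce (i), (ii), (iv) from the non-equivariant theory; for (iii) the paper likewise observes that $ch^G_k(I_G)\subset\prod_{m\ge 1}CH^m_G(k,0)$ (citing Edidin--Graham rather than the rank computation you give, which amounts to the same thing) and uses multiplicativity to get $ch^G_X(I_G^n K^G_i(X))\subset\prod_{m\ge n}CH^m_G(X,i)$, passing to inverse limits. The only cosmetic difference is that the paper delegates the independence-of-good-pair check to the argument of \cite[Proposition~3.1]{ED1}, which is the same double-pair/homotopy-invariance device you sketch.
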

\begin{proof} In order to define $ch^G_X$, it suffices to define the
component ${\left(ch^G_X(x)\right)}_j$ of $ch^G_X(x)$ in 
$CH^j_G\left(X,i\right)$ for every $j \ge 0$,
whenever $x \in K^G_i(X)$. So we fix $j \ge 0$ and choose a good pair
$(V,U)$ for $G$ corresponding to $j$. Let ${\pi}_{UX} : X \times U \to
X$ be the projection map, which is flat. We define  
${\left(ch^G_X(x)\right)}_j$ to be the $j$th component of the composite map
\[
K^G_X(X) \xrightarrow{{\pi}_{UX}^*} 
K^G_{X \times U} \left(X \times U\right) \xrightarrow{\cong} 
K_i(X_G) \xrightarrow{ch_{X_G}} CH^*\left(X_G,i\right),
\]
where $ch_{X_G} : K_i(X_G) \to CH^*\left(X_G,i\right)$ is the 
non-equivariant  Chern character map 
of Bloch and Gillet ({\sl cf.} \cite[Definition~2.34]{Gillet}, 
\cite[Section~7]{Bloch}).
The proof of the independence of the above definition of the choice
of the good pair $(V,U)$ is same as the proof of Proposition~3.1
in \cite{ED1}, using the fact that the non-equivariant  $K$-theory and
higher Chow groups satisfy the homotopy invariance property.

To prove the contravariance property of $ch^G$, we can again reduce to the
non-equivariant  case as above, where this is already known ({\sl cf.}
\cite{Gillet}, see also \cite[Theorem~1.10]{FW}). The same argument also
proves $(ii)$ as the non-equivariant  Chern character is known to be a ring
isomorphism ({\sl loc. cit.}).

To prove $(iii)$, we have $ch^G_k(I_G) \subset
{\underset{j \ge 1}{\prod}} CH^j_G\left(k,0\right)$ by \cite[3.1]{ED1}, 
and hence \\
$ch^G_X\left(I^n_G K^G_i(X)\right) \subset
{\underset{j \ge n}{\prod}} CH^j_G\left(X,i\right)$ by $(ii)$.
This gives a map of inverse systems 
\begin{equation}\label{eqn:pro}
\frac{K^G_i(X)}{I^n_G K^G_i(X)} \xrightarrow{ch^G_X}
\frac{{\underset{j}{\prod}} CH^j_G\left(X,i\right)}
{{\underset{j \ge n}{\prod}} CH^j_G\left(X,i\right)}.
\end{equation}
Taking the inverse limits both sides and using ~\ref{eqn:comp0}, we
get the required map in $(iii)$. The last assertion follows directly from the
construction of $ch^G_X$ above and by applying the contavariance property
of the non-equivariant  Chern character for the natural map $X_G \to X/G$.
\end{proof}
For a $G$-variety $X$ which is not necessarily smooth, we define
({\sl cf.} \cite[2.6]{ED2}) the equivariant operational Chow groups
$A^j_G(X)$ as operations 
$c(Y \to X) : CH^{j'}_G\left(Y, \cdot \right) \to
CH^{j+j'}_G\left(Y, \cdot \right)$ for any map $Y \to X$ in ${\sV}_G$,
satisfying the same properties as in the non-equivariant  case.
If $E$ is an equivariant vector bundle of rank $r$ on $X$, then
for any map $Y \xrightarrow{f} X$, we can apply 
Proposition~\ref{prop:EHCG}(v), to get the Chern class operations
$c^G_j\left(f^*(E)\right): CH^{j'}_G\left(Y, \cdot \right) \to
CH^{j+j'}_G\left(Y, \cdot \right)$ and hence elements in $A^j_G(X)$.
This defines the Chern character
\begin{equation}\label{eqn:ChernS}
{\ov{ch}}^G_X : K^G_0(X) \longrightarrow \stackrel{\infty}
{\underset{j = 0}{\prod}} A^j_G(X)
\end{equation} 
which is a ring homomorphism and the Todd class 
\[
Td^G_X(E) = \stackrel{r}{\underset{j = 1}{\prod}}
\frac{x_j}{1- e^{-x_j}}
\]
where $\{x_1, \cdots , x_r\}$ are the Chern roots of $E$
({\sl cf.} \cite[3.1]{ED1}). The Todd class of $E$ is an invertible 
element of $\stackrel{\infty}{\underset{j = 0}{\prod}} A^j_G(X)$.
Let ${\mathfrak m}_X$ denote the maximal ideal of the ring $K^G_0(X)$ 
consisting of the isomorphism classes of the vector bundles of 
virtual rank zero. The Chern character ${\ov{ch}}^G_X$ has the following 
important property that will be useful to us. 
\begin{prop}\label{prop:unit}
Let $G$ be a linear algebraic group and let $X \in {\sV}_G$. 
Then the map ${\ov{ch}}^G_X$ factors through the localization
\[
{K^G_0(X)}_{{\mathfrak m}_X} 
\xrightarrow{{\ov{ch}}^G_X} \stackrel{\infty}
{\underset{j = 0}{\prod}} A^j_G(X).
\]
\end{prop}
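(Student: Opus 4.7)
\medskip

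\noindent\textbf{Proof proposal.} By the universal property of localization, it is enough to show that for every $\alpha \in K^G_0(X)\setminus {\mathfrak m}_X$ the image ${\ov{ch}}^G_X(\alpha)$ is a unit in the graded-completed ring $R_X := \stackrel{\infty}{\underset{j=0}{\prod}} A^j_G(X)$. The plan is to split this into two elementary facts: $(a)$ the degree-zero component of ${\ov{ch}}^G_X(\alpha)$ is the virtual rank of $\alpha$, viewed as a nonzero element of $A^0_G(X)$; and $(b)$ any element of $R_X$ whose degree-zero component is invertible is itself invertible, via a geometric-series inverse.

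For $(a)$, recall that ${\ov{ch}}^G_X$ is a ring homomorphism, and for an equivariant vector bundle $E$ of rank $r$ on $X$ one has ${\ov{ch}}^G_X([E]) = r + c^G_1(E) + \tfrac{1}{2}\bigl(c^G_1(E)^2 - 2 c^G_2(E)\bigr) + \cdots$ by the very definition via Chern roots. Since $X$ is connected, so is $X_G$ for any good pair $(V,U)$, which gives $A^0_G(X) = CH^0_G(X,0) = {\Q}$ (acting by multiplication on $CH^*_G(Y,\cdot)$ for any $Y\to X$). Hence for a virtual class $\alpha = [E]-[F]$ the degree-zero component of ${\ov{ch}}^G_X(\alpha)$ is exactly $\rank(E)-\rank(F) \in {\Q}$, which is nonzero precisely when $\alpha \notin {\mathfrak m}_X$.

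For $(b)$, write an element of $R_X$ as $a = a_0 + a_+$ with $a_0 \in A^0_G(X) = {\Q}$ and $a_+ \in \prod_{j\ge 1} A^j_G(X)$. If $a_0 \in {\Q}^{*}$, then $a = a_0\bigl(1 + a_0^{-1} a_+\bigr)$, and I would define the inverse of $1 + a_0^{-1} a_+$ by the formal series $\sum_{n=0}^{\infty} (-a_0^{-1} a_+)^n$. The $n$-th summand lies in $\prod_{j\ge n} A^j_G(X)$, so in each fixed degree $j$ only finitely many terms ($n=0,1,\dots,j$) contribute; the series therefore converges componentwise and defines an element of $R_X$ which is easily checked to be a two-sided inverse of $a$.

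Combining $(a)$ and $(b)$, ${\ov{ch}}^G_X$ carries every element of $K^G_0(X)\setminus {\mathfrak m}_X$ to a unit of $R_X$, and the localized map ${K^G_0(X)}_{{\mathfrak m}_X} \to R_X$ exists. I do not expect any real obstacle; the only subtlety is the identification $A^0_G(X) = {\Q}$, which uses the connectedness of the mixed quotients $X_G$ (itself a consequence of the standing convention that varieties are connected and of the connectedness of $U/G$ in the good pairs used in \cite{ED2}). Once this is in hand, the argument is purely formal.
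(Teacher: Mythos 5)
Your argument is essentially correct, but it takes a genuinely different route from the paper. The paper first treats the smooth case by citing Edidin--Graham, and then handles general $X$ by invoking Lemma~\ref{lem:lifting}: it chooses an equivariant closed embedding $X \inj M$ with $M$ smooth and $\alpha = i^*(\beta)$, notes that $i^*$ preserves the virtual rank so $\beta \notin {\mathfrak m}_M$, and pulls back the unit ${\ov{ch}}^G_M(\beta)$ along ${\ov{i}}^*$. You instead argue directly and uniformly in ${\underset{j}{\prod}} A^j_G(X)$: the degree-zero component of ${\ov{ch}}^G_X(\alpha)$ is the scalar operation $\rank(\alpha)\cdot \id$, and an element of the graded-complete operational ring with invertible degree-zero part is invertible by the geometric series, whose $n$-th term lives in ${\underset{j \ge n}{\prod}} A^j_G(X)$ so that each graded component of the sum is finite. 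Your approach is more self-contained (it makes the smooth/singular distinction and the lifting lemma unnecessary for this particular statement), at the cost of having to verify the formal ring-theoretic facts yourself; the paper's approach outsources the real content to the smooth case, where it is already established. One inaccuracy to fix: the identification $A^0_G(X) = CH^0_G(X,0)$ is only valid for smooth $X$ (operational Chow groups agree with ordinary ones via Poincar\'e duality only in the smooth case), so you should not assert $A^0_G(X) = \Q$ in general. This does not damage the proof, because all you actually use is that the degree-zero component of ${\ov{ch}}^G_X(\alpha)$ is multiplication by the nonzero rational number $\rank(\alpha)$, which is manifestly a unit in $A^0_G(X)$ with inverse multiplication by $\rank(\alpha)^{-1}$; I would rephrase step $(a)$ accordingly and drop the appeal to connectedness of $X_G$.
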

\begin{proof} If $X$ is smooth, this follows from 
\cite[Theorem~4.1, Theorem~6.1]{ED1}. Now suppose that $X$ is not 
necessarily smooth. 
We need to show that for any element $\alpha \in K^G_0(X)$ which
is not in ${\mathfrak m}_X$,
the image of ${\ov{ch}}^G_X(\alpha)$ is an invertible element of
$\stackrel{\infty}{\underset{j = 0}{\prod}} A^j_G(X)$.
Before we show this, we observe that $K^G_0(X)$
has a canonical decomposition 
\begin{equation}\label{eqn:K0}
K^G_0(X) = {\Q} \oplus {\mathfrak m}_X
\end{equation}
as a $\Q$-vector space. Given any $\alpha 
\in K^G_0(X)$, we can use Lemma~\ref{lem:lifting} below to get a
$G$-equivariant embedding $X \stackrel{i}{\inj} M$ such that $M$
is smooth and $\alpha = i^*(\beta)$ for some $\beta \in K^G_0(M)$.
Now the contravariance property of ${\ov{ch}}^G$ gives a commutative
diagram 
\[
\xymatrix{
K^G_0(M) \ar[r]^{{\ov{ch}}^G_M} \ar[d]_{i^*} & 
\stackrel{\infty}{\underset{j = 0}{\prod}} A^j_G(M) \ar[d]^{{\ov{i}}^*} \\
K^G_0(X) \ar[r]_{{\ov{ch}}^G_X} & 
\stackrel{\infty}{\underset{j = 0}{\prod}} A^j_G(X)}
\]
Since $i^*$ is a ring homomorphism which preserves the rank,
we see from ~\ref{eqn:K0} that if $\alpha \notin {\mathfrak m}_X$, then
$\beta$ is also not in ${\mathfrak m}_M$ and hence ${\ov{ch}}^G_M(\beta)$
is a unit in $\stackrel{\infty}{\underset{j = 0}{\prod}} A^j_G(M)$ 
by the smooth case.
Hence ${\ov{ch}}^G(\alpha) = {{\ov{i}}^*} \circ ch^G_M(\beta)$ is 
a unit in $\stackrel{\infty}{\underset{j = 0}{\prod}} A^j_G(X)$.
\end{proof}
\begin{lem}\label{lem:lifting}
For any $X \in {\sV}_G$ and $\alpha \in K^G_0(X)$, there is an equivariant
closed embedding $X \stackrel{i}{\inj} M$ with $M \in {\sV}^S_G$ and
a class $\beta \in K^G_0(M)$ such that $\alpha = i^*(\beta)$.
\end{lem}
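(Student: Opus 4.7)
The plan is to reduce to the task of producing, for any single equivariant vector bundle $E$ on $X$, an equivariant closed embedding $X \inj M_E$ into a smooth $G$-variety together with an equivariant vector bundle on $M_E$ restricting to $E$. Once this is done for both $E$ and $F$ in the decomposition $\alpha = [E] - [F]$, and carried out simultaneously so that the target is a common smooth $G$-variety $M$, the lemma follows by setting $\beta = [\tilde{E}] - [\tilde{F}]$.

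To carry out this construction, I would first invoke the linearity hypothesis on the $G$-action: there is an equivariant ample line bundle $L$ on $X$, and hence an equivariant closed embedding $j: X \inj \P(V')$ for some $G$-representation $V'$, with $L \cong j^*\sO(1)$. For an equivariant vector bundle $E$ of rank $r$ on $X$, the sheaf $E(N) := E \otimes L^{\otimes N}$ is globally generated for $N \gg 0$, and since $\Gamma(X, E(N))$ carries a natural rational $G$-action, one can find a finite-dimensional $G$-subrepresentation $W \subset \Gamma(X, E(N))$ whose evaluation map $W \otimes \sO_X \surj E(N)$ is surjective. This produces a $G$-equivariant classifying morphism $\psi_E : X \to \Gr(r, W)$ into the Grassmannian of rank-$r$ quotients of $W$, such that $\psi_E^* Q \cong E(N)$, where $Q$ denotes the tautological quotient bundle on $\Gr(r, W)$.

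Applying the same procedure to $F$ (with a common value of $N$), I get $\psi_F : X \to \Gr(s, W')$. Now I set $M = \P(V') \times \Gr(r, W) \times \Gr(s, W')$, a smooth projective $G$-variety admitting an equivariant ample line bundle (combining $\sO(1)$ with the Plücker bundles), so $M \in \sV^S_G$. The map $i := (j, \psi_E, \psi_F) : X \to M$ is an equivariant closed embedding, since it factors as the graph embedding $X \inj X \times \Gr(r, W) \times \Gr(s, W')$ followed by $j \times \id$, both of which are closed. Pulling back the tautological quotient bundles from the two Grassmann factors and twisting by $\sO(-N)$ from the first factor produces equivariant vector bundles $\tilde{E}, \tilde{F}$ on $M$ with $i^*\tilde{E} = E(N) \otimes L^{-N} = E$ and $i^*\tilde{F} = F$. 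Hence $\beta := [\tilde{E}] - [\tilde{F}] \in K^G_0(M)$ satisfies $i^*\beta = \alpha$.

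The main technical input is the ability to choose the generating sections of $E(N)$ inside a \emph{finite-dimensional} $G$-subrepresentation of $\Gamma(X, E(N))$. This rests on the standard fact that any rational action of the linear algebraic group $G$ on a quasi-coherent sheaf is locally finite, so every section is contained in a finite-dimensional $G$-stable subspace, together with the coherence of $E(N)$, which forces only finitely many sections to generate it.
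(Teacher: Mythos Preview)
Your approach is essentially the same as the paper's: reduce to lifting a single equivariant vector bundle, then realize it as the pullback of a tautological bundle on a Grassmannian-type target. The paper does this by first embedding $X \stackrel{f}{\inj} Y$ with $Y$ smooth, invoking Thomason's equivariant resolution to get a $G$-bundle $F$ on $Y$ with $F \surj f_*E$, and then taking $M$ to be the Grassmann \emph{bundle} of rank-$r$ quotients of $F$ over $Y$. Your version replaces Thomason's theorem by the more explicit observation that $E(N)$ is generated by a finite-dimensional $G$-stable space of global sections, which amounts to proving the needed special case of Thomason directly; you then use an absolute Grassmannian rather than a relative one. Both routes are valid and lead to the same conclusion.

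There is one small gap in your write-up: an equivariant ample line bundle on a quasi-projective $X$ gives only an equivariant \emph{locally closed} immersion $j: X \inj \P(V')$, not a closed one in general, so your argument that $i = (j, \psi_E, \psi_F)$ is closed does not go through as stated. The fix is standard: replace $\P(V')$ by the $G$-invariant open subvariety $Y = \P(V') \setminus (\ov{X} \setminus X)$, into which $X$ embeds as a closed subscheme, and take $M = Y \times \Gr(r, W) \times \Gr(s, W')$. With that adjustment your proof is complete.
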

\begin{proof} Since $K^G_0(X)$ is the group of isomorphism classes of
equivariant vector bundles, we can reduce the problem to $\alpha$ being
a finite collection of vector bundles. Then by the diagonal embedding of
$X$ into a product of smooth varieties, we reduce the problem to the case
when $\alpha$ is an equivariant vector bundle $E$ of rank $r$.

Since $G$ acts linearly on $X$, there is an equivariant closed
embedding $X \xrightarrow{f} Y$ with $Y$ smooth.
Put $E' = f_*(E)$. Then by \cite{Thomason4}, there is a $G$-equivariant
vector bundle $F$ on $Y$ and an equivariant surjection $F \surj E'$.
Let $M$ be the Grassman bundle of rank $r$ quotient bundles of
$F$ on $Y$ and let $M \xrightarrow{p} Y$ be the projection map.
Let $Q$ denote the universal quotient bundle of rank $r$ on $M$. It is now
easy to see that the action of $G$ on $Y$ and $F$ induces a natural
$G$-action on $M$ such that $p$ is a $G$-equivariant map and $Q$ is
a $G$-equivariant bundle on $M$. The universal property of the Grassman
bundle then implies that the map $f$ factors through a map 
$X \xrightarrow{i} M$ such that $E = {i}^*(Q)$. Clearly, $M$ is smooth
and $i$ is a closed embedding.
\end{proof}
The following property of the non-equivariant  Riemann-Roch map 
${\tau}_X: G_i(X) \to CH^*\left(X, i\right)$ ({\sl cf.} \cite[7.4]{Bloch})
will be used frequently in the construction of these maps in the
equivariant setting. 
\begin{lem}\label{lem:RR1}
Let $f: Y \to X$ be a morphism of varieties such that either \\
$(i) \ \ f$ is a vector bundle morphism $X$, or \\
$(ii) \ \ X$ and $Y$ are smooth and $f$ is an l.c.i. morphism. \\
Then one has ${\tau}_Y \circ f^* = {Td_Y(T_f)} \cdot \left({\bar{f}}^* \circ
{\tau}_X\right)$.
\end{lem}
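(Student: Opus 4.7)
I would handle the two cases of the lemma separately, since they require different techniques. Case~(ii) reduces to a direct computation from Bloch's explicit description of $\tau$ on smooth varieties. Case~(i), where $X$ may be singular, is more delicate and requires combining Bloch's covariant Riemann-Roch for the zero section with a Koszul-type computation.

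For case~(ii), I would proceed as follows. Under Poincar\'e duality $G_i(X)\cong K_i(X)$, Bloch's Riemann-Roch for smooth varieties identifies $\tau_X$ with $ch_X(\cdot)\cdot Td(T_X)$, where $ch_X$ is the Gillet Chern character, and similarly for $Y$. The Chern character is natural under pullback: $ch_Y\circ f^* = \bar{f}^*\circ ch_X$. The l.c.i.\ hypothesis together with the smoothness of both $X$ and $Y$ ensures that the virtual relative tangent $T_f = T_Y - f^*T_X$ is a well-defined class in $K_0(Y)$ with multiplicative Todd class $Td(T_Y) = Td(T_f)\cdot\bar{f}^*(Td(T_X))$. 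Assembling these three ingredients yields, for $\beta\in G_i(X)\cong K_i(X)$,
\[\tau_Y(f^*\beta) = ch_Y(f^*\beta)\cdot Td(T_Y) = Td(T_f)\cdot\bar{f}^*\bigl(ch_X(\beta)\cdot Td(T_X)\bigr) = Td(T_f)\cdot\bar{f}^*(\tau_X\beta),\]
which is the desired identity.

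For case~(i), with $f:Y\to X$ a vector bundle morphism and $X$ possibly singular, the Chern character description of $\tau$ is not available. Here I would exploit the fact that both $f^*$ and $\bar{f}^*$ are isomorphisms (homotopy invariance) and apply Bloch's covariant Riemann-Roch to the zero section $s:X\hookrightarrow Y$, which is a proper regular embedding with $f\circ s = \mathrm{id}_X$ and virtual relative tangent $T_s = -E$, where $E$ is the underlying vector bundle of $Y$. In the Baum-Fulton-MacPherson normalization, Bloch's Riemann-Roch gives $\tau_Y\circ s_* = \bar{s}_*\circ\tau_X$. Combining this with the Koszul identity $[s_*\mathcal{O}_X] = \lambda_{-1}(f^*E^\vee)$ in $K_0(Y)$ (so that $s_*\beta = \lambda_{-1}(f^*E^\vee)\cdot f^*\beta$ by the projection formula), the $K_0$-module property $\tau_Y(\alpha\cdot\gamma) = ch(\alpha)\cdot\tau_Y(\gamma)$, the identity $ch\bigl(\lambda_{-1}(V^\vee)\bigr) = c_{\mathrm{top}}(V)/Td(V)$ applied to $V = f^*E$, and the projection-formula computation $\bar{s}_*(\tau_X\beta) = c_{\mathrm{top}}(f^*E)\cdot\bar{f}^*(\tau_X\beta)$, the pieces assemble to
\[c_{\mathrm{top}}(f^*E)\cdot\tau_Y(f^*\beta) = c_{\mathrm{top}}(f^*E)\cdot Td(f^*E)\cdot\bar{f}^*(\tau_X\beta),\]
from which the desired identity would follow upon cancelling $c_{\mathrm{top}}(f^*E)$.

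The main obstacle is the final cancellation in case~(i): a priori $c_{\mathrm{top}}(f^*E)$ may be a zero-divisor in $CH^*(Y,i)$. My plan is to bypass this by choosing a closed embedding $X\hookrightarrow M$ into a smooth variety and extending $E$ to a vector bundle on $M$, so that the vector-bundle case over $X$ can be reduced to the smooth case over $M$ already established in~(ii) via the localization sequence. A cleaner alternative, if one is willing to cite it, is to invoke Bloch's spectrum-level construction of $\tau$, where flat-pullback compatibility with a Todd correction is built into the formalism. The explicit geometric route is the one that extends cleanly to the equivariant versions of $\tau$ developed in the rest of the paper.
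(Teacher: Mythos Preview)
Your treatment of case~(ii) is exactly the paper's argument: both use the identification $\tau = ch \cdot Td$ on smooth varieties, naturality of $ch$ under pullback, and multiplicativity of Todd classes.

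For case~(i), your approach is genuinely different from the paper's, and the gap you identify is real. The paper does not attempt to cancel $c_{\mathrm{top}}(f^*E)$; instead it localizes the problem away. Using that $\tau$ commutes with restriction to an open cover (Gillet), the paper passes to a trivializing cover and reduces to the case $Y = X \times \mathbb{A}^1$ (and by induction, to rank one). There $T_f$ is trivial, so $Td(T_f)=1$ and the claim becomes simply $\tau_{X\times\mathbb{A}^1}\circ f^* = \bar f^*\circ\tau_X$. This is then checked by compactifying to $X\times\mathbb{P}^1$, using the tensor decompositions $G_i(X\times\mathbb{P}^1)\cong G_i(X)\otimes G_0(\mathbb{P}^1)$ and $CH^*(X\times\mathbb{P}^1,i)\cong CH^*(X,i)\otimes CH^*(\mathbb{P}^1,0)$, together with the known Riemann-Roch for $\mathbb{P}^1_X\to X$, and finally restricting back to $\mathbb{A}^1$. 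No top Chern class ever appears.

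Your Koszul/zero-section route is conceptually attractive, but the $c_{\mathrm{top}}$ cancellation is a genuine obstruction, not a technicality: $c_{\mathrm{top}}(f^*E)$ is typically nilpotent in $CH^*(Y,i)$. Your proposed fix --- embed $X\hookrightarrow M$ smooth, extend $E$ to $E_M$, and reduce to case~(ii) for $Y_M\to M$ --- is not complete as stated. The extension exists (via a Grassmannian, as in the paper's Lemma on lifting bundles), and one does obtain $\bar j_*\bigl(\tau_Y(f^*\beta) - Td(T_f)\cdot\bar f^*(\tau_X\beta)\bigr)=0$ for the closed embedding $j:Y\hookrightarrow Y_M$ using covariant Riemann-Roch and flat base change. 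But $\bar j_*$ is not injective in general, so this does not finish. One can try to run a five-lemma argument on the localization sequences for $(Y,Y_M,Y_M\setminus Y)$ with the twisted transformation $Td(T_f)^{-1}\cdot\tau$, but this requires checking compatibility with all boundary maps and is substantially more work than the paper's direct reduction to trivial bundles.
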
 
\begin{proof} First suppose $X$ is not necessarily smooth and $Y$ is
a vector bundle over $X$. Choose a finite open covering ${\sU}$ of
$X$ such that the restriction of $f$ on every $U \in {\sU}$ is trivial. 
Since the Riemann-Roch map commutes with the restriction to open covers
({\sl cf.} \cite[Theorem~4.1]{Gillet}), we get a commutative diagram
\[
\xymatrix{
G_i(X) \ar[r] \ar[d]_{{\tau}_X} & 
{\check{\H}}\left({\sU}, {\sK}\right) \ar[d]^{{\tau}_{\sU}} \\
CH^*\left(X,i\right) \ar[r] & 
{\check{\H}}\left({\sU}, {\sC}{\sH}(-, i)\right),}
\]
where the terms on the right are the $\check{\rm C}$ech cohomology of
the sheaves of $K$-groups and Chow groups for the cover $\sU$. The 
horizontal maps are isomorphisms by the Mayer-Vietoris property of 
$K$-theory and higher Chow groups. Thus it suffices to
prove the desired result for the open subsets in $\sU$. Hence we can
assume that $f$ is a trivial bundle, which can be further assumed to
be of rank one by induction. Thus we have $Y = X \times {\A}^1$ and
$f$ is the projection map. Put ${\wt{Y}} = X \times {\P}^1$ and
let $\wt{f} : \wt{Y} \to X$ be the projection and $j : Y \to \wt{Y}$
the inclusion. Now we apply the isomorphisms $G_i(Y) = G_i(X) {\otimes}
G_0({\P}^1)$, $CH^*\left(Y, i\right) = CH^*\left(X, i\right) {\otimes}
CH^*\left({\P}^1, 0\right)$ and the proof of the Riemann-Roch theorem
for the map ${\P}^1_X \to X$ ({\sl cf.} \cite[Lemma~4.4]{Gillet}) to get 
\[
\begin{array}{lllll}
{\tau}_{\wt{Y}} \circ {\wt{f}}^*(a) & = & {\tau}_{\wt{Y}}
\left(a \otimes 1\right) & =
& {\tau}_X(a) \otimes {\tau}_{{\P}^1}(1) \\
& = & {\tau}_X(a) \otimes Td\left({\P}^1\right) & = &
\left(1 \otimes Td\left({\P}^1\right)\right) \cdot 
\left({\tau}_X(a) \otimes 1\right) \\
& = & Td\left(T_{\wt{f}}\right) \cdot \left({\tau}_X(a) \otimes 1\right) &
= &  Td\left(T_{\wt{f}}\right) \cdot 
\left({\ov{{\wt{f}}}}^* \circ {\tau}_X (a)\right).
\end{array}
\]
Finally, we have
\[
{\tau}_Y \circ f^* = {\tau}_Y \circ j^* \circ {\wt{f}}^* = 
{\bar {j}}^* \circ {\tau}_{\wt{Y}} \circ {\wt{f}}^* 
= {\bar {j}}^* \circ {\ov{{\wt{f}}}}^* \circ {\tau}_X = 
{\bar {f}}^* \circ {\tau}_X.
\]
This proves the part $(i)$.

Now suppose $X$ and $Y$ are smooth and $f$ is an l.c.i. morphism.
Then we have
\[
\begin{array}{lllll}
{\tau}_Y \circ f^* & = & \left(ch_Y \circ f^*\right) \cdot Td(Y) 
& = &  \left({\bar{f}}^* \circ ch_X\right) \cdot Td(Y) \\
& = & \left({\bar{f}}^* \circ ch_X\right) \cdot 
\left[Td\left(f^*(X)\right) \cdot Td\left(T_f\right)\right] & = &   
{\bar{f}}^*\left(ch_X \cdot Td(X)\right) \cdot Td\left(T_f\right) \\
& = & \left({\bar{f}}^* \circ {\tau}_X\right) \cdot Td\left(T_f\right)
& & 
\end{array}
\]
\end{proof}
\begin{thm}\label{thm:RRoch}
Let $G$ be a linear algebraic group and let $X \in {\sV}_G$. Then
there is a Riemann-Roch map
\begin{equation}\label{eqn:RRoch0}
{\tau}^G_X : G^G_i(X) \to \ov{CH^*_G\left(X,i\right)} 
\end{equation}
which satisfies the following properties. \\
$(i) \ \ {\tau}^G$ is covariant for proper maps in ${\sV}_G$. \\
$(ii)$ For $\alpha \in K^G_0(X)$ and $x \in G^G_i(X)$, one has
${\tau}^G_X({\alpha} x) = {\ov{ch}}^G_X({\alpha}) \cdot {\tau}^G_X(x)$, with
respect to the $K^G_0(X)$-module structure on $G^G_i(X)$. \\
$(iii) \ \ {\tau}^G_X$ factors through the $I_G$-adic completion
\begin{equation}\label{eqn:RRoch1}
\widehat{G^G_i(X)} \stackrel{{\widehat{\tau}}^G_X}{\longrightarrow} 
\ov{CH^*_G\left(X,i\right)}.
\end{equation}
$(iv)$ For a morphism $f: Y \to X$ in ${\sV}_G$ such that 
$f$ is either an equivariant vector bundle morphism, or
it is an equivariant l.c.i. morphism in ${\sV}^S_G$, one has
${\tau}^G_Y \circ f^* = {Td^G_Y(T_f)} \cdot \left({\bar{f}}^* \circ
{\tau}^G_X\right)$. If $j: U \to X$ is a $G$-equivariant open immersion,
then ${\tau}^G_U \circ j^* = {\bar{j}}^* \circ {\tau}^G_X.$ \\
$(v)$ If $G$ acts freely on $X$, then ${\tau}^G_X$ coincides with the
non-equivariant  Riemann-Roch map ${\tau}_{X/G}$ under the identifications
$G^G_i(X) = G_i(X/G)$ and $CH^*_G\left(X,i\right) = 
CH^*\left(X/G,i\right)$.
\end{thm}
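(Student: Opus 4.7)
The plan is to mimic the construction of the Chern character in Proposition~\ref{prop:Chern}, but using Bloch's non-equivariant Riemann-Roch map for possibly singular quasi-projective varieties instead of the non-equivariant Chern character. Fix $j\ge 0$ and choose a good pair $(V,U)$ for the $G$-action corresponding to $j$. Let $\pi_{UX}\colon X\times U\to X$ denote the projection. By Lemma~\ref{lem:sch}, the mixed quotient $X_G=X\stackrel{G}{\times}U$ is a quasi-projective scheme. Since $U\to U/G$ is a principal $G$-bundle, the pullback $\pi_{UX}^*\colon G^G_i(X)\to G^G_i(X\times U)$ is an isomorphism by homotopy invariance, and free-action descent identifies $G^G_i(X\times U)\cong G_i(X_G)$. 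I define the $j$th component of $\tau^G_X(x)$ to be the $j$th component (in $CH^j(X_G,i)\cong CH^j_G(X,i)$) of $\tau_{X_G}\circ\pi_{UX}^*(x)$, where $\tau_{X_G}\colon G_i(X_G)\to CH^*(X_G,i)$ is Bloch's Riemann-Roch map. Taking the product over all $j$ gives the map~\eqref{eqn:RRoch0}.

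The first key step is independence of the choice of good pair. Given two good pairs, a standard double-fibration argument reduces this to comparing $\tau_{X_G}$ on $X_G$ and on a vector bundle $X_G'\to X_G$; here Lemma~\ref{lem:RR1}(i) gives $\tau_{X_G'}\circ f^*=Td(T_f)\cdot(\bar f^*\circ\tau_{X_G})$, and since $T_f$ is a pullback from $X_G$ its Todd class is $1$ in the relevant components by homotopy invariance on $CH^*$, exactly as in Proposition~3.1 of \cite{ED1}. This yields~\eqref{eqn:RRoch0} unambiguously. Properties (i) and (v) are then immediate: for a proper equivariant $f\colon Y\to X$ one has an induced proper map $f_G\colon Y_G\to X_G$, and covariance of the non-equivariant $\tau$ (\cite[Theorem~7.4]{Bloch}) combined with the identification of push-forwards on equivariant higher Chow groups (Proposition~\ref{prop:EHCG}$(i)$) gives (i); property (v) follows because, when $G$ acts freely, $\pi_{UX}$ is a composition with a vector bundle and $X_G\to X/G$ is a locally trivial bundle, so one just recovers $\tau_{X/G}$.

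For property (ii), use that under the identifications above the $K^G_0(X)$-action on $G^G_i(X)$ corresponds to the $K_0(X_G)$-action on $G_i(X_G)$, and that the non-equivariant Riemann-Roch is a ${\ov{ch}}$-module map (\cite[Theorem~4.1]{Gillet}); matching the equivariant Chern character constructed in Proposition~\ref{prop:Chern} with ${\ov{ch}}^G_X$ on the operational side is routine because both are defined via the same mixed quotient. Property (iv) splits into two cases: for an equivariant vector bundle morphism, apply Lemma~\ref{lem:RR1}(i) on $Y_G\to X_G$, noting that the relative tangent bundle is the pullback of the equivariant tangent bundle $T_f$; for an equivariant l.c.i.\ morphism of smooth $G$-varieties, apply Lemma~\ref{lem:RR1}(ii) on $Y_G\to X_G$ and invoke the identification $c^G_*(T_f)_G=c_*(T_{f_G})$ already used in Corollary~\ref{cor:ESIF}. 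The open-immersion case follows from the flat pull-back functoriality of both $K$-theory and higher Chow groups.

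The main obstacle is property (iii): the factorization through the $I_G$-adic completion $\widehat{G^G_i(X)}$. By (ii) and the fact that $\tau^G_k(I_G)\subset\prod_{j\ge 1}CH^j_G(k,0)$ (which follows from the corresponding statement for ${\ov{ch}}^G_k$ established in Proposition~\ref{prop:unit} and \cite[3.1]{ED1}), one gets $\tau^G_X(I_G^n\,G^G_i(X))\subset\prod_{j\ge n}CH^j_G(X,i)$, so that $\tau^G_X$ induces a compatible system of maps of the inverse systems
\[
\frac{G^G_i(X)}{I_G^n\,G^G_i(X)}\longrightarrow\frac{\prod_j CH^j_G(X,i)}{\prod_{j\ge n}CH^j_G(X,i)},
\]
and taking the inverse limit and using the identification~\eqref{eqn:comp0} yields the factorization through $\widehat{G^G_i(X)}$. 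The only delicate point here is that one must know that the $K^G_0(X)$-action really shifts the grading filtration on the target, which uses that ${\ov{ch}}^G_X$ respects the filtrations; this is exactly where Proposition~\ref{prop:unit} and the argument in \cite[\S3]{ED1} become essential.
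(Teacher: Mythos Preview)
There is a genuine gap: your definition of $\tau^G_X$ omits the Todd class normalization that the paper builds in. The paper sets
\[
\tau^G_X(x)\;=\;\frac{\tau_{X_G}\bigl(\pi_{UX}^*(x)\bigr)}{Td_{X_G}(E_V)},
\]
where $E_V$ is the vector bundle $X\stackrel{G}{\times}(U\times V)\to X_G$. Your claim that ``$Td(T_f)$ is $1$ in the relevant components by homotopy invariance'' is simply false. When one compares two good pairs through the usual product construction, the induced map $f\colon X'_G\to X_G$ is (an open subset of) a vector bundle, and Lemma~\ref{lem:RR1}(i) gives $\tau_{X'_G}\circ f^* = Td(T_f)\cdot(\bar f^*\circ\tau_{X_G})$. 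The Todd class $Td(T_f)$ has nontrivial components in positive degrees, so the $j$th component of the right-hand side mixes in $(\tau_{X_G}(-))_q$ for \emph{all} $q\le j$. Hence your raw $j$th component $(\tau_{X_G}\circ\pi_{UX}^*(x))_j$ genuinely depends on the good pair. Dividing by $Td_{X_G}(E_V)$ is exactly what makes the comparison go through: under $f$ one has $T_f\cong f^*E_{V'}$ and $E_{V\oplus V'}\cong f^*E_V\oplus f^*E_{V'}$ on $X'_G$, so the Todd factors cancel. This is the ``extra ingredient'' the paper flags when it invokes Lemma~\ref{lem:RR1} in the independence argument.

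The missing normalization also breaks your verification of (v). When $G$ acts freely, the map $p\colon X_G\to X/G$ is (an open subset of) a vector bundle with relative tangent $E_V$, so Lemma~\ref{lem:RR1}(i) gives $\tau_{X_G}\circ p^* = Td(E_V)\cdot(\bar p^*\circ\tau_{X/G})$; without dividing by $Td(E_V)$ you do \emph{not} recover $\tau_{X/G}$. A minor side remark: $\pi_{UX}^*\colon G^G_i(X)\to G^G_i(X\times U)$ is not an isomorphism (homotopy invariance would give that for $X\times V$, not for the open $X\times U$); this does not affect the construction, since only the map is needed, but the sentence as written is incorrect.
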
 
\begin{proof} As in the case of Chern character, we need to define each
component of ${\tau}^G_X$ in the product ${\underset{j \ge 0}{\prod}} 
CH^j_G\left(X, i\right)$. So we fix $x \in G^G_i(X)$ and $j \ge 0$ and
choose a good pair $(V,U)$ for $G$ corresponding to $j$. Let
${\pi}_{UX} : X \times U \to X$ be the projection map, and consider
the diagram
\[
\xymatrix{
G^G_X(X) \ar[r]^{{{\pi}_{UX}^*}} \ar[d] &  
G^G_{X \times U} \left(X \times U\right) \ar[r]^{\cong} &
G_i(X_G) \ar[d]^{{\tau}_{X_G}} \\ 
{\ov{CH^*_G\left(X,i\right)}} & &  
CH^*\left(X_G,i\right) \ar@{_{(}->}[ll],}
\]
where ${\tau}_{X_G} : G_i(X/G) \to CH^*\left(X_G,i\right)$ is the
non-equivariant  Riemann-Roch map ({\sl cf.} \cite[7.4]{Bloch}) of
Bloch and Gillet. We define 
\begin{equation}\label{eqn:RR0}
{\tau}^G_X(x) = \frac{{\tau}_{X_G} \circ {{{\pi}_{UX}^*}}(x)}
{Td_{X_G}\left(E_V\right)},
\end{equation}
where $E_V$ is the vector bundle $X \stackrel{G}{\times}
\left(U \times V \right) \to X \stackrel{G}{\times} U = X_G$.

The proof that the above definition is independent of the choice of
the good pair $(V,U)$ follows exactly along the lines of the 
proof of \cite[Proposition~31.1]{ED1}. The only extra ingredient we need
in our case is Lemma~\ref{lem:RR1}.

The properties $(ii)$ and $(iii)$ follow directly from the definition
of ${\tau}^G$ and the analogous properties of the non-equivariant  
Riemann-Roch map ({\sl cf.} \cite[Theorem~4.1]{Gillet}). The proof
of property $(iii)$ is same as the proof of the corresponding 
property of the equivariant Chern character map in 
Proposition~\ref{prop:Chern}. The property $(iv)$ follows from the 
corresponding non-equivariant  result in Lemma~\ref{lem:RR1}.
The statement about open immersion follows from the analogous property
of the non-equivariant  Riemann-Roch ({\sl cf.} \cite[Theorem~4.1]{Gillet}).
The proof of property $(v)$ is the same as the proof of 
Proposition~\ref{prop:Chern} $(iv)$.
\end{proof} 
\begin{cor}\label{cor:local}
The Riemann-Roch map factors through the localization
\[
{\tau}^G_X : {G^G_i(X)}_{{\mathfrak m}_X} \to \ov{CH^*_G\left(X,i\right)}.
\]
\end{cor}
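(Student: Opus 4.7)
The plan is to realize this as a formal consequence of property $(ii)$ of Theorem~\ref{thm:RRoch} together with Proposition~\ref{prop:unit}, via the universal property of localization. First I would note that the product $\prod_{j \ge 0} A^j_G(X)$ acts on $\ov{CH^*_G(X,i)} = \prod_{j \ge 0} CH^j_G(X,i)$ by Chern class operations, and that the equivariant Chern character ${\ov{ch}}^G_X : K^G_0(X) \to \prod_{j \ge 0} A^j_G(X)$ from \eqref{eqn:ChernS} is a ring homomorphism. Composing, we endow $\ov{CH^*_G(X,i)}$ with the structure of a $K^G_0(X)$-module, and property $(ii)$ of Theorem~\ref{thm:RRoch} says precisely that ${\tau}^G_X$ is $K^G_0(X)$-linear with respect to this structure.

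Next, for any element $\alpha \in K^G_0(X) \setminus {\mathfrak m}_X$, Proposition~\ref{prop:unit} asserts that ${\ov{ch}}^G_X(\alpha)$ is a unit in $\prod_{j \ge 0} A^j_G(X)$. Multiplication by such a unit is an automorphism of $\ov{CH^*_G(X,i)}$, which means the $K^G_0(X)$-module structure on the target factors through the localization ${K^G_0(X)}_{{\mathfrak m}_X}$. Applying the universal property of localization of modules, the $K^G_0(X)$-linear map ${\tau}^G_X$ extends uniquely to a map ${G^G_i(X)}_{{\mathfrak m}_X} \to \ov{CH^*_G(X,i)}$, as required.

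There is no genuine obstacle to this argument: Proposition~\ref{prop:unit} (whose proof via Lemma~\ref{lem:lifting} handled the non-smooth case) has already done the essential work by promoting ${\ov{ch}}^G_X$ to a map out of ${K^G_0(X)}_{{\mathfrak m}_X}$, and the remaining content is the universal property of localization together with the $K^G_0(X)$-linearity of ${\tau}^G_X$. In particular, no appeal to the structure theory of $G$, the completion machinery of Section~4, or the decomposition results of Section~6 is needed for this corollary.
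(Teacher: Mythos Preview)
Your proposal is correct and follows essentially the same approach as the paper: use Theorem~\ref{thm:RRoch}~$(ii)$ to see that ${\tau}^G_X$ is $K^G_0(X)$-linear, then invoke Proposition~\ref{prop:unit} to conclude that elements outside ${\mathfrak m}_X$ act invertibly on the target, whence the factorization through the localization. The paper's proof is simply a terser version of your argument.
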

\begin{proof} We have seen that ${{\ov{ch}}}^G_X$ is a ring homomorphism.
Moreover, it follows from Theorem~\ref{thm:RRoch} $(ii)$ that 
${\tau}^G_X$ is $K^G_0(X)$-linear. Thus it suffices to show that any
element $\alpha \in K^G_0(X)$ which is not in ${{\mathfrak m}_X}$, acts
as a unit in $\ov{CH^*_G\left(X,i\right)}$. But this follows immediately
from Proposition~\ref{prop:unit}.
\end{proof}

\section{Riemann-Roch For Finite Group Actions}
In this section, we review the equivariant $K$-theory for finite group
actions and prove our Riemann-Roch isomorphism in this case. 
As remarked before, such an isomorphism can also be obtained using the
study of equivariant $K$-theory for finite group actions in \cite{Vistoli1}.
Let $G$ be a finite group and let
$X$ be an equidimensional $G$-variety. Let $p:X \to Y = X/G$ be the 
quotient for the $G$-action. Note that this quotient always exists and 
is quasi-projective because we are dealing with the linear actions on 
quasi-projetcive varieties. Let ${\sZ}^j\left(X, \cdot\right)$ 
denote the cycle complex of codimension $j$ cycles on $X$. Then $G$ acts
on ${\sZ}^j\left(X, \cdot\right)$ by letting it act on an irreducible 
cycle $\sigma \in {\sZ}^j\left(X, n\right)$ by
\[
\left(g, \sigma\right) \mapsto {\mu}_g^*(\sigma)
\]
and extending this linearly to all of $\sigma \in {\sZ}^j\left(X, n\right)$.
Note that $G$ here acts on $X \times {\Delta}^n$ via the diagonal action
for the trivial action of ${\Delta}^n$. In particular, the action of
$G$ on ${\sZ}^j\left(X, \cdot\right)$ commutes with the boundary maps,
and we get an action of $G$ on the complex ${\sZ}^j\left(X, \cdot\right)$.
Let ${\sZ}^j_G\left(X, \cdot\right)$ denote the subcomplex of invariant
cycles, and let $d^G$ denote the restriction of the differential $d$ of the
complex ${\sZ}^j\left(X, \cdot\right)$ to 
${\sZ}^j_G\left(X, \cdot\right)$. Put
\[
\wt{CH}^j_G\left(X, i\right) = 
H_i\left({\sZ}^j_G\left(X, \cdot\right)\right) \ \ {\rm for} \ \ i \ge 0.
\]
This gives a natural map
\begin{equation}\label{eqn:finite0}
\wt{CH}^j_G\left(X, \cdot\right) \stackrel{{\delta}^G_X}{\longrightarrow}
{\left(CH^j\left(X, \cdot\right)\right)}^G.
\end{equation}
\begin{lem}\label{lem:finite1}
There are canonical isomorphisms
\[
CH^j\left(Y, \cdot\right) \xrightarrow{p^*} 
\wt{CH}^j_G\left(X, \cdot\right) \xrightarrow{{\delta}^G_X}
{\left(CH^j\left(X, \cdot\right)\right)}^G.
\]
\end{lem}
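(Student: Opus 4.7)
The plan falls into two parts: establishing that $\delta^G_X$ is an isomorphism via an averaging argument, and establishing that $p^*$ is an isomorphism by exhibiting $\frac{1}{|G|} p_*$ as an inverse at the cycle-complex level.

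For the second isomorphism, I note that the $G$-action on $\sZ^j(X, \cdot)$ commutes with the simplicial boundary (since $G$ acts trivially on $\Delta^{\cdot}$), so each $g^*$ is a chain endomorphism. As $G$ is finite and we work rationally, the averaging operator
\[
\pi_G \;:=\; \frac{1}{|G|} \sum_{g \in G} g^* \;:\; \sZ^j(X, \cdot) \to \sZ^j(X, \cdot)
\]
is an idempotent chain map with image precisely $\sZ^j_G(X, \cdot)$. The inclusion of the invariant subcomplex is therefore split, so taking homology commutes with taking $G$-invariants. This gives $\wt{CH}^j_G(X, i) = (H_i(\sZ^j(X, \cdot)))^G = (CH^j(X, i))^G$, and the identification coincides with $\delta^G_X$.

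For the first isomorphism, I will use that the finite surjective morphism $p : X \to Y$ admits a cycle-level pullback $p^*[V] = \sum_W m_W [W]$ summed over irreducible components $W$ of $p^{-1}(V)$ with $m_W$ the ring-length multiplicity, yielding a chain map $\sZ^j(Y, \cdot) \to \sZ^j(X, \cdot)$. Since $p \circ g = p$ for every $g \in G$, the image lands in $\sZ^j_G(X, \cdot)$. The candidate inverse is $q := \frac{1}{|G|} p_*$, and the argument reduces to the pair of cycle-level identities
\[
p_* \circ p^* \;=\; |G| \cdot \id_{\sZ^j(Y, \cdot)} \qquad \text{and} \qquad p^* \circ p_* \;=\; \sum_{g \in G} g^* \text{ on } \sZ^j(X, \cdot).
\]
The first follows from $\deg(p) = |G|$ together with the projection/degree formula for finite surjective morphisms. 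The second can be verified on an irreducible $V$ with stabilizer $H$ by combining $p_*[V] = |H| \cdot [p(V)]$ with $p^*[p(V)] = |H| \sum_i [V_i]$ (the $V_i$ being the $G$-orbit of $V$), matching $\sum_{g \in G} g^*[V]$; alternatively, one reduces via the localization sequence to the $G$-free open locus where $p$ is étale-Galois and the formula is classical. Granting (a) and (b), $q \circ p^* = \id$ and, for $\sigma \in \sZ^j_G(X, \cdot)$, $p^* q(\sigma) = \frac{1}{|G|} \sum_g g^* \sigma = \sigma$, so $p^*$ is a chain isomorphism onto $\sZ^j_G(X, \cdot)$, inducing the claimed isomorphism on $H_i$.

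The principal technical obstacle is ensuring that the cycle-level pullback $p^*$ is well-defined on the Bloch complex, i.e., preserves the proper-intersection admissibility with respect to faces of $\Delta^{\cdot}$, despite the fact that $Y = X/G$ need not be smooth so that $p$ need not be flat; and that the ramification multiplicities in the definition of $p^*[V]$ are arranged so the second identity above holds on the nose rather than only up to rational equivalence. At points with nontrivial stabilizers the naive set-theoretic preimage differs from $p^*$ by these multiplicity factors, and it is precisely the rational coefficients that absorb the resulting $|G|$-factors --- this is why the result fails integrally and the lemma is stated over $\Q$.
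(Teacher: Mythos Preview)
Your treatment of $\delta^G_X$ via the averaging idempotent is correct and matches the paper's argument (which phrases it as the trace map together with the computation $x = tr(x) + (\text{boundary})$, but this is the same content).

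The argument for $p^*$, however, has a real gap, and it is not quite the one you flag. Well-definedness of a cycle-level pullback along a finite surjective morphism is not the issue: since $p\times\id_{\Delta^n}$ is finite, scheme-theoretic preimage preserves codimension and hence admissibility. The actual problem is that your identities (a) $p_*p^*=|G|\cdot\id$ and (b) $p^*p_*=\sum_g g^*$ are \emph{false} at the cycle level when $p$ is not flat. Take $G=\Z/2$ acting on $X=\A^2$ by $(x,y)\mapsto(-x,-y)$; then $Y=X/G$ is the quadric cone $\Spec k[a,b,c]/(b^2-ac)$, and over the vertex $V=\{0\}_Y$ the scheme-theoretic fibre is $\Spec k[x,y]/(x^2,xy,y^2)$, of length $3$. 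Hence $p^*[V]=3[\{0\}_X]$, so $p_*p^*[V]=3[V]\ne 2[V]=|G|\,[V]$; likewise $p^*p_*[\{0\}_X]=3[\{0\}_X]\ne 2[\{0\}_X]=\sum_g g^*[\{0\}_X]$. Neither $q\circ p^*=\id$ nor $p^*\circ q|_{\sZ^j_G}=\id$ holds on the nose, and your conclusion that $p^*$ is a chain isomorphism does not follow. Rational coefficients do not help here: the defect is an excess-length phenomenon at non-flat points, not a divisibility issue; the role of $\Q$ in this lemma is solely in the averaging for $\delta^G_X$.

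The paper sidesteps all of this. Its $p^*$ is the \emph{reduced} preimage $V\mapsto p^{-1}(V)_{\rm red}$, and rather than producing an inverse via $p_*$, it checks directly that this is a bijection $\sZ^j(Y,n)\xrightarrow{\sim}\sZ^j_G(X,n)$: injectivity is immediate from surjectivity of $p$, and surjectivity comes from writing any $G$-invariant cycle as a linear combination of $G$-orbit sums $\sum_{V'\in G\cdot V}[V']$, each of which equals $[p^{-1}(p(V))_{\rm red}]$. No push-forward, no length multiplicities, no flatness hypothesis. If you want to salvage a $p_*$-based approach you would have to restrict first to the free locus (where $p$ is \'etale and your identities do hold) and then run a localization or noetherian-induction argument at the level of homology; that is more work than the paper's direct bijection.
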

\begin{proof} Let $V \subset Y \times {\Delta}^n$ be an irreducible cycle
intersecting all faces of $ Y\times {\Delta}^n$ properly and let
$\ov{V} = p^{-1}(V)$. Then $\ov{V} \to V$ is a finite morphism.
Moreover, the trivial action of $G$ on ${\Delta}^n$ implies that
$p^{-1}\left(V \cap Y \times {\Delta}^m\right) =
\ov{V} \cap X \times {\Delta}^m$ for all $m \le n$. Hence $H$ defines
a cycle on $ X \times {\Delta}^n$ which is clearly $G$-invariant.
This gives a natural map ${\sZ}^j\left(Y, \cdot\right) 
\xrightarrow{p^*} {\sZ}^j_G\left(X, \cdot\right)$.
To show that $p^*$ is an isomorphism of complexes, we need to give a unique
representation for every cycle $\sigma \in {\sZ}^j_G\left(X, \cdot\right)$
in terms of image of $p^*$. Now we can write $\sigma$ as
\[
\sigma = {\Sigma} a_j V_j - {\Sigma} a'_j V'_j, \ \ {\rm with} \ \
a_j, a'_j > 0.
\]
Then it is easy to see that both the sums on the right are $G$-invariant.
Hence can assume that $\sigma = {\Sigma} a_j V_j$ with $a_j > 0$.
Since $V_j$' are irreducible, $\sigma$ is $G$-invariant if and only if
it is of the form ${\Sigma} b_j H_j$, where each $H_j$ is of the form
${\underset{g \in G}{\Sigma}} g{\wt{H_j}}$ where ${\wt{H_j}}$ is
an irreducible cycle. Taking $W_j = p\left({\wt{H_j}}\right)$, it is then
easy to see that $\sigma = {\Sigma} a_j p^{-1}(W_j) = 
p^{-1}\left(\delta\right)$, where $W_j$'s (and hence $\delta$) are uniquely
determined by $\sigma$.     

We now show that ${\delta}^G_X$ is an isomorphism. 
The injectivity is proved by defining the trace map
${\sZ}^j\left(X, \cdot\right) \xrightarrow{tr} 
{\sZ}^j_G\left(X, \cdot\right)$
\begin{equation}\label{eqn:trace}
tr(x) = \frac{1}{|G|} {\underset{g \in G}{\Sigma}} {\mu}^*_g (x),
\end{equation}
and checking that $tr \circ {\delta}^G_X$ is the identity map on the
homology.
Thus we need to prove the surjectivity of ${\delta}^G_X$. So let
$x \in {\sZ}^j\left(X, n \right)$ be such that $d_n(x) = 0$ and
$x - gx \in {\rm Image}(d_{n+1}) \ \forall \ g \in G$ (since
$x \in {\left(CH^j\left(X, n\right)\right)}^G$). Putting $x_g =
x - gx$, we get 
\[
x =  \frac{1}{|G|} {\underset{g \in G}{\Sigma}} x = 
\frac{1}{|G|} {\underset{g \in G}{\Sigma}} gx +
\frac{1}{|G|} {\underset{g \in G}{\Sigma}} x_g =
tr(x) + y,\]
where $y = \frac{1}{|G|} {\underset{g \in G}{\Sigma}} x_g 
\in {\rm Image}(d_{n+1})$. Since $d_n(x) = 0$, it is easy to check that
$d^G_n\left(tr(x)\right) = 0$ and $x = {\delta}^G_X \left(tr(x)\right)$.
\end{proof}  
\begin{cor}\label{cor:finite2*}
There are canonical isomorphisms
\begin{equation}\label{eqn:finite3}
CH^*_G\left(X, \cdot\right) \to {\left(CH^j\left(X, \cdot\right)\right)}^G
\xleftarrow{p^*} CH^*\left(Y, \cdot\right).
\end{equation}
In particular, the natural maps 
\begin{equation}\label{eqn:finite4}
CH^*_G\left(X, \cdot\right) \to 
\wt{CH^*_G\left(X, \cdot\right)} 
\to \widehat{CH^*_G\left(X, \cdot\right)}
\to \ov{CH^*_G\left(X, \cdot\right)}
\end{equation}
are all isomorphisms.
\end{cor}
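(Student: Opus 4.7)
The plan is to deduce the first displayed chain of isomorphisms from \lemref{lem:finite1} applied to the free $G$-action on $X \times U$, combined with homotopy invariance and localization on the first factor; and then to obtain the second sequence by showing that rationally the augmentation ideal $J_G \subset S(G)$ vanishes when $G$ is finite, so that all three completions agree trivially with $CH^*_G(X, \cdot)$.

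First I would fix $j \geq 0$ and choose a good pair $(V, U)$ for the $G$-action corresponding to $j$. Since $G$ is finite and acts freely on $U$, the diagonal action on $X \times U$ is free with quotient $X_G$, so applying \lemref{lem:finite1} to this free action yields a canonical isomorphism
\[
CH^j_G(X, i) = CH^j(X_G, i) \xrightarrow{\cong} CH^j(X \times U, i)^G.
\]
Next I would exploit the $G$-equivariant chain $X \times U \hookrightarrow X \times V \to X$: the right-hand arrow is a $G$-equivariant vector bundle, so homotopy invariance ($\propref{prop:EHCG}$) produces $CH^j(X, i) \cong CH^j(X \times V, i)$, and the complement $X \times (V \setminus U)$ has codimension $> j$, so localization produces $CH^j(X \times V, i) \cong CH^j(X \times U, i)$. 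The composite is $G$-equivariant and, taking $G$-invariants (an exact functor rationally via the trace $\tfrac{1}{|G|} \sum_{g \in G} g^*$ exhibited in \eqref{eqn:trace}), gives $CH^j_G(X, i) \cong CH^j(X, i)^G$. Combining with the $p^*$ half of \lemref{lem:finite1} then yields \eqref{eqn:finite3}.

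For the second assertion, I would first specialize the isomorphism just established to $X = \Spec k$ with the trivial $G$-action. This gives $CH^j_G(k, 0) \cong CH^j(k, 0)^G$, which vanishes for $j \geq 1$ because $\Spec k$ is zero-dimensional, while it equals $\Q$ for $j = 0$. Hence $S(G) = \Q$ and $J_G = 0$; consequently $\widehat{S(G)} = S(G)$, and the $J_G$-adic filtration on every $S(G)$-module is trivial. It follows tautologically that the weak completion $\wt{CH^*_G(X, \cdot)}$ and the $J_G$-adic completion $\widehat{CH^*_G(X, \cdot)}$ both agree with $CH^*_G(X, \cdot)$. For the graded completion, I would observe that for each fixed $i$ the term $CH^j_G(X, i) \cong CH^j(X, i)^G$ vanishes for $j > \dim X$, so the direct sum $\bigoplus_j CH^j_G(X, i)$ is already finite and coincides with the corresponding direct product $\prod_j CH^j_G(X, i) = \ov{CH^*_G(X, \cdot)}$.

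The entire argument is essentially formal once the first isomorphism is in hand, so no step constitutes a genuine obstacle; the only point requiring mild care is verifying $G$-equivariance of the homotopy-and-localization identification $CH^j(X, i) \cong CH^j(X \times U, i)$, which is immediate from functoriality of pull-back along the $G$-equivariant projection $X \times U \to X$.
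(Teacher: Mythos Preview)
Your proof is correct and follows essentially the same approach as the paper; your explicit observation that $J_G = 0$ (by specializing \eqref{eqn:finite3} to $X = \Spec k$) is slightly more direct than the paper's phrasing in terms of nilpotent filtrations together with \corref{cor:completion2}. One small correction: $CH^j(X,i)$ need not vanish for $j > \dim X$ when $i > 0$ (e.g.\ $CH^1(\Spec k, 1) \cong k^{\times} \otimes \Q$), but it does vanish for $j > \dim X + i$, so your conclusion that the direct sum over $j$ is finite for each fixed $i$ is unaffected.
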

\begin{proof} To prove ~\ref{eqn:finite3}, we only need to prove the 
first isomorphism. 
So fix $j \ge 0$ and choose a good pair $(V,U)$ for the $G$-action
corresponding to $j$. Then we get the canonical isomorphisms
\[
CH^j_G\left(X, \cdot\right) \xrightarrow{\cong} 
CH^j\left(X_G, \cdot\right) \xrightarrow{\cong} 
{\left(CH^j\left(X \times U, \cdot\right)\right)}^G,
\]
where the second map is isomorphism by Lemma~\ref{lem:finite1}.
On the other hand we have natural $G$-equivariant maps
\[
CH^j\left(X, \cdot\right) \to 
CH^j\left(X \times V, \cdot\right) \to 
CH^j\left(X \times U, \cdot\right). 
\]
The first map is an isomorphism by the homotopy invariance and the 
second map is isomorphism by the localization sequence since $(V,U)$
is a good pair. Taking the $G$-invariants both sides, we conclude the
proof.

The isomorphism of the last and the composite of all maps in ~\ref{eqn:finite4}
follows from the isomorphisms in ~\ref{eqn:finite3}, which implies
that the $J_G$-adic filtration and the filtration by grading on 
$CH^j_G\left(X, \cdot\right)$ are nilpotent. The isomorphism of the
first map (hence the second map) now follows from 
Corollary~\ref{cor:completion2}.
\end{proof}
\begin{cor}\label{cor:finite2}
Let $G$ be a linear algebraic group over $k$ such that all its irreducible 
components are also defined over $k$. Let $G^0$ denote the identity
component of $G$ and let $H = G/{G^0}$. Let $X$ be a quasi-projective
$k$-variety with a $G$-action. Then the finite group $H$ naturally acts
on $CH^*_{G^0}\left(X, i \right)$ such that 
${\left(CH^*_{G^0}\left(X, i \right)\right)}^H \cong 
CH^*_{G}\left(X, i \right)$.
\end{cor}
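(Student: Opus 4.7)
The strategy is to use a single good pair to simultaneously compute $CH^*_G(X, \cdot)$ and $CH^*_{G^0}(X, \cdot)$, then exploit the Galois covering that arises naturally between the corresponding mixed quotients.

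Fix $j \ge 0$ and choose a good pair $(V, U)$ for the $G$-action corresponding to $j$. Since $G^0$ is a closed subgroup of $G$, it acts freely on $U$; moreover the quotient $U/G^0$ is quasi-projective, since it is a finite cover of the quasi-projective scheme $U/G$ by the finite group $H = G/G^0$. Thus $(V, U)$ is also a good pair for the $G^0$-action corresponding to $j$. Because $G^0$ is normal in $G$, the diagonal $G$-action on $X \times U$ descends to an action on $X_{G^0} = (X \times U)/G^0$ that factors through $H$; this residual $H$-action is free, with quotient $X_G$. Hence the natural map $\pi : X_{G^0} \to X_G$ is a Galois covering with group $H$.

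Applying Lemma~\ref{lem:finite1} to this covering yields a canonical isomorphism
\[
CH^j(X_G, \cdot) \xrightarrow{\pi^*} \left(CH^j(X_{G^0}, \cdot)\right)^H.
\]
Under the defining identifications $CH^j_G(X, \cdot) \cong CH^j(X_G, \cdot)$ and $CH^j_{G^0}(X, \cdot) \cong CH^j(X_{G^0}, \cdot)$, valid for this value of $j$ by the choice of good pair together with the homotopy and localization properties of Proposition~\ref{prop:EHCG}, this translates into the asserted isomorphism $CH^j_G(X, \cdot) \cong \left(CH^j_{G^0}(X, \cdot)\right)^H$.

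The main subtlety — and the hard part of the argument — is verifying that the $H$-action on $CH^*_{G^0}(X, \cdot)$ transported from $X_{G^0}$ is intrinsic, i.e., independent of the choice of good pair. The plan is to check this by naturality: given two good pairs $(V, U)$ and $(V', U')$ for $G$, one constructs a dominating pair (for instance using $(V \oplus V', U \times V' \cup V \times U')$) and shows that the induced maps of mixed quotients are $H$-equivariant and induce isomorphisms on the relevant Chow groups, making the $H$-actions compatible. Equivalently, one may describe the $H$-action intrinsically: each $g \in G$ induces a pullback on $CH^*_{G^0}(X, \cdot)$ via the translation $\mu_g : X \to X$ twisted by the conjugation $\phi_g : G^0 \to G^0$, and this pullback is the identity when $g \in G^0$ because inner automorphisms of $G^0$ act trivially on $G^0$-equivariant cohomology; hence the action descends to $H$. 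The hypothesis that every irreducible component of $G$ is defined over $k$ is used precisely to guarantee that $G^0$ and $H$ are defined over $k$, so that all the schemes and morphisms above live over the base field.
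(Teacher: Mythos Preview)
Your proof is correct and follows essentially the same route as the paper: choose a good pair $(V,U)$ for $G$, observe it is also good for $G^0$, note that $X_{G^0}\to X_G$ is a principal $H$-bundle, and apply Lemma~\ref{lem:finite1}. The paper's proof is terser and does not explicitly address the independence of the $H$-action on the choice of good pair, whereas you sketch two ways to verify this; this extra care is reasonable but not strictly needed beyond the standard double-fibration argument already used to show the equivariant Chow groups themselves are well defined.
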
  
\begin{proof} We first observe that since all the components of $G$ are 
defined over $k$, $H$ is a finite constant group.
Fix $j \ge 0$ and choose a good pair $(V, U)$ for the
$G$-action on $X$. Then this is also a good pair for the $G^0$-action on
$X$. Moreover, $H$ naturally acts on the mixed quotient
$X \stackrel{G^0} {\times} U$ such that the corresponding quotient is
$X \stackrel{G} {\times} U$. Hence by Lemma~\ref{lem:finite1},
$H$ acts on $CH^j\left(X \stackrel{G^0} {\times} U, i \right)$
such that 
${\left(CH^j\left(X \stackrel{G^0} {\times} U, i \right)\right)}^H
\cong CH^j\left(X \stackrel{G} {\times} U, i \right)$. The corollary now
follows from the isomorphisms $CH^j_{G^0}\left(X, i \right) \cong
CH^j\left(X \stackrel{G^0} {\times} U, i \right)$ and
$CH^j_{G}\left(X, i \right) \cong
CH^j\left(X \stackrel{G} {\times} U, i \right)$.
\end{proof}
For a finite group $G$ acting on $X$, let $\sigma : G \times X \to X$
denote the action map. It is then clear that $\sigma$ is finite and
{\'e}tale. Put ${\alpha}_X = {\sigma}_*\left([{\sO}_{G \times X}]\right)
\in K^G_0(X)$. 
\begin{lem}\label{lem:etale}
For $G$ and $X$ as above, one has 
${\tau}^G_{G \times X} \circ  {\sigma}^* = 
{\bar{\sigma}}^* \circ {\tau}^G_X$.
\end{lem}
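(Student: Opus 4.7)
The $G$-action on $G \times X$ making $\sigma$ equivariant is left translation on the first factor, $h\cdot(g,x) = (hg,x)$; this action is free, with quotient $X$ via the second projection. Fix $j \ge 0$ and a good pair $(V,U)$ for the $G$-action corresponding to $j$; I will argue that the $j$-th components of both sides of the asserted identity agree.

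The first step is to identify the mixed quotients concretely. Since every orbit of the $G$-action on $G \times X \times U$ has a unique representative with first coordinate $e$, the map $[g,x,u] \mapsto (x, g^{-1}u)$ gives a canonical isomorphism $(G \times X)_G \cong X \times U$. Under this identification, the morphism $\sigma_G : (G \times X)_G \to X_G$ induced by $\sigma \times \id$ becomes precisely the finite \'etale quotient map $q : X \times U \to X \stackrel{G}{\times} U = X_G$ for the free $G$-action on $X \times U$. Moreover, the vector bundle $E_{V,G\times X}$ on $(G\times X)_G$ is canonically trivial with fiber $V$ and equals $q^*E_{V,X}$, since both are the bundles associated to the $G$-torsor $X \times U \to X_G$ via the representation $V$.

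The second step is to unwind the definitions. Let $z \in G_i(X_G)$ be the image of $\pi_{UX}^*\!(y) \in G^G_i(X \times U)$ under the isomorphism $G^G_i(X \times U) \cong G_i(X_G)$ coming from the free action. Using formula~\eqref{eqn:RR0}, the naturality of flat pullback in $G$-theory along the square relating $\pi_{U,G\times X}$, $\pi_{UX}$, $\sigma \times \id$, and $\sigma$, and the fact that $Td$ commutes with pullback, one gets
\begin{equation*}
\tau^G_{G \times X}(\sigma^* y)_j \;=\; \frac{\tau_{X \times U}(q^* z)}{q^*\bigl(Td_{X_G}(E_V)\bigr)}, \qquad \bar\sigma^*\bigl(\tau^G_X(y)\bigr)_j \;=\; \frac{\bar q^*\bigl(\tau_{X_G}(z)\bigr)}{q^*\bigl(Td_{X_G}(E_V)\bigr)}.
\end{equation*}
Thus the lemma is reduced to the non-equivariant identity $\tau_{X \times U} \circ q^* = \bar q^* \circ \tau_{X_G}$.

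The final step is to verify this last identity. Since $q$ is finite \'etale, it is l.c.i.\ with vanishing relative tangent bundle, so $Td(T_q) = 1$. When $X$ is smooth, $X_G$ and $X \times U$ are smooth and the identity is immediate from Lemma~\ref{lem:RR1}(ii). For general (possibly singular) $X$, the same identity holds because the Bloch-Gillet Riemann-Roch map is natural with respect to flat l.c.i.\ pullback up to multiplication by $Td(T_f)$, and here the Todd correction is trivial. The main obstacle is precisely this extension of Lemma~\ref{lem:RR1} beyond the smooth case to the \'etale morphism $q$ on possibly singular schemes; this can be handled either by invoking the flat-pullback compatibility that is built into Gillet's axiomatic construction of $\tau$, or, alternatively, by an equivariant embedding $X \hookrightarrow M$ into a smooth $G$-variety as in Lemma~\ref{lem:lifting} together with covariance of $\tau^G$ for the resulting closed immersion.
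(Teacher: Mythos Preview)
Your reduction is the same as the paper's: identify $(G\times X)_G$ with $X\times U$, identify $\sigma_G$ with the quotient map $q:X\times U\to X_G$, cancel the Todd classes, and reduce to the non-equivariant identity $\tau_{X\times U}\circ q^{*}=\bar q^{*}\circ\tau_{X_G}$. For smooth $X$ your appeal to Lemma~\ref{lem:RR1}(ii) is legitimate and in fact shorter than the paper's route.

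The gap is in the singular case. Neither of your two proposed fixes is adequate as written. The ``flat l.c.i.\ pullback compatibility built into Gillet's construction'' is exactly what Lemma~\ref{lem:RR1} records, and that lemma is only proved here for vector bundles and for l.c.i.\ maps between \emph{smooth} varieties; the extension to \'etale maps of singular schemes is not available in the paper and is not a one-line citation for higher $K$-theory. Your second option---embed $X\hookrightarrow M$ and use covariance for the closed immersion---is the right instinct but is not an argument by itself: covariance gives you compatibility with push-forward, not with the pull-back $q^{*}$ that you need.

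The paper resolves this by turning the problem around. Using the embedding one first computes $\tau_Y(1)$ and $\tau_S(1)$ explicitly (via $Td$ of the ambient smooth schemes, where the \'etale map \emph{is} covered by Lemma~\ref{lem:RR1}) and deduces $p_{*}p^{*}=|G|\cdot\mathrm{id}$ on $K$-theory. Then for any $a\in G_i(S)$ one writes
\[
\bar p^{*}\tau_S(a)=\tfrac{1}{|G|}\bar p^{*}\tau_S(p_{*}p^{*}a)
=\tfrac{1}{|G|}\bar p^{*}\bar p_{*}\tau_Y(p^{*}a)=tr\bigl(\tau_Y(p^{*}a)\bigr)=\tau_Y(p^{*}a),
\]
using covariance of $\tau$ for $p_{*}$, the Galois identity $\bar p^{*}\bar p_{*}=|G|\cdot tr$, naturality of $\tau$ for automorphisms, and the fact that $G$ acts trivially on $S$. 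This is the missing ingredient in your sketch.
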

\begin{proof} We embed $X$ equivariantly into a smooth $G$-variety $M$.
By Corollary~\ref{cor:finite2}, we can choose a good pair $(V,U)$ for
the $G$-action such that $CH^*_G\left(X, \cdot\right) = 
CH^*\left(X_G, \cdot\right)$ and same for $M$.
Then as in the proof of Theorem~\ref{thm:RRoch} $(iv)$, we are reduced
to proving the non-equivariant  version of the lemma for the induced map
of quotients $G \stackrel{G}{\times} \left(X \times U\right) 
\xrightarrow{f} X  \stackrel{G}{\times} U$ which is also 
finite and {\'e}tale. Note that 
$G \stackrel{G}{\times} \left(X \times U\right) \cong X \times U$
and the map $f$ is the quotient map for the free action of $G$ on
$X \times U$. Letting $M \times U \xrightarrow{q} M \stackrel{G}{\times}
U$ denote the corresponding map for $M$, our problem is reduced to proving
that for a fiber diagram 
\begin{equation}\label{eqn:etale0}
\xymatrix{
Y \ar[r]^{i} \ar[d]_{p} & Z \ar[d]^{q} \\
S \ar[r]_{j} & W}
\end{equation}
of varieties such that the bottom arrow is the closed embedding
of quotients for a free action of a finite group $G$ on the top arrow, which
is an equivariant closed embedding, one has
${\tau}_Y \circ p^* = {\bar{p}}^* \circ {\tau}_S$.

Before we prove this, we recall the construction
of the Riemann-Roch map from \cite{Bloch}. Given any variety $X$, we
first embed $X \stackrel{i}{\inj} M$ with $M$ smooth. Let $U$ denote
the complement of $X$ in $M$. Then there is a diagram of homotopy
fibration of spectra
\begin{equation}\label{eqn:RR21}
\xymatrix{
G(X) \ar[r]^{i_*} & G(M) \ar[r]^{j^*} \ar[d]_{ch_M} & 
G(U) \ar[d]^{ch_U} \\
{\sH}_X  \ar[r]_{i_*} & {\sH}_M  \ar[r]_{j^*} &
{\sH}_U,}
\end{equation}    
where the terms on the bottom arrow are spectra whose homotopy groups 
give the higher Chow groups ({\sl cf.} \cite[Theorem~1.10]{FW}).
Since this is a diagram of homotopy fibrations and the right square
commutes, we get an induced Chern character with support
$ch^M_X : G(X) \to {\sH}_X$. Then one defines the Riemann-Roch map
for $X$ on the homotopy groups by ${\tau}_X (x) = 
Td(i^*T_M) \cdot ch^M_X (x)$. This is independent of $M$.

Coming back to the proof of ~\ref{eqn:etale0}, we first claim
that $p_* \circ p^* = |G| id$. But the projection formula
reduces this to showing that $p_*(1) = |G|$.
By the Riemann-Roch theorem ({\sl cf.} \cite{Bloch}), it suffices
to show that ${\bar{p}}_* \circ {\tau}_Y (1) = |G| {\tau}_S (1)$.
Using the above construction of the Riemann-Roch map, we have 
\[
\begin{array}{lll}
{\bar{p}}_* \circ {\tau}_Y (1) & =  & 
{\bar{p}}_* \left[ch_Y(1) {\bar{i}}^*\left(Td(Z)\right)\right] \\
& = & {\bar{p}}_* \circ {\bar{i}}^*\left(Td(Z)\right) \\
& = & {\bar{p}}_* \circ {\bar{i}}^* \circ  {\bar{q}}^*\left(Td(W)\right) \\ 
& = & {\bar{p}}_* \circ {\bar{p}}^* \circ  {\bar{j}}^*\left(Td(W)\right) \\ 
& = & |G| {\tau}_S (1),
\end{array}
\]
where the third equality holds because $q$ is a finite and {\'e}tale
map of smooth varieties. This proves the claim.

Now for any $a \in G_i(S)$, we have
\[
\begin{array}{lll}
{\bar{p}}^* \circ {\tau}_S (a) & = & 
{\bar{p}}^* \circ {\tau}_S \left({\frac{1}{|G|}} p_* \circ p^*(a)\right) \\
& = & {\frac{1}{|G|}} \left({\bar{p}}^* \circ {\tau}_S \circ
p_* \circ p^*(a)\right) \\ 
& = & {\frac{1}{|G|}} \left({\bar{p}}^* \circ {\bar{p}}_* \circ
{\tau}_Y \circ  p^*(a)\right) \\ 
& = & tr \left({\tau}_Y \circ  p^*(a)\right) \\ 
& = & {\tau}_Y \circ  p^*\left(tr(a)\right) \\
& = & {\tau}_Y \circ  p^*(a).
\end{array}
\]
Here, the first equality follows from the above claim. The fourth
equality occurs because of the fact that for any irreducible cycle
$V \in {\sZ}^* \left(Y, n \right)$, one has ${\bar{p}}^* \circ {\bar{p}}_*
(V) = {\underset{g \in G}{\Sigma}} {\mu}^*(V) = |G| tr(V)$. Finally,
the last equality holds because $G$ acts trivially on $S$.
This completes the proof of the lemma.
\end{proof}
\begin{thm}\label{thm:finiteI}
For $X \in {\sV}_G$ as above, the Riemann-Roch map
$G^G_i(X)  \xrightarrow{{\tau}^G_X} CH^*_G\left(X,i\right)$
is surjective. Moreover, for any $x \in G^G_i(X)$, one has
${\tau}^G_X (x) = 0$ if and only if ${\alpha}_X \cdot x = 0$.
\end{thm}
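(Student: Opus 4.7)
The plan rests on three ingredients: Bloch's non-equivariant isomorphism $\tau_X\colon G_i(X)_{\Q}\xrightarrow{\cong}CH^*(X,i)_{\Q}$; the identification $CH^*_G(X,i)\cong CH^*(Y,i)$ via $\bar p^*$ from \corref{cor:finite2*}; and the Morita isomorphisms $G^G_i(G\times X)\cong G_i(X)$ and $CH^*_G(G\times X,i)\cong CH^*(X,i)$, which arise because $G$ acts freely on $G\times X$ by left translation on the first factor with quotient $X$ via $p_2$. The action map $\sigma\colon G\times X\to X$ is $G$-equivariant for this translation action on the source and the original action on the target. Under the Morita identifications (restriction to $\{1\}\times X$) a direct verification shows that $\sigma^*$ is identified with the forgetful map $\mathrm{res}_G\colon G^G_i(X)\to G_i(X)$, that $\bar\sigma^*$ is identified with the Chow-theoretic forgetful map $\mathrm{res}_{CH}$, and that $\tau^G_{G\times X}$ coincides with $\tau_X$ (by property $(v)$ of \thmref{thm:RRoch}). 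Consequently \lemref{lem:etale} unwinds to the crucial forgetful compatibility
\[
\mathrm{res}_{CH}\circ\tau^G_X=\tau_X\circ\mathrm{res}_G.
\]

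The key computation is $\tau^G_X(\alpha_X\cdot x)=|G|\,\tau^G_X(x)$. The projection formula applied to $\alpha_X=\sigma_*[{\mathcal O}_{G\times X}]$ gives $\alpha_X\cdot x=\sigma_*\sigma^*(x)$, and combining covariance (\thmref{thm:RRoch}$(i)$) with \lemref{lem:etale} yields
\[
\tau^G_X(\alpha_X\cdot x)=\bar\sigma_*\bar\sigma^*\tau^G_X(x)=\bar\sigma_*(1)\cdot\tau^G_X(x);
\]
passing to a mixed quotient identifies $\bar\sigma_*(1)$ with the degree of the induced finite \'etale cover $X\times U\to (X\times U)/G$, namely $|G|$. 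The direction $\alpha_X\cdot x=0\Rightarrow\tau^G_X(x)=0$ then follows by rationalization. Conversely, if $\tau^G_X(x)=0$, the forgetful compatibility gives $\tau_X(\mathrm{res}_G(x))=0$; Bloch's isomorphism forces $\mathrm{res}_G(x)=0$, which via Morita means $\sigma^*(x)=0$, hence $\alpha_X\cdot x=\sigma_*\sigma^*(x)=0$.

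For surjectivity I apply covariance to $p\colon X\to Y$. Since $G$ acts trivially on $Y$, one has $G^G_i(Y)=G_i(Y)\otimes R(G)$, $CH^*_G(Y,i)=CH^*(Y,i)$, and multiplicativity of $\tau^G$ gives $\tau^G_Y=\tau_Y\circ\mathrm{res}_G$. Under the identifications $CH^*_G(X,i)\cong CH^*(Y,i)$ via $\bar p^*$ and $CH^*_G(Y,i)=CH^*(Y,i)$, the projection formula gives $\bar p^G_*\circ\bar p^*=|G|\cdot\mathrm{id}$, so covariance becomes $|G|\,\tau^G_X=\tau_Y\circ p_*\circ\mathrm{res}_G$. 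Since $\tau_Y$ is an isomorphism, it suffices to show $\phi:=p_*\circ\mathrm{res}_G$ is surjective. Given $\delta\in G_i(Y)$, use that $p$ is finite surjective (so $p_*$ is surjective rationally via Bloch's iso and $\bar p_*\bar p^*=|G|$) to choose $y\in G_i(X)$ with $p_*(y)=\delta$, and set $x=\sigma_*(y)\in G^G_i(X)$ via the Morita-induction map. A direct computation of the induced equivariant sheaf gives $\mathrm{res}_G(\sigma_*y)=\sum_{g\in G}g_*y$, so $\phi(x)=\sum_g p_*g_*y=|G|\,p_*(y)=|G|\,\delta$, placing $\delta$ rationally in the image. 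The main technical obstacle is the clean bookkeeping of the Morita identifications in both $K$-theory and higher Chow groups so that $\sigma^*$ and $\bar\sigma^*$ become the forgetful maps; once that is established, \lemref{lem:etale} supplies exactly the compatibility needed to close the argument.
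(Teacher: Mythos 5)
Your kernel characterization is essentially the paper's own argument: both directions rest on Lemma~\ref{lem:etale}, the free $G$-action on $G\times X$, the projection formula $\alpha_X\cdot x=\sigma_*\sigma^*(x)$, and Bloch's non-equivariant theorem. Your derivation of $\alpha_X\cdot x=0\Rightarrow\tau^G_X(x)=0$ from the identity $\tau^G_X(\alpha_X\cdot x)=|G|\,\tau^G_X(x)$ is a mild streamlining of the paper's route, which instead shows that $\sigma^*(\alpha_X)=\alpha_{G\times X}$ is a unit (via $K_0=\Q\oplus(\mathrm{nilpotents})$ on the free quotient), concludes $\sigma^*(x)=0$, and then uses $\bar\sigma_*\bar\sigma^*=|G|\cdot\mathrm{id}$. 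Where you genuinely diverge is surjectivity. The paper gets it in one stroke from the commutative square relating $\sigma_*$ and $\bar\sigma_*$: the top arrow $\tau^G_{G\times X}$ is an isomorphism by the free-action case, and $\bar\sigma_*$ is surjective because $\bar\sigma_*\bar\sigma^*=|G|\cdot\mathrm{id}$. Your detour through $Y=X/G$ works but costs more: you need the identity $\tau^G_Y=\tau_Y\circ\mathrm{res}_G$ for the trivial action on $Y$ (which silently uses $CH^{>0}_G(k,0)_{\Q}=0$ for finite $G$), and your assertion $\bar p_*\circ\bar p^*=|G|\cdot\mathrm{id}$ is not correct in general --- it is multiplication by $\deg(p)$, which equals $|G|$ only when the action is generically free (for the trivial action it is $1$). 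Since all you need is a nonzero scalar, this does not break the argument, but the bookkeeping is heavier than necessary; indeed, your final computation $\phi(\sigma_*y)=|G|\,p_*(y)$ is, after unwinding the Morita identifications, exactly the paper's observation that the image of $\sigma_*$ already surjects.
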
 
\begin{proof} Before we begin the proof, we note that ${\tau}^G_X$
maps $G^G_i(X)$ into $\ov{CH^*_G\left(X,i\right)}$, but we can now
replace the latter by $CH^*_G\left(X,i\right)$ using  
Corollary~\ref{cor:finite2}.

To show the surjectivity of ${\tau}^G_X$, we consider the following
diagram
\[
\xymatrix{
G^G_i(G \times X) \ar[d]_{{\sigma}_*} \ar[r]^{{\tau}_{G \times X}} &
CH^*_G\left(G \times X, i\right) \ar[d]^{{\ov{\sigma}}_*} \\
G^G_i(X) \ar[r]_{{\tau}^G_X} & CH^*_G\left(X,i\right)}
\]
which commutes by Theorem~\ref{thm:RRoch}. 
Since $G$ acts freely on $G \times X$, we can apply 
Theorem~\ref{thm:RRoch} and the non-equivariant   Riemann-Roch ({\sl cf.}
\cite[Theorem~9.1]{Bloch} to conclude that the top horizontal arrow
is an isomorphism. Moreover, as $\sigma$ is finite and {\'e}tale,
we have ${\ov{\sigma}}_* \circ {\ov{\sigma}}^* = |G| id$. In particular,
${\ov{\sigma}}_*$ is surjective and hence so is ${\tau}^G_X$.

To prove the remaining part of the theorem,
we first show that if $G$ acts freely on $X$, then ${\alpha}_X$ is an
invertible element of $K^G_0(X)$. Let $X \xrightarrow{p} Y = X/G$ be the
quotient map. Then $K^G_0(X) \cong K_0(Y)$ and ${\alpha}_X =
p_*\left([{\sO}_X]\right)$. Now the assertion follows from the fact that
$K_0(Y)$ has a canonical decomposition $K_0(Y) = \wt{K_0(Y)} \oplus {\Q}$,
where $\wt{K_0(Y)}$ is nilpotent and ${\alpha}_X$ has positive rank.

Now suppose $x \in G^G_i(X)$ is such that ${\tau}^G_X (x) = 0$.
Then by Lemma~\ref{lem:etale}, we get ${\tau}^G_{G \times X} \circ
{\sigma}^*(x) = 0$, which in turn implies that ${\sigma}^*(x) = 0$
as ${\tau}^G_{G \times X}$ is an isomorphism. Composing this with
${\sigma}_*$ and applying the projection formula, we get 
${\alpha}_X \cdot x = 0$. 

Conversely, ${\alpha}_X \cdot x = 0$ implies that 
${\sigma}^* \left({\alpha}_X\right) \cdot {\sigma}^* 
(x) = {\sigma}^* \left({\alpha}_X \cdot x\right) = 0$.
But we have just shown that ${\sigma}^* \left({\alpha}_X\right) = 
{\alpha}_{G \times X}$  is a unit in $K^G_0(G \times X)$.
So we must have ${\sigma}^*(x) = 0$. Composing this with 
${\tau}^G_{G \times X}$ and applying Lemma~\ref{lem:etale}, we 
conclude that ${\ov{\sigma}}^* \circ {\tau}^G_X (x) = 0$. This in turn
gives ${\ov{\sigma}}_* \circ {\ov{\sigma}}^* \circ {\tau}^G_X (x) = 0$
and hence $|G|{\tau}^G_X (x) = 0$.
\end{proof}
\begin{cor}\label{cor:finiteII}
Let $G$ be a finite group and let $X \in {\sV}_G$. Then \\
$(i)$ The maps
\[
\wt{G^G_i(X)} \to \widehat{G^G_i(X)} \to  
{\widehat{G^G_i(X)}}_{{\mathfrak m}_X} \leftarrow 
{G^G_i(X)}_{{\mathfrak m}_X}
\]
are all isomorphisms. \\
$(ii)$ The Riemann-Roch map ${\tau}^G_X$ induces isomorphism
\[
\wt{G^G_i(X)} \stackrel{{\wt{\tau}}^G_X}{\underset{\cong}{\longrightarrow}}
CH^*_G\left(X,i\right).
\]
$(iii)$ If $X$ is smooth, then the Chern character map in
Proposition~\ref{prop:Chern} induces an isomorphism
\[
\wt{K^G_i(X)} \stackrel{{\wt{ch}}^G_X}{\underset{\cong}{\longrightarrow}}
CH^*_G\left(X,i\right).
\]
\end{cor}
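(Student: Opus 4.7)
My plan is to exploit two structural features of finite groups $G$. First, since $G$ is finite, $R(G) = R(G)_{\Q}$ is a finite-dimensional semisimple $\Q$-algebra (by character theory), hence a product of fields; the augmentation $I_G$ cuts out one factor with quotient $\Q$, so $R(G) = \Q \times R'$ with $I_G = 0 \times R'$, forcing $I_G^n = I_G$ for all $n \geq 1$. Consequently $\widehat{R(G)} = R(G)/I_G = \Q$ and, for every $R(G)$-module $M$, one gets $\wt{M} = \widehat{M} = M/I_G M$, which gives the first isomorphism in $(i)$ directly.

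Second, for finite $G$ rationally $S(G) = CH^*_G(k,0) = \Q$, so the point-level Chern character $\ov{ch}^G_k : R(G) \to {\prod}_j A^j_G(k) = \Q$ must be the rank map and kills $I_G$; by the naturality of $\ov{ch}^G$ and \thmref{thm:RRoch}$(ii)$, this gives $\tau^G_X(\rho \cdot x) = 0$ for $\rho \in I_G$, so $\tau^G_X$ descends to $\wt{G^G_i(X)}$. To prove $(ii)$ I will use the identity $\alpha_X = \pi_X^*(\rho_{\mathrm{reg}})$ in $K^G_0(X)$, where $\rho_{\mathrm{reg}}$ is the regular representation and $\pi_X : X \to \Spec k$; this follows from flat base change once one observes that the $G$-equivariant shear $(g,x) \mapsto (g, g^{-1}x)$ identifies the action map $\sigma$ on $G \times X$ (equipped with the action $h \cdot (g,x) = (hg, x)$) with the projection $G \times X \to X$ (equipped with the diagonal action). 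Since $\rho_{\mathrm{reg}}$ reduces to its rank $|G|$ modulo $I_G$, the element $\alpha_X$ acts on $\wt{G^G_i(X)}$ as multiplication by $|G|$. Surjectivity of $\wt{\tau}^G_X$ is \thmref{thm:finiteI}, and injectivity follows at once: if $\wt{\tau}^G_X(x) = 0$, then \thmref{thm:finiteI} gives $\alpha_X x = 0$, so $|G| x = 0$ in $\wt{G^G_i(X)}$ and $x = 0$.

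For the remaining isomorphisms in $(i)$, I will factor $\tau^G_X$ through $(G^G_i(X))_{\mathfrak{m}_X}$ using \corref{cor:local}. Surjectivity of the induced map $(G^G_i(X))_{\mathfrak{m}_X} \to CH^*_G(X,i)$ is again \thmref{thm:finiteI}; for injectivity, if $x$ dies in the localization then $\beta x = 0$ for some $\beta \notin \mathfrak{m}_X$, and \thmref{thm:RRoch}$(ii)$ together with the invertibility of $\ov{ch}^G_X(\beta)$ from \propref{prop:unit} forces $\tau^G_X(x) = 0$. This yields $(G^G_i(X))_{\mathfrak{m}_X} \cong CH^*_G(X,i)$. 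Combining with $\widehat{G^G_i(X)} = G^G_i(X)/I_G G^G_i(X)$ and the fact that $I_G$ acts as zero on $CH^*_G(X,i)$, I conclude $\widehat{G^G_i(X)}_{\mathfrak{m}_X} = (G^G_i(X))_{\mathfrak{m}_X}$, completing all four identifications in $(i)$.

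Finally, $(iii)$ is immediate from $(ii)$ via Thomason's Poincar{\'e} duality $K^G_i(X) \cong G^G_i(X)$ for $X$ smooth, together with the standard relation $\tau^G_X = ch^G_X \cdot Td^G_X(T_X)$ where the Todd class is invertible in $\ov{CH^*_G(X)}$. The delicate point throughout will be the identity $\alpha_X = \pi_X^*(\rho_{\mathrm{reg}})$: the action and projection maps $G \times X \to X$ are $G$-equivariant with respect to \emph{different} $G$-actions on the source, so one must first reconcile them via the diagonal action and the equivariant shear before flat base change applies.
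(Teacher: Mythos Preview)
Your overall strategy matches the paper's: exploit the idempotence $I_G^2 = I_G$ coming from the semisimplicity of $R(G)_{\Q}$, use \thmref{thm:finiteI} for surjectivity and the kernel description, and pass between $\wt{G^G_i(X)}$, $\widehat{G^G_i(X)}$, and $G^G_i(X)_{\mathfrak m_X}$. Your identification $\alpha_X = \pi_X^*(\rho_{\mathrm{reg}})$ via the equivariant shear is correct and is a pleasant sharpening of the paper's argument; it lets you prove (ii) directly on $\wt{G^G_i(X)}$ (since $\alpha_X$ then acts as $|G|$ there), whereas the paper first establishes $G^G_i(X)_{\mathfrak m_X}\cong CH^*_G(X,i)$ and only then deduces (ii).

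There is, however, a genuine gap in your third paragraph. What you write under ``for injectivity'' of $(G^G_i(X))_{\mathfrak m_X}\to CH^*_G(X,i)$ is the implication
\[
x=0 \text{ in the localization} \;\Longrightarrow\; \tau^G_X(x)=0,
\]
which is well-definedness of the factorization --- and that is already \corref{cor:local}. Injectivity requires the opposite implication. The fix is immediate with the tools you already have: by \thmref{thm:finiteI}, $\tau^G_X(x)=0$ iff $\alpha_X\cdot x=0$; since $\alpha_X=\pi_X^*(\rho_{\mathrm{reg}})$ has rank $|G|\neq 0$, it lies outside $\mathfrak m_X$ and is therefore a unit in $K^G_0(X)_{\mathfrak m_X}$, so $\alpha_X\cdot x=0$ forces $x=0$ in the localization. (This is precisely the paper's argument for this step.) With this correction, the rest of your deductions for the remaining arrows in (i) and for (iii) go through as written.
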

\begin{proof} First we note that ${\alpha}_X$ is a unit in 
${K^G_0(X)}_{{\mathfrak m}_X}$ and so it
acts as a unit on ${G^G_i(X)}_{{\mathfrak m}_X}$. 
It then follows from Corollary~\ref{cor:local} and Theorem~\ref{thm:finiteI}
that ${\tau}^G_X$ induces an isomorphism
\begin{equation}\label{eqn:finitelocal}
{G^G_i(X)}_{{\mathfrak m}_X} \xrightarrow{\cong}
CH^*_G\left(X,i\right).
\end{equation}
To prove the isomorphism of the second and the third
arrows in $(i)$, it suffices to show that the ${{\mathfrak m}_X}$-adic
filtration on $G^G_i(X)$ is nilpotent.
But we have seen in the proof of Proposition~\ref{prop:unit} that
${\ov{ch}}^G_X\left({{\mathfrak m}^n_X}\right) \subset
{\underset{j \ge n}{\prod}}A^j_G(X)$ for every $n \ge 1$.
We conclude from Corollary~\ref{cor:finite2} that 
${{\mathfrak m}^n_X} CH^*_G\left(X, i \right) = 0$ for $n \gg 0$ and 
hence from ~\ref{eqn:finitelocal}, we get
${{\mathfrak m}^n_X}{G^G_i(X)}_{{\mathfrak m}_X} = 0$ for
$n \gg 0$.  To prove the isomorphism for the first arrow in $(i)$,
we note from \cite[Theorem~1]{Vistoli1} that 
$R(G) = I_G \times \left({R(G)}/{I_G}\right)$, where $I_G$ is a finite 
product of fields. This implies that $I^n_G = I_G$ for all $n$ and hence
$\wt{G^G_i(X)} \cong {{G^G_i(X)}}/{I_G} \cong
\widehat{G^G_i(X)}$. The part $(ii)$ now follows directly from
the part $(i)$ and ~\ref{eqn:finitelocal}.

If $X$ is smooth, then Corollary~\ref{cor:finite2} implies that
there is a good pair $(V,U)$ for the $G$-action such that
$CH^*_G\left(X, \cdot\right) = CH^*\left(X_G, \cdot\right)$. Now we see
from the the construction of the non-equivariant  Riemann-Roch
map in the proof of Lemma~\ref{lem:etale}, and its equivariant
construction in ~\ref{eqn:RR0} that
\begin{equation}\label{eqn:RRC}
{\tau}^G_X = \frac{Td(X_G)}{Td(E_V)}  ch^G_X.
\end{equation}
Now the isomorphism of ${\wt{ch}}^G_X$ follows from $(ii)$ and the fact that
the Todd classes are invertible elements in $CH^*\left(X_G, 0\right)$.
\end{proof}

\section{Riemann-Roch Isomorphism For Action With Finite Stabilizers}
As was mentioned in the beginning, our approach to the proof of 
Theorem~\ref{thm:main*} is to eventually reduce the problem to the
case when the underlying group acts either with finite stabilizers or
with a constant dimension of stabilizers. In the case of action with finite 
stabilizers, we shall in fact prove the following stronger version of 
Theorem~\ref{thm:main*}, where we do not assume that the given variety is 
smooth. The equivariant $K$-theory of smooth varieties for actions with 
finite stabilizers was studied before in \cite{Toen} and \cite{VV0}. The main 
result of \cite{VV0} is a decomposition of the equivariant $K$-theory from 
which one can deduce the Riemann-Roch isomorphism. Our approach in this case
is focussed more on directly constructing a Riemann-Roch isomorphism
between the equivariant $K$-theory and higher Chow groups, without relying
on the results of To{\"e}n and Vezzosi-Vistoli. Moreover, we prove this in a 
form which will be most suitable for us in the proof of our main result of this
paper.
\begin{thm}\label{thm:singular}
Let $G$ be a linear algebraic group acting on a (possibly singular) variety 
$X$. If $G$ acts with finite stabilizers, then the Riemann-Roch map of 
~\ref{eqn:RRoch1} gives rise to a commutative diagram
\begin{equation}\label{eqn:singular*}
\xymatrix{
 {G^G_i(X){\otimes}_{R(G)} {\widehat{R(G)}}}
\ar[r]^{{\wt{\tau}}^G_X} \ar[d]_{u^G_X} &
{CH^*_G(X,i){\otimes}_{S(G)} {\widehat{S(G)}}}
\ar[d]^{{\ov{u}}^G_X} \\
{\widehat{G^G_i(X)}} \ar[r]_{{\widehat{\tau}}^G_X} &
{\widehat{CH^*_G(X,i)}},}
\end{equation}
where all the maps are isomorphisms.
If $G$ is a diagonalizable group, then the horizontal maps
are isomorphisms and the vertical maps injective,
even when it acts on $X$ with a fixed dimension of stabilizers. 
\end{thm}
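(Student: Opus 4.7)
The strategy is to treat the two halves by separate reductions. The finite-stabilizer case will be reduced to the finite-group case already established in Corollary~\ref{cor:finiteII} by means of Thomason's generic slice theorem. The fixed-dimension-stabilizer case for a diagonalizable torus $T$ will be reduced to the finite-stabilizer case by splitting off the connected subtorus that acts trivially.

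For the first half, I would first use Lemma~\ref{lem:lifting} to embed $X$ equivariantly into a smooth $M \in {\sV}^S_G$; by the covariance of ${\tau}^G$ under proper maps (Theorem~\ref{thm:RRoch}) and the localization sequence for the open complement $M \setminus X$, one may reduce to the case where $X$ is smooth. The action being with finite stabilizers, Thomason's generic slice theorem \cite[Proposition~4.10]{Thomason1} produces a $G$-invariant stratification of $X$ whose strata are étale-locally of the form $G \stackrel{H}{\times} Y$ for a finite subgroup $H$ of $G$ and a quasi-projective $H$-variety $Y$. On each such slice, Corollary~\ref{cor:Morita} (for the higher Chow groups) and its $K$-theoretic analogue identify the $G$-equivariant groups of $G \stackrel{H}{\times} Y$ with the $H$-equivariant groups of $Y$. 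Since $H$ is finite, Corollary~\ref{cor:finiteII} guarantees that the weak and $I_H$-adic completions of $G^H_i(Y)$ coincide and that ${\tau}^H_Y$ induces an isomorphism onto $CH^*_H(Y,i)$, so locally all four corners of diagram~\ref{eqn:singular*} are canonically isomorphic. The global conclusion on $X$ then follows by a Mayer--Vietoris / Čech patching argument, using the functoriality of ${\tau}^G$ and Proposition~\ref{prop:completion1} to control the behaviour of the completions under restriction.

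For the second half, let $T$ be diagonalizable acting on $X$ with constant stabilizer dimension, and let $H \subset T$ be the connected component of the common stabilizer, which by Proposition~\ref{prop:strata}$(i)$ and Remark~\ref{remk:stratasingular} exists after stratifying $X$ into finitely many invariant pieces. Choose a splitting $T = H \times T'$. Theorem~\ref{thm:trivial} yields $CH^*_T(X, i) \cong CH^*_{T'}(X, i) \otimes_{\Q} S(H)$, and Thomason's \cite[Lemma~5.6]{Thomason3} yields the parallel decomposition $G^T_i(X) \cong G^{T'}_i(X) \otimes_{\Q} R(H)$. Under these identifications $R(T) = R(T') \otimes R(H)$ and $S(T) = S(T') \otimes S(H)$, and ${\tau}^T_X$ factors as the tensor product of ${\tau}^{T'}_X$ with the absolute Riemann--Roch map $R(H) \to \wh{S(H)}$ of Theorem~\ref{thm:ED}. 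Since $T'$ now acts with finite stabilizers, the first half applies to $T'$, and tensoring with the isomorphism $\wh{R(H)} \xrightarrow{\cong} \wh{S(H)}$ produces the desired horizontal isomorphisms on weak completions. The vertical map is only injective because the weak completion yields $\wt{G^{T'}_i(X)} \otimes \wh{R(H)}$ while the $I_T$-adic completion yields the full inverse limit; by Proposition~\ref{prop:completion1}$(iv)$--$(v)$ this passage is always injective but never surjective when the relevant module is not finitely generated in bounded degree.

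The principal obstacle is the étale descent step in the first half. Although Thomason's slice theorem supplies the local model, one must verify that the $I_G$-adic completion of $G^G_i(X)$ and the $J_G$-adic completion of $CH^*_G(X, i)$ commute with the Mayer--Vietoris spectral sequence for a cover by slices; this is delicate because completion is not exact in general, and relies on the fact that on each slice the two completions already coincide (essentially because they become finite-dimensional modulo a fixed power of the augmentation ideal). A secondary subtlety is handling non-smooth $X$: the compatibility of ${\tau}^G$ with the localization sequence attached to the ambient smooth embedding must be combined with Corollary~\ref{cor:local}, which ensures that ${\tau}^G$ factors through the ${\mathfrak m}_X$-localization and so reflects precisely the information captured by the slice description.
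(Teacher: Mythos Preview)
Your second half (diagonalizable group with constant stabilizer dimension) is essentially the paper's argument in Proposition~\ref{prop:fixedS}: split $T = H \times T'$ with $H$ the trivially-acting subtorus, decompose via Theorem~\ref{thm:trivial} and \cite[Lemma~5.6]{Thomason3}, and tensor the finite-stabilizer Riemann--Roch for $T'$ with the Edidin--Graham isomorphism $\widehat{R(H)} \cong \widehat{S(H)}$ at a point. That part is fine.

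The first half, however, has a genuine gap. Thomason's generic slice theorem (\cite[Proposition~4.10]{Thomason5}) is a statement about \emph{diagonalizable} group actions only; it does not furnish, for an arbitrary linear algebraic group $G$ acting with finite stabilizers, a stratification whose pieces are \'etale-locally of the form $G \stackrel{H}{\times} Y$ with $H \subset G$ finite. So your proposed reduction of the general-$G$ case directly to Corollary~\ref{cor:finiteII} does not get off the ground. The paper instead handles the general case in two further reduction steps (Propositions~\ref{prop:main-GLn} and~\ref{prop:main-general}): first pass from $G$ to $GL_n$ via Morita equivalence $G^G_i(X) \cong G^{GL_n}_i(GL_n \stackrel{G}{\times} X)$ together with Lemma~\ref{lem:finitering} comparing the completions of $R(G)$ and $R(GL_n)$; then pass from $GL_n$ to its maximal torus using Merkurjev's isomorphism $G^{GL_n}_i(X) \otimes_{R(GL_n)} R(T) \cong G^T_i(X)$ and the Weyl-invariants description of Corollary~\ref{cor:reduction}. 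Only after these reductions is the slice theorem invoked, and even then the passage from the generic open slice to all of $X$ is by Noetherian induction on the localization exact sequences (which remain exact after tensoring with the flat modules $\widehat{R(G)}$, $\widehat{S(G)}$), not by a {\v C}ech or Mayer--Vietoris patching --- so the completion-versus-descent obstacle you flag never actually arises.

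A smaller point: your proposed reduction to smooth $X$ by embedding $X \hookrightarrow M$ and using localization also does not work as stated, since the open complement $M \setminus X$ need not inherit the finite-stabilizer hypothesis, and the inductive step breaks down. The paper's argument (Theorem~\ref{thm:main-torus}) works directly for singular $X$ and does not require this reduction.
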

In this section, we study the Chern character and the
Riemann-Roch maps for the torus action with finite stabilizers, and  
prove Theorem~\ref{thm:singular} in this special case. 
We begin with the following general result. 
\begin{prop}\label{prop:FST}
Let $G$ be a linear algebraic group acting on a variety $X$ with 
finite stabilizers. Then for any $i \ge 0$, one has
$CH^j_G\left(X,i\right) = 0$ for $j \gg 0$. In particular,
the natural maps
\[ 
CH^*_G\left(X, i\right) \to 
\wt{CH^*_G\left(X, i\right)} 
\to \widehat{CH^*_G\left(X, i\right)}
\to \ov{CH^*_G\left(X, i\right)}
\]
are all isomorphisms.
\end{prop}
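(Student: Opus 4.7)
My plan is to reduce the whole statement to the first assertion, namely that $CH^j_G(X,i) = 0$ for $j \gg 0$. Granted such a vanishing, say $CH^j_G(X,i) = 0$ for $j > N$, the graded module $CH^*_G(X,i) = \bigoplus_{0 \le j \le N} CH^j_G(X,i)$ is a finite direct sum; since the augmentation ideal $J_G$ lives in strictly positive degrees, it acts nilpotently, yielding $J_G^{N+1} \cdot CH^*_G(X,i) = 0$. The graded completion therefore equals $CH^*_G(X,i)$, the $J_G$-adic completion equals $CH^*_G(X,i)$, and the weak completion $CH^*_G(X,i) \otimes_{S(G)} \widehat{S(G)}$ factors through $S(G)/J_G^{N+1} \otimes_{S(G)} \widehat{S(G)} = S(G)/J_G^{N+1}$ and so also returns $CH^*_G(X,i)$.

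For the vanishing itself, I would proceed by Noetherian induction on $X$. The base case $X = \emptyset$ is trivial. For the inductive step, apply Thomason's generic \'etale slice theorem (\cite[Proposition~4.10]{Thomason1}, already invoked in Remark~\ref{remk:stratasingular}) to an irreducible component: this furnishes a non-empty $G$-invariant open $W \subseteq X$ and a closed subgroup $H \subseteq G$, realizable as the generic $G$-stabilizer on that component, with $W \cong G \stackrel{H}{\times} Z$ for an $H$-variety $Z$. Because every $G$-stabilizer on $X$ is finite by hypothesis, $H$ (being a stabilizer up to conjugation) is itself finite.

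The Morita isomorphism (Corollary~\ref{cor:Morita}) then identifies $CH^j_G(W,i)$ with $CH^j_H(Z,i)$, and Corollary~\ref{cor:finite2*} — applicable since $H$ is finite — further identifies this (rationally) with $CH^j(Z/H,i)$. A codimension-$j$ cycle on $(Z/H) \times \Delta^i$ has support of dimension $\dim Z + i - j$, which is negative whenever $j > \dim Z + i$; the cycle complex vanishes trivially in that range, so $CH^j_G(W,i) = 0$. Since $\dim Z = \dim W - \dim G \le \dim X - \dim G$, this gives $CH^j_G(W,i) = 0$ for all $j > \dim X - \dim G + i$.

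Finally, the complement $X' = X \setminus W$ is a proper closed $G$-invariant subvariety of strictly smaller dimension whose induced $G$-action inherits finite stabilizers, so Noetherian induction supplies a bound $N'$ with $CH^j_G(X', i) = 0$ for $j > N'$. Feeding both vanishings into the equivariant localization long exact sequence of Proposition~\ref{prop:EHCG}(iii) — in dimension grading to accommodate a possibly non-equidimensional $X'$ — then forces $CH^j_G(X, i) = 0$ for $j > \max(N', \dim X - \dim G + i)$, completing the induction. The one technical delicacy is ensuring that $H$ in the slice decomposition is finite, which is guaranteed by choosing $H$ to be a generic stabilizer and invoking the finite-stabilizer hypothesis.
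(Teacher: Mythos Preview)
Your reduction of the completion isomorphisms to the vanishing $CH^j_G(X,i)=0$ for $j\gg 0$ is correct, and your overall inductive scheme (generic open set via a slice, identify its equivariant Chow groups with ordinary Chow groups of a quotient, then Noetherian induction plus localization) is exactly the shape of the paper's argument.

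The gap is in the invocation of Thomason's generic slice theorem for an \emph{arbitrary} linear algebraic group $G$. The version cited (\cite[Proposition~4.10]{Thomason1}, and the one used in Remark~\ref{remk:stratasingular} and in the paper's own proof via \cite[Proposition~4.10]{Thomason5}) is a statement for \emph{diagonalizable} group actions: it produces a non-empty $T$-invariant open $U$ and a diagonalizable subgroup $T_1\subset T$ with $U\cong U/T\stackrel{T_1}{\times}T$. For a non-abelian $G$, stabilizers are only well-defined up to conjugacy and there is no analogous global product decomposition $W\cong G\stackrel{H}{\times}Z$ available in this generality; the sentence ``realizable as the generic $G$-stabilizer'' papers over precisely this difficulty.

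The paper handles this by inserting two reduction steps before the slice argument: first embed $G\hookrightarrow GL_n$ and use the Morita isomorphism (Corollary~\ref{cor:Morita}) to replace $(G,X)$ by $(GL_n,\,GL_n\stackrel{G}{\times}X)$, noting that finite stabilizers are preserved; then use Corollary~\ref{cor:reduction} to pass from $GL_n$ to its split maximal torus $T$ via Weyl-group invariants. Only then does Thomason's slice theorem apply, yielding $CH^*_T(U,i)\cong CH^*_{T_1}(U/T,i)$ with $T_1$ finite diagonalizable (because $T$ acts with finite stabilizers), and one finishes with Theorem~\ref{thm:trivial} and the dimension bound exactly as you do. Once you insert these reductions at the start, your argument goes through essentially verbatim.
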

\begin{proof} We only need to prove the first assertion. The remaining
part then follows exactly in the same way as the proof of 
Corollary~\ref{cor:finite2*}. We embed $G$ as a closed subgroup of $GL_n$
for some $n$. Then $GL_n$ acts naturally on the quotient 
$GL_n \stackrel{G} {\times} X$, and this action is with finite stabilizers
if and only if $G$-action on $X$ is with finite stabilizers
({\sl cf.} \cite[Section~5]{ED1}).
By Corollary~\ref{cor:Morita}, we can assume that $G$ is the general
linear group and hence a connected and split reductive group.
We can now use Corollary~\ref{cor:reduction} to reduce to the case when
$G$ is a split torus $T$. 

Now, by Thomason's generic slice Theorem
({\sl cf.} \cite[Proposition~4.10]{Thomason5}), there exists a non-empty
$T$-invariant open subset $U \subset X$ and a diagonalizable subgroup
$T_1 \subset T$ with quotient $T/{T_1} = T_2$ such that $T$ acts on
$U$ via $T_2$, which in turn acts freely on $U$ such that there is a
$T$-equivariant isomorphism 
\[
U \xrightarrow{\cong} U/T \times T_2 \cong U/T \stackrel{T_1}{\times} T.
\]
Since $T$ acts on $U$ with finite stabilizers, $T_1$ must be a finite
diagonalizable group. Now we apply the Morita isomorphism of 
Corollary~\ref{cor:Morita} to get
\[
CH^*_T\left(U, i \right) \cong 
CH^*_T\left(U/T \stackrel{T_1}{\times} T, i \right) \cong
CH^*_{T_1}\left(U/T, i \right).
\]
Since $T_1$ acts trivially on $U/T$, we now apply Theorem~\ref{thm:trivial}
to get
\begin{equation}\label{eqn:MoritaC}
CH^*_T\left(U, i \right) \cong CH^*\left(U/T, i\right) {\otimes}_{\Q}
S(T_1).
\end{equation}
Since $CH^j\left(U/T, i\right) = 0$ for $j \gg 0$ and since $T_1$ is
a finite group, we must have $CH^j_T\left(U, i \right) = 0$
for $j \gg 0$. Now we apply the Noetherian induction and the
localization sequence ({\sl cf.} Proposition~\ref{prop:EHCG})
to conclude that $CH^j_T\left(X, i \right) = 0$ for $j \gg 0$.
\end{proof}  
\begin{lem}\label{lem:FST1} 
Let $G$ and $X$ be as in Proposition~\ref{prop:FST}. Then the natural map
\[
{G^G_i(X)}_{I_G} \to \widehat{G^G_i(X)}
\]
is an isomorphism.
\end{lem}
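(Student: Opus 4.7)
The strategy is Noetherian induction on $X$, with the base case provided by the finite-group result of Section~8.

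First I would reduce to the case $G = T$, a split torus. Using the Morita isomorphism (Corollary~\ref{cor:Morita}) to embed $G \hookrightarrow GL_n$ and pass to $GL_n \stackrel{G}{\times} X$, then invoking the $K$-theoretic analogue of the restriction isomorphism from Corollary~\ref{cor:reduction} (available since the derived group of $GL_n$ is the simply-connected $SL_n$; this essentially mirrors the Chow-theoretic reduction already done, and in the paper will be formalized in Section~11), one reduces to proving $(G^T_i(Y))_{I_T} \cong \widehat{G^T_i(Y)}$ for $T$ a split torus acting on a variety $Y$ with finite stabilizers. The augmentation ideals correspond under these reductions.

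For $T$ a split torus acting with finite stabilizers, apply Thomason's generic slice theorem (as in the proof of Proposition~\ref{prop:FST}) to find a non-empty $T$-invariant open $U \subset Y$ isomorphic to $(U/T) \stackrel{T_1}{\times} T$ for some finite diagonalizable subgroup $T_1 \subset T$. The Morita isomorphism for $G$-theory gives $G^T_i(U) \cong G^{T_1}_i(U/T)$, via which the $R(T)$-action factors through the restriction $R(T) \to R(T_1)$. This restriction is surjective (characters of a diagonalizable subgroup extend from characters of $T$), it carries $I_T$ onto $I_{T_1}$, and it carries $R(T)\setminus I_T$ onto $R(T_1)\setminus I_{T_1}$; consequently the $I_T$- and $I_{T_1}$-adic localizations (and completions) of any $R(T_1)$-module agree as $R(T)$-modules. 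Since $T_1$ is finite, Corollary~\ref{cor:finiteII}(i) gives $(G^{T_1}_i(U/T))_{I_{T_1}} \cong \widehat{G^{T_1}_i(U/T)}$, which transfers to the desired isomorphism for $U$.

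For the inductive step, let $Z = Y\setminus U$ with its reduced $T$-invariant structure; $T$ acts on $Z$ with finite stabilizers, and $\dim Z < \dim Y$, so by Noetherian induction the lemma holds for $Z$. The $R(T)$-linear long exact localization sequence
\[
\cdots \to G^T_i(Z) \to G^T_i(Y) \to G^T_i(U) \to G^T_{i-1}(Z) \to \cdots
\]
remains exact after localization at $I_T$ (since $R(T)_{\Q}$ is Noetherian). Comparing this to the $I_T$-adically completed sequence via the natural maps ${(-)}_{I_T} \to \widehat{(-)}$ and applying the five lemma yields the result for $Y$.

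\emph{Main obstacle.} The technical point is exactness of $I_T$-adic completion along the localization sequence, since $G^T_i(Y)$ need not be finitely generated over $R(T)_{\Q}$. The resolution uses the fact that, by the previous steps, on the endpoint terms $G^T_i(U)$ and $G^T_i(Z)$ the localization at $I_T$ and the completion at $I_T$ already agree, and yield modules on which $I_T$ acts idempotent-like (reflecting the decomposition of $R(T_1)_{\Q}$ into a product of fields for finite $T_1$, as in the proof of Corollary~\ref{cor:finiteII}). A standard Mittag--Leffler argument using this essentially discrete $I_T$-adic topology on the endpoint terms propagates exactness of completion to the middle term, closing the induction.
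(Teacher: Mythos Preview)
Your approach is essentially correct and closely parallels the paper's, including the reduction to a torus via Morita and Merkurjev, the use of Thomason's generic slice theorem, and Noetherian induction along the localization sequence. The one genuine difference is in how the inductive step is organized, and this is worth noting.

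The paper does not attempt to compare localization and completion directly via the five lemma. Instead it reformulates the goal at the outset: to show that the natural map ${G^G_i(X)}_{I_G} \to \widehat{G^G_i(X)}$ is an isomorphism, it suffices to show that $I^n_G\,{G^G_i(X)}_{I_G} = 0$ for $n \gg 0$. This nilpotence statement is what is then proved by induction. On the generic open $U$ one gets (after the reductions) $G^T_i(U) \cong G_i(U/T) \otimes_{\Q} R(T_1)$ with $T_1$ finite, and Vistoli's structure theorem for $R(T_1)_{\Q}$ gives the nilpotence on $U$ immediately. The point is that nilpotence of the $I_T$-action propagates through the \emph{localized} long exact sequence --- which is exact because localization is exact --- with no appeal to completion at all: if $I_T^n$ kills the localized $U$- and $Z$-terms, then $I_T^{2n}$ kills the localized $X$-term.

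Your ``main obstacle'' paragraph correctly identifies the issue with running a five-lemma argument on the completed sequence, and your proposed Mittag--Leffler fix, once unpacked, is really the observation that the $I_T$-adic topology on the endpoint terms is eventually discrete --- i.e.\ exactly the nilpotence the paper proves. So the two arguments converge, but the paper's reformulation makes the inductive step a one-liner and sidesteps any discussion of exactness of completion on non-finitely-generated modules.
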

\begin{proof} This is a straightforward generalization of 
\cite[Proposition~5.1]{ED1} to higher $K$-theory and we only 
give the main steps.  We only need to show that
\[
I^n_G {G^G_i(X)}_{I_G} = 0 \ \ {\rm for} \ \ n \gg 0.
\]
By embedding $G$ into $GL_n$ and using the Morita equivalence
({\sl cf.} \cite[Theorem~1.10]{Thomason1})
\begin{equation}\label{eqn:MoritaK}
G^{GL_n}_i\left(GL_n \stackrel{G}{\times} X \right) \xrightarrow{\cong}
G^G_i(X),
\end{equation}
we can reduce to the $G = GL_n$ case. In this case, we use the 
Merkurjev's isomorphism ({\sl cf.} \cite[Proposition~8.1]{Merkurjev})
\[
G^G_i(X) {\otimes}_{R(G)} R(T) \xrightarrow{\cong} G^T_i(X)
\]
to reduce to the case of $G = T$ a torus.

Now, we can use Thomason's generic slice Theorem
as above  and then \cite[Lemma~5.6]{Thomason3} to get a non-empty
$T$-invariant open subset $U \subset X$ and a finite subgroup
$T_1 \subset T$ such that there is a $T$-equivariant isomorphism 
$U \xrightarrow{\cong} U/T \stackrel{T_1}{\times} T$ and
\[
G^T_i(U) \xrightarrow{\cong} G_i(U/T) {\otimes}_{\Q} R(T_1).
\]
Since $T_1$ is finite, we have $I_{T_1} R(T_1) = 0$ by
\cite[Theorem~1]{Vistoli1}, and hence
$I_T G^T_i(U) = 0$. Now the corresponding result for $G^T_i(X)$ follows
from the Noetherian induction and the localization
sequence in the equivariant $K$-theory 
({\sl cf.} \cite[Theorem~1.5]{Thomason1}).
\end{proof}
\begin{thm}\label{thm:main-torus}
Let $G$ be a diagonalizable group acting on a variety $X$ with finite
stabilizers. Then 
${\tau}^G_X$ induces the following commutative diagram where
all maps are isomorphisms.
\begin{equation}\label{eqn:FST*}
\xymatrix{
{\wt{G^G_i(X)}} \ar[d]_{u^G_X} \ar[r]^{{\wt{\tau}}^G_X} &
{\wt{CH^*_G\left(X, i \right)}} \ar[d]^{{\ov{u}}^G_X} \\
{\widehat{G^G_i(X)}} \ar[r]_{{\widehat{\tau}}^G_X} &
{\widehat{CH^*_G\left(X, i \right)}}}
\end{equation}
If $X$ is smooth, then the Chern character map of 
Proposition~\ref{prop:Chern} induces an isomorphism
\[
{\wt{K^G_i(X)}} \xrightarrow{{\wt{ch}}^G_X} {\wt{CH^*_G\left(X, i \right)}}.
\]
\end{thm}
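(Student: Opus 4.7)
The strategy is to identify all four corners of the square \eqref{eqn:FST*} via \propref{prop:FST} and \lemref{lem:FST1}, and then prove the top horizontal map is an isomorphism by Noetherian induction on $X$, using Thomason's generic slice theorem and the finite group results of the previous section to reduce the open piece to the non-equivariant Riemann--Roch of Bloch.

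First, I claim the vertical maps in \eqref{eqn:FST*} are already isomorphisms. By \propref{prop:FST}, $J_G^n\cdot CH^*_G(X,i)=0$ for $n\gg 0$, so $CH^*_G(X,i)\otimes_{S(G)}\widehat{S(G)} = CH^*_G(X,i) = \widehat{CH^*_G(X,i)}$ and $\ov{u}^G_X$ is an isomorphism. By \lemref{lem:FST1} and its proof, $G^G_i(X)_{I_G}\cong\widehat{G^G_i(X)}$ with $I_G^n\cdot G^G_i(X)_{I_G}=0$ for $n\gg 0$. Since $R(G)\setminus I_G$ maps into the units of $\widehat{R(G)}$, the map $R(G)\to\widehat{R(G)}$ factors through $R(G)_{I_G}$; the nilpotency of $I_G$ on the localization then yields $\wt{G^G_i(X)} = G^G_i(X)_{I_G} = \widehat{G^G_i(X)}$. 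Thus it suffices to prove that the top horizontal map $\wt\tau^T_X$ is an isomorphism.

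I proceed by Noetherian induction on $X$. Thomason's generic slice theorem \cite[Proposition~4.10]{Thomason5} produces a non-empty $T$-invariant open $U\subseteq X$ and a finite diagonalizable subgroup $T_1\subseteq T$ such that $U\cong (U/T)\times^{T_1}T$ as $T$-varieties, with $T_1$ acting trivially on $U/T$. On $U$, Morita (\corref{cor:Morita} for equivariant higher Chow and \cite[Theorem~1.10]{Thomason1} for $G$-theory), combined with \cite[Lemma~5.6]{Thomason3} and \corref{cor:finite2*}, gives
\[
G^T_i(U)\cong G_i(U/T)\otimes_{\Q}R(T_1), \qquad CH^*_T(U,i)\cong CH^*(U/T,i).
\]
Applied to $\Spec k$, \corref{cor:finite2*} also yields $S^{>0}(T_1)=0$, so $\ov{ch}^{T_1}_{U/T}([\rho])=\mathrm{rank}(\rho)$ for every $T_1$-representation $\rho$; \thmref{thm:RRoch}(ii) then identifies $\tau^T_U$ with the map $y\otimes[\rho]\mapsto\mathrm{rank}(\rho)\,\tau_{U/T}(y)$. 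By Vistoli \cite[Theorem~1]{Vistoli1}, $R(T_1)=\Q\oplus I_{T_1}$ with $I_{T_1}^2=I_{T_1}$, and since the image of $I_T$ in $R(T_1)$ is $I_{T_1}$, we obtain $R(T_1)\otimes_{R(T)}\widehat{R(T)}=\Q$. Hence $\wt\tau^T_U$ reduces to the non-equivariant Riemann--Roch $\tau_{U/T}\colon G_i(U/T)\to CH^*(U/T,i)$, which is an isomorphism by \cite[Theorem~9.1]{Bloch}.

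For the inductive step, set $Y=X\setminus U$. The localization sequences in $G$-theory \cite[Theorem~1.5]{Thomason1} and equivariant higher Chow (\propref{prop:EHCG}$(iii)$) are compatible with $\tau^T$ by \thmref{thm:RRoch}$(i),(iv)$, and remain exact after tensoring with $\widehat{R(T)}$ (resp.\ $\widehat{S(T)}$) by Noetherian flatness. The 5-lemma applied to the inductive hypothesis on $Y$ together with the isomorphism on $U$ yields the result on $X$. For the Chern character assertion in the smooth case, \cite[Theorem~1.8]{Thomason1} identifies $K^T_i(X)$ with $G^T_i(X)$, and \thmref{thm:RRoch}$(iv)$ gives the relation $\wt\tau^T_X = Td^T_X(T_X)\cdot\wt{ch}^T_X$ with the Todd class a unit in the completed ring, so $\wt{ch}^T_X$ is also an isomorphism. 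The main obstacle is the identification of $\tau^T_U$ under the Morita-type decompositions as the tensor product $\tau_{U/T}\otimes\mathrm{rk}$; this requires tracking the construction of the equivariant Riemann--Roch through two successive equivalences and crucially uses the vanishing of positive-degree equivariant Chow of $\Spec k$ for finite diagonalizable groups.
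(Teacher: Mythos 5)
Your proposal is correct and follows essentially the same route as the paper: identify the vertical maps via Proposition~\ref{prop:FST} and Lemma~\ref{lem:FST1}, reduce the open stratum from Thomason's generic slice theorem through Morita and the finite-group/trivial-action decompositions to Bloch's non-equivariant Riemann--Roch (your direct computation $R(T_1)\otimes_{R(T)}\widehat{R(T)}=\Q$ via Vistoli replaces the paper's appeal to $\widehat{R(T_1)}$ and Theorem~\ref{thm:ED}, but is equivalent), and conclude by Noetherian induction with the localization sequences and the 5-lemma. The only quibble is that the relation $\wt{\tau}^T_X = Td^T_X(T_X)\cdot\wt{ch}^T_X$ in the smooth case comes from the construction in ~\ref{eqn:RR0} (cf.\ ~\ref{eqn:RRC}) rather than from Theorem~\ref{thm:RRoch}$(iv)$, which concerns functoriality under pullbacks.
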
 
\begin{proof} We see from Proposition~\ref{prop:FST} that the map
${\widehat{G^G_i(X)}} \xrightarrow{{{\widehat{\tau}}^G_X}} 
\ov{CH^*_G\left(X, i \right)}$ actually lifts to a map
${\widehat{G^G_i(X)}} \xrightarrow{{{\widehat{\tau}}^G_X}} 
{\widehat{CH^*_G\left(X, i \right)}}$.
The proposition also shows that the map ${{\ov{u}}^G_X}$ is an isomorphism.
This automatically gives a natural
map ${{\wt{\tau}}^G_X}$ as in the above diagram, which is functorial
in $X$ and which makes the diagram commute.

As in the proof of Lemma~\ref{lem:FST1}, we can choose a non-empty
$G$-invariant open subset $U \subset X$ and a finite subgroup
$T_1 \subset G$ such that
$G^G_i(U) \xrightarrow{\cong} G_i(U/G) {\otimes}_{\Q} R(T_1)$,
which in particular gives
\[
G^G_i(U) {\otimes}_{R(G)} {\widehat{R(G)}} \xrightarrow{\cong}
G_i(U/G) {\otimes}_{\Q} {\left(R(T_1) {\otimes}_{R(G)} 
{\widehat{R(G)}}\right)}.
\]
Since the natural map $R(G) \to R(T_1)$ is finite, we have
\[
R(T_1) {\otimes}_{R(G)} {\widehat{R(G)}} \xrightarrow{\cong}
{\widehat{R(T_1)}}_G \xrightarrow{\cong} {\widehat{R(T_1)}},
\]
where the last equality follows from \cite[Corollary~6.1]{ED1}.
Thus we get an isomorphism 
\begin{equation}\label{eqn:FST-torus1}
G^G_i(U) {\otimes}_{R(G)} {\widehat{R(G)}} \xrightarrow{\cong}
G_i(U/G) {\otimes}_{\Q} {\widehat{R(T_1)}}.
\end{equation}
Similarly, we use ~\ref{eqn:MoritaC} and \cite[Theorem~6.1 (b)]{ED1}
to get an isomorphism
\begin{equation}\label{eqn:FST-torus2}
CH^*_G\left(U,i\right) {\otimes}_{S(G)} {\widehat{S(G)}} \xrightarrow{\cong}
CH^*\left(U/G,i\right) {\otimes}_{\Q} {\widehat{S(T_1)}}
\end{equation}
and a commutative diagram
\begin{equation}\label{eqn:FST-torus3}
\xymatrix{
{G^G_i(U) {\otimes}_{R(G)} {\widehat{R(G)}}} \ar[d]_{{\wt{\tau}}^G_U}
\ar[r]^{\cong} & {G_i(U/G) {\otimes}_{\Q} {\widehat{R(T_1)}}}
\ar[d]^{{\tau}_{U/G} \otimes {\tau}^G_k} \\
{CH^*_G\left(U,i\right) {\otimes}_{S(G)} {\widehat{S(G)}}} \ar[r]_{\cong}
& {CH^*\left(U/G,i\right) {\otimes}_{\Q} {\widehat{S(T_1)}}}.}
\end{equation}
The map ${\tau}_{U/G}$ is an isomorphism by the non-equivariant   Riemann-Roch
({\sl cf.} \cite[Theorem~9.1]{Bloch}) and ${\tau}^G_k$ is an isomorphism
by Theorem~\ref{thm:ED}. In particular, the right vertical map is an
isomorphism and hence so is the left vertical map.

To prove the isomorphism of ${{\wt{\tau}}^G_X}$, we let $Y = X - U$ and
consider the diagram of localization sequences 
\[
\xymatrix@C.7pc{
{\wt{G^G_{i+1}(U)}} \ar[r] \ar[d]^{{\wt{\tau}}^G_U} &
{\wt{G^G_i(Y)}} \ar[r] \ar[d]^{{\wt{\tau}}^G_Y} &
{\wt{G^G_i(X)}} \ar[r] \ar[d]^{{\wt{\tau}}^G_X} &
{\wt{G^G_i(U)}} \ar[r] \ar[d]^{{\wt{\tau}}^G_U} &
{\wt{G^G_{i-1}(Y)}} \ar[d]^{{\wt{\tau}}^G_Y} \\
{\wt{CH^*_G\left(U,i+1 \right)}} \ar[r] &
{\wt{CH^*_G\left(Y,i \right)}} \ar[r] & 
{\wt{CH^*_G\left(X,i \right)}} \ar[r] &
{\wt{CH^*_G\left(U,i \right)}} \ar[r] &
{\wt{CH^*_G\left(Y,i-1 \right)}}} 
\]
which commutes by Theorem~\ref{thm:RRoch}.
The rows are exact since ${\widehat{R(G)}}$ and ${\widehat{S(G)}}$ are
flat over $R(G)$ and $S(G)$ respectively.
The map ${{\wt{\tau}}^G_Y}$ is an isomorphism by the Noetherian induction,
and we have shown above that ${{\wt{\tau}}^G_U}$ is an isomorphism.
We conclude from 5-lemma that ${{\wt{\tau}}^G_X}$ is also an isomorphism.

To show that $u^G_X$ is an isomorphism, the natural
maps ${G^G_i(X)}_{I_G} \to \wt{G^G_i(X)} \to \widehat{G^G_i(X)}$
and Lemma~\ref{lem:FST1} imply that $u^G_X$ is surjective. It is injective
because we have just shown that ${\widehat{\tau}}^G_X \circ u^G_X =
{\ov{u}}^G_X \circ {\wt{\tau}}^G_X$ is an isomorphism. 
Finally, the isomorphism of ${\widehat{\tau}}^G_X$ now follows since all 
other maps in ~\ref{eqn:FST*} are isomorphisms.

If $X$ is smooth, we can use Proposition~\ref{prop:FST} to choose a good
pair $(V,U)$ for the $G$-action such that $CH^*_G\left(X, i \right)
\cong CH^*\left(X_G, i \right)$. The rest of the argument is exactly the same
as the proof of the corresponding result for finite group actions in
Corollary~\ref{cor:finiteII}.
\end{proof}

\section{Proofs Of Theorem~\ref{thm:main*} For Diagonalizable 
Groups} 
In this section, we prove Theorem~\ref{thm:main*} for the action of
diagonalizable groups on smooth varieties. 
We first deal with the case when a diagonalizable group $G$ acts on a 
variety (possibly singular) $X$ such that the stabilizers have a constant 
dimension.
\begin{prop}\label{prop:fixedS}
Let $G$ be a split diagonalizable group and let $X \in {\sV}_G$ be such that
the stabilizers of all points of $X$ have a constant dimension. 
Then Theorem~\ref{thm:main*} and Theorem~\ref{thm:singular} hold.
\end{prop}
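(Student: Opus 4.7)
The strategy is to reduce the constant stabilizer-dimension case to the finite-stabilizer case treated in Theorem~\ref{thm:main-torus}, by factoring out a subtorus that acts trivially.

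First, by Proposition~\ref{prop:strata}(i) together with Remark~\ref{remk:stratasingular}, the hypothesis that every stabilizer has the same dimension $s$ implies that $X = X_s$ is a finite disjoint union of closed subvarieties of the form $X^{T_j}_{\le s}$, where each $T_j \subset G$ is an $s$-dimensional subtorus. Since equivariant $K$-theory, equivariant higher Chow groups, the Chern character, and the Riemann-Roch map all commute with finite disjoint unions, I may replace $X$ by a single component and assume there is one subtorus $H \subset G$ of dimension $s$ acting trivially on $X$. The quotient $Q := G/H$ then acts on $X$ with finite stabilizers, and since $G$ is diagonalizable one may fix a splitting $G \cong H \times Q$.

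Next, the trivial $H$-action yields compatible tensor decompositions on both sides. Theorem~\ref{thm:trivial} provides an isomorphism
\[
CH^*_Q(X,i) \otimes_{\Q} S(H) \xrightarrow{\cong} CH^*_G(X,i),
\]
and the eigenspace decomposition of equivariant sheaves under a trivial torus action (Lemma~5.6 of \cite{Thomason3}, the same tool used to prove Theorem~\ref{thm:trivial}) yields the analogous isomorphism
\[
G^Q_i(X) \otimes_{\Q} R(H) \xrightarrow{\cong} G^G_i(X),
\]
together with its counterpart for $K^G_i(X)$ when $X$ is smooth. These are compatible with the ring decompositions $R(G) = R(Q) \otimes_{\Q} R(H)$ and $S(G) = S(Q) \otimes_{\Q} S(H)$; since $I_G$ and $J_G$ split correspondingly, one obtains $\widehat{R(G)} \cong \widehat{R(Q)} \,\widehat{\otimes}_{\Q}\, \widehat{R(H)}$ and $\widehat{S(G)} \cong \widehat{S(Q)} \,\widehat{\otimes}_{\Q}\, \widehat{S(H)}$.

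Combining these decompositions, I expect the Riemann-Roch map $\tau^G_X$ (and, for smooth $X$, the Chern character $ch^G_X$) to factor, under the isomorphisms above, as the external tensor product of $\tau^Q_X$ with the Edidin-Graham map $\tau^H_k\colon \widehat{R(H)} \to \widehat{S(H)}$. The $Q$-factor becomes an isomorphism after completing with $\widehat{R(Q)}$ (respectively $\widehat{S(Q)}$) by Theorem~\ref{thm:main-torus}, since $Q$ acts with finite stabilizers; the $H$-factor is an isomorphism by Theorem~\ref{thm:ED}. This yields the isomorphism of the horizontal maps in diagrams~\eqref{eqn:main**} and~\eqref{eqn:singular*}, and injectivity of the vertical maps $u^G_X$ and ${\ov{u}}^G_X$ then follows from Corollary~\ref{cor:completion2}, the vertical maps being isomorphisms in the finite-stabilizer subcase by Theorem~\ref{thm:main-torus}.

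The principal obstacle is the compatibility asserted in the last paragraph: namely that $\tau^G_X$ decomposes as the external product $\tau^Q_X \otimes \tau^H_k$ under the isomorphisms above. The verification will use a good pair $(V_H \oplus V_Q, U_H \times U_Q)$ for $G$ built from good pairs for $H$ and $Q$ separately, so that $X_G \cong X_Q \times (U_H/H)$; then the non-equivariant Chern character and Todd class on $X_G$ split as external products by Künneth, matching the tensor product structure. The remaining ingredient is the multiplicativity in Theorem~\ref{thm:RRoch}(ii) and the module compatibilities under $R(G) = R(Q) \otimes R(H)$ and $S(G) = S(Q) \otimes S(H)$. Granting these compatibilities, the proposition reduces to results already proved.
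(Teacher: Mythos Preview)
Your proposal is correct and follows essentially the same route as the paper: reduce via Proposition~\ref{prop:strata} and Remark~\ref{remk:stratasingular} to a single subtorus acting trivially, decompose both $K$-theory and Chow groups as tensor products using Thomason's lemma and Theorem~\ref{thm:trivial}, and then conclude by combining Theorem~\ref{thm:main-torus} on the finite-stabilizer factor with Theorem~\ref{thm:ED} on the trivially-acting torus factor. Your identification of the tensor-product compatibility of $\tau^G_X$ (respectively $ch^G_X$) as the point requiring care, together with your suggested verification via a product good pair, matches exactly what the paper does implicitly when it asserts the commutativity of diagram~\eqref{eqn:fixedS1}.
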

\begin{proof} Using Proposition~\ref{prop:strata} and
Remark~\ref{remk:stratasingular}, we can assume that
there exists a subtorus $T \subset G$ such that $T$ acts trivially
on $X$ and $H = G/T$ acts on $X$ with finite stabilizers. 
We have then
\begin{equation}\label{eqn:fixedS2}
\begin{array}{lll}
{\wt{G^G_i(X)}} & \cong & G^G_i(X) {\otimes}_{R(G)} \widehat{R(G)} \\
& \cong & \left(G^H_i(X) {\otimes}_{\Q} R(T)\right) {\otimes}_{R(G)}
\widehat{R(G)} \\  
& \cong & G^H_i(X) {\otimes}_{R(H)} 
\left(\left(R(H) {\otimes}_{\Q} R(T)\right) {\otimes}_{R(G)}
\left(\widehat{R(H)} {\otimes}_{\Q} \widehat{R(T)}\right)\right) \\
& \cong & G^H_i(X) {\otimes}_{R(H)} \left(R(G) {\otimes}_{R(G)}
\left(\widehat{R(H)} {\otimes}_{\Q} \widehat{R(T)}\right)\right) \\
& \cong & \left(G^H_i(X) {\otimes}_{R(H)} \widehat{R(H)}\right)
{\otimes}_{\Q} \widehat{R(T)} \\
& \cong & {\wt{G^H_i(X)}} {\otimes}_{\Q} \widehat{R(T)},
\end{array}
\end{equation}
where the second isomorphism and the isomorphism $R(G) \cong
R(T) {\otimes}_{\Q} R(H)$ follow from \cite[Lemma~5.6]{Thomason3}.
We can similarly use Theorem~\ref{thm:trivial} to
get
\begin{equation}\label{eqn:fixedS02}
\wt{CH^*_G\left(X, i \right)} \xrightarrow{\cong} 
\wt{CH^*_H\left(X, i \right)} {\otimes}_{\Q} \widehat{S(T)}.
\end{equation} 

First we assume $X \in {\sV}^S_G$ and prove Theorem~\ref{thm:main*}.
Since the construction of the equivariant Chern character 
({\sl cf.} Proposition~\ref{prop:Chern}) at the individual factors
of $\ov{CH^*_G\left(X,i\right)}$ is given by the non-equivariant  
Chern character on a mixed quotient, we have a commutative diagram
\begin{equation}\label{eqn:fixedS1}
\xymatrix{
{{\wt{K^H_i(X)}} {\otimes}_{\Q} \widehat{R(T)}} \ar[r]^{\hspace*{.5cm}\cong} 
\ar[d]_{{\wt{ch}}^H_X \otimes {\wt{ch}}^T_k} &
{{\wt{K^G_i(X)}}} \ar[r]^{u^G_X} \ar@{.>}[d] & 
{{\widehat{K^G_i(X)}}} \ar[d]^{{\widehat{ch}}^G_X} \\
{\wt{CH^*_H\left(X, i \right)} {\otimes}_{\Q} \widehat{S(T)}} 
\ar[r]_{\hspace*{1cm} \cong}
& {\wt{CH^*_H\left(X, i \right)}} \ar@{^{(}->}[r]_{{\ov{u}}^G_X} &
{\ov{CH^*_H\left(X, i \right)}},} 
\end{equation}  
where the map ${{\ov{u}}^G_X}$ is injective by 
Corollary~\ref{cor:completion2}. The left vertical map is
an isomorphism by Theorem~\ref{thm:main-torus} and Theorem~\ref{thm:ED}.
The first arrow in both the rows are isomorphisms by ~\ref{eqn:fixedS2}
and ~\ref{eqn:fixedS02}.
This automatically induces the middle vertical arrow ${\wt{ch}}^G_X$,
which is an isomorphism and makes both the squares commute.
This also implies that $u^G_X$ is injective. 
This proves Theorem~\ref{thm:main*}. The proof of Theorem~\ref{thm:singular}
follows exactly the same way by replacing the Chern character by the
Riemann-Roch map, since Theorem~\ref{thm:main-torus} holds even when 
$X$ is singular.
\end{proof}
\begin{thm}\label{thm:main*-torus}
Let $G$ be diagonalizable group and let $X \in {\sV}^S_G$. Then the 
Chern character map $ch^G_X$ induces an isomorphism
\[
\wt{K^G_i(X)} \xrightarrow{{\wt{ch}}^G_X} \wt{CH^*_G\left(X, i \right)}
\]
such that the diagram
\begin{equation}\label{eqn:singular**}
\xymatrix{
 {K^G_i(X){\otimes}_{R(G)} {\widehat{R(G)}}}
\ar[r]^{{\wt{ch}}^G_X} \ar[d]_{u^G_X} &
{CH^*_G(X,i){\otimes}_{S(G)} {\widehat{S(G)}}}
\ar[d]^{{\ov{u}}^G_X} \\
{\widehat{K^G_i(X)}} \ar[r]_{{\widehat{ch}}^G_X} &
{\ov{CH^*_H\left(X, i \right)}}}
\end{equation}
commutes.
\end{thm}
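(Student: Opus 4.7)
The approach is to compare the decomposition theorem for equivariant higher Chow groups (Theorem~\ref{thm:decomposition*}) with the parallel decomposition of equivariant $K$-theory established by Vezzosi--Vistoli in \cite[Theorem~1]{VV}, and then reduce the Chern character isomorphism stratum-by-stratum to Proposition~\ref{prop:fixedS}. Both decompositions express the invariant as an iterated fiber product running over the strata $X_s$ and the normal-bundle strata $N_{s,s-1}$, glued via specialization maps; as emphasized in the proof of Theorem~\ref{thm:specialization}, the construction of these specializations was deliberately set up so as to work in both theories in parallel.

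First I would import the $K$-theoretic decomposition. Since our construction of the specialization map relies only on localization long exact sequences, Gysin pushforwards along principal Cartier divisors, smooth pullbacks, and Tor-independent base change (Lemma~\ref{lem:torind}), all of which are present both in equivariant $K$-theory and in higher Chow groups, one obtains the corresponding factorization of $K^G_i(X)$ in the same form. After tensoring with $\widehat{R(G)}$ and $\widehat{S(G)}$, which are flat over $R(G)$ and $S(G)$ respectively, the two decompositions persist at the level of the weak completions appearing in the statement. I would then check that $ch^G$ intertwines the two families of specialization maps: this uses compatibility of $ch^G$ with flat pullback (Proposition~\ref{prop:Chern}(i)), with products (Proposition~\ref{prop:Chern}(ii)), and with Gysin pushforwards along regular closed embeddings of smooth varieties, the latter being extracted from Theorem~\ref{thm:RRoch}(iv) together with the relation ~\ref{eqn:RRC} between $ch^G$ and ${\tau}^G$ (the Todd classes involved are invertible on smooth ambient schemes, so the correction factors are harmless for the purpose of checking that a map is an isomorphism).

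With these compatibilities in place, $\wt{ch}^G_X$ becomes a map between two isomorphic iterated fiber products whose individual factors are of the form $\wt{ch}^G_{X_s}$ or $\wt{ch}^G_{N_{s,s-1}}$. Each of the underlying varieties has constant stabilizer dimension by the very construction of the strata, so Proposition~\ref{prop:fixedS} gives that $\wt{ch}^G$ is an isomorphism on each factor. An induction on the number of non-empty strata, applied to the fiber-product description and using the five-lemma at each stage, then yields the isomorphism of $\wt{ch}^G_X$. Commutativity of the diagram ~\ref{eqn:singular**} is built into the construction: by Proposition~\ref{prop:Chern}(iii) the bottom arrow ${\widehat{ch}}^G_X$ is the canonical factorization of $ch^G_X$ through the $I_G$-adic completion, and ${\ov{u}}^G_X$ is injective by Corollary~\ref{cor:completion2}.

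The main obstacle I anticipate is the compatibility of Chern characters with specialization maps. The specialization is engineered from a Gysin pushforward along a principal Cartier divisor followed by a restriction, and Chern characters do not commute naively with Gysin maps. The necessary corrections are encoded by the Riemann--Roch formalism of Section~7, and they must be tracked carefully through the deformation-to-the-normal-cone diagram so that they cancel against the corresponding Todd classes on the other side. Once this verification is settled, all that remains is the routine bookkeeping of assembling the fiber-product isomorphisms and reading off the commutative square.
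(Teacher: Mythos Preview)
Your overall strategy matches the paper's: both arguments reduce to the strata via a fiber-product decomposition and then appeal to Proposition~\ref{prop:fixedS} on each piece with constant stabilizer dimension. The paper organizes this slightly differently, inducting on the largest $s$ with $X_s\neq\emptyset$ and using the single-step fiber product of Proposition~\ref{prop:decomposition1} (namely $CH^*_G(X_{\le s})\cong CH^*_G(X_s)\times_{CH^*_G(N_s^0)}CH^*_G(X_{<s})$ and its $K$-theoretic counterpart \cite[Proposition~4.7]{VV}) rather than the full iterated decomposition of Theorem~\ref{thm:decomposition*}; but these are equivalent inductive schemes.

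Where you go astray is in your ``main obstacle.'' The specialization maps in this paper are \emph{not} built from Gysin pushforwards: by Theorem~\ref{thm:specialization} they are characterized by the identity $\ov{Sp}^{\le t}_{X,s}\circ j_{s,\le t}^* = i_{s,\le t}^*$, where both $i_{s,\le t}^*$ and $j_{s,\le t}^*$ are \emph{pullbacks} (closed and open respectively) and $j_{s,\le t}^*$ is surjective --- on the Chow side by cohomological rigidity (diagram~\eqref{eqn:special}), on the $K$-side by \cite[Propositions~4.3,~4.6]{VV}, and surjectivity persists after tensoring with the flat $\widehat{R(G)}$. Since $ch^G$ is contravariant (Proposition~\ref{prop:Chern}(i)), it commutes with $i^*$ and $j^*$; the surjectivity of $j^*$ then \emph{forces} commutation with the specialization map. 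No Riemann--Roch corrections, no Todd-class bookkeeping, no appeal to pushforward compatibility are needed. This is exactly the short argument the paper gives in the small diagram following~\eqref{eqn:torus**}. So your anticipated obstacle evaporates, and with it the detour through Theorem~\ref{thm:RRoch}(iv) and~\eqref{eqn:RRC} (which, incidentally, is only established in the paper for finite-group quotients). The remaining diagram chase --- exactness of the fiber-product rows, injectivity of $\ov{u}^G_X$ from Corollary~\ref{cor:completion2}, and assembly of $\wt{ch}^G_X$ --- then goes through as you outline.
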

\begin{proof} In the rest of this section, we shall use the terms and the
notations of Sections 5 and 6 freely without explaining them again.
Let $s \ge 0$ be the largest integer such that
$X_s \neq \emptyset$. We prove the theorem by induction on $s$.
If $s = 0$, then $G$ acts on $X$ with finite stabilizers in which case
the result is already proved in Theorem~\ref{thm:main-torus}. So we assume 
that $s \ge 1$. Let $X_s \stackrel{f_s}{\inj} X$ and $X_{< s} 
\stackrel{g_s}{\inj} X$ be the inclusions of closed and open sets as
before. Since $G$ acts on $X_s$ with constant dimension of stabilizers,
the result holds for $X_s$ by Proposition~\ref{prop:fixedS}.
This also holds for $X_{< s}$ and $N^0_s$ by induction on $s$
(since $N^0_s = {(N_s)}_{< s}$).
Thus we have the maps ${{\wt{ch}}^G_{X_{< s}}}$ such that
${\widehat{ch}}^G_{X_{< s}} \circ u^G_{X_{< s}} = {\ov{u}}^G_{X_{< s}} \circ
{{\wt{ch}}^G_{X_{< s}}}$ and similarly for $X_s$ and $N^0_s$. 
We now consider the following diagram.
\begin{equation}\label{eqn:torus**}
\xymatrix@C6pc{
{\wt{K^G_i(X)}} \ar[r]^{(g^*_s, f^*_s)} \ar@/_3pc/[dd] \ar@{.>}[d] & 
{{\wt{K^G_i(X_{< s})}} \oplus {\wt{K^G_i(X_s)}}} 
\ar[r]^{\hspace*{.5cm}{Sp^{\le s-1}_{X,s}} - {{\eta}^*_{s, \le s-1}}}
\ar[d]^{{\wt{ch}}^G_{X_{< s}} \oplus {{\wt{ch}}^G_{X_s}}} &
{\wt{K^G_i(N^0_s)}} \ar[d]^{{\wt{ch}}^G_{N^0_s}} \\
{\wt{CH^*_G\left(X, i \right)}} \ar[r]^{({\ov{g}}^*_s, {\ov{f}}^*_s)}
\ar@{^{(}->}[d]^{{\ov{u}}^G_X} & 
{{\wt{CH^*_G\left(X_{< s}, i \right)}} \oplus
{\wt{CH^*_G\left(X_s, i \right)}}} 
\ar[r]^{\hspace*{1.2cm}
{{\ov{Sp}}^{\le s-1}_{X,s}} - {{\ov{\eta}}^*_{s, \le s-1}}}
\ar@{^{(}->}[d]^{{\ov{u}}^G_{X_{< s}} \oplus {\ov{u}}^G_{X_s}} &
{\wt{CH^*_G\left(N^0_s, i \right)}} \ar@{^{(}->}[d]^{{\ov{u}}^G_{N^0_s}} \\
{\ov{CH^*_G\left(X, i \right)}} \ar[r]^{({\ov{g}}^*_s, {\ov{f}}^*_s)} &
{{\ov{CH^*_G\left(X_{< s}, i \right)}} \oplus
{\ov{CH^*_G\left(X_s, i \right)}}} 
\ar[r]^{\hspace*{1.2cm}
{{\ov{Sp}}^{\le s-1}_{X,s}} - {{\ov{\eta}}^*_{s, \le s-1}}} &
{\ov{CH^*_G\left(N^0_s, i \right)}},} 
\end{equation}
where the curved downward arrow on the left is ${\widehat{ch}}^G_X
\circ u^G_X$. 
The composite square on the left commutes by the contravariance of
the Chern character. The lower right square clearly commutes. We
show that the upper right square commutes. For this, it suffices to show 
separately that 
\begin{equation}\label{eqn:torus**0}
(i) \ {\wt{ch}}^G_{N^0_s} \circ {\eta}^*_{s, \le s-1} 
= {\ov{\eta}}^*_{s, \le s-1} \circ {\wt{ch}}^G_{X_s} \ \  {\rm and}
\ \ (ii) \ {\wt{ch}}^G_{N^0_s} \circ {Sp^{\le s-1}_{X,s}} =
{\ov{Sp}}^{\le s-1}_{X,s} \circ {\wt{ch}}^G_{X_{< s}}.
\end{equation}
But ${\eta}^*_{s, \le s-1}$ is the composite of pull-backs 
\[
\wt{K^G_i(X_s)} \xrightarrow{{\ov{f}}_{s, \infty}^*} 
\wt{K^G_i(N_s)} \to \wt{K^G_i(N^0_s)},
\]
where the last map is the pull-back to an open subset. The
same factorization holds for the higher Chow groups as well. Hence 
$(i)$ follows directly from the contravariance of the Chern character.
To prove $(ii)$, we consider the diagram
\[
\xymatrix@C2pc{
{\wt{K^G_i(M_{s, \le s-1})}} \ar@{->>}[r]^{j^*_{s, \le s-1}} 
\ar[d]_{{\wt{ch}}^G_{M_{s, \le s-1}}} &  
{{\wt{K^G_i(X_{< s})}}} \ar[r]^{Sp^{\le s-1}_{X,s}} 
\ar[d]^{{\wt{ch}}^G_{X_{< s}}} &
{\wt{K^G_i(N^0_s)}} \ar[d]^{{\wt{ch}}^G_{N^0_s}} \\
{{\wt{CH^*_G\left(M_{s, \le s-1}, i \right)}}} 
\ar@{->>}[r]^{{\ov{j}}^*_{s, \le s-1}} &
{\ov{CH^*_G\left(X_{< s}, i \right)}} \ar[r]^{{\ov{Sp}}^{\le s-1}_{X,s}}
& {\ov{CH^*_G\left(N^0_s, i \right)}}.}
\]
The left square is a pull-back diagram and hence commutes.
The composite top horizontal arrow  is just the pull-back $i^*_{s, \le s-1}$ 
and the composite bottom horizontal arrow is ${\ov{i}}^*_{s, \le s-1}$
({\sl cf.} Diagram~\ref{eqn:nospecial}). Hence the 
composite square commutes. The map ${j^*_{s, \le s-1}}$ is surjective
from the $K$-rigidity ({\sl cf.} 
\cite[Proposition~4.3, Proposition~4.6]{VV}). It follows that the
right square also commutes, which proves ~\ref{eqn:torus**0}.
Hence we have shown that all the completed squares in the diagram
~\ref{eqn:torus**} commute.

Now, the top row in the diagram ~\ref{eqn:torus**} is exact with the
first map injective by \cite[Proposition~4.7]{VV} and the fact that
$\widehat{R(T)}$ is flat over $R(T)$. The same conclusion holds for the
middle row by Proposition~\ref{prop:decomposition1}. The bottom row is exact
with the first map injective by Proposition~\ref{prop:decomposition1}
and Proposition~\ref{prop:completion1}. It follows from a diagram chase
that there exists a unique map ${\wt{ch}}^G_X : \wt{K^G_i(X)}
\to \wt{CH^*_G\left(X, i \right)}$ such that the upper left square
commutes. The injectivity of ${({\ov{g}}^*_s, {\ov{f}}^*_s)}$ now
shows that ${\widehat{ch}}^G_X \circ u^G_X = {\ov{u}}^G_X \circ
{{\wt{ch}}^G_X}$. This proves the theorem.
\end{proof}

\section{Reduction Steps For Arbitrary Groups}
In this section, we establish some reduction steps for deducing the
proofs of our main results for an arbitrary group from the case of
diagonalizable groups. We first do this for $G = GL_n$.
\begin{prop}\label{prop:main-GLn}
Let $G$ be a linear algebraic group and let $X \in {\sV}_G$. Suppose that
Theorem~\ref{thm:main*} (resp. Theorem~\ref{thm:singular}) holds when
$G$ is a diagonalizable group. Then Theorem~\ref{thm:main*} 
(resp. Theorem~\ref{thm:singular}) also holds when $G = GL_n$.
\end{prop}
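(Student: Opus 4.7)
The plan is to descend from $G=GL_n$ to its standard split maximal torus $T$ (with Weyl group $W$) by taking $W$-invariants, and then invoke the diagonalizable case for $T$. The three essential inputs are Merkurjev's isomorphism $K^G_i(X)\otimes_{R(G)}R(T)\xrightarrow{\cong}K^T_i(X)$ (valid because the derived subgroup of $GL_n$ is simply connected, as cited in the proof of \lemref{lem:FST1}), its Chow-theoretic analogue \thmref{thm:reductiveI}, and the fact that $R(T)$ (resp.\ $S(T)$) is a finite free module over $R(G)=R(T)^W$ (resp.\ $S(G)=S(T)^W$) carrying a natural $W$-action.

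First I would establish the completion-level identifications. Since $R(T)$ is finite over $R(G)$, the $I_G$-adic and $I_T$-adic topologies on $R(T)$ coincide, giving $\widehat{R(T)}\cong R(T)\otimes_{R(G)}\widehat{R(G)}$; similarly $\widehat{S(T)}\cong S(T)\otimes_{S(G)}\widehat{S(G)}$. Combining with Merkurjev and \thmref{thm:reductiveI}, one obtains natural identifications
\[
\wt{K^T_i(X)}\;\cong\;\wt{K^G_i(X)}\otimes_{\widehat{R(G)}}\widehat{R(T)},\qquad
\wt{CH^*_T(X,i)}\;\cong\;\wt{CH^*_G(X,i)}\otimes_{\widehat{S(G)}}\widehat{S(T)},
\]
with analogous relations for the $I_G$- ($=I_T$-) adic completion $\widehat{K^T_i(X)}$ and $\widehat{CH^*_T(X,i)}$.

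Next I would perform a Galois-style descent. Because we work rationally and $|W|$ is invertible, the finite cover $\widehat{R(G)}\to\widehat{R(T)}$ is faithfully flat with $\widehat{R(T)}^W=\widehat{R(G)}$, so $(M\otimes_{\widehat{R(G)}}\widehat{R(T)})^W=M$ for any $\widehat{R(G)}$-module $M$, and $(-)^W$ is exact; the same holds on the Chow side. Hence $\wt{K^G_i(X)}=(\wt{K^T_i(X)})^W$, $\wt{CH^*_G(X,i)}=(\wt{CH^*_T(X,i)})^W$, and likewise for the hatted completions. The restriction from $G$ to $T$ is functorial, so \propref{prop:Chern}(i) and \thmref{thm:RRoch}(i) yield commutative diagrams of Chern characters and Riemann--Roch maps between the $G$- and $T$-completed setups, and the $W$-action commutes with all maps because inner automorphisms of $G$ act trivially on equivariant $K$-theory and Chow groups. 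Applying the assumed diagonalizable case to $T$, the $T$-level horizontal maps in \eqref{eqn:main**} (resp.\ in \eqref{eqn:singular*}) are $W$-equivariant isomorphisms. Taking $W$-invariants produces the desired $\wt{ch}^G_X$ (resp.\ $\wt{\tau}^G_X$), gives commutativity of \eqref{eqn:main**}, and transfers injectivity of $u^T_X$ to $u^G_X$ via the left-exactness of $(-)^W$.

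The main obstacle is the Galois descent step: one must verify that $\widehat{R(G)}\hookrightarrow\widehat{R(T)}$ (and the $S$-analogue) is indeed a faithfully flat $W$-Galois extension after completion, which rests on $R(T)$ being free of rank $|W|$ over $R(G)$ (a consequence of the simply-connectedness of the derived subgroup of $GL_n$) and on commutation of this finite extension with the $I_G$- and $J_G$-adic completions. Once that is secured, the $W$-equivariance of $ch^T$ and $\tau^T$ is essentially formal from naturality under equivariant pull-back, and the remainder of the proof is a diagram chase; the singular statement \thmref{thm:singular} for $GL_n$ follows by the same argument since neither Merkurjev's isomorphism nor \thmref{thm:reductiveI} requires smoothness, and restriction of a $GL_n$-action with finite (resp.\ constant-dimension) stabilizers to $T$ still has finite (resp.\ constant-dimension) stabilizers.
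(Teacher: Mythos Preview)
Your approach is essentially the same as the paper's: both descend from $GL_n$ to its maximal torus $T$ via the Weyl group, using Merkurjev's isomorphism on the $K$-side and \thmref{thm:reductiveI} on the Chow side, together with the finite free extension $R(G)\subset R(T)$ (and $S(G)\subset S(T)$) to match the completions. The paper packages the descent as a cube diagram plus trace maps and then proves the isomorphism by base-changing along the faithfully flat map $\widehat{R(G)}\to\widehat{R(T)}$, whereas you take $W$-invariants directly; these are equivalent arguments.

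Two small points deserve care. First, your reason for the $W$-equivariance of $\wt{ch}^T_X$ (``inner automorphisms of $G$ act trivially'') is not quite the right one: $\wt{ch}^T_X$ is not given by a formula but is \emph{characterized} by $\ov{u}^T_X\circ\wt{ch}^T_X=\widehat{ch}^T_X\circ u^T_X$ with $\ov{u}^T_X$ injective, so its $W$-equivariance follows from the naturality of $ch^T$, $u^T$ and $\ov{u}^T$ under the $W$-action on the mixed quotients $X_T$. Second, your parenthetical that a $GL_n$-action with constant-dimension stabilizers restricts to a $T$-action with constant-dimension stabilizers is false in general (take stabilizers that are various conjugates of $T$); fortunately this case is irrelevant here, since the constant-dimension clause of \thmref{thm:singular} applies only to diagonalizable $G$, and for $G=GL_n$ only the finite-stabilizer case is at stake.
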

\begin{proof} We first prove the assertion for Theorem~\ref{thm:main*}.
The other case is similar and we shall indicate the specific changes.
So let $T$ be a split maximal torus of $G = GL_n$ and $W$ the Weyl group.
Then by Corollary~\ref{cor:reduction}, $W$ acts on $CH^*_T\left(X , \cdot
\right)$ as graded automorphisms and $CH^j_G\left(X, i \right)
= {\left(CH^j_T\left(X, i \right)\right)}^W$ for every $i, j \ge 0$.
In particular, $W$ acts on $\ov{CH^*_T\left(X, i \right)}$
and $\ov{CH^*_G\left(X, i \right)} = {\left(\ov{CH^*_T\left(X, i \right)}
\right)}^W$. Since $W$ acts through the pull-backs, we see that it acts on 
$\widehat{S(T)}$ as $\widehat{S(G)}$-algebra automorphisms and on 
$\ov{CH^*_T\left(X, i \right)}$ as $\widehat{S(G)}$-linear maps.
We also have the trace maps ({\sl cf.} ~\ref{eqn:trace})
\[
\ov{CH^*_G\left(X, i \right)} \inj  \ov{CH^*_T\left(X, i \right)}
\xrightarrow{tr} \ov{CH^*_G\left(X, i \right)}
\]
such that the composite is identity.
Letting $W$ act on $CH^*_T\left(X, i \right) {\otimes}_{S(G)} 
{\widehat{S(T)}}$ diagonally, we get natural maps
\begin{equation}\label{eqn:trace1}
CH^*_G\left(X, i \right) {\otimes}_{S(G)} {\widehat{S(G)}} \to
CH^*_T\left(X, i \right) {\otimes}_{S(G)} 
{\widehat{S(T)}} \xrightarrow{tr} CH^*_G\left(X, i \right) {\otimes}_{S(G)} 
{\widehat{S(G)}}
\end{equation}
such that the composite is identity. We note here that since $S(T)$ and 
$S(G)$ are polynomial algebras, the $J_T$-adic and the graded filtrations
on $S(T)$ coincide, and similarly for $S(G)$. In particular, the
maps $\widehat{S(T)} \to \ov{CH^*_T\left(k, 0 \right)}$ and
$\widehat{S(G)} \to \ov{CH^*_G\left(k, 0 \right)}$ are isomorphisms. 

Since $W$ preserves the graded filtration of $CH^*_T\left(X, i \right)$,
it preserves the augmentation ideal $J_T$ of $S(T)$. Moreover, as $W$
acts as an $S(G)$-algebra on $S(T)$, we see that $W$ preserves all
powers of $J_G$ and hence the $J_G$-adic filtration of
$CH^*_T\left(X, i \right)$. Since $W$ clearly preserves the $J_G$-adic
filtration on $S(T)$ and since this filtration induces the same topology
on $S(T)$ as the $J_T$-adic filtration ({\sl cf.} \cite[Theorem~6.1]{ED1}), 
we see that the maps
\[
CH^*_T\left(X, i \right) 
\stackrel{}{\underset {S(G)}{\otimes}}{S(T)} \to    
CH^*_T\left(X, i \right) \stackrel{}{\underset {S(G)}{\otimes}}
\frac{S(T)}{J^j_G S(T)} \to
\frac{CH^*_T\left(X, i \right)}{J^j_T CH^*_T\left(X, i \right)} \to
\frac{CH^*_T\left(X, i \right)}{CH^{\ge j}_T\left(X, i \right)}
\]
are all $W$-equivariant. We conclude that the composite map
\begin{equation}\label{eqn:GTT}
\xymatrix{
CH^*_T\left(X, i \right) {\otimes}_{S(G)} 
{\widehat{S(T)}} \ar[rr]^{w^G_X} \ar@{->>}[dr] & & 
{\ov{CH^*_T\left(X, i \right)}} \\
& CH^*_T\left(X, i \right) {\otimes}_{S(T)} {\widehat{S(T)}} 
\ar@{^{(}->}[ur] & }
\end{equation}
is $W$-equivariant, where the slanted downward arrow is the natural 
surjection and the upward arrow is injective by 
Corollary~\ref{cor:completion2}. We now have the following diagram. 
\[
\xymatrix{
{CH^*_G\left(X, i \right) {\otimes}_{S(G)} 
{\widehat{S(G)}}} \ar@{^{(}->}[r] \ar@{^{(}->}[d]_{{\ov{u}}^G_X} &
{CH^*_T\left(X, i \right) {\otimes}_{S(G)} 
{\widehat{S(T)}}} \ar@{->>}[r]^{tr} \ar[d]_{w^G_X} &
{CH^*_G\left(X, i \right) {\otimes}_{S(G)} {\widehat{S(G)}}} 
\ar@{^{(}->}[d]^{{\ov{u}}^G_X} \\
{\ov{CH^*_G\left(X, i \right)}} \ar@{^{(}->}[r] & 
{\ov{CH^*_T\left(X, i \right)}}
\ar@{->>}[r]_{tr} & {\ov{CH^*_G\left(X, i \right)}}}
\]
The left square clearly commutes since $W$ acts trivially on the terms 
in the left column. Since $w^G_X$ is $W$-equivariant and ${{\ov{u}}^G_X}$
is its restriction on ${CH^*_G\left(X, i \right) {\otimes}_{S(G)} 
{\widehat{S(G)}}}$, the right square also commutes. 
Since both the composite horizontal arrows are identity, we get the
natural maps
\begin{equation}\label{eqn:trace2}
\frac{{\ov{CH^*_G\left(X, i \right)}}}
{CH^*_G\left(X, i \right) {\otimes}_{S(G)} {\widehat{S(G)}}}
\inj \frac{{\ov{CH^*_T\left(X, i \right)}}}
{CH^*_T\left(X, i \right) {\otimes}_{S(G)} {\widehat{S(T)}}}
\xrightarrow{tr} 
\frac{{\ov{CH^*_G\left(X, i \right)}}}
{CH^*_G\left(X, i \right) {\otimes}_{S(G)} {\widehat{S(G)}}}
\end{equation}
such that the composite is identity. Moreover, the middle term is
same as $\frac{{\ov{CH^*_T\left(X, i \right)}}}
{{\wt{CH^*_T\left(X, i \right)}}}$ by ~\ref{eqn:GTT}.
We now consider the following diagram.
\begin{equation}\label{eqn:main*-GLn}
\xymatrix{
{\wt{K^G_i(X)}} \ar[d]_{u^G_X} \ar@{.>}[rr] \ar[ddr] & &
{\wt{CH^*_G\left(X, i \right)}} \ar@{^{(}->}[d]^{{\ov{u}}^G_X}
\ar[ddr] & \\
{\widehat{K^G_i(X)}} \ar[rr]^{{\widehat{ch}}^G_X} \ar[ddr] & &
{\ov{CH^*_G\left(X, i \right)}} \ar[ddr] & \\
& {\wt{K^T_i(X)}} \ar@{^{(}->}[d]_{u^T_X} \ar[rr]^{{\wt{ch}}^T_X} & &
{\wt{CH^*_T\left(X, i \right)}} \ar@{^{(}->}[d]^{{\ov{u}}^T_X} \\ 
& {\widehat{K^T_i(X)}} \ar[rr]_{{\widehat{ch}}^T_X} & & 
{\ov{CH^*_T\left(X, i \right)}}}
\end{equation}
All the completed faces of the above diagram commute and the map
${\wt{ch}}^T_X$ is an isomorphism by our assumption about the torus
action. We want to show that the dotted arrow can be completed by
the map ${\wt{ch}}^G_X$ such that the back face (and hence all faces)
of the cube commutes.
Using ~\ref{eqn:trace2} and a diagram chase, we see that it suffices to 
show that the image of the composite map
\[
{\wt{K^G_i(X)}} \xrightarrow{{\widehat{ch}}^G_X \circ u^G_X}
{\ov{CH^*_G\left(X, i \right)}} \to {\ov{CH^*_T\left(X, i \right)}}
\]
lies in ${\wt{CH^*_T\left(X, i \right)}}$. But this follows directly from the
commutativity of all completed squares in the above diagram.

It remains now to show that ${\wt{ch}}^G_X$ is an isomorphism.
We first note that the completed arrows in the above diagram are
$\widehat{R(G)}$-linear via the isomorphism of the rings
$\widehat{R(G)} \xrightarrow{\cong} \widehat{S(G)}$. In particular,
${\wt{ch}}^G_X$ is $\widehat{R(G)}$-linear. Moreover, 
$\widehat{R(T)} \cong \widehat{S(T)}$ is faithfully flat over
$\widehat{R(G)} \cong \widehat{S(G)}$.
Thus it suffices to show that ${\wt{ch}}^G_X {\otimes}_{\widehat{R(G)}}
\widehat{R(T)}$ is an isomorphism. However, we have the following
commutative diagram.
\begin{equation}\label{eqn:main*-GLn1}
\xymatrix{
{{\wt{K^G_i(X)}} {\otimes}_{\widehat{R(G)}} \widehat{R(T)}} 
\ar[d]_{\cong} \ar[r]^{{\wt{ch}}^G_X \otimes {\wt{ch}}^G_k} &
{{\wt{CH^*_G\left(X, i \right)}} {\otimes}_{\widehat{S(G)}}
{\widehat{S(T)}}} \ar[d]^{\cong} \\
{K^G_i(X) {\otimes}_{R(G)} \widehat{R(T)}} \ar[d]_{\cong} &
{CH^*_G\left(X, i \right) {\otimes}_{S(G)} \widehat{S(T)}} 
\ar[d]^{\cong} \\
{\left(K^G_i(X) {\otimes}_{R(G)} R(T)\right) 
{\otimes}_{R(T)} \widehat{R(T)}} \ar[d]_{{\psi}^G_T} &
{\left(CH^*_G\left(X, i \right) {\otimes}_{S(G)} S(T)\right) 
{\otimes}_{S(T)} \widehat{S(T)}} \ar[d]^{{\phi}^G_T} \\
{K^T_i(X) {\otimes}_{R(T)} \widehat{R(T)}} 
\ar[r]_{{\wt{ch}}^T_X \otimes {\wt{ch}}^T_k} &
{CH^*_T\left(X, i \right) {\otimes}_{S(T)} \widehat{S(T)}}}
\end{equation} 
The map ${\psi}^G_T$ is an isomorphism by 
\cite[Proposition~8]{Merkurjev} and ${\phi}^G_T$ is an isomorphism by
Theorem~\ref{thm:reductiveI}. The bottom horizontal arrow is an
isomorphism by our assumption. We conclude that the top horizontal
arrow is also an isomorphism.

For the proof of Theorem~\ref{thm:singular} for $G = GL_n$, we first
observe that ~\ref{eqn:trace2} is general and it holds for any
reductive group $G$ with a split maximal torus $T$ and any $X \in {\sV}_G$ by 
Corollary~\ref{cor:reduction}. 
Hence we can consider the same diagram as ~\ref{eqn:main*-GLn} with
$K$-groups replaced by $G$-groups and the Chern character replaced
by the equivariant Riemann-Roch of Theorem~\ref{thm:RRoch}
to get the desired map ${\wt{\tau}}^G_X$. Then all the maps in the
diagram are $\widehat{R(G)}$-linear by Theorem~\ref{thm:RRoch} $(ii)$.
Since $G$ acts with finite stabilizers, so does $T$ and hence
${{\wt{\tau}}^T_X}$ is an isomorphism by Proposition~\ref{prop:fixedS}.
Moreover, as \cite[Proposition~8]{Merkurjev} also holds for the $G$-theory, 
the same argument as above shows that ${\wt{\tau}}^G_X$ is an
isomorphism.
\end{proof}
We need the following finiteness results for the maps of the equivariant Chow
rings and the completions of the representation rings
for generalizing Proposition~\ref{prop:main-GLn} to any linear 
algebraic group. For compact Lie groups and complex
algebraic groups, Lemma~\ref{lem:finitering} follows from the stronger 
results of Segal ({\sl cf.} \cite{Segal}) and Edidin-Graham
({\sl cf.} \cite[Proposition~2.3]{ED3}).
\begin{lem}\label{lem:finiteringChow} 
Let $G$ be a linear algebraic group and let $H \subset G$ be a closed
subgroup. Then the restriction map $S(G) \to S(H)$ is finite. In particular,
$S(G)$ is Noetherian.
\end{lem}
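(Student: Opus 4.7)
\textbf{Proof proposal for Lemma~\ref{lem:finiteringChow}.}

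My plan is to transfer the corresponding finiteness statement on the representation-theoretic side (Lemma~\ref{lem:finitering}, i.e.\ that $R(G)\to R(H)$ is finite, which is classical for any closed subgroup of a linear algebraic group and is cited from Segal/Edidin--Graham) to the equivariant Chow ring side via the Edidin--Graham Riemann--Roch isomorphism, and then descend from completions back to the graded rings via a graded Nakayama argument. The restriction $r^G_H\colon S(G)\to S(H)$ is a homomorphism of non-negatively graded $\Q$-algebras concentrated in augmentation ideals $J_G, J_H$, and the pull-back interpretation of $r^G_H$ guarantees $J_G \cdot S(H)\subseteq J_H$. This will be the structural setup for the descent.

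First I would tensor the finite extension $R(G)\to R(H)$ with $\widehat{R(G)}$ over $R(G)$. Since $R(H)$ is a finite $R(G)$-module, the resulting module $R(H)\otimes_{R(G)}\widehat{R(G)}$ is a finite $\widehat{R(G)}$-module and coincides with the $I_G$-adic completion of $R(H)$. By \cite[Corollary~6.1]{ED1} (which states that $I_G$ and $I_H$ define the same topology on $R(H)$), this is also the $I_H$-adic completion $\widehat{R(H)}$. Next, Theorem~\ref{thm:ED} applied to $X=\Spec k$ for both $G$ and $H$ yields ring isomorphisms $\widehat{R(G)}\xrightarrow{\cong}\prod_{j\ge 0}CH^j_G(k,0)$ and similarly for $H$; combining these with the fact (quoted in the introduction, via \cite[Proposition~2.1]{Graham}) that on $S(G)$ the $J_G$-adic filtration and the filtration by grading induce the same topology, we identify the products $\prod_{j\ge 0}CH^j_G(k,0)$ with $\widehat{S(G)}$ and the analogous product for $H$ with $\widehat{S(H)}$. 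Putting this together, I obtain that $\widehat{S(H)}$ is a finite $\widehat{S(G)}$-module, via a ring map compatible with the restriction $r^G_H$.

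Now I would carry out a graded Nakayama descent. Let $\bar e_1,\dots,\bar e_r\in\widehat{S(H)}$ generate it as an $\widehat{S(G)}$-module; since $S(H)$ is dense in $\widehat{S(H)}$ and the generators may be modified modulo $\widehat{J_H}^{N}$ without losing the generating property (for $N$ large), I can assume the $\bar e_i$ lie in $S(H)$ and, by taking graded components, that they are homogeneous. I then claim $\{e_1,\dots,e_r\}$ together with any $\Q$-basis of the (finite-dimensional) low-degree pieces $\bigoplus_{n<\max\deg e_i}S(H)_n$ generate $S(H)$ as an $S(G)$-module: given any homogeneous $b\in S(H)_n$ with $n\ge\max\deg e_i$, write $b=\sum_i a_i e_i$ in $\widehat{S(H)}$ with $a_i\in\widehat{S(G)}$; expanding each $a_i$ as a convergent sum of homogeneous pieces in $S(G)$ and extracting the piece of the correct degree, which is forced by homogeneity of $b$, yields an identity $b=\sum_i a_i^{(n-\deg e_i)} e_i$ with coefficients in $S(G)$ itself. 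This produces the required finite generation of $S(H)$ over $S(G)$.

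For the ``in particular'' clause, the Noetherianness of $S(G)$ follows from the fact, established in the proof of Corollary~\ref{cor:completion2}, that $S(G)$ is a finitely generated $\Q$-algebra (one passes to a finite extension of $k$, trivializes unipotent radicals, reduces to a split torus, and invokes polynomiality); Hilbert's basis theorem then gives Noetherianness. The one step I expect to demand some care is the graded Nakayama descent, since $\widehat{S(H)}$ is only ``pro-graded'' rather than graded, and one needs the filtration-compatibility of the restriction map and the topology comparison of \cite[Proposition~2.1]{Graham}/\cite[Corollary~2.3]{Brion} to ensure that the generators can be chosen inside $S(H)$ and extracted homogeneously. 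The rest of the argument is essentially formal, provided Lemma~\ref{lem:finitering} is in place.
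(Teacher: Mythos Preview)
Your proposal has a circularity problem. You invoke Lemma~\ref{lem:finitering} as input, but in this paper Lemma~\ref{lem:finitering} is stated and proved \emph{after} Lemma~\ref{lem:finiteringChow}, and its proof explicitly uses Lemma~\ref{lem:finiteringChow} (the second sentence of that proof reads: ``The isomorphism of the map $S(H)\otimes_{S(G)}\widehat{S(G)}\to\widehat{S(H)}$ now follows directly from Lemma~\ref{lem:finiteringChow}''). You have also misread what Lemma~\ref{lem:finitering} asserts: it does not say that $R(G)\to R(H)$ is finite, only that the completed tensor products agree with the completions. The remark immediately preceding Lemma~\ref{lem:finitering} says that the stronger finiteness of $R(G)\to R(H)$ is known for compact Lie groups (Segal) and complex algebraic groups (Edidin--Graham, \cite{ED3}); the present paper works over an arbitrary base field $k$ and does not have that input available. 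So the very first step of your argument---``$R(H)$ is a finite $R(G)$-module''---is not established in the required generality, and your route through Lemma~\ref{lem:finitering} is circular regardless.

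The paper instead argues directly on the Chow side, with no appeal to representation-ring finiteness. It first treats the case of a diagonalizable $G$ with a subtorus $H$, then embeds an arbitrary $G$ into $GL_n$ to reduce to $G=GL_n$. For $GL_n$ it passes to a finite extension $l/k$ over which everything splits, uses Lemma~\ref{lem:folklore1} to compare $S(H)$ with $S(H_l)$, and then reduces $S(H_l)$ to its maximal torus $T'$ via Lemma~\ref{lem:nonreductive}, Corollary~\ref{cor:finite2}, and Corollary~\ref{cor:reduction}. Finiteness then follows from the explicit polynomial description of $S(T)$, $S(T')$ and the identification $S(GL_n)=S(T)^W$. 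Everything stays within the structural results on equivariant Chow rings already proved in Sections~3, 4 and~8.
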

\begin{proof} We first assume that $G$ is a diagonalizable
group and $H \subset G$ is a subtorus. Then we have an
isomorphism $R(G) \cong R(H) {\otimes}_{\Q} R(G/H)$ by 
\cite[Lemma~5.6]{Thomason3} and the 
natural map $R(G) \to R(H)$ is surjective and hence finite.
In general, we embed $G$ inside some $GL_n$ to get the maps
$R(GL_n) \to R(G) \to R(H)$. Hence we can assume that $G = GL_n$.

There is a finite extension $l/k$ such that all the connected components
of the algebraic group $H_l$ are defined over $l$, the identity component
$H^0_l$ is split and its unipotent radical $R_u\left(H^0_l\right)$ is
also defined and split over $l$. Moreover, there is maximal torus $T$ of 
$G_l$ such that $T \cap H_l = T'$ is a split maximal torus of $H_l$. 
We now consider the following commutative diagram.
\begin{equation}\label{eqn:finiteS*}
\xymatrix{
S(G) \ar[r]^{\cong} \ar[d] & S(G_l) \ar[r] \ar[d] & S(T) \ar[d] \\
S(H) \ar[r] & S(H_l) \ar[r] & S(T')}
\end{equation}
We have shown above that the right vertical arrow is finite. The first
horizontal arrow on the top is an isomorphism since $G = GL_n$
({\sl cf} \cite[Section~3.2]{ED1}), and the second horizontal arrow
on the top is finite by Corollary~\ref{cor:reduction}.
In particular, $S(T')$ is finite over $S(G)$.
To see that the middle vertical arrow is finite, we can use 
Lemma~\ref{lem:nonreductive} and Corollary~\ref{cor:finite2}
to assume that $H_l$ is connected and split reductive with split
maximal torus $T'$. In that case, it follows from 
Corollary~\ref{cor:reduction} and the fact that $S(G)$ is Noetherian.
Finally, we conclude from Lemma~\ref{lem:folklore1} that
$S(H)$ is an $S(G)$-submodule of $S(H_l)$ and hence finite over
$S(G)$. The claim about the Noetherian property follows from this finiteness
result and the fact that $S(GL_n)$ is Noetherian.
\end{proof}

\begin{lem}\label{lem:finitering}
Let $G$ be a linear algebraic group and let $H \subset G$ be a closed
subgroup. Then the restriction maps $R(H) {\otimes}_{R(G)}
\widehat{R(G)} \xrightarrow{r^G_H} \widehat{R(H)}$ and
$S(H) {\otimes}_{S(G)} \widehat{S(G)}$\\
$\xrightarrow{r^G_H} \widehat{S(H)}$
are isomorphisms of $\Q$-algebras.
\end{lem}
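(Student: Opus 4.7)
The plan is to reduce both isomorphisms to two inputs: the finiteness of $R(H)$ over $R(G)$ (resp.\ of $S(H)$ over $S(G)$), and the coincidence of the adic topologies defined on $R(H)$ by $I_G R(H)$ and $I_H$ (resp.\ on $S(H)$ by $J_G S(H)$ and $J_H$). The Chow-theoretic finiteness is \lemref{lem:finiteringChow}; the $K$-theoretic finiteness will be proved along identical lines.

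For the $K$-theoretic finiteness, I first embed $G \hookrightarrow GL_n$ and observe that the composite $R(GL_n) \to R(G) \to R(H)$ reduces us to $G = GL_n$. Next, extending scalars to a finite field extension $\ell/k$ over which $G_\ell$ and $H_\ell$ become split (with all components of $H_\ell$ defined over $\ell$ and unipotent radicals split) and descending by Galois invariants as in \lemref{lem:folklore1*}, we reduce to the split case. Applying \lemref{lem:nonreductive} replaces $H$ by its reductive quotient. Finally, Chevalley's isomorphism $R(G) \cong R(T)^W$ (Weyl-invariant Laurent polynomials for $GL_n$), together with the surjectivity of restriction of characters $R(T) \twoheadrightarrow R(T')$ for any closed diagonalizable subgroup $T' \subseteq T$, shows $R(H)$ is finite over $R(G)$.

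Given finiteness, the standard base-change identification $M \otimes_A \widehat{A}_I \cong \widehat{M}_{IM}$ for a finitely generated $M$ over a Noetherian $A$ yields
\[
R(H) \otimes_{R(G)} \widehat{R(G)} \;\cong\; \widehat{R(H)}_{I_G R(H)}
\quad \text{and}\quad
S(H) \otimes_{S(G)} \widehat{S(G)} \;\cong\; \widehat{S(H)}_{J_G S(H)}.
\]
The inclusions $I_G R(H) \subseteq I_H$ and $J_G S(H) \subseteq J_H$ are automatic from preservation of rank and grading. For the reverse containment of powers, $S(H)/J_G S(H)$ is a finitely generated graded $\Q$-algebra whose augmentation ideal has bounded positive degrees and is thus nilpotent, giving $J_H^n \subseteq J_G S(H)$. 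For the $K$-theoretic analogue, $R(H)/I_G R(H)$ is a finite-dimensional (hence Artinian) $\Q$-algebra; identifying its maximal ideals with characters of $R(H)$ restricting to the augmentation of $R(G)$, and using a faithful embedding $G \hookrightarrow GL_n$ to conclude that only the identity conjugacy class of $H$ can restrict to the identity of $G$, we see that $I_H / I_G R(H)$ is the unique maximal ideal, hence (as the Jacobson radical of an Artinian local ring) nilpotent.

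The main obstacle is to make the character-theoretic argument of the last step rigorous over an arbitrary base field $k$ --- one must identify $\operatorname{Spec}(R(H)_{\overline{k}})$ carefully with (rational orbits of) semisimple conjugacy classes, and control the fibers of the restriction map over the augmentation of $R(G)$ --- along with cleanly executing the torus reduction in the $K$-theoretic finiteness step.
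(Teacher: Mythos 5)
Your overall architecture (finiteness of $R(H)$ over $R(G)$ and of $S(H)$ over $S(G)$, plus coincidence of the $I_GR(H)$- and $I_H$-adic topologies, then the standard base-change identity for finite modules over Noetherian rings) is a legitimate route, and your treatment of the Chow side is essentially complete: $S(H)/J_GS(H)$ is indeed a finite-dimensional graded $\Q$-algebra, so its positive-degree part is nilpotent. But on the $K$-theory side there are two genuine gaps. First, the claim that $R(H)$ is module-finite over $R(G)$ does \emph{not} follow ``along identical lines'' as \lemref{lem:finiteringChow}. The reduction to the connected case breaks down: for Chow groups one has $S(H)\cong S(H^0)^{H/H^0}$ (\corref{cor:finite2}) and can invoke \lemref{lem:folklore1*}, whereas $R(H)$ is not the ring of invariants of $R(H^0)$, and the transfer trick fails because $\Ind_{H^0}^{H}\circ\mathrm{Res}$ is multiplication by the class of the regular representation of $H/H^0$, which is a zero-divisor in $R(H)_{\Q}$ rather than the scalar $[H:H^0]$. (Segal's proof of this finiteness for compact groups in \cite{Segal} needs a substantially more involved argument through normalizers of maximal tori; the paper deliberately never proves $R(G)\to R(H)$ finite in this generality.) Second, the step $I_H^n\subseteq I_GR(H)$ rests on identifying the fibre of $\Spec R(H)_{\Q}\to\Spec R(G)_{\Q}$ over the augmentation ideal with the single point $I_H$; as you yourself concede, this requires the conjugacy-class description of $\Spec R(H)_{\Q}$ over an arbitrary field, which you do not supply. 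That identification is exactly the content of \cite[Theorem~6.1]{ED1}, so as written you are re-deriving a quoted black box and leaving its hardest point open.

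For contrast, the paper's proof avoids both difficulties. It proves finiteness only on the Chow side (\lemref{lem:finiteringChow}), imports the topology comparison $\wh{R(H)}_G\cong\wh{R(H)}$ and $\wh{S(H)}_G\cong\wh{S(H)}$ wholesale from \cite[Theorem~6.1]{ED1}, and then transfers everything to the representation ring via the completed Chern character isomorphisms $\wh{R(G)}\cong\wh{S(G)}$, $\wh{R(H)}\cong\wh{S(H)}$ of Theorem~\ref{thm:ED}, together with the Morita identification $R(H)\cong K_0^G(G/H)$ and the already-established isomorphism $\wt{ch}^G_{G/H}$ from Theorem~\ref{thm:main*-torus} and \propref{prop:main-GLn}. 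If you want to keep your direct approach, the two points above are precisely what you must either prove (following Segal's argument and \cite[Proposition~2.3]{ED3}) or cite.
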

\begin{proof} Let ${\widehat{R(H)}}_{G}$ and ${\widehat{S(H)}}_{G}$ denote 
the $I_G$-adic completion of $R(H)$ and $J_G$-adic completion
of $S(H)$ respectively. Then we have ${\widehat{R(H)}}_{G} 
\xrightarrow{\cong} \widehat{R(H)}$ and ${\widehat{S(H)}}_{G}
\xrightarrow{\cong} \widehat{S(H)}$ by \cite[Theorem~6.1]{ED1}.
The isomorphism of the map
$S(H) {\otimes}_{S(G)} \widehat{S(G)} \xrightarrow{r^G_H} \widehat{S(H)}$
now follows directly from Lemma~\ref{lem:finiteringChow}.
We consider the following commutative diagram.
\begin{equation}\label{eqn:finitering0}
\xymatrix{
R(G) \ar[r] \ar[d] & {\widehat{R(G)}} \ar@{=}[r] \ar[d] &
{\widehat{R(G)}} \ar[r]^{{\widehat{ch}}^G_k} \ar[d] & 
{\widehat{S(G)}} \ar[d] \\
R(H) \ar[r] & {R(H) {{\otimes}_{R(G)}} {\widehat{R(G)}}} \ar[r] &
{{\widehat{R(H)}}_{G}} \ar@{.>}[r] \ar[d]^{\cong} &   
{{\widehat{S(H)}}_{G}} \ar[d]^{\cong} \\
& & {\widehat{R(H)}} \ar[r]_{{\widehat{ch}}^H_k} & {\widehat{S(H)}}}
\end{equation}
Since ${\widehat{ch}}^G_k$ and ${\widehat{ch}}^H_k$ are ring isomorphisms by 
Theorem~\ref{thm:ED}, we conclude that there exists a unique
isomorphism ${{\widehat{R(H)}}_{G}} \xrightarrow{{\widehat{ch}}^H_k}
{{\widehat{S(H)}}_{G}}$ such that the above diagram commutes.
We have shown that ${{\widehat{S(H)}}_{G}} \cong
S(H) {\otimes}_{S(G)} {\widehat{S(G)}}$, and the latter is then
finite over ${\widehat{S(G)}}$. Now we conclude from the above 
diagram that ${{\widehat{R(H)}}_{G}}$ is finite over ${\widehat{R(G)}}$.

Using the indicated isomorphisms of rings in the above diagram, we need to 
show that the composite map ${R(H) {{\otimes}_{R(G)}}} 
{\widehat{R(G)}} \xrightarrow{{\phi}^G_H} {{\widehat{S(H)}}_{G}}$ 
in the middle row is an isomorphism, in order to prove the
proposition. We have now the natural maps
\[
{R(H) {{\otimes}_{R(G)}} {\widehat{R(G)}}} \to 
{K^G_0(G/H)  {{\otimes}_{R(G)}} {\widehat{R(G)}}}
\xrightarrow{{\wt{ch}^G_{G/H}}}
{CH^*_G\left(G/H, 0 \right) {{\otimes}_{S(G)}} {\widehat{S(G)}}}.
\]
The first map is an isomorphism by the Morita equivalence 
\cite[Proposition~3.2]{ED1} and the second map is an isomorphism by
Theorem~\ref{thm:main*-torus} and Proposition~\ref{prop:main-GLn}.
However, the last term is same as $CH^*_H\left(k, 0\right)
{{\otimes}_{S(G)}} {\widehat{S(G)}} \cong
S(H) {\otimes}_{S(G)} {\widehat{S(G)}}$
by \cite[Proposition~3.2]{ED1}, and
we have seen above that the latter term is same as
${{\widehat{S(H)}}}_G$. This proves the lemma.
\end{proof}
\begin{prop}\label{prop:main-general}
Let $G$ be a linear algebraic group and let $X \in {\sV}_G$. Suppose that
Theorem~\ref{thm:main*} (resp. Theorem~\ref{thm:singular}) holds when
$G$ is a diagonalizable group. Then Theorem~\ref{thm:main*} 
(resp. Theorem~\ref{thm:singular}) holds for any $G$.
\end{prop}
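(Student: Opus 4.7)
The plan is to reduce the general case to the $GL_n$-case already handled in Proposition~\ref{prop:main-GLn}, via the Morita equivalence. Choose a closed embedding $G \hookrightarrow GL_n$ (possible since $G$ is linear algebraic) and set $Y = GL_n \stackrel{G}{\times} X$. By Lemma~\ref{lem:sch}, $Y$ is quasi-projective, and smooth whenever $X$ is; moreover, $GL_n$ acts on $Y$ with finite stabilizers if and only if $G$ does so on $X$ (\cite[Section~5]{ED1}), so the ``finite stabilizer'' hypothesis of Theorem~\ref{thm:singular} passes between the two situations. Thomason's Morita isomorphism~\ref{eqn:MoritaK} together with its Chow-theoretic analogue in Corollary~\ref{cor:Morita} furnishes natural isomorphisms
\[
K^G_i(X) \xrightarrow{\cong} K^{GL_n}_i(Y), \qquad CH^*_G(X, i) \xrightarrow{\cong} CH^*_{GL_n}(Y, i).
\]
The key observation is that both isomorphisms intertwine the equivariant Chern character and Riemann-Roch maps: any good pair $(V,U)$ for the $GL_n$-action is also a good pair for the $G$-action, and the identification of mixed quotients $(GL_n \stackrel{G}{\times} X) \stackrel{GL_n}{\times} U = X \stackrel{G}{\times} U$ reduces both $ch^{GL_n}_Y$ and $ch^G_X$ (resp.\ $\tau^{GL_n}_Y$ and $\tau^G_X$) to the \emph{same} non-equivariant Chern character (resp.\ Riemann-Roch) map on $X_G = Y_{GL_n}$.

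Next, by Lemma~\ref{lem:finitering} the restriction maps yield ring isomorphisms $R(G) \otimes_{R(GL_n)} \widehat{R(GL_n)} \cong \widehat{R(G)}$ and $S(G) \otimes_{S(GL_n)} \widehat{S(GL_n)} \cong \widehat{S(G)}$, and both right-hand sides are finite over the $GL_n$-completions. Combining these with the Morita identifications above---noting that the $R(G)$-module structure on $K^G_i(X) = K^{GL_n}_i(Y)$ restricts to the natural $R(GL_n)$-structure along $R(GL_n) \to R(G)$, and similarly for $S$---one gets canonical isomorphisms
\[
\wt{K^G_i(X)} \xrightarrow{\cong} \wt{K^{GL_n}_i(Y)}, \qquad \wt{CH^*_G(X,i)} \xrightarrow{\cong} \wt{CH^*_{GL_n}(Y,i)}.
\]
Moreover, since finiteness of $\widehat{R(G)}$ over $\widehat{R(GL_n)}$ forces the $I_G$-adic and $I_{GL_n}$-adic topologies on $K^G_i(X) = K^{GL_n}_i(Y)$ to coincide (and similarly for the $J$-adic topologies on Chow groups), the same argument applied to $I$-adic completions gives $\widehat{K^G_i(X)} \cong \widehat{K^{GL_n}_i(Y)}$ and $\widehat{CH^*_G(X,i)} \cong \widehat{CH^*_{GL_n}(Y,i)}$.

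Finally, invoke Proposition~\ref{prop:main-GLn} for the $GL_n$-action on $Y$ to get the commutative diagram of Theorem~\ref{thm:main*} (resp.\ Theorem~\ref{thm:singular}) for $(GL_n, Y)$, in which the horizontal arrows are isomorphisms and the vertical arrows are injective (resp.\ all four arrows are isomorphisms in the finite-stabilizer setting). Transporting this diagram along the Morita isomorphisms of the preceding paragraphs, and using their compatibility with $ch$ and $\tau$ established in the first paragraph, yields the corresponding diagram for $(G, X)$ and defines the maps $\wt{ch}^G_X$ and $\widehat{ch}^G_X$ (resp.\ $\wt{\tau}^G_X$ and $\widehat{\tau}^G_X$) demanded by the statement.

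The principal technical obstacle is the clean verification of the naturality triangle relating Morita, the restriction maps $R(GL_n) \to R(G)$ and $S(GL_n) \to S(G)$, and the Chern character/Riemann-Roch maps on both sides. Most of this is forced by the single identification $Y_{GL_n} = X_G$ of mixed quotients, but one must nonetheless track the $R(GL_n)$- versus $R(G)$-module structures carefully when applying Lemma~\ref{lem:finitering} to interchange the base of the tensor product. Once these compatibilities are in place, the remaining argument is a formal diagram chase.
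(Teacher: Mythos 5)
Your proposal is correct and follows essentially the same route as the paper: embed $G$ in $GL_n$, transport the problem to $\wt{X}=GL_n\stackrel{G}{\times}X$ via the Morita isomorphisms of Thomason and Corollary~\ref{cor:Morita}, use Lemma~\ref{lem:finitering} to convert the $GL_n$-completions of the base rings into the $G$-completions, and then invoke Proposition~\ref{prop:main-GLn}. The only cosmetic difference is that you identify the $I$-adic completions on both sides explicitly, whereas the paper simply factors $\widehat{ch}^G_X\circ u^G_X$ through the weak completion using the injectivity of $\ov{u}^G_X$; both amount to the same diagram chase.
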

\begin{proof} As in the proof of Proposition~\ref{prop:main-GLn},
we first prove the assertion for Theorem~\ref{thm:main*}.
The other case is similar and we shall indicate the specific changes.
Choosing an embedding $G \inj GL_n$ as a closed subgroup and using
\cite[Theorem~1.10]{Thomason1} and Corollary~\ref{cor:Morita} above,
we have natural isomorphisms
\[
G^{GL_n}_i\left(GL_n \stackrel{G}{\times} X\right) \xrightarrow{\cong}
G^G_i(X) \ \ {\rm and}
\]
\[
CH^*_{GL_n}\left(GL_n \stackrel{G}{\times} X, i \right) \xrightarrow{\cong}
CH^*_G\left(X, i \right).
\]
Putting $\wt{X} = GL_n \stackrel{G}{\times} X$ and replacing the $G$-groups
by $K$-groups (as $X$ is smooth), we get the following commutative
diagram.
\[
\xymatrix{
{K^G_i(X) {\otimes}_{R(G)} \left(R(G) {\otimes}_{R(GL_n)} 
\widehat{R(GL_n)}\right)} \ar[r]^{\cong} \ar[dddd]_{r^{GL_n}_G} &
{K^{GL_n}_i(\wt{X}) {\otimes}_{R(GL_n)} \widehat{R(GL_n)}} 
\ar[d]^{{\wt{ch}}^{GL_n}_{\wt{X}}} \\
& {CH^*_{GL_n}\left(\wt{X}, i \right) {\otimes}_{S(GL_n)} 
\widehat{S(GL_n)}} \ar[d] \\
& {CH^*_{G}\left(X, i \right) {\otimes}_{S(GL_n)} 
\widehat{S(GL_n)}} \ar[d]^{\cong} \\
& {CH^*_{G}\left(X, i \right) {\otimes}_{S(G)} \left(
S(G) {\otimes}_{S(GL_n)} \widehat{S(GL_n)}\right)} \ar[d]^{r^{GL_n}_G}  
\\
{K^G_i(X) {\otimes}_{R(G)} \widehat{R(G)}} \ar[d]_{u^G_X} 
\ar@{.>}[r]^{{\wt{ch}}^G_X}
& {CH^*_{G}\left(X, i \right) {\otimes}_{S(G)} \widehat{S(G)}} 
\ar@{^{(}->}[d]^{{\ov{u}}^G_X} \\
{\widehat{K^G_i(X)}} \ar[r]_{{\widehat{ch}^G_X}} & 
{\ov{CH^*_{G}\left(X, i \right)}}} 
\]
The top horizontal map is the composite
\[
{K^G_i(X) {\otimes}_{R(G)} \left(R(G) {\otimes}_{R(GL_n)} 
\widehat{R(GL_n)}\right)} \xrightarrow{\cong} 
{K^G_i(X) {\otimes}_{R(GL_n)} \widehat{R(GL_n)}}
\]
\[
\hspace*{9cm} \xrightarrow{\cong}
{K^{GL_n}_i(\wt{X}) {\otimes}_{R(GL_n)} \widehat{R(GL_n)}}.
\]  
The second vertical arrow on the right is an isomorphism as 
mentioned above. The maps $r^{GL_n}_G$ on both sides are isomorphisms
by Lemma~\ref{lem:finitering}, ${{\wt{ch}}^{GL_n}_{\wt{X}}}$ is
an isomorphism by Proposition~\ref{prop:main-GLn}
and ${{\ov{u}}^G_X}$ is injective by
Corollary~\ref{cor:completion2}. We conclude that the map 
${{\widehat{ch}^G_X}} \circ u^G_X$ factors through a map
${{\wt{ch}}^G_X}$ which is an isomorphism.

To deduce Theorem~\ref{thm:singular} from the case of diagonalizable 
groups, we use the same diagram as above for $G$-groups and the same
changes as in the proof of the theorem for $GL_n$ in 
Proposition~\ref{prop:main-GLn}, and observe that if $G$ acts on $X$ with
finite stabilizers, then so does $GL_n$ on $\wt{X}$.
\end{proof} 

\section{Proof Of The main result and consequences}
Our strategy to prove Theorem~\ref{thm:main*} and Theorem~\ref{thm:singular} 
is to use the results of the previous section to reduce the proofs to the 
case of diagonalizable groups.
\\
{\bf{Proof of Theorem~\ref{thm:main*}:}} 
The existence of
${\wt{ch}}^G_X$ and its isomorphism follows from Theorem~\ref{thm:main*-torus}
and Proposition~\ref{prop:main-general}. 
We have moreover seen from the construction of the equivariant Chern
character in Proposition~\ref{prop:Chern} that 
$\widehat{R(G)} \xrightarrow{{\widehat{ch}}^G_k} \widehat{S(G)}$ is an
isomorphism of rings and ${\wt{ch}}^G_X$ is $\widehat{R(G)}$-linear
under this isomorphism. Taking the $\widehat{I_G}$-completion of the top row 
in the diagram ~\ref{eqn:main**} under the isomorphism 
$\widehat{I_G} \cong \widehat{J_G}$, we see that the isomorphism
${\wt{ch}}^G_k$ induces the isomorphism of completions
$\widehat{K^G_i(X)} \xrightarrow{{\widehat{ch}}^G_k}
\widehat{CH^*_G\left(X, i \right)}$.
  
Finally, the map ${\ov{u}}^G_X$ is injective by 
Corollary~\ref{cor:completion2}. Hence $u^G_X$ is also injective.
$\hfill \square$
\\
\\ 
{\bf{Proof of Corollary~\ref{cor:mainRR}:}}
The ring homomorphism $\widehat{K^G_0(X)} \xrightarrow{{\widehat{ch}}^G_X}
\widehat{CH^*_G\left(X, 0\right)}$ is an isomorphism by 
Theorem~\ref{thm:ED}. Moreover, the map ${\widehat{\tau}}^G_X$ is
a $\widehat{K^G_0(X)}$-linear under the above isomorphism by
Theorem~\ref{thm:RRoch}.
Since the Todd classes of the vector bundles are invertible
elements of $\widehat{CH^*_G\left(X, 0\right)}$, we see from the
construction of the Riemann-Roch map in Theorem~\ref{thm:RRoch}
that there is an invertible element ${\beta}_X$ in
$\widehat{K^G_0(X)}$ such that ${\widehat{\tau}}^G_X = {\beta} \cdot
{\widehat{ch}}^G_X$. The corollary now follows from Theorem~\ref{thm:main*}.   
$\hfill \square$
\\

{\bf{Proof of Theorem~\ref{thm:singular}:}} If a diagonalizable group acts
with a fixed dimension of stabilizers, then this is proved in
Proposition~\ref{prop:fixedS}. If any linear algebraic group $G$
acts on $X$ with finite stabilizers, then ${\wt{\tau}}^G_X$ is an
isomorphism by the case of diagonalizable groups and 
Proposition~\ref{prop:main-general}. Since ${\wt{\tau}}^G_X$ is an
$\widehat{R(G)}$-linear ({\sl cf.} Theorem~\ref{thm:RRoch}), the 
isomorphism of $\widehat{G^G_i(X)} \xrightarrow{{\widehat{\tau}}^G_k}
\widehat{CH^*_G\left(X, i \right)}$ follows exactly in the same way
as the isomorphism of ${\widehat{ch}}^G_X$ in the proof of
Theorem~\ref{thm:main*}.

To prove the isomorphism of vertical arrows in the diagram
~\ref{eqn:singular*}, we first see that the map ${\ov{u}}^G_X$ is an
isomorphism by Proposition~\ref{prop:FST}. This implies then that
$u^G_X$ is injective. 
On the other hand, the natural maps
${G^G_i(X)}_{I_G} \to \wt{G^G_i(X)} \xrightarrow{u^G_X} 
{\widehat{G^G_i(X)}}$ 
and Lemma~\ref{lem:FST1} imply that $u^G_X$ is surjective.
$\hfill \square$
\\ 
\\ 
{\bf{Proof of Theorem~\ref{thm:finite}:}}
For any $X \in {\sV}_G$, we have the natural maps
\[
G^G_i(X) \to {G^G_i(X)}_{I_G} \xrightarrow{{\wt{\tau}}^G_X}
\wt{CH^*_G\left(X, i \right)} \leftarrow CH^*_G\left(X, i \right), 
\]
where the last map is an isomorphism by Proposition~\ref{prop:FST} and
${{\wt{\tau}}^G_X}$ is an isomorphism by Theorem~\ref{thm:singular}.
This gives the desired lifting of the Riemann-Roch map
$G^G_i(X) \xrightarrow{{\tau}^G_X} CH^*_G\left(X, i \right)$ such that
$\rm{Ker}\left({\tau}^G_X\right)$ has the desired property.

For the surjectivity of ${\tau}^G_X$, we first prove the case when $X$ is 
smooth. By \cite[Theorem~6.1]{Se}, there is a finite $G$-equivariant
cover $f: X' \to X$ such that $X'$ is normal and $G$ acts freely on $X'$. 
In particular, $X'$ is quasi-projective and the $G$-action on $X'$ is linear
({\sl cf.} \cite[Proposition~7.1]{GIT}). We claim that for any $i,j \ge 0$,
the map ${CH_G^j(X',i)} \xrightarrow{f_*} {CH_G^j(X,i)}$ is 
surjective. 
To see this, choose a representation $V$ of $G$ and a $G$-invariant open
subset $U$ of $V$ such that $G$ acts freely on $U$ and the codimension of 
$V-U$ is larger than $j$. Then
the induced map ${\tilde f} : X'_G \to X_G$ is a finite map
of quasi-projective schemes ({\sl loc. cit.})
such that $X_G$ is smooth. Hence by \cite[Theorem~4.1 and 5.8]{Bloch},
there is a pull-back map ${\tilde f}^* : {CH^j(X_G,i)} \to 
{CH^j(X'_G,i)}$ such that ${\tilde f}_* \circ {\tilde f}_*$ is the
multiplication by the degree of ${\tilde f}$, which proves the claim.

Now let $Y' = {X'}/G$ and consider the following Riemann-Roch diagram.
\[
\xymatrix{
{G^G_i(X')} \ar[r]^{{\tau}^G_{X'}} \ar[d]_{f_*} &
{CH^*_G(X',i)} \ar[d]^{{\bar f}_*} \\
{G^G_i(X)} \ar[r]_{{\tau}^G_X} & {CH^*_G(X,i)}}
\]
Note that like $X'_G$ and $X_G$, $Y'$ is also quasi-projective.
Moreover, one has $G^G_i(X') \cong G^i(Y')$ and $CH^*_G(X',i) \cong
CH^*(Y',i)$. In particular, we conclude from the Bloch's non-equivariant  
Riemann-Roch Theorem that the top horizontal arrow is an isomorphism.
We have just shown above that the right vertical map is surjective.
We conclude that the bottom horizontal map must be surjective.

If $X$ is singular and the stack $[X/G]$ has a coarse moduli scheme, then
by \cite[Theorem~1]{KV}, there a $X' \in {\sV}_G$ and finite flat
$G$-equivariant map $f : X' \to X$ such that $G$ acts freely on $X'$.
As $f$ is finite and flat, we have maps ${\tilde f}^* : {CH^j(X_G,i)} \to 
{CH^j(X'_G,i)}$ and ${\tilde f}_* : {CH^j(X'_G,i)} \to 
{CH^j(X_G,i)}$ such that ${\tilde f}_* \circ {\tilde f}_*$ is the
multiplication by the degree of ${\tilde f}$ by the same reason.
The same argument as in the smooth case above now proves that ${\tau}^G_X$
is surjective.
$\hfill \square$
\\  

As another application of our Riemann-Roch isomorphisms to the equivariant
$K$-theory, we can prove the following partial generalization of 
\cite[Proposition~8]{Merkurjev} from simply connected to arbitrary reductive 
groups. For simply connected reductive groups, Merkurjev's result is stronger 
in that his result holds with the integral coefficients. 
But the main advantage of the following result is that it does not assume
anything about $G$. It will turn out in the forthcoming sequel that it
may not be necessary to complete the ring $R(T)$ in the corollary below. 
\begin{cor}\label{cor:Merkurjev1}
Let $G$ be a connected and split reductive group and let $T$ be a split 
maximal torus of $G$. Then for any smooth variety $X$ with $G$-action and for 
any $i \ge 0$, the natural map
\[
K^G_i(X) {\otimes}_{R(G)} {\widehat{R(T)}} \xrightarrow{{\eta}^G_T}
K^T_i(X) {\otimes}_{R(T)} {\widehat{R(T)}}
\]
is an isomorphism. For actions with finite stabilizers, 
${\eta}^G_T$ is an isomorphism even if $X$ is not smooth.
\end{cor}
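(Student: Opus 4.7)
\textbf{Proof proposal for Corollary~\ref{cor:Merkurjev1}.}
The plan is to transport the statement across the Chern character/Riemann-Roch isomorphisms to the Chow side, where the analogous result (Theorem~\ref{thm:reductiveI}) is already known. First I would fix the smooth case. By Theorem~\ref{thm:main*} applied to $G$ and to $T$, there are Chern character isomorphisms
\[
K^G_i(X) \otimes_{R(G)} \widehat{R(G)} \xrightarrow{\cong} CH^*_G(X,i) \otimes_{S(G)} \widehat{S(G)},
\qquad
K^T_i(X) \otimes_{R(T)} \widehat{R(T)} \xrightarrow{\cong} CH^*_T(X,i) \otimes_{S(T)} \widehat{S(T)},
\]
and these are compatible with pull-back along the restriction maps $r^G_T$ (on $K$-theory and on Chow groups). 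Under the ring isomorphisms $\widehat{R(G)} \cong \widehat{S(G)}$ and $\widehat{R(T)} \cong \widehat{S(T)}$ of Theorem~\ref{thm:ED}, together with the identifications $\widehat{R(T)} \cong R(T) \otimes_{R(G)} \widehat{R(G)}$ and $\widehat{S(T)} \cong S(T) \otimes_{S(G)} \widehat{S(G)}$ of Lemma~\ref{lem:finitering}, the source of ${\eta}^G_T$ rewrites as
\[
K^G_i(X) \otimes_{R(G)} \widehat{R(T)}
\;\cong\; \bigl(K^G_i(X) \otimes_{R(G)} \widehat{R(G)}\bigr) \otimes_{\widehat{R(G)}} \widehat{R(T)}
\;\cong\; \bigl(CH^*_G(X,i) \otimes_{S(G)} \widehat{S(G)}\bigr) \otimes_{\widehat{S(G)}} \widehat{S(T)},
\]
which simplifies to $CH^*_G(X,i) \otimes_{S(G)} \widehat{S(T)}$. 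Similarly the target rewrites as $CH^*_T(X,i) \otimes_{S(T)} \widehat{S(T)}$.

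With this translation in place, the next step is to invoke Theorem~\ref{thm:reductiveI}, which provides an isomorphism
\[
CH^*_G(X,i) \otimes_{S(G)} S(T) \xrightarrow{\cong} CH^*_T(X,i).
\]
Tensoring over $S(T)$ with $\widehat{S(T)}$ then yields exactly the identification of the two rewritten sides. It remains to check that the composite of all these isomorphisms is indeed the natural map ${\eta}^G_T$; this reduces to the compatibility of the equivariant Chern character with the restriction functors $K^G_i(X) \to K^T_i(X)$ and $CH^*_G(X,i) \to CH^*_T(X,i)$ (which is built into Proposition~\ref{prop:Chern} since the Chern character is defined through the pull-back to the mixed quotient, and the restriction from $G$ to $T$ is itself a pull-back under a flat map of mixed quotients).

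For the singular case with finite stabilizers, I would repeat the argument verbatim with $K^G_i$ replaced by $G^G_i$ throughout, using Theorem~\ref{thm:singular} (which provides the Riemann-Roch isomorphism without smoothness hypothesis when the $G$-action has finite stabilizers) and its consequence for $T$ (which acts with finite stabilizers whenever $G$ does). The Chow-theoretic input Theorem~\ref{thm:reductiveI} is stated for arbitrary $X \in {\sV}_G$, so no change is needed there.

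The only substantive point of care, which I flag as the main potential obstacle, is the compatibility check: one must verify that after all the identifications above, the arrow produced by chaining the Chern character isomorphisms and Theorem~\ref{thm:reductiveI} coincides with the restriction map ${\eta}^G_T$ and is not off by some automorphism (e.g.\ a unit in $\widehat{S(T)}$ coming from a Todd class). For the Chern character this is immediate from its functoriality under pull-back; for the Riemann-Roch version one uses that $r^G_T$ is pull-back along a flat morphism so that the Todd class of the relative tangent bundle is trivial, and hence Theorem~\ref{thm:RRoch}(iv) forces compatibility on the nose.
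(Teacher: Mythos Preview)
Your proposal is correct and follows essentially the same route as the paper: transport the question to the Chow side via the Chern character isomorphisms of Theorem~\ref{thm:main*} (or Theorem~\ref{thm:singular} in the finite-stabilizer case), then invoke Theorem~\ref{thm:reductiveI} and tensor with $\widehat{S(T)}$. One small correction to your final paragraph: the restriction $r^G_T$ on mixed quotients is a $G/T$-fibration, so the relative tangent bundle is not trivial; however this is harmless, since any Todd-class discrepancy is multiplication by a unit in $\widehat{CH^*_T(X,0)}$ and therefore does not affect whether $\eta^G_T$ is an isomorphism.
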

\begin{proof}
We first note that there is an $\widehat{R(G)}$-linear isomorphism
\[
K^G_i(X) {\otimes}_{R(G)} \widehat{R(T)} 
\xrightarrow{\cong} {\wt{K^G_i(X)}} {\otimes}_{\widehat{R(G)}} 
\widehat{R(T)}.
\]
Thus we need to show that the natural map
${\wt{K^G_i(X)}} {\otimes}_{\widehat{R(G)}} \widehat{R(T)}
\to {\wt{K^T_i(X)}}$ is an isomorphism.
For this, we consider the following commutative diagram.
\[
\xymatrix{
{{\wt{K^G_i(X)}} {\otimes}_{\widehat{R(G)}} \widehat{R(T)}}
\ar[r] \ar[d] & {\wt{K^T_i(X)}} \ar[d] \\
{{\wt{CH^*_G\left(X, i \right)}} 
{\otimes}_{\widehat{S(G)}} \widehat{S(T)}} \ar[r] &
{{\wt{CH^*_T\left(X, i \right)}}}}
\]
The vertical maps are isomorphisms by Theorem~\ref{thm:main*}.
So we need to show that the bottom horizontal map is an isomorphism.
However, we have isomorphism
\[
{{\wt{CH^*_G\left(X, i \right)}} 
{\otimes}_{\widehat{S(G)}} \widehat{S(T)}}
\cong CH^*_G\left(X, i \right) {\otimes}_{S(G)} \left({\widehat{S(G)}}
{\otimes}_{\widehat{S(G)}} {\widehat{S(T)}}\right)\]
\[
\hspace*{5cm} \cong \left(CH^*_G\left(X, i \right) 
{\otimes}_{S(G)} S(T) \right) {\otimes}_{S(T)} {\widehat{S(T)}}.
\]
But the last term is naturally isomorphic to
$CH^*_T\left(X, i \right)  {\otimes}_{S(T)} {\widehat{S(T)}}
= \wt{CH^*_T\left(X, i \right)}$ by Theorem~\ref{thm:reductiveI}.
This proves the corollary when $X$ is smooth. The same proof also
applies when $X$ is not smooth and $G$ acts with finite stabilizers
by Theorem~\ref{thm:singular} since our extra ingredient
Theorem~\ref{thm:reductiveI} holds for all $X \in {\sV}_G$.
\end{proof}
\begin{ack}
The author is deeply indebted to Angelo Vistoli for many
fruitful discussions which were very valuable in this work.
The suggestion of using the results of \cite{VV} in this work
was essentially by him. The author also thanks him for suggesting
several improvements in the earlier version of this paper. 
Part of this work was done when the author was visiting
Scuola Normale Superiore during the summer of 2008. He
would like to thank the institute for its invitation and 
financial support during his stay.
\end{ack} 

School of Mathematics,
Tata Institute Of Fundamental Research, \\
Homi Bhabha Road,
Mumbai, 400005, India. \\ 
{\sl E-mail address :} amal@math.tifr.res.in  
\end{document}